\newtheorem{theorem}{Theorem}[section]
\newtheorem{prop}[theorem]{Proposition}
\newtheorem{conj}[theorem]{Conjecture}
\newtheorem{lemma}[theorem]{Lemma}
\newtheorem{corollary}[theorem]{Corollary}
\newtheorem{remark}[theorem]{Remark}
\newtheorem{definition}[theorem]{Definition}
\newtheorem{example}[theorem]{Example}
\newtheorem{question}[theorem]{Question}
\newcommand{\Z}{{\mathbb Z}}
\newcommand{\Bcal}{{\mathcal B}}
\newcommand{\Gcal}{{\mathcal G}}
\newcommand{\Jcal}{{\mathcal J}}
\newcommand{\Scal}{{\mathcal S}}
\newcommand{\T}{{\mathcal T}}
\newcommand{\Div}{\operatorname{Div}}
\newcommand{\Pic}{\operatorname{Pic}}
\newcommand{\im}{\operatorname{im}}
\newcommand{\onechip}{[c-s]}
\newcommand{\what}{\widehat}
\DeclareMathOperator{\BBY}{BBY}
\newcommand{\wtilde}{\widetilde}
\newcommand{\la}{{\langle}}
\newcommand{\ra}{{\rangle}}
\newcommand{\rotdir}[1]{\langle #1\rangle}
\newcommand{\ic}{n}
\newcommand{\jest}[3]{\Jcal_{#1,#2,#3}}
\newcommand{\tele}[2]{\mathcal T^{#1}_{(#2)}}
\newcommand{\rt}{\mathfrak{r}}
\renewcommand{\i}{{\mathfrak i}}
\author[Ankan Ganguly]{Ankan Ganguly\textsuperscript{*}}
\address{\textsuperscript{*}Wharton Department of Statistics and Data Science, University of Pennsylvania: ankan@wharton.upenn.edu}
\author[Alex McDonough]{ Alex McDonough\textsuperscript{\dag}}
\address{\textsuperscript{\dag}Department of Mathematics, University of California, Davis: amcd@ucdavis.edu}
\date{January 31}
\begin{document}
\title[There is a Unique Consistent Sandpile Torsor Structure]{Rotor-Routing Induces the Only Consistent Sandpile Torsor Structure on Plane Graphs}

\date{April 4, 2022 (revised January 31, 2023 and June 16, 2023)}
\maketitle
\begin{abstract} We make precise and prove a conjecture of Klivans about actions of the sandpile group on spanning trees. More specifically, the conjecture states that there exists a unique ``suitably nice'' sandpile torsor structure on plane graphs which is induced by rotor-routing.

First, we rigorously define a sandpile torsor algorithm (on plane graphs) to be a map which associates each plane graph (i.e., planar graph with an appropriate ribbon structure) with a free transitive action of its sandpile group on its spanning trees. Then, we define a notion of consistency, which requires a torsor algorithm to be preserved with respect to a certain class of contractions and deletions. Using these definitions, we show that the rotor-routing sandpile torsor algorithm is consistent. Furthermore, we demonstrate that there are only three other consistent algorithms on plane graphs, which all have the same structure as rotor-routing. 

We also define sandpile torsor algorithms on regular matroids and suggest a notion of consistency in this context. We conjecture that the Backman-Baker-Yuen algorithm is consistent, and that there are only three other consistent sandpile torsor algorithms on regular matroids, all with the same structure.
\end{abstract}


\section{Introduction}\label{sec:intro} 

Let $G$ be a finite connected multigraph. The \emph{sandpile group} of $G$, which we denote $\Pic^0(G)$, is a finite abelian group given by the cokernal of the \emph{graph Laplacian matrix} over the integers. A remarkable fact, which follows from Kirchhoff's matrix-tree theorem, is that the size of $\Pic^0(G)$ is equal to the number of \emph{spanning trees} of $G$ (see \cite[Theorem 7.3]{Biggs}). The traditional proof of this result is non-bijective, and for many graphs, no  automorphism invariant bijections exist unless $G$ is given additional structure (see \cite[Theorem 8.1]{Wagner}). This problem is often resolved by working with \emph{ribbon graphs}. 

Let $\chi$ specify a cyclic ordering on the edges adjacent to each vertex of $G$. The pair $(G,\chi)$ is called a \emph{ribbon graph}. Holroyd et al. demonstrated that the \emph{rotor-routing algorithm} induces a \emph{free transitive group action} of $\Pic^0(G)$ on $\T(G)$, which depends on the ribbon structure $\chi$ and a choice of \emph{sink} vertex~\cite{Holroyd}. In particular, this makes $\T(G)$ a $\Pic^0(G)$-\emph{torsor}, so we will call such an action a \emph{sink-parameterized sandpile torsor action} on $(G,\chi)$.



On a MathOverflow post, Ellenberg asked if there exist certain classes of ribbon graphs for which natural sandpile torsor actions can be defined without requiring a sink vertex~\cite{JSE}. Chan, Church, and Grochow answered this question by showing that the rotor-routing sandpile torsor action on $(G,\chi)$ is independent of the sink vertex if and only if $(G,\chi)$ is a \emph{plane graph}~\cite[Theorem 2]{CCG}.\footnote{Following the language of~\cite{BW}, we distinguish between a \emph{plane graph}, which is a particular planar embedding, and a \emph{planar graph} which is a graph where such an embedding exists.} With this result in mind, we define a \emph{sandpile torsor algorithm} (on plane graphs) to be a function which assigns a sandpile torsor action to every plane graph $(G,\chi)$ independent of the choice of sink vertex. 

Another sink-parameterized sandpile torsor action is given by the \emph{Bernardi algorithm}~\cite{Bernardi}, which is distinct from rotor-routing on non-planar ribbon graphs. Baker and Wang showed that the restriction of the Bernardi algorithm to plane graphs is also a sandpile torsor algorithm~\cite[Theorem 5.1]{BW}. Furthermore, they showed that this sandpile torsor algorithm is equivalent to the rotor-routing algorithm (when restricted to plane graphs)~\cite[Theorem 7.1]{BW}. This surprising equivalence motivated the following conjecture by Klivans, which we prove in this paper. 

\begin{conj}{\cite[Conjecture 4.7.17]{Klivans}}\label{conj:klivans}
For plane graphs, there is only one sandpile torsor structure. 
\end{conj}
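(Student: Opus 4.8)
The plan is first to make Klivans's statement precise, following the framework announced in the abstract. A \emph{sandpile torsor algorithm} is a rule assigning to each plane graph $(G,\chi)$ a free transitive action of $\Pic^0(G)$ on $\T(G)$ — no sink is needed, by the Chan--Church--Grochow theorem — and \emph{consistency} is the requirement that this assignment be compatible with a prescribed class of edge contractions and deletions. I would then prove two things. First, the rotor-routing algorithm is consistent. Second, rotor-routing is the \emph{only} consistent sandpile torsor algorithm up to the two evident symmetries: reversing the ribbon structure $\chi$, and post-composing each action with the inversion $g \mapsto -g$ of the sandpile group. Since rotor-routing together with its images under these two symmetries is a list of four algorithms, this is precisely the assertion that there is a unique torsor structure, with three other algorithms realizing the same structure.

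For consistency of rotor-routing I would exploit the known interaction of the rotor-routing action with deletion and contraction. Deleting a non-bridge edge $e$ realizes $\T(G\setminus e)$ as the spanning trees of $G$ avoiding $e$; contracting a non-loop edge $e$ realizes $\T(G/e)$ as the spanning trees containing $e$; and there are canonical comparison maps among $\Pic^0(G)$, $\Pic^0(G\setminus e)$, and $\Pic^0(G/e)$. The content is to check that a rotor-routing move of $G$ not involving $e$ descends to the corresponding move in the minor, and that sink-independence — where planarity enters — is preserved by each allowed operation. These verifications are local and somewhat tedious but not the conceptual core.

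The heart of the paper is the uniqueness statement, which I would prove by induction on the number of edges of $G$. The base cases reduce, via repeated allowed deletions and contractions, to a short list of small plane graphs with cyclic sandpile group — cycles $C_n$ and their planar duals, the banana graphs $B_n$ — where a free transitive action of $\Z/n\Z$ on an $n$-element set is simply a choice of $n$-cycle, and one checks by hand (climbing from $C_{n-1}$ to $C_n$ using consistency) that only the rotor-routing $n$-cycles survive, the remaining freedom being exactly ribbon-reversal and group-inversion. For the inductive step the subtlety is that the torsor structures on $G\setminus e$ and $G/e$ only determine the action of $\Pic^0(G)$ on the two pieces $\T(G\setminus e)$ and $\T(G/e)$ of $\T(G)$ \emph{separately}; they do not by themselves say how a group element moves a tree avoiding $e$ to one containing $e$. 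To recover this crossing information one intersects the decompositions coming from several edges $e,f,\dots$ and invokes the torsor structure on the smaller minors $G\setminus e\setminus f$, $G/e\setminus f$, $G\setminus e/f$, $G/e/f$, and so on.

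I expect the main obstacle to be exactly this gluing problem: one must show that for a plane graph the overlapping constraints coming from all allowed minors are \emph{rigid}, pinning the action down uniquely rather than merely being mutually consistent — and that it is precisely planarity that makes this work (it must fail in general, where rotor-routing depends on the sink). Concretely I would look for a small family of ``pivot'' pairs of spanning trees $T$ and $T' = T - f + e$ differing by one edge swap, argue that the class $T' - T \in \Pic^0(G)$ is forced by consistency applied to a well-chosen minor containing that pivot, and then show that such pivots generate enough of $\Pic^0(G)$ acting on enough of $\T(G)$ to reconstruct the full action. Choosing the class of contractions and deletions so that every needed pivot lies in an allowed minor — while assuming no more than that — is the delicate point on which the whole argument turns.
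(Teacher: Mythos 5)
Your plan has the right architecture — it matches the paper's: define sandpile torsor algorithms on plane graphs via sink-independent sink-parameterized algorithms, define consistency via contraction and deletion, prove rotor-routing consistent, then prove uniqueness by induction on edges with the four symmetries (ribbon-reversal and group-inversion) accounting for the ``structure.'' You also correctly foresee that the gluing of constraints from multiple minors, and the precise choice of allowed contractions/deletions, are where the real work lies. But there are genuine gaps at exactly those two points, and a third difficulty you do not anticipate at all.

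First, the consistency definition cannot be chosen freely: the paper restricts to acting by $[c-s]$ where $c,s$ are the endpoints of an \emph{edge} of $G$, contracts only edges $e \in T \cap \alpha(T)$, and deletes only edges $e \notin T \cup \alpha(T)$. This is not merely bookkeeping — Example~\ref{ex:lilla} shows that dropping the adjacency requirement makes rotor-routing \emph{fail} to commute with contraction. Your sketch of the consistency proof (``rotor-routing moves not involving $e$ descend to the minor'') does not engage with the hard cases: when the rotor at an endpoint of $e$ makes a full turn, or when $e$ is crossed once in each direction, the move sequences in $G$ and in the minor differ and must be matched up by reordering the ribbon structure; and when $e$ is traversed in only one direction, one must use planarity (the cycle-reversal lemma) to show that case never occurs. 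Second, your ``pivot pairs'' $T' = T - f + e$ are the paper's single-step and source-turn pairs, but the claim that they ``generate enough of $\Pic^0(G)$ acting on enough of $\T(G)$'' is precisely Theorem~\ref{thm:sourceturn}, which requires a nontrivial argument (a bespoke partial order on spanning trees and an induction on minors) and yields the combinatorial fact (Corollary~\ref{cor:leafswap}) that in any $2$-connected graph one can pass between spanning trees by repeated leaf-edge swaps. You identify this as ``the delicate point on which the whole argument turns'' but do not supply the proof. Third — and this is the gap your plan cannot see coming — in the inductive step there is a case where $\alpha(T)$ shares \emph{no} edges and \emph{no} non-edges with $T$ other than $f,g$. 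Then the consistency conditions literally say nothing, because there is no eligible edge to contract or delete. Handling this requires an entire separate argument (the ``telescope graph'' classification of Appendix~\ref{app:case4}), which is the hardest part of the paper. Your proposal's strategy of ``intersecting decompositions from several edges'' gives no foothold here, since every spanning tree related to $T$ by a single step must also be flipped to its complement, and one must show this forces $G$ into a very restricted family and then kill that family directly.
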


One of the challenges in resolving this conjecture is that one must introduce a reasonable definition of \emph{sandpile torsor structure} on plane graphs. The rotor-routing sandpile torsor algorithm follows a simple set of rules that can be applied to any plane graph. With this in mind, we posit that any definition of \emph{sandpile torsor structure} should include some notion of consistency of induced sandpile torsor actions between different plane graphs. However, it is difficult to find a suitable consistency condition because there is no general map between the sandpile groups of different graphs which preserves their structure. The key insight that motivated the writing of this paper is the formulation of a contraction-deletion based definition of consistency (see Definition~\ref{def:consistent} and Figure~\ref{fig:consistent}).\\

\textbf{There are two main results in this paper}. In Theorem~\ref{thm:rrconsistent}, we show that rotor-routing is \emph{consistent}. In Theorem~\ref{thm:onestructure} we show that there are only 3 other consistent sandpile torsor algorithms on plane graphs, which can be obtained from rotor-routing by reversing the cyclic order on the vertices, taking the inverse action, or both. We say that these algorithms all have the same \emph{structure} as rotor-routing. One powerful tool that we introduce is Theorem~\ref{thm:sourceturn}, which implies that on any 2-connected graph, it is possible to transform one spanning tree to any other by repeatedly swapping \emph{leaf edges} with edges not on the tree (Corollary~\ref{cor:leafswap}).

In Section~\ref{sec:background}, we define relevant terms and introduce the rotor-routing algorithm. In Section~\ref{sec:funfacts}, we give several properties of rotor-routing that are applied in later sections. In Section~\ref{sec:consistent}, we introduce \emph{consistency} and prove that the rotor-routing algorithm is consistent. In Section~\ref{sec:unique}, we show that every other consistent sandpile torsor algorithm on plane graphs has the same structure as rotor-routing. In Section~\ref{sec:matroids}, we define sandpile torsor algorithms on \emph{regular matroids} as well as a notion of \emph{consistency} in this context. We conjecture that the Backman-Baker-Yuen matroidal sandpile torsor algorithm (constructed in \cite{BBY}) is consistent (Conjecture~\ref{conj:BBYconsistent}) and that all other consistent matroidal sandpile torsor algorithms have the same structure (Conjecture~\ref{conj:allconsistent}). In Appendix~\ref{app:unicycles}, we introduce terminology used in~\cite{Holroyd} and~\cite{CCG} and provide proofs of the properties presented in Section~\ref{sec:funfacts}. Finally, Appendix~\ref{app:case4} is devoted to building the framework to prove the final case of Theorem~\ref{thm:onestructure}.

\section{Background and Definitions}\label{sec:background}
\subsection{Divisors, Ribbon Graphs, and the Sandpile Group}

In this section, we use much of the notation from~\cite{CCG}. See ~\cite{divisorsbook} and \cite{Klivans} for more information on divisors, sandpile groups, and chip-firing.

Let $G$ be a finite, connected, undirected graph with vertices $V(G)$ and edges $E(G)$. We allow parallel edges but not loop edges. For an edge $e \in E(G)$, we write $\i(e)$ for the unordered pair of vertices incident to $e$ (in set notation). We say that edges $e$ and $f$ are \emph{parallel} if $\i(e) = \i(f)$. For $v \in V(G)$, we write $\deg(v)$ for the number of edges incident to $v$. For $x,y \in V(G)$, a \emph{path} $P$ from $x$ to $y$ is a collection of distinct $(e_1,\dots,e_k) \in E(G)^k$ for some $k$ such that $x \in \i(e_1)$, $y \in \i(e_k)$, and 
\[|\i(e_i) \cap \i(e_j)| = \begin{cases}2 &\text{if $i=j$,}\\1 &\text{if $|i-j|=1$,}\\0 &\text{otherwise.}\\\end{cases}\]

\begin{definition}
The group $\Div(G)$ of \emph{divisors} of $G$ is given by

\[\Div(G) := \left\{\sum_{v \in V(G)}n_vv\mid n_v\in\Z\right \}.\]

The subgroup $\Div^0(G)$ of \emph{degree-0 divisors}\footnote{In general, the \emph{degree} of a divisor is the sum of the coefficients on the vertices, and is not related to the degree of the vertices of $G$. For this paper, all of our divisors will have degree $0$, and any future instances of the term \emph{degree} always refer to the degree of a vertex.} of $G$ is given by

\[\Div^0(G) := \left\{\sum_{v \in V(G)}n_vv\mid n_v\in\Z, \sum_{v \in V(G)}n_v = 0\right\}.\]

For any $s \in V(G)$, let

\[\Div^0_s(G) := \left\{\sum_{v \in V(G)}n_vv\mid n_v\in\Z_{\ge 0} \text{ for $v \not= s$}, \sum_{v \in V(G)}n_v = 0\right\}.\]
\end{definition}

Note that while $\Div(G)$ and $\Div^0(G)$ are groups, $\Div_s^0(G)$ is a \emph{monoid} because its nonzero elements lack inverses. The vertex $s$ is called the \emph{sink vertex}. 

Let $A$ be the adjacency matrix of $G$ (i.e. the $|V(G)|\times |V(G)|$ symmetric matrix with $A_{vw}$ equal to the number of edges between $v$ and $w$). Let $\text{deg}(G)$ be the $|V(G)|\times |V(G)|$ diagonal matrix such that $\text{deg}(G)_{vv} = \text{deg}(v)$ for every $v \in V(G)$. The \emph{Laplacian matrix} $\Delta$ of $G$ is the symmetric matrix defined by $\Delta := \text{deg}(G) - A$.

\begin{definition}
The \emph{sandpile group} of $G$, denoted $\Pic^0(G)$, is the quotient
\[\Pic^0(G) := \Div^0(G)/\im_{\Z}(\Delta).\]
\end{definition}

The sandpile group has been rediscovered in many contexts and is also called the \emph{graph Picard group}, the \emph{graph Jacobian}, the \emph{critical group}, the \emph{group of components}, or the \emph{group of bicycles}. One way to explore properties of $\Pic^0(G)$ is in terms of \emph{chip-firing}, also called \emph{the dollar game}, which we describe below. 

Each element $D = \sum_{v\in V(G)} n_vv\in \Div^0(G)$ may be viewed as a configuration of ``chips'' placed on the vertices of $G$, also allowing negative ``debt'' chips (where the total number of chips matches the total amount of debt). Note that elements of $\Div^0_s(G)$ have all of their debt on the sink vertex. In this context, divisors are also called \emph{chip configurations}. Given a chip configuration $D$, we may ``fire'' a vertex $v \in V(G)$ by removing $\deg(v)$ chips from $v$ and placing one chip on each of the neighbors of $v$. That is, $v$ ``gifts'' a chip to each of its neighbors. For our purposes, we allow a vertex to fire regardless of the number of chips, even if this puts the vertex in debt. 

Each row of the Laplacian matrix corresponds to a vertex firing. Thus, the image of the Laplacian over $\Z$ describes an equivalence relation on chip configurations, where $D$ and $D'$ are \emph{firing equivalent} if $D'$ can be obtained from $D$ by a sequence of vertex firings. This means that two chip configurations are firing equivalent precisely when they are both representatives of the same equivalence class of $\Pic^0(G)$. 

Throughout this paper, we will put brackets around a divisor to indicate the corresponding equivalence class of $\Pic^0(G)$. As discussed in the previous paragraph, for $D,D' \in \Div^0(G)$, we have $[D] = [D']$, or equivalently $[D-D'] = [0]$, if and only if $D$ and $D'$ are firing equivalent. We consider $D,D' \in \Div^0_s(G)$ to be firing equivalent if they are firing equivalent as elements of $\Div^0(G)$.

We will also use brackets to indicate sets of integers, where $[a,b] := \{z \in \Z \mid a \le z \le b\}$. For clarity, we will write $[1,n]$ in place of the more common $[n]$ so that this is not mistaken for an equivalence class.

\begin{lemma}\label{lem:anysink}
For any $s \in V(G)$ and $D \in \Div^0(G)$, there exists some $D' \in \Div^0_s(G)$ such that $[D] = [D']$. 
\end{lemma}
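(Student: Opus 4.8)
The plan is to exploit the one structural fact we already have for free: the sandpile group is \emph{finite}, with $|\Pic^0(G)|$ equal to the number of spanning trees of $G$ (Kirchhoff's matrix-tree theorem, as recalled in Section~\ref{sec:intro}). Set $N := |\Pic^0(G)|$. For any $v \in V(G)$, the divisor $v - s$ lies in $\Div^0(G)$, so its class $[v-s]$ is an element of the finite group $\Pic^0(G)$ of order $N$; by Lagrange's theorem $N[v-s] = [0]$, i.e.\ $N(v-s) \in \im_{\Z}(\Delta)$, and hence $N(v-s)$ is firing equivalent to $0$.

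With this in hand the lemma is essentially bookkeeping. Given $D = \sum_{v \in V(G)} n_v v \in \Div^0(G)$, choose a single integer $K \geq \max_{v \neq s}\lceil -n_v / N\rceil$ (so that $n_v + KN \geq 0$ for every $v \neq s$), and define
\[D' := D + \sum_{v \neq s} KN\,(v - s).\]
Since $D' - D$ is a $\Z$-linear combination of elements of $\im_{\Z}(\Delta)$, we have $[D'] = [D]$. Moreover $\deg D' = \deg D = 0$ because each $KN(v-s)$ has degree $0$, and for each $v \neq s$ the coefficient of $v$ in $D'$ is $n_v + KN \geq 0$. Therefore $D' \in \Div^0_s(G)$, as required.

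I expect no genuine obstacle here; the only point worth stating carefully is that the corrections at different vertices do not interfere, which is immediate since the summand $KN(v-s)$ alters only the $v$- and $s$-coordinates and no nonnegativity constraint is imposed at $s$. One could alternatively give a self-contained chip-firing proof in the spirit of Dhar's burning algorithm (iteratively firing sets of vertices to push all off-sink debt onto $s$, terminating by a potential argument on the total debt), but the finiteness argument above is the shortest route and uses only facts already stated in the introduction.
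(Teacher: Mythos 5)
Your proof is correct, but it follows a genuinely different route from the paper's. The paper constructs a specific effective representative of $[0]$ using Dhar-style stabilization: it takes the divisor $\delta$ with $\deg(v)$ chips on each $v \neq s$, stabilizes it to $\delta^\circ$ by repeated legal firings, and observes that $\delta - \delta^\circ$ is firing equivalent to $0$ while being strictly positive at every non-sink vertex; adding a large multiple of $\delta - \delta^\circ$ to $D$ then does the job. You instead invoke the finiteness of $\Pic^0(G)$ (via the matrix-tree theorem) and Lagrange's theorem to conclude $N(v-s) \in \im_\Z(\Delta)$ for every $v$, and then add the correction $\sum_{v \neq s} KN(v-s)$ for a suitably large $K$. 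Both arguments are sound and short. The paper's version buys a constructive, purely combinatorial route (it never needs to know $|\Pic^0(G)|$, only that stabilization terminates), and it reuses machinery from Holroyd et al.\ that the paper cites elsewhere anyway; your version buys brevity and uses only group theory plus a cardinality fact already on the table, at the cost of being non-constructive (it gives no explicit bound on $N$ and no chip-firing description of the correction). Either is acceptable; the paper's choice is largely a matter of staying within the chip-firing toolkit it has already set up.
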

\begin{proof}
Let $\delta \in \Div^0_s(G)$ assign $\deg(v)$ chips to each $v \in V(G) \setminus s$ and $-\sum_{v\not=s} \deg(v)$ chips to $s$. Let $\delta^\circ$ be the divisor we obtain from $\delta$ by repeatedly firing any vertices $v$ that have more than $\deg(v)$ many chips until no such vertices exist.\footnote{The notation for $\delta$ and $\delta^\circ$, as well as the general method of this proof, come from~\cite{Holroyd} (where $\delta^\circ$ is called the \emph{stabilization} of $\delta$). The divisor $\delta^\circ$ is well-defined by~\cite[Lemma~2.2]{Holroyd} (see also~\cite{Dhar90}).}

Because $\delta$ and $\delta^\circ$ are firing equivalent, $[\delta - \delta^\circ] = 0$. Furthermore, by construction, $\delta-\delta^\circ$ has a positive number of chips at every $v \not= s$. Let $-m$ be the minimal number of chips $D$ assigns to any vertex. Then, $D + m(\delta - \delta^\circ) \in \Div^0_s(G)$ and $[D + m(\delta - \delta^\circ)] = [D]$. 
\end{proof}

A \emph{spanning tree} of $G$ is a maximal subset of $E(G)$ that contains no cycles. Let $\mathcal T(G)$ be the set of spanning trees of $G$. The following is a version of Kirchhoff's matrix-tree theorem. 

\begin{theorem}[sandpile matrix-tree theorem for graphs~{\cite[Theorem 7.3]{Biggs}}]\label{thm:mtt}
\[|\Pic^0(G)| = |\mathcal T(G)|\] 
\end{theorem}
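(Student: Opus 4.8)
The plan is to reduce the statement to the classical matrix–tree theorem by first re-expressing $\Pic^0(G)$ as the cokernel of a \emph{reduced Laplacian}, and then evaluating the size of that cokernel via a Cauchy--Binet expansion of a determinant. Write $n = |V(G)|$ and fix any vertex $s$ to play the role of sink. First I would observe that the assignment $\sum_v n_v v \mapsto (n_v)_{v \neq s}$ identifies $\Div^0(G)$ with $\Z^{V(G)\setminus\{s\}}$, since the constraint $\sum_v n_v = 0$ forces $n_s = -\sum_{v \neq s} n_v$. Each row $\Delta_v$ of $\Delta$ has entries summing to $0$, so $\im_\Z(\Delta) \subseteq \Div^0(G)$, and because $\Delta$ is symmetric its rows also sum (over $v$) to $0$; hence $\Delta_s = -\sum_{v\neq s}\Delta_v$ is redundant and $\im_\Z(\Delta)$ is generated by $\{\Delta_v : v \neq s\}$. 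Under the identification above, $\Delta_v$ maps to row $v$ of the matrix $\tilde\Delta$ obtained from $\Delta$ by deleting row $s$ and column $s$. This gives a group isomorphism $\Pic^0(G) \cong \Z^{\,n-1}/\im_\Z(\tilde\Delta)$; note this already shows the answer cannot depend on the choice of $s$, as it must not.

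Next I would compute the order of this finite group. Since $G$ is connected, $\tilde\Delta$ is nonsingular (it is symmetric, weakly diagonally dominant, and connectivity rules out a nonzero kernel vector; equivalently it is positive definite). For a nonsingular integer matrix $M$ of size $k$ one has $\bigl|\Z^k/\im_\Z(M)\bigr| = |\det M|$ by the Smith normal form. Therefore $|\Pic^0(G)| = |\det \tilde\Delta|$, and it remains to identify this determinant with $|\mathcal T(G)|$.

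For that last step I would orient the edges of $G$ arbitrarily, let $B$ be the $V(G)\times E(G)$ signed incidence matrix, and use $\Delta = BB^{\top}$, so that $\tilde\Delta = \tilde B\tilde B^{\top}$ where $\tilde B$ is $B$ with row $s$ removed. Cauchy--Binet then gives
\[\det\tilde\Delta \;=\; \sum_{\substack{S \subseteq E(G)\\ |S| = n-1}} \bigl(\det \tilde B_S\bigr)^2,\]
where $\tilde B_S$ is the $(n-1)\times(n-1)$ column submatrix indexed by $S$. The crux, and the one genuinely combinatorial input, is the unimodularity lemma: $\det\tilde B_S \in \{0,\pm 1\}$ always, and $\det\tilde B_S = \pm 1$ exactly when $S$ is the edge set of a spanning tree (if $S$ contains a cycle the corresponding columns are linearly dependent; if $S$ is a spanning forest one repeatedly strikes off a leaf to expose a triangular structure with unit diagonal). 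Summing, $\det\tilde\Delta = |\mathcal T(G)|$, which with the previous two steps yields $|\Pic^0(G)| = |\mathcal T(G)|$. The main obstacle is precisely that unimodularity lemma; since it is the content of the classical theorem cited as~\cite[Theorem 7.3]{Biggs}, one could instead short-circuit this third step by quoting that result and carrying out only the routine bookkeeping of the first two steps.
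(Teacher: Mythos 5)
The paper does not prove this statement; it simply cites it from Biggs's book, so there is no ``paper proof'' to compare against. Your argument is a correct and complete proof by the standard route: identify $\Pic^0(G)$ with the cokernel of the reduced Laplacian $\tilde\Delta$ (using that row $s$ of $\Delta$ is redundant because the rows of the symmetric Laplacian sum to zero), invoke Smith normal form to get $|\Pic^0(G)| = |\det\tilde\Delta|$, and then evaluate $\det\tilde\Delta = \det(\tilde B\tilde B^{\top})$ by Cauchy--Binet together with total unimodularity of the reduced signed incidence matrix to get the spanning-tree count. All three steps are sound, including the nonsingularity of $\tilde\Delta$ for connected $G$ and the characterization of which $(n-1)$-subsets of edges give nonzero minors. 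The only thing the paper adds by citing Biggs is economy; your write-up supplies the content that the citation elides, and your own closing remark correctly identifies where a citation could replace the computation.
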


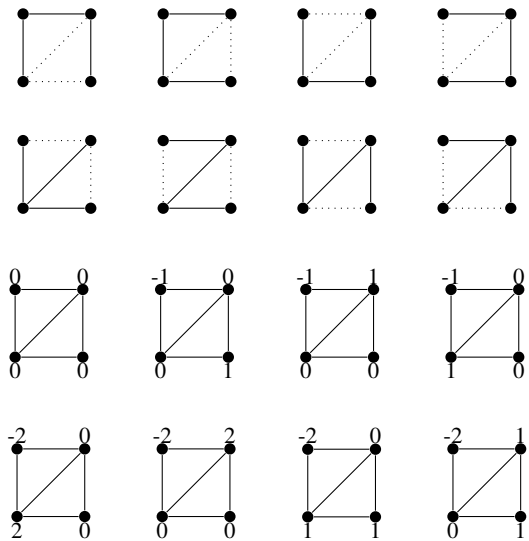
\begin{figure}
\begin{center}
\begin{tikzpicture}[scale= 0.3]
    
    \tikzstyle{every node} = [circle,fill,inner sep=1pt,minimum size = 1.5mm]
    \node(a) at (0,0) {};
    \node(b) at (0,3){};
    \node(c) at (3,3) {};
    \node(d) at (3,0){};

    \tikzstyle{every node} = [draw = none,fill = none,scale = .6]
    \draw (a) -- (b);
    \draw (b) -- (c);
    \draw (c) -- (d);
    \draw[dotted](a) -- (c);
    \draw[dotted] (a) -- (d);

\end{tikzpicture}
\hspace{.6 cm}
\begin{tikzpicture}[scale= 0.3]
    
    \tikzstyle{every node} = [circle,fill,inner sep=1pt,minimum size = 1.5mm]
    \node(a) at (0,0) {};
    \node(b) at (0,3){};
    \node(c) at (3,3) {};
    \node(d) at (3,0){};

    \tikzstyle{every node} = [draw = none,fill = none,scale = .6]
    \draw (a) -- (b);
    \draw (b) -- (c);
    \draw [dotted](c) -- (d);
    \draw [dotted](a) -- (c);
    \draw (a) -- (d);

\end{tikzpicture}
\hspace{.6 cm}
\begin{tikzpicture}[scale= 0.3]
    
    \tikzstyle{every node} = [circle,fill,inner sep=1pt,minimum size = 1.5mm]
    \node(a) at (0,0) {};
    \node(b) at (0,3){};
    \node(c) at (3,3) {};
    \node(d) at (3,0){};

    \tikzstyle{every node} = [draw = none,fill = none,scale = .6]
    \draw (a) -- (b);
    \draw [dotted](b) -- (c);
    \draw (c) -- (d);
    \draw [dotted](a) -- (c);
    \draw (a) -- (d);

\end{tikzpicture}
\hspace{0.6 cm}
\begin{tikzpicture}[scale= 0.3]
    
    \tikzstyle{every node} = [circle,fill,inner sep=1pt,minimum size = 1.5mm]
    \node(a) at (0,0) {};
    \node(b) at (0,3){};
    \node(c) at (3,3) {};
    \node(d) at (3,0){};

    \tikzstyle{every node} = [draw = none,fill = none,scale = .6]
    \draw [dotted](a) -- (b);
    \draw (b) -- (c);
    \draw (c) -- (d);
    \draw [dotted](a) -- (c);
    \draw (a) -- (d);

\end{tikzpicture}\\
\vspace{0.6 cm}
\begin{tikzpicture}[scale= 0.3]
    
    \tikzstyle{every node} = [circle,fill,inner sep=1pt,minimum size = 1.5mm]
    \node(a) at (0,0) {};
    \node(b) at (0,3){};
    \node(c) at (3,3) {};
    \node(d) at (3,0){};

    \tikzstyle{every node} = [draw = none,fill = none,scale = .6]
    \draw (a) -- (b);
    \draw [dotted](b) -- (c);
    \draw [dotted](c) -- (d);
    \draw (a) -- (c);
    \draw (a) -- (d);

\end{tikzpicture}
\hspace{0.6 cm}
\begin{tikzpicture}[scale= 0.3]
    
    \tikzstyle{every node} = [circle,fill,inner sep=1pt,minimum size = 1.5mm]
    \node(a) at (0,0) {};
    \node(b) at (0,3){};
    \node(c) at (3,3) {};
    \node(d) at (3,0){};

    \tikzstyle{every node} = [draw = none,fill = none,scale = .6]
    \draw [dotted](a) -- (b);
    \draw (b) -- (c);
    \draw [dotted](c) -- (d);
    \draw (a) -- (c);
    \draw (a) -- (d);

\end{tikzpicture}
\hspace{0.6 cm}
\begin{tikzpicture}[scale= 0.3]
    
    \tikzstyle{every node} = [circle,fill,inner sep=1pt,minimum size = 1.5mm]
    \node(a) at (0,0) {};
    \node(b) at (0,3){};
    \node(c) at (3,3) {};
    \node(d) at (3,0){};

    \tikzstyle{every node} = [draw = none,fill = none,scale = .6]
    \draw (a) -- (b);
    \draw [dotted](b) -- (c);
    \draw (c) -- (d);
    \draw (a) -- (c);
    \draw [dotted](a) -- (d);

\end{tikzpicture}
\hspace{0.6 cm}
\begin{tikzpicture}[scale= 0.3]
    
    \tikzstyle{every node} = [circle,fill,inner sep=1pt,minimum size = 1.5mm]
    \node(a) at (0,0) {};
    \node(b) at (0,3){};
    \node(c) at (3,3) {};
    \node(d) at (3,0){};

    \tikzstyle{every node} = [draw = none,fill = none,scale = .6]
    \draw [dotted](a) -- (b);
    \draw (b) -- (c);
    \draw (c) -- (d);
    \draw (a) -- (c);
    \draw [dotted](a) -- (d);

\end{tikzpicture}\\
\vspace{0.6 cm}
\begin{tikzpicture}[scale= 0.3]
    
    \tikzstyle{every node} = [circle,fill,inner sep=1pt,minimum size = 1.5mm]
    \node(a) at (0,0) {};
    \node(b) at (0,3){};
    \node(c) at (3,3) {};
    \node(d) at (3,0){};

    \tikzstyle{every node} = [draw = none,fill = none,scale = .6]
    \draw (a) -- (b);
    \draw (b) -- (c);
    \draw (c) -- (d);
    \draw (a) -- (c);
    \draw (a) -- (d);
    
    \node (o) at (0,3.6){{\large 0}};
    \node (o) at (0,-.6){{\large 0}};
    \node (o) at (3,3.6){{\large 0}};
    \node (o) at (3,-.6){{\large 0}};

\end{tikzpicture}
\hspace{.45 cm}
\begin{tikzpicture}[scale= 0.3]
    
    \tikzstyle{every node} = [circle,fill,inner sep=1pt,minimum size = 1.5mm]
    \node(a) at (0,0) {};
    \node(b) at (0,3){};
    \node(c) at (3,3) {};
    \node(d) at (3,0){};

    \tikzstyle{every node} = [draw = none,fill = none,scale = .6]
    \draw (a) -- (b);
    \draw (b) -- (c);
    \draw (c) -- (d);
    \draw (a) -- (c);
    \draw (a) -- (d);
    
    \node (o) at (0,3.6){{\large -1}};
    \node (o) at (0,-.6){{\large 0}};
    \node (o) at (3,3.6){{\large 0}};
    \node (o) at (3,-.6){{\large 1}};

\end{tikzpicture}
\hspace{.45 cm}
\begin{tikzpicture}[scale= 0.3]
    
    \tikzstyle{every node} = [circle,fill,inner sep=1pt,minimum size = 1.5mm]
    \node(a) at (0,0) {};
    \node(b) at (0,3){};
    \node(c) at (3,3) {};
    \node(d) at (3,0){};

    \tikzstyle{every node} = [draw = none,fill = none,scale = .6]
    \draw (a) -- (b);
    \draw (b) -- (c);
    \draw (c) -- (d);
    \draw (a) -- (c);
    \draw (a) -- (d);
    
    \node (o) at (0,3.6){{\large -1}};
    \node (o) at (0,-.6){{\large 0}};
    \node (o) at (3,3.6){{\large 1}};
    \node (o) at (3,-.6){{\large 0}};

\end{tikzpicture}
\hspace{0.45 cm}
\begin{tikzpicture}[scale= 0.3]
    
    \tikzstyle{every node} = [circle,fill,inner sep=1pt,minimum size = 1.5mm]
    \node(a) at (0,0) {};
    \node(b) at (0,3){};
    \node(c) at (3,3) {};
    \node(d) at (3,0){};

    \tikzstyle{every node} = [draw = none,fill = none,scale = .6]
    \draw (a) -- (b);
    \draw (b) -- (c);
    \draw (c) -- (d);
    \draw (a) -- (c);
    \draw (a) -- (d);
    
    \node (o) at (0,3.6){{\large -1}};
    \node (o) at (0,-.6){{\large 1}};
    \node (o) at (3,3.6){{\large 0}};
    \node (o) at (3,-.6){{\large 0}};

\end{tikzpicture}\\
\vspace{0.45 cm}
\begin{tikzpicture}[scale= 0.3]
    
    \tikzstyle{every node} = [circle,fill,inner sep=1pt,minimum size = 1.5mm]
    \node(a) at (0,0) {};
    \node(b) at (0,3){};
    \node(c) at (3,3) {};
    \node(d) at (3,0){};

    \tikzstyle{every node} = [draw = none,fill = none,scale = .6]
    \draw (a) -- (b);
    \draw (b) -- (c);
    \draw (c) -- (d);
    \draw (a) -- (c);
    \draw (a) -- (d);
    
    \node (o) at (0,3.6){{\large -2}};
    \node (o) at (0,-.6){{\large 2}};
    \node (o) at (3,3.6){{\large 0}};
    \node (o) at (3,-.6){{\large 0}};

\end{tikzpicture}
\hspace{0.45 cm}
\begin{tikzpicture}[scale= 0.3]
    
    \tikzstyle{every node} = [circle,fill,inner sep=1pt,minimum size = 1.5mm]
    \node(a) at (0,0) {};
    \node(b) at (0,3){};
    \node(c) at (3,3) {};
    \node(d) at (3,0){};

    \tikzstyle{every node} = [draw = none,fill = none,scale = .6]
    \draw (a) -- (b);
    \draw (b) -- (c);
    \draw (c) -- (d);
    \draw (a) -- (c);
    \draw (a) -- (d);
    
    \node (o) at (0,3.6){{\large -2}};
    \node (o) at (0,-.6){{\large 0}};
    \node (o) at (3,3.6){{\large 2}};
    \node (o) at (3,-.6){{\large 0}};

\end{tikzpicture}
\hspace{0.45 cm}
\begin{tikzpicture}[scale= 0.3]
    
    \tikzstyle{every node} = [circle,fill,inner sep=1pt,minimum size = 1.5mm]
    \node(a) at (0,0) {};
    \node(b) at (0,3){};
    \node(c) at (3,3) {};
    \node(d) at (3,0){};

    \tikzstyle{every node} = [draw = none,fill = none,scale = .6]
    \draw (a) -- (b);
    \draw (b) -- (c);
    \draw (c) -- (d);
    \draw (a) -- (c);
    \draw (a) -- (d);
    
    \node (o) at (0,3.6){{\large -2}};
    \node (o) at (0,-.6){{\large 1}};
    \node (o) at (3,3.6){{\large 0}};
    \node (o) at (3,-.6){\large 1};

\end{tikzpicture}
\hspace{0.45 cm}
\begin{tikzpicture}[scale= 0.3]
    
    \tikzstyle{every node} = [circle,fill,inner sep=1pt,minimum size = 1.5mm]
    \node(a) at (0,0) {};
    \node(b) at (0,3){};
    \node(c) at (3,3) {};
    \node(d) at (3,0){};

    \tikzstyle{every node} = [draw = none,fill = none,scale = .6]
    \draw (a) -- (b);
    \draw (b) -- (c);
    \draw (c) -- (d);
    \draw (a) -- (c);
    \draw (a) -- (d);
    
    \node (o) at (0,3.6){\large -2};
    \node (o) at (0,-.6){\large 0};
    \node (o) at (3,3.6){\large 1};
    \node (o) at (3,-.6){\large 1};

\end{tikzpicture}

\end{center}
\caption{Above are the 8 spanning trees for the graph $K_4\setminus e$ as well as representatives for the 8 elements of $\Pic^0(K_4 \setminus e)$.\protect\footnotemark~By Theorem~\ref{thm:mtt}, these sets are always the same size. However, there is not a natural choice of bijection between them.}
\label{fig:mttexample}
\end{figure}

Theorem~\ref{thm:mtt} implies the existence of bijections between equivalence classes of $\Pic^0(G)$ and elements of $\mathcal T(G)$. However, in order to define sufficiently natural bijections, we will need to give $G$ some additional structure (see~\cite{Wagner} and \cite{CCG} for further discussion). Figure~\ref{fig:mttexample} demonstrates Theorem~\ref{thm:mtt} for a particular graph. 

\begin{definition}
A \emph{ribbon structure} $\chi$ on a graph $G$, is a map from each $v \in V(G)$ to a cyclic order of the edges incident to $v$. A \emph{ribbon graph} is a pair $(G,\chi)$, where $G$ is a graph and $\chi$ is a ribbon structure on $G$. 
\end{definition}

Ribbon graphs are also called \emph{combinatorial maps}. Suppose that $G$ can be drawn in a plane with no crossings such that for every $v \in V(G)$, $\chi(v)$ gives the edges incident to $v$ in counterclockwise order. Then, $(G,\chi)$ is called a \emph{plane graph} or a \emph{planar embedding}. A graph $G$ is called \emph{planar} if there exists some $\chi$ such that $(G,\chi)$ is a plane graph. 

Suppose that $x \in V(G)$ and $e \in E(G)$ are incident (i.e., $x \in \i(e)$). We will write $\chi(x,e)$ for the edge after $e$ in the cyclic order $\chi(x)$. 

\begin{definition}
\label{def::auto}
Let $(G,\chi)$ and $(G',\chi')$ be two ribbon graphs. A \emph{ribbon graph isomorphism} is a bijection, $\phi: V(G)\cup E(G) \to V(G')\cup E(G')$ such that $\phi(V(G)) = V(G')$, $\phi(E(G)) = E(G')$, and for any $e \in E(G)$,  the following properties are satisfied (where $\i(e) = \{x,y\}$):

\begin{enumerate}
    \item $\i(\phi(e)) = \{\phi(x),\phi(y)\}$.
    \item $\phi(\chi(x,e)) = \chi'(\phi(x),\phi(e))$.
    \item $\phi(\chi(y,e)) = \chi'(\phi(y),\phi(e))$.
\end{enumerate}

\noindent A \emph{ribbon graph automorphism} is a ribbon graph isomorphism from a ribbon graph to itself.
\end{definition}

\footnotetext{The representatives are the \emph{superstable divisors} with a particular \emph{sink}. For more information on choosing a distinguished set of representatives for the sandpile group, see~\cite[Chapter 2]{Klivans}.}

\begin{lemma}\label{lem:ribiso}
Any ribbon graph isomorphism $\phi$ from $(G,\chi)$ to $(G',\chi')$ induces an bijection from $\T(G)$ to $\T(G')$. Furthermore, \(\phi\) induces an isomorphism $\phi_{\rm D}$ from $\Div^0(G)$ to $\Div^0(G')$ which also generates isomorphisms from $\Div^0_s(G)$ to $\Div^0_{\phi(s)}(G')$ and $\Pic^0(G)$ to $\Pic^0(G')$.

\end{lemma}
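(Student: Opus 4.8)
The plan is to notice that although $\phi$ is a \emph{ribbon} graph isomorphism, only property~(1) of Definition~\ref{def::auto} is needed for this lemma: that property says precisely that the restrictions $\phi|_{V(G)}\colon V(G)\to V(G')$ and $\phi|_{E(G)}\colon E(G)\to E(G')$ are bijections that respect incidence, i.e.\ that $\phi$ is an isomorphism of the underlying multigraphs. Every claim in the lemma then follows by transporting structure along this isomorphism, and properties~(2)--(3) are not used.

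First I would treat the spanning trees. The edge bijection $\phi|_{E(G)}$ extends to an inclusion-preserving bijection $S\mapsto\phi(S)$ between the subset lattices of $E(G)$ and $E(G')$. Since $\phi$ respects incidence, a tuple of edges is a path (respectively, forms a cycle) in $G$ if and only if its image is a path (respectively, forms a cycle) in $G'$; hence $S$ is acyclic iff $\phi(S)$ is acyclic, and because $S\mapsto\phi(S)$ is an order isomorphism, $S$ is a \emph{maximal} acyclic subset iff $\phi(S)$ is. Thus $S\mapsto\phi(S)$ restricts to a bijection $\T(G)\to\T(G')$.

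Next I would define $\phi_{\rm D}\colon\Div(G)\to\Div(G')$ by $\phi_{\rm D}\bigl(\sum_{v}n_v v\bigr)=\sum_{v}n_v\,\phi(v)$. Since $\phi|_{V(G)}$ is a bijection this is a group isomorphism, and it preserves the total coefficient, so it restricts to an isomorphism $\Div^0(G)\to\Div^0(G')$ and carries the set $\{n_v\ge 0\text{ for }v\neq s\}$ onto $\{n_{v'}\ge 0\text{ for }v'\neq\phi(s)\}$, giving the monoid isomorphism $\Div^0_s(G)\to\Div^0_{\phi(s)}(G')$. For the sandpile groups it remains to check that $\phi_{\rm D}$ matches the Laplacian images: $\im_{\Z}(\Delta_G)$ is generated by the divisors $\deg_G(v)\,v-\sum_{w\neq v}A^G_{vw}\,w$ for $v\in V(G)$ (the rows of $\Delta_G$), and since $\phi$ is a multigraph isomorphism we have $\deg_G(v)=\deg_{G'}(\phi(v))$ and $A^G_{vw}=A^{G'}_{\phi(v)\phi(w)}$ for all $v,w$, so $\phi_{\rm D}$ sends the $v$-th generator to the $\phi(v)$-th generator of $\im_{\Z}(\Delta_{G'})$. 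As $\phi_{\rm D}$ is an isomorphism carrying a generating set of $\im_{\Z}(\Delta_G)$ bijectively onto a generating set of $\im_{\Z}(\Delta_{G'})$, it maps $\im_{\Z}(\Delta_G)$ onto $\im_{\Z}(\Delta_{G'})$ and therefore descends to an isomorphism $\Pic^0(G)=\Div^0(G)/\im_{\Z}(\Delta_G)\to\Div^0(G')/\im_{\Z}(\Delta_{G'})=\Pic^0(G')$.

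I do not expect a real obstacle here; the only point requiring a little care is the bookkeeping with parallel edges, where one must use the edge bijection $\phi|_{E(G)}$ rather than just the vertex map in order to conclude that adjacency multiplicities, and hence the Laplacians, correspond. I would also add a sentence observing that all of these induced maps are ``the same map'' (apply $\phi$), so they are automatically compatible with one another, which is the form in which the lemma gets used later.
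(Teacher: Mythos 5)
Your proof is correct and follows essentially the same route as the paper: transport structure along the graph isomorphism, define $\phi_{\rm D}$ by relabeling vertices, and check that the Laplacian images correspond. Your observation that only property~(1) of Definition~\ref{def::auto} is used is also made in the paper, in the remark immediately following the lemma.
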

\begin{proof}
Let $\phi$ be the isomorphism in question. The bijection between $\T(G)$ and $\T(G')$ follows from the fact that $\phi$ induces a graph isomorphism between $G$ and $G'$. Next, we define $\phi_{\rm D}$ by the map

\[\sum_{v \in V(G)} n_v v \mapsto \sum_{v' \in V(G')} n_{\phi^{-1}(v')} v'.\] 

It is simple to check that $\phi_{\rm D}$ is an isomorphism between $\Div^0(G)$ and $\Div^0(G')$. Because $\Div_s^0(G)$ and $\Div_{\phi(s)}^0(G')$ are respective subsets of $\Div^0(G)$ and $\Div^0(G')$ which respect group operations, it follows that $\phi_{\rm D}|_{\Div_s^0(G)}$ is an isomorphism between $\Div_s^0(G)$ and $\Div_{\phi(s)}^0(G')$.

Finally, if $\Delta$ and $\Delta'$ denote the respective Laplacian matrices of $G$ and $G'$, then $\phi_{\rm D}$ is also an isomorphism on the subgroups of $\Div^0(G)$ and $\Div^0(G')$ restricted to $\im_\Z(\Delta)$ and $\im_\Z(\Delta')$ respectively. Thus, $\phi_{\rm D}$ induces an isomorphism between $\Pic^0(G)$ and $\Pic^0(G')$.
\end{proof}

In fact, Lemma~\ref{lem:ribiso} holds even if $\phi$ is a graph isomorphism that does not respect the ribbon structures of $G$ and $G'$. However, we work exclusively with ribbon graph isomorphisms throughout this article due to the fact that the ribbon structures of graphs play a key roll in the definition of sandpile torsor actions and algorithms (see the next subsection).

\subsection{Sandpile Torsor Actions and Sandpile Torsor Algorithms}
Given a ribbon graph $(G,\chi)$ and a vertex $s \in V(G)$, we will work with a function $\what \alpha_{(G,\chi,s)}: \Div^0_s(G)\times \T(G) \to \T(G)$ which belongs to a specific class of \emph{monoid actions} (i.e. maps satisfying Properties 2 and 3 in Definition~\ref{def:torsor})

\begin{definition}\label{def:torsor}
A function of the form $\what \alpha_{(G,\chi,s)}$ is called a \emph{sink-parameterized sandpile torsor action (on ribbon graphs)}  if for any $T\in \T(G)$, and $D,D' \in \Div_s^0(G)$, the following properties are satisfied:
\begin{enumerate}
\item $\what \alpha_{(G,\chi,s)}(D,T) = \what\alpha_{(G,\chi,s)}(D',T) \Longleftrightarrow [D] = [D'].$
\item $\what \alpha_{(G,\chi,s)}(0,T) = T$.
    \item  $\what \alpha_{(G,\chi,s)}(D+D',T) = \what \alpha_{(G,\chi,s)}(D,\what \alpha_{(G,\chi,s)}(D',T))$. 
    \item For any $T' \in \T(G)$, there exists some $D'' \in \Div_s^0(G)$ such that $\what \alpha_{(G,\chi,s)}(D'',T) = T'$. 
\end{enumerate}
\end{definition}

\begin{definition}
The function $\what \alpha$ is called a \emph{sink-parameterized sandpile torsor algorithm (on ribbon graphs)} if for any ribbon graph $(G,\chi)$ and $s \in V(G)$, the following conditions hold:
\begin{enumerate}
\item  $\what \alpha_{(G,\chi,s)}$ is a sink-parameterized sandpile torsor action.

\item For any ribbon graph isomorphism $\phi$ from $(G,\chi)$ to $(G',\chi')$, and any $D \in \Div_s^0(G)$,
\[\phi(\what \alpha_{(G,\chi,s)}(D,T)) = \what \alpha_{(G',\chi',\phi(s))}(\phi_{\rm D}(D),\phi(T)).\]
\end{enumerate}
\end{definition}

\begin{definition}\label{def:STA}
For every \emph{plane graph} $(G,\chi)$, let $\alpha_{(G,\chi)}$ be an action of $\Pic^0(G)$ on $\T(G)$. We say $\alpha$ is a \emph{sandpile torsor algorithm (on plane graphs)} if there is a sink-parameterized sandpile torsor algorithm $\what \alpha$ such that for every \emph{plane graph} $(G,\chi)$, $s \in V(G)$, $D \in \Div_s^0(G)$, and $T \in \T(G)$, 
\[\alpha_{(G,\chi)}([D],T) = \what \alpha_{(G,\chi,s)}(D,T).\]
We call a specific action of the form $\alpha_{(G,\chi)}$ a \emph{sandpile torsor action}. 
\end{definition}

Notice that a sandpile torsor algorithm $\alpha$ is uniquely defined by a sink-parameterized sandpile torsor algorithm $\what \alpha$ such that for every plane graph $(G,\chi)$, $s,s' \in V(G)$, $D \in \Div_s^0(G)$, $D' \in \Div_{s'}^0(G)$, and $T \in \T(G)$, 
\begin{equation}\label{eq:sinkinvariant}\what\alpha_{(G,\chi,s)}(D,T) = \what \alpha_{(G,\chi,s')}(D',T) \Longleftrightarrow [D] = [D'].\end{equation}

\begin{remark}
It is important that we restrict to \emph{plane graphs} in Definition~\ref{def:STA} since we claim that it is impossible for \eqref{eq:sinkinvariant} to be satisfied by all \emph{ribbon graphs}. 

To prove this claim, let $(G,\chi)$ be a non-planar ribbon graph with 2 vertices $x$ and $y$ and 3 edges between them. Let $\phi$ be the ribbon graph automorphism which swaps $x$ and $y$ and act as the identity on the edges. It is straightforward to see that the induced automorphism $\phi_D$ maps each divisor to its inverse.

Suppose that $\what{\alpha}$ is a sink-parameterized sandpile torsor algorithm. By Definition 2.9, this map must satisfy the equality $\what \alpha_{(G,\chi,x)}(D,T) = \what \alpha_{(G,\chi,y)}(-D,T)$ for all $D \in \Div_x^0(G)$. However, $[D] \not= [-D]$ unless $[D]$ is the identity. In particular, it is not possible for $\what{\alpha}$ to satisfy~\eqref{eq:sinkinvariant}. This example is also discussed in~\cite[Figure 1]{alextorsors}. 
\end{remark}


Two well-known sink-parameterized sandpile torsor algorithms are the \emph{rotor-routing algorithm} and the \emph{Bernardi algorithm}. These algorithms are distinct in general, and their constructions appear unrelated~\cite[Figure 9]{BW}.\footnote{In fact, Baker and Wang conjectured that for every non-planar ribbon graph, there is a sink vertex where the two algorithms are distinct~\cite{BW}. This conjecture was independently proven by Ding as well as by Shokrieh and Wright~\cite{Ding,SW}. } However, both algorithms satisfy \eqref{eq:sinkinvariant}~\cite{CCG,BW}, and they both define the same sandpile torsor algorithm~\cite[Theorem 7.1]{BW}. As discussed in Section~\ref{sec:intro}, this surprising equivalence inspired Conjecture~\ref{conj:klivans}, which asks if this sandpile torsor algorithm is in some sense unique. We will not explicitly work with the Bernardi algorithm in this paper, but we encourage the curious reader to see~\cite{BW} for its construction, as well as~\cite{Yuengeometric} and~\cite{Lilla} for alternate perspectives when restricting to plane graphs. 

\begin{remark}
It is possible to define sandpile torsor algorithms directly instead of first defining sink-parameterized sandpile torsor algorithms. However, it is convenient to introduce sink-parameterized sandpile torsor algorithms because we make heavy use of the rotor-routing algorithm (defined in Section \ref{sec:rr}), which is traditionally defined on ribbon graphs that have a sink vertex. In fact, the proof of one of our main results, the consistency of the rotor-routing algorithm (Theorem \ref{thm:rrconsistent}), involves the application of the rotor-routing algorithm to non-planar ribbon graphs.
\end{remark}

\subsection{Rotor-Routing}
\label{sec:rr}

Let $(G,\chi)$ be a ribbon graph and $s \in V(G)$. 
\begin{definition}
A \emph{rotor configuration with sink $s$} is an assignment of an incident edge to every vertex of $G$ except for $s$. Given a rotor configuration $\rho$ and $x \in V(G) \setminus s$, we write $\rho\la x \ra$ for the edge assigned to $x$. 
\end{definition}

We refer to edges of the form $\rho\la x \ra$ as \emph{rotors}. It is useful to visualize a rotor $\rho\la x\ra$ as a directed edge incident to $x$ and oriented away from $x$. We now define a function $r_x$ which rotates the rotor at $x$ one position. 

\begin{definition}\label{def:onestep}
Let $\rho$ be a rotor configuration with sink $s$ and $x \in V(G) \setminus s$. We write $r_x(\rho)$ for the rotor configuration defined by:
\[r_x(\rho)\la v \ra = \begin{cases} \chi(v,\rho\la v \ra) & \text{ if $v = x$, } \\ \rho\la v \ra & \text{ otherwise.}
\end{cases}\]
\end{definition}

\begin{definition}
We say that a collection of rotors forms a \emph{directed cycle} $C$ if there is a path along them that returns to its original vertex. Given a directed cycle $C$, we write $E(C)$ for the set of edges corresponding to the rotors that make up $C$ and $V(C)$ for the set of vertices incident to these edges. A collection of rotors is \emph{acyclic} if it contains no cycles. 
\end{definition}

\begin{definition}\label{def:Ts}
Let $s \in V(G)$ and $T \in \T(G)$. We write $T_s$ for the rotor configuration with sink $s$ such that for every $x \in V(G) \setminus s$, the rotor $T_s\la x\ra$ is the unique edge in $T$ incident to $x$ along the path from $x$ to $s$.
\end{definition}

In other words, $T_s$ describes the orientation of the edges of $T$ such that each edge is directed toward $s$. The following lemma is immediate.  
\begin{lemma}\label{lem:acyclic} 
Let $\rho$ be a rotor configuration with sink $s$ on a ribbon graph $(G,\chi)$. There exists some $T \in \T(G)$ such that $T_s = \rho$ if and only if $\rho$ is acyclic. In particular, $T$ is the set of edges which form the rotors of $\rho$. 
\end{lemma}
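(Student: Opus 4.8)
The plan is to handle both directions by analyzing \emph{rotor-walks}: the walk that starts at a non-sink vertex $x$ and, at each step, follows the rotor of the current vertex to its other endpoint. Since each vertex of $V(G)\setminus s$ has exactly one outgoing rotor and $s$ has none, such a walk either terminates at $s$ or else eventually revisits a vertex, and a revisited vertex exhibits a collection of rotors forming a directed cycle. This dichotomy drives both implications.

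For the ``only if'' direction, suppose $T \in \T(G)$ with $T_s = \rho$. By Definition~\ref{def:Ts} every rotor of $\rho$ is an edge of $T$, so the edge set formed by the rotors is contained in $E(T)$. As $T$ is acyclic as an undirected graph, no subset of its edges can form a cycle, directed or otherwise, so $\rho$ is acyclic, and this edge set is exactly the promised $T$.

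For the ``if'' direction, suppose $\rho$ is acyclic and let $T$ be the set of edges forming its rotors. I would first show the map $x\mapsto\rho\langle x\rangle$ from $V(G)\setminus s$ to $E(G)$ is injective: if distinct $x,y$ had a common rotor $e$, then $\i(e)=\{x,y\}$ and $e$ would form a directed $2$-cycle, contradicting acyclicity. Hence $|T|=|V(G)|-1$, and since $T$ is a forest on $|V(G)|$ vertices with $|V(G)|-1$ edges, it must be a spanning tree. Finally, to see $T_s=\rho$, fix $x\ne s$: the rotor-walk from $x$ uses only edges of $T$ and, by acyclicity together with the dichotomy above, is a simple path that can terminate only at $s$. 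Since $T$ is a tree, this is the unique path from $x$ to $s$ in $T$, so its first edge $\rho\langle x\rangle$ equals $T_s\langle x\rangle$.

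The only point that needs genuine care — the main obstacle — is making the rotor-walk argument airtight when $G$ has parallel edges: one must verify that a repeated vertex (or a shared rotor) really does yield a collection of rotors forming a directed cycle in the precise sense of the paper's definition, and that a rotor-walk cannot get stuck anywhere other than $s$. Everything else is routine bookkeeping about forests and tree paths.
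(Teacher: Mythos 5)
The paper gives no proof of this lemma, stating only that it ``is immediate,'' so there is no argument to compare against; your proof is a reasonable reconstruction of what the authors had in mind, and the rotor-walk framing is the natural one.

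There is, however, a small logical gap in the ``if'' direction as you have written it. After establishing $|T| = |V(G)|-1$, you write ``since $T$ is a forest on $|V(G)|$ vertices with $|V(G)|-1$ edges, it must be a spanning tree.'' But the claim that $T$ is a forest --- i.e.\ that the rotors, viewed as undirected edges, contain no cycle --- is precisely the nontrivial content of the lemma at this point; you assert it without justification, and then use it to conclude the tree property. As written, this is circular. The fix is already implicit in your final paragraph: move the rotor-walk argument earlier. Once you know that from every $x \neq s$ the rotor-walk is a simple path using only edges of $T$ and terminating at $s$ (acyclicity forbids a revisit, and only $s$ lacks an outgoing rotor), you have shown $T$ is connected and spans $V(G)$; combined with $|T| = |V(G)|-1$, this gives the spanning tree. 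Alternatively, one can show acyclicity of $T$ directly: if $T$ contained an undirected cycle $C$, counting shows $s \notin V(C)$ and, by injectivity of $v \mapsto \rho\langle v\rangle$, each vertex of $C$ has its rotor among the edges of $C$; walking around $C$ one finds either a coherently oriented directed cycle or a bi-directed pair, contradicting acyclicity of $\rho$. Either repair is short, but some version of it must appear --- ``$T$ is a forest'' cannot simply be assumed.

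One further small point: your worry about parallel edges and the ``single bi-directed edge'' case is well placed. The paper's Definition~2.12 defines a directed cycle via a path that returns to its origin, and the paper explicitly remarks that the bi-directed-edge edge case is handled; you should cite that remark (or spell out that $\rho\langle x\rangle = \rho\langle y\rangle = e$ with $\i(e) = \{x,y\}$ counts as a directed $2$-cycle) rather than leaving it as an acknowledged obstacle.
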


Given a sandpile element of the form $c-s \in \Div^0_s(G)$ for some $c,s\in V(G)$, and a spanning tree $T \in \T(G)$, Algorithm \ref{alg:rotor} outputs a rotor configuration with sink $s$. At each step of this algorithm, a single rotor rotates according to the ribbon structure $\chi$ (on a plane graph, this would be a counterclockwise rotation). When the algorithm terminates, we obtain a new rotor configuration. See Figure~\ref{fig:consistent}(a) for a demonstration of Algorithm~\ref{alg:rotor}. 

\begin{algorithm}
\caption{Single-chip rotor-routing}\label{alg:rotor}
\begin{algorithmic}
\State \textbf{Input:} A tree $T\in \T(G)$ and a divisor of the form $c-s \in \Div_s^0(G)$. 
\State \textbf{Output:} The rotor configuration $\rho$ with sink vertex $s$.
\State Initialize the rotor configuration $\rho = T_s$.
\State Initialize a ``traveling vertex''  $x$ at $c$. We will call $x$ a \emph{chip}. 
\While{ $x \not= s$}
\State Replace $\rho$ with $r_x(\rho)$. \Comment{rotate the rotor at $x$ by one position.}
\State Replace $x$ with the other vertex incident to $\rho \la x \ra$. \Comment{move $x$ across the rotor.}
\EndWhile\\
\Return $\rho$.
\end{algorithmic}
\end{algorithm}

There is also a variant of Algorithm~\ref{alg:rotor} involving \emph{unicycles}, which we will discuss in Appendix~\ref{app:unicycles}. 

Algorithm~\ref{alg:rotor} was first explored by Priezzhev et al. under the name \emph{Eulerian walkers model}~\cite{eulerianwalkers}. By \cite[Lemma 3.10]{Holroyd}, the final rotor $\rho$ in Algorithm~\ref{alg:rotor} is acyclic. By Lemma~\ref{lem:acyclic}, this means there is a unique tree $T'\in \T(G)$ such that $T'_s = \rho$. For the inputs $c,s \in V(G)$ and $T \in \T(G)$, we define the mapping $\what r_{(G,\chi,s)}(c-s,T) := T'$ where $T'_s$ is the output of Algorithm~\ref{alg:rotor}.

Holroyd et al. showed how this model could be used to define a sink-parameterized sandpile torsor action on any ribbon graph $(G,\chi)$. Fix any $D = \sum_{v \in V(G)}n_vv\in \Div^0_s(G)$ and let $N := -n_s$. Note that $D = \sum_{i=1}^{N} (c_i - s)$ for some sequence of vertices $(c_1,\dots,c_{N})$. Let $T^0 = T$, and for each $i \in [1,N]$, let $T^i = \what r_{(G,\chi,s)}(c_i-s,T^{i-1})$. Then, we define $\what r_{(G,\chi,s)}(D,T) := T^{N}$. By \cite[Corollary 2.6]{Holroyd}, $T^{N}$ does not depend on the ordering of $(c_1,\dots,c_{N})$. Thus, $\what r_{(G,\chi,s)}$ is a well-defined monoid action of $\Div^0_s(G)$ on $\T(G)$. 

\begin{theorem}{\cite[Theorem 2.5]{LL}}
The rotor-routing algorithm $\what r$ constructed above is a sink-parameterized sandpile torsor algorithm. 
\end{theorem}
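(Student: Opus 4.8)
The plan is to verify the two conditions in Definition~2.9. Condition~1 asks that each $\what r_{(G,\chi,s)}$ be a sink-parameterized sandpile torsor action, i.e., satisfy the four properties of Definition~2.7. Properties~2 and~3 (the identity and additivity axioms) are precisely the assertion, established in the discussion preceding the theorem, that $\what r_{(G,\chi,s)}$ is a well-defined monoid action of $\Div^0_s(G)$ on $\T(G)$; here well-definedness rests on the order-independence result \cite[Corollary~2.6]{Holroyd}. For Properties~1 and~4 I would invoke the theorem of Holroyd et al.\ that rotor-routing induces a \emph{free transitive} action of $\Pic^0(G)$ on $\T(G)$: transitivity of this action, together with Lemma~\ref{lem:anysink} (every class of $\Pic^0(G)$ has a representative in $\Div^0_s(G)$), gives Property~4; the fact that the action is well-defined on $\Pic^0(G)$ gives the ``$\Leftarrow$'' direction of Property~1, and freeness gives the ``$\Rightarrow$'' direction.

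The substantive part is Condition~2: for a ribbon graph isomorphism $\phi\colon(G,\chi)\to(G',\chi')$, any $D\in\Div^0_s(G)$, and any $T\in\T(G)$,
\[\phi\bigl(\what r_{(G,\chi,s)}(D,T)\bigr)=\what r_{(G',\chi',\phi(s))}\bigl(\phi_{\rm D}(D),\phi(T)\bigr).\]
Since $\phi_{\rm D}$ is a group isomorphism carrying $\Div^0_s(G)$ onto $\Div^0_{\phi(s)}(G')$ (Lemma~\ref{lem:ribiso}) and sending $c-s\mapsto\phi(c)-\phi(s)$, and since $\what r$ is obtained by iterating the single-chip map, it suffices to prove the identity when $D=c-s$ for a single vertex $c$; the general case follows by writing $D=\sum_{i=1}^N(c_i-s)$, applying the single-chip case $N$ times, and using Property~3. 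For the single-chip case I would run Algorithm~\ref{alg:rotor} simultaneously on $(c-s,T)$ over $(G,\chi,s)$ and on $(\phi(c)-\phi(s),\phi(T))$ over $(G',\chi',\phi(s))$, and show the two executions stay ``in sync'' under $\phi$.

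To make this precise I would introduce the push-forward of rotor configurations: if $\rho$ is a rotor configuration with sink $s$ on $(G,\chi)$, let $\phi_*\rho$ be the rotor configuration with sink $\phi(s)$ on $(G',\chi')$ defined by $(\phi_*\rho)\la\phi(x)\ra=\phi(\rho\la x\ra)$ for $x\neq s$. Two facts must be checked: (i) $\phi_*(T_s)=\phi(T)_{\phi(s)}$, because $\phi$ induces a graph isomorphism taking $T$ to $\phi(T)$ and hence the unique $x$--$s$ path in $T$ to the unique $\phi(x)$--$\phi(s)$ path in $\phi(T)$, together with their first edges; and (ii) $\phi_*(r_x(\rho))=r_{\phi(x)}(\phi_*\rho)$ for every $x\neq s$, which is immediate from the ribbon-isomorphism axiom $\phi(\chi(x,e))=\chi'(\phi(x),\phi(e))$ applied with $e=\rho\la x\ra$. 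Granting (i) and (ii), an induction on the number of iterations of the \texttt{while} loop shows that after $k$ steps the first execution has the chip at a vertex $x_k$ with configuration $\rho_k$, while the second has the chip at $\phi(x_k)$ with configuration $\phi_*\rho_k$: the base case is (i), and the inductive step uses (ii) for the rotation line and the ribbon-isomorphism axiom $\i(\phi(e))=\{\phi(x),\phi(y)\}$ (so $\phi$ sends the other endpoint of a rotor to the other endpoint of its image) for the ``move the chip'' line. Since $\phi$ is injective, the first loop halts (reaching $s$) exactly when the second does (reaching $\phi(s)$), so the output configurations $\rho$ and $\phi_*\rho$ correspond; by Lemma~\ref{lem:acyclic} this is exactly $\phi\bigl(\what r_{(G,\chi,s)}(c-s,T)\bigr)=\what r_{(G',\chi',\phi(s))}(\phi(c)-\phi(s),\phi(T))$.

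I expect the main obstacle to be organizational rather than conceptual: setting up $\phi_*$ and the loop-invariant induction carefully enough that (i), (ii), and the step-by-step matching of the two \texttt{while} loops (including the termination condition) are genuinely forced by the three axioms in Definition~\ref{def::auto}. One should also confirm the legitimacy of the reduction to single chips, which uses $\phi_{\rm D}(c-s)=\phi(c)-\phi(s)$ and the order-independence of the composition of single-chip maps — both already available.
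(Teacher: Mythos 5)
The paper does not give its own proof of this statement: it cites \cite[Theorem~2.5]{LL} and, for the torsor action properties, ultimately the free-transitive-action result of Holroyd et al. So there is no internal proof to compare your argument against; rather, your proposal is a reconstruction of what that citation discharges, and it reads as a correct one. Your handling of Condition~1 is right: the monoid-action axioms (Properties~2,~3) are exactly the well-definedness content of the order-independence result, and Properties~1,~4 are respectively freeness/well-definedness and transitivity (together with Lemma~\ref{lem:anysink}) of the rotor-routing action of $\Pic^0(G)$. Your treatment of Condition~2 is also sound: defining the push-forward $\phi_*$ on rotor configurations, checking $\phi_*(T_s)=\phi(T)_{\phi(s)}$ and $\phi_*(r_x(\rho))=r_{\phi(x)}(\phi_*\rho)$ from the ribbon-isomorphism axioms, and running a loop-invariant induction over Algorithm~\ref{alg:rotor} is exactly the mechanical verification that the statement requires; the reduction to single-chip divisors via Property~3 and $\phi_{\rm D}(c-s)=\phi(c)-\phi(s)$ is legitimate because $\what r$ is defined by composing single-chip moves and is order-independent. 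I see no gap. One minor stylistic remark: you could shorten the Condition~2 verification by observing that a ribbon graph isomorphism sends unicycles to unicycles and commutes with the rotor-routing process (the unicycle formulation in Appendix~\ref{app:unicycles}), but the direct induction on the while loop that you give is equally valid and arguably more self-contained.
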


Chan, Church, and Grochow proved that, on plane graphs, the rotor-routing action does not depend on the choice of sink vertex, which implies the following result.

\begin{theorem}{\cite[Theorem 2]{CCG}}\label{thm:rristorsor}
The rotor-routing algorithm $\what r$ satisfies (\ref{eq:sinkinvariant}). In particular, it defines a sandpile torsor algorithm $r$. 
\end{theorem}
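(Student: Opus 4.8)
The second assertion of the theorem follows immediately from the first: the preceding theorem already provides that $\what r$ is a sink-parameterized sandpile torsor algorithm, so once $\what r$ satisfies \eqref{eq:sinkinvariant} the formula $r_{(G,\chi)}([D],T):=\what r_{(G,\chi,s)}(D,T)$ is unambiguous and is a $\Pic^0(G)$-action by Property~1 of Definition~\ref{def:torsor}---this is precisely the observation recorded after Definition~\ref{def:STA}. So the entire task is \eqref{eq:sinkinvariant}, and in fact only its $(\Leftarrow)$ direction, \emph{sink-independence}: if $D\in\Div^0_s(G)$, $D'\in\Div^0_{s'}(G)$, and $[D]=[D']$, then $\what r_{(G,\chi,s)}(D,T)=\what r_{(G,\chi,s')}(D',T)$ for every $T\in\T(G)$. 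The direction $(\Rightarrow)$ is then formal: if the two trees coincide, pick (Lemma~\ref{lem:anysink}) some $D''\in\Div^0_s(G)$ with $[D'']=[D']$; sink-independence gives $\what r_{(G,\chi,s)}(D'',T)=\what r_{(G,\chi,s')}(D',T)=\what r_{(G,\chi,s)}(D,T)$, and Property~1 of the single-sink torsor action forces $[D]=[D'']=[D']$.

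To establish sink-independence I would make two standard reductions. First, since $G$ is connected, join $s$ to $s'$ by a path $s=s_0,\dots,s_k=s'$ and choose (Lemma~\ref{lem:anysink}) representatives $D_i\in\Div^0_{s_i}(G)$ of $[D]$ with $D_0=D$ and $D_k=D'$; the case of \emph{adjacent} sinks applied along the path then chains to the general statement. Second, $\Pic^0(G)$ is generated by the classes $[c-s]$ with $c\in V(G)\setminus s$, and both $\what r_{(G,\chi,s)}$ and $\what r_{(G,\chi,s')}$ are monoid actions (Properties~2 and~3 of Definition~\ref{def:torsor}), so it suffices to prove, for adjacent $s,s'$, the single-chip identity $\what r_{(G,\chi,s)}(c-s,T)=\what r_{(G,\chi,s')}(E_c,T)$ for all $c$ and all $T$, where $E_c\in\Div^0_{s'}(G)$ is any representative of $[c-s]$; an induction on the number of chips---peeling off one summand $c_i-s$ at a time, applying Property~3 on both sides, and finally invoking Property~1 of the $s'$-action to trade $\sum_iE_{c_i}$ for the prescribed $\Div^0_{s'}(G)$-representative of $[D]$---then recovers sink-independence.

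The single-chip identity for adjacent sinks is the heart of the proof and the only place planarity is used; I expect it to be the main obstacle. The approach I would take is that of Chan--Church--Grochow, via the \emph{unicycle} reformulation of rotor-routing recalled in Appendix~\ref{app:unicycles}: rotor-routing extends to a bijection on unicycles, and each single-sink action $\what r_{(G,\chi,s)}(\cdot,T)$ is extracted from the unicycle dynamics by ``cutting'' the unique directed cycle at the sink $s$. What must be shown is that, for a plane graph, the outcome of this cut does not depend on where along the cycle it is made; equivalently, that routing a chip once around a facial cycle of the embedding acts trivially on spanning trees. This is exactly the statement that fails for non-planar ribbon structures. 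Its proof is a careful accounting of the rotor rotations incurred along the loop, an accounting that succeeds precisely because of planarity---concretely, because the two edges of a facial cycle incident to any of its vertices are consecutive in the cyclic order $\chi$ at that vertex---showing that after one full turn every rotor has returned to its initial position. Since the facial cycles span the cycle space of $G$, triviality around each facial cycle is enough to compare the $s$- and $s'$-actions in general: the two prescriptions ``route $c-s$ toward $s$'' and ``route $E_c$ toward $s'$'' differ, at the level of rotor-routing, by loops around facial cycles, hence leave $T$ in the same tree. Combining this with the two reductions above yields \eqref{eq:sinkinvariant}, and the theorem follows.
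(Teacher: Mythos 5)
The paper does not prove this theorem at all; it is stated as a direct citation of \cite[Theorem 2]{CCG} and used as a black box. (Appendix~\ref{app:unicycles} does recall the technical heart of CCG's argument, Proposition~\ref{prop:reverse} on unicycle reversal, but only because it is needed for Lemmas~\ref{lem:looprev} and~\ref{lem:looprev2}, not to reprove Theorem~\ref{thm:rristorsor}.) So there is no ``paper's own proof'' to compare against, and what you have written is a reconstruction of Chan--Church--Grochow. Your formal scaffolding is fine: only the $(\Leftarrow)$ direction of \eqref{eq:sinkinvariant} requires work, $(\Rightarrow)$ follows from freeness exactly as you say, and the reductions to adjacent sinks (via a path and Lemma~\ref{lem:anysink}) and then to single chips (via the monoid properties) are standard and correct.

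The gap is in your description of the hard step. You characterize CCG's planarity input as ``routing a chip once around a facial cycle acts trivially on spanning trees,'' justified by the observation that the two boundary edges of a face at a vertex are $\chi$-consecutive, and then claim that ``facial cycles span the cycle space'' finishes the comparison of the $s$- and $s'$-actions. This is not CCG's argument, and neither claim is established by what you have written. CCG's Proposition~8 recasts sink-independence as the statement that the rotor-routing \emph{process} carries every unicycle $(\rho^*,x)$ to $(\overline{\rho^*},x)$, and their Proposition~9 (this paper's Proposition~\ref{prop:reverse}) proves that reversal for plane graphs; the cycle being reversed is the \emph{arbitrary} directed cycle of the unicycle, not a face, and the proof goes through the left/right dichotomy of Lemma~\ref{lem:leftright} rather than the $\chi$-consecutivity of facial edges. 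Your ``cycle space'' step is also a genuine leap: the $s$- and $s'$-routings of the same class differ in \emph{where Algorithm~\ref{alg:rotor} halts}, not by concatenating a walk around a fixed loop, and translating between these two pictures is precisely what the unicycle formalism does. To turn your sketch into a proof you would need to establish both (i) that the composite of single-chip routings around a facial boundary fixes every spanning tree, and (ii) that this face-triviality actually implies agreement of the two adjacent-sink actions; neither is immediate, and neither is what CCG prove.
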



Theorem~\ref{thm:rristorsor} tells us that for any plane graph $(G,\chi)$, there is a well-defined free transitive action $r_{(G,\chi)}$ of $\Pic^0(G)$ on $\T(G)$. To evaluate $r_{(G,\chi)}(S,T)$ for some $S \in \Pic^0(G)$ and $T \in \T(G)$, we first choose a sink vertex $s$ and some $D \in \Div_s^0(G)$ such that $[D] = S$ (such a $D$ must exist by Lemma~\ref{lem:anysink}). Then, we calculate $r_{(G,\chi)}(S,T) = \what r_{(G,\chi,s)}(D,T)$ through repeated applications of Algorithm~\ref{alg:rotor}.

Throughout this paper, we will use the term \emph{rotor-routing algorithm} to refer to both $\what r$ and $r$. Context should resolve any potential ambiguity. 

\section{Useful Properties of Rotor-Routing}\label{sec:funfacts}

In the previous section, we defined the rotor-routing sandpile torsor algorithm. In this section, we introduce several properties of rotor-routing that we will use in later sections. Some of our proofs for results in this section require additional terminology which we will delay until Appendix~\ref{app:unicycles}. As in the previous section, $G$ is a finite connected graph which has no loops but may have multiple edges.  

\begin{lemma}\label{lem:adjverts}
For any $D \in \Div^0(G)$, we can write
\[D = \sum_{i=1}^k (c_i-s_i),\]
where for every $i$, there exists an edge $f_i$ such that $\i(f_i) = (c_i,s_i)$. 
\end{lemma}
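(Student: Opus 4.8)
The plan is to reduce the statement to the elementary telescoping identity along a walk, exploiting that $G$ is connected.

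First I would record that $\Div^0(G)$ is generated as an abelian group by the differences $v - w$ over (ordered) pairs of vertices. Concretely, fix any $s \in V(G)$ and write $D = \sum_{v \in V(G)} n_v v$ with $\sum_v n_v = 0$; then, since subtracting $\bigl(\sum_v n_v\bigr) s = 0$ changes nothing,
\[ D = \sum_{v \neq s} n_v\,(v - s). \]
So it suffices to express each $v - s$ (and, to accommodate negative coefficients, each $s - v$) as a sum of terms of the form $c - s'$ whose two vertices are the endpoints of a single edge.

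Second, for a fixed pair $v, s$, I would use connectedness of $G$ to choose a walk $v = u_0, u_1, \dots, u_\ell = s$ with $u_j u_{j+1} \in E(G)$ for each $j$ (a path as defined in Section~\ref{sec:background} suffices, but any walk does). Then
\[ v - s = \sum_{j=0}^{\ell - 1} (u_j - u_{j+1}), \]
and each summand $u_j - u_{j+1}$ has the required form, witnessed by the edge $f$ with $\i(f) = \{u_j, u_{j+1}\}$; reversing the walk handles $s - v$. Finally I would assemble everything: expand $D = \sum_{v \neq s} n_v (v - s)$, replace each $n_v(v - s)$ by $|n_v|$ copies of $v - s$ when $n_v > 0$ or of $s - v$ when $n_v < 0$ (and omit it when $n_v = 0$), substitute each such single difference by its telescoping decomposition from the previous paragraph, and concatenate all resulting terms to obtain $D = \sum_{i=1}^{k}(c_i - s_i)$ with each $\{c_i, s_i\}$ spanned by an edge $f_i$. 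When $D = 0$ we simply take $k = 0$.

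I do not expect a genuine obstacle here: the entire content is the connectedness of $G$ feeding the telescoping identity. The only things to watch are the sign bookkeeping for negative coefficients and the fact that the decomposition is highly non-unique — in particular $k$ is not bounded in any canonical way — which is fine since the statement asks only for existence.
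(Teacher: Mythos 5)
Your argument is correct, and it is exactly the content the paper compresses into the one-line remark that the lemma ``follows immediately from the definition of $\Div^0(G)$ and the fact that $G$ is connected.'' You have simply made explicit the telescoping-along-a-walk step and the sign bookkeeping that the authors left implicit.
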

\begin{proof}
This lemma follows immediately from the definition of $\Div^0(G)$ and the fact that $G$ is connected.
\end{proof}

\begin{corollary}\label{cor:adjverts}
Suppose that $\alpha$ and $\beta$ are sandpile torsor algorithms such that for all plane graphs $(G,\chi)$, for all $f \in E(G)$, and for all $T \in \T(G)$, we have, 
\[\alpha_{(G,\chi)}([c-s],T) = \beta_{(G,\chi)}([c-s],T),\]
where $\{c,s\} = \i(f)$. Then $\alpha = \beta$. 
\end{corollary}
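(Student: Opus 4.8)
The plan is to use the fact that, by Definition~\ref{def:STA}, a sandpile torsor algorithm assigns to each plane graph an honest \emph{action} of the group $\Pic^0(G)$ on $\T(G)$, and hence is determined by its values on any generating set of $\Pic^0(G)$ together with a single base tree. Lemma~\ref{lem:adjverts} supplies exactly such a generating set: the classes $[c-s]$ with $c,s$ joined by an edge.

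Concretely, I would fix an arbitrary plane graph $(G,\chi)$, a class $S \in \Pic^0(G)$, and a tree $T \in \T(G)$, and choose a representative $D \in \Div^0(G)$ of $S$ (the quotient map $\Div^0(G) \to \Pic^0(G)$ is surjective). Applying Lemma~\ref{lem:adjverts}, write $D = \sum_{i=1}^{k}(c_i - s_i)$ with $\i(f_i) = \{c_i, s_i\}$ for some edges $f_i$, so that $S = \sum_{i=1}^{k}[c_i - s_i]$ in $\Pic^0(G)$. Now set $T_k := T$ and, for $i = k, k-1, \dots, 1$, let $T_{i-1} := \alpha_{(G,\chi)}([c_i - s_i], T_i)$; since $\alpha_{(G,\chi)}$ is a group action, telescoping gives $\alpha_{(G,\chi)}(S, T) = T_0$. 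Define $T'_i$ in the same way using $\beta$, so $\beta_{(G,\chi)}(S,T) = T'_0$.

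The proof is finished by a downward induction on $i$ showing $T_i = T'_i$. The base case $i = k$ is $T = T$. For the inductive step, if $T_i = T'_i$, then since $\{c_i, s_i\} = \i(f_i)$ the hypothesis applies, so
\[
T_{i-1} = \alpha_{(G,\chi)}([c_i - s_i], T_i) = \beta_{(G,\chi)}([c_i - s_i], T_i) = \beta_{(G,\chi)}([c_i - s_i], T'_i) = T'_{i-1}.
\]
Thus $\alpha_{(G,\chi)}(S,T) = T_0 = T'_0 = \beta_{(G,\chi)}(S,T)$, and as $(G,\chi)$, $S$, and $T$ were arbitrary, $\alpha = \beta$.

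I do not expect a genuine obstacle here; this is the standard "an action is determined by its behaviour on generators" argument. The two points that merely need care are (i) invoking Lemma~\ref{lem:adjverts} so that the edge-adjacent difference classes really do generate $\Pic^0(G)$, and (ii) using that a sandpile torsor algorithm is a group action of $\Pic^0(G)$ itself — not just a collection of sink-parameterized monoid actions — so that $S$ may be telescoped as a sum of these generators with no need to reconcile different sink vertices.
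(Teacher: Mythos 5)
Your proof is correct and is exactly the argument the paper leaves implicit (the paper states Corollary~\ref{cor:adjverts} without proof, relying on Lemma~\ref{lem:adjverts}). The telescoping/induction along the decomposition $D=\sum(c_i-s_i)$, using that each $\alpha_{(G,\chi)}$ is an action of $\Pic^0(G)$, is precisely the intended reasoning.
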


Corollary~\ref{cor:adjverts} will allow us to focus on the case where the chip and sink are adjacent. 
\begin{prop}\label{prop:nooverspins}
If there is some $f \in E(G)$ such that $\i(f) = \{c,s\}$, then no rotor can make more than a full turn and no edge may be crossed more than once when applying Algorithm~\ref{alg:rotor} with input $T$ and $c-s$. 
\end{prop}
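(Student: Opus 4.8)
The plan is to prove the equivalent assertion that, during the run of Algorithm~\ref{alg:rotor} on the inputs $T$ and $c-s$, the chip departs each vertex $v$ at most $\deg(v)$ times. Since the rotor at $v$ advances exactly once each time the chip departs $v$, this is precisely the statement that no rotor advances by more than one full turn; and since the chip traverses an edge only in the direction of the rotor it has just rotated onto, bounding the departures from every vertex equally says that no edge is crossed more than once (in a given direction).

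The key step is to use the hypothesis $\i(f)=\{c,s\}$ to turn the run into a \emph{unicycle} evolution, in the sense of Appendix~\ref{app:unicycles}. I would extend the acyclic configuration $T_s$ to a rotor configuration $\hat\rho_0$ on \emph{all} of $G$ by setting $\hat\rho_0\la s\ra := f$, oriented from $s$ to $c$. Then $\hat\rho_0$ is just $T$ oriented toward $s$ together with the single extra oriented edge $s\to c$, so it has exactly one directed cycle: the fundamental cycle of $f$ with respect to $T$ when $f\notin T$, and the length-two cycle $c\to s\to c$ along $f$ when $f\in T$. In either case that cycle passes through $c$, so $(\hat\rho_0,c)$ is a unicycle. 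The crucial observation is that the rotor at $s$ is never moved while Algorithm~\ref{alg:rotor} runs, because the chip halts as soon as it reaches $s$; hence, if $(\hat\rho_t,x_t)$ denotes the configuration obtained from the $t$-th state $(\rho_t,x_t)$ of Algorithm~\ref{alg:rotor} by re-appending the rotor $s\to c$, then each step of the algorithm is literally one application of the unicycle move to $(\hat\rho_t,x_t)$, and the algorithm stops exactly when the chip first returns to $s$ (it does reach $s$, by \cite{Holroyd}). Because the unicycle move takes unicycles to unicycles, induction shows that every $(\hat\rho_t,x_t)$ is a unicycle, so the run is an initial segment of the orbit of $(\hat\rho_0,c)$ under the unicycle move.

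I would then finish using the description of unicycle orbits from Appendix~\ref{app:unicycles}: the unicycle move is a bijection of the finite set of unicycles of $(G,\chi)$, each of its orbits has length $2|E(G)|$, and around a single orbit the chip departs each vertex $v$ exactly $\deg(v)$ times (equivalently, each rotor completes exactly one full turn). Since the states $(\hat\rho_0,x_0),(\hat\rho_1,x_1),\dots,(\hat\rho_M,x_M)$ visited by Algorithm~\ref{alg:rotor} form a prefix of the orbit through $(\hat\rho_0,c)$ — terminating at the first state whose chip is at $s$, which occurs strictly before the orbit closes up since the initial state has its chip at $c\neq s$ — the number of departures from any vertex $v$ during the run is at most the number over the full orbit, namely $\deg(v)$, which is exactly the claim. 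The main obstacle is supplying the imported fact about unicycle orbits (length $2|E(G)|$, each vertex departed $\deg(v)$ times), which is where the bookkeeping of the unicycle formalism of Holroyd et al.~\cite{Holroyd} does the real work. It is worth stressing that the hypothesis $\i(f)=\{c,s\}$ is essential: it is precisely what allows $T_s$ to be closed up into a \emph{single} cycle through $c$, and the conclusion genuinely fails without it — already when a chip is routed around a triangle starting from a vertex not adjacent to the sink, a rotor makes more than one full turn.
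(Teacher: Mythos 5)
Your proof uses the same approach as the paper's proof in Appendix~\ref{app:unicycles}. The paper likewise extends $T_s$ to the sink-free configuration $T^{*f}_s$ by planting the rotor $f$ at $s$, observes that $(T^{*f}_s,c)$ is a unicycle, invokes~\cite[Lemma~4.9]{Holroyd} (Lemma~\ref{lem:fullspins}) to get that a full unicycle orbit has length $2|E(G)|$ with every rotor completing exactly one turn, and notes (Lemma~\ref{lem:rruni}) that Algorithm~\ref{alg:rotor} is exactly the unicycle process up to the step where the chip first hits $s$. The only place your write-up is a little thin is the last justification — ``which occurs strictly before the orbit closes up since the initial state has its chip at $c\neq s$''. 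That phrase alone doesn't force the first visit to $s$ to land at a step $<2|E(G)|$; you also need the fact (which you state earlier, and which the paper invokes explicitly) that over one full orbit the chip departs $s$ exactly $\deg(s)\ge 1$ times, so it must be at $s$ at some step in $[1,2|E(G)|-1]$, while the orbit closes at step $2|E(G)|$ with the chip back at $c$. With that one sentence spelled out, your argument coincides with the paper's.
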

Proposition~\ref{prop:nooverspins} follows from~\cite[Lemma 4.9]{Holroyd}, which we will prove Appendix~\ref{app:unicycles}. Note that this result requires an edge between $c$ and $s$ in $G$. 

Proposition~\ref{prop:nooverspins} holds for any ribbon graph. Next, we will give two lemmas that require planarity. We will prove these lemmas in Appendix~\ref{app:unicycles} using ideas from~\cite{CCG}. 

A defining fact about plane graphs is that any directed cycle $C$ partitions the vertices $V(G) \setminus V(C)$  and edges $E(G) \setminus E(C)$ into two classes. In particular, they are either to the ``left'' of $C$ or to the ``right'' of $C$ (where the ``left'' edges and vertices are in the interior of a cycle oriented counterclockwise or the exterior of a cycle oriented clockwise). Note that the edge case where $C$ is a single bi-directed edge will never be an issue.

Let $(G,\chi)$ be a plane graph, $f \in E(G)$, $\i(f) = \{c,s\}$, and $T \in \T(G)$. Suppose that $f\not\in T$, but $T_s$ contains some path $x_1,x_2,\dots,x_k$ of rotors from $c$ to $s$. Let $C$ be the directed cycle formed by $\{f,x_1,x_2,\dots,x_k\}$. 

\begin{lemma}\label{lem:looprev}
In the construction above, if Algorithm~\ref{alg:rotor} is applied with input $T$ and $c-s$, then the chip never crosses any edges to the right of $C$. 
\end{lemma}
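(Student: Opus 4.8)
Here is my plan.

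The plan is to track the chip through Algorithm~\ref{alg:rotor} and show it never crosses an edge to the right of $C$. Call an edge \emph{bad} if it lies to the right of $C$ and \emph{good} otherwise (i.e.\ if it is in $E(C)$ or to the left of $C$), and let $L$, $R$ be the sets of vertices of $G$ to the left, resp.\ right, of $C$. Since $G$ is a plane graph, $C$ separates $L$ from $R$, so every good edge has both endpoints in $V(C)\cup L$ and every bad edge has both endpoints in $V(C)\cup R$; hence, as long as the chip has crossed only good edges it sits in $V(C)\cup L$, and it can cross a bad edge only while occupying a vertex of $C$. For $x\in V(C)$ write $e_x^{\mathrm{out}}$, $e_x^{\mathrm{in}}$ for the two edges of $C$ at $x$, pointing forward and backward along $C$ respectively; so $e_x^{\mathrm{out}}=T_s\langle x\rangle$ when $x\neq s$, $e_c^{\mathrm{in}}=f$, and $e_x^{\mathrm{in}}=T_s\langle x'\rangle$ with $x'$ the predecessor of $x$ on $C$. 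The planarity input I will use is the local picture: reading $\chi(x)$ counterclockwise starting from $e_x^{\mathrm{out}}$, one meets $e_x^{\mathrm{out}}$, then the good edges at $x$ other than $e_x^{\mathrm{out}},e_x^{\mathrm{in}}$, then $e_x^{\mathrm{in}}$, then the bad edges at $x$, before returning to $e_x^{\mathrm{out}}$ --- informally, going counterclockwise out of the ``forward'' edge of $C$ one sweeps the left side of $C$ before the right side. (This is where the description in~\cite{CCG} of how a directed cycle splits a plane graph is used.)

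Now suppose for contradiction that the chip ever crosses a bad edge and look at the first time it does. By the above the chip is then at some $x\in V(C)$ and, just having rotated, $\rho\langle x\rangle$ points along the first bad edge at $x$ in the counterclockwise order just described. Since $\rho\langle x\rangle$ starts out equal to $e_x^{\mathrm{out}}$ and advances by exactly one position every time the chip departs $x$, this forces: on its earlier departures the chip left $x$ along each of the good non-$C$ edges at $x$ in turn, and on the departure immediately preceding the present one it left $x$ along $e_x^{\mathrm{in}}$ --- i.e.\ it once stepped backward along $C$, from $x$ to its predecessor $x'$. In particular $x\neq c$: leaving $c$ along $e_c^{\mathrm{in}}=f$ would put the chip on $s$ and halt the algorithm, so it cannot be an earlier departure. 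By Proposition~\ref{prop:nooverspins} no edge is crossed twice, so the chip cannot re-enter $x$ along $e_x^{\mathrm{in}}$ or along any good non-$C$ edge at $x$; nor can it re-enter $x$ along a bad edge, since that would be an earlier crossing of a bad edge. The only remaining possibility is that the chip re-entered $x$ along $e_x^{\mathrm{out}}=T_s\langle x\rangle$, coming from the successor $x''$ of $x$ on $C$.

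This triggers a cascade toward the sink. The edge $T_s\langle x\rangle=e_x^{\mathrm{out}}$ is the backward $C$-edge $e_{x''}^{\mathrm{in}}$ at $x''$, so the chip once stepped backward along $C$ from $x''$ too, and the argument of the previous paragraph applies verbatim at $x''$ (again $x''$ is neither $c$ nor $s$, no bad edge has yet been crossed, and no edge is crossed twice), forcing the chip to have re-entered $x''$ along $T_s\langle x''\rangle$ from the successor of $x''$. Iterating around $C$ in the forward direction, after finitely many steps we reach the vertex $y$ of $C$ immediately preceding $s$ and conclude that the chip once re-entered $y$ along $T_s\langle y\rangle$ --- the $C$-edge joining $y$ to $s$ --- coming from $s$ itself. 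This is impossible: $s$ carries no rotor and the algorithm stops the moment the chip arrives at $s$, so the chip never moves out of $s$. This contradiction proves the lemma.

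The step I expect to do the real work is the first one: extracting from the ribbon structure the precise ``forward edge, then left side, then backward edge, then right side'' description of $\chi(x)$ at each $x\in V(C)$, and then the bookkeeping in the second paragraph of exactly which edges at $x$ have already been traversed and in which direction, so that the ``only remaining possibility'' deductions really are forced. Once that is in place the cascade is short, and it is the one point at which the ``no edge is crossed more than once'' half of Proposition~\ref{prop:nooverspins} is genuinely needed.
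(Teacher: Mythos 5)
Your overall strategy is genuinely different from the paper's and, I believe, ultimately salvageable: rather than invoking the unicycle machinery (Proposition~\ref{prop:reverse} and Lemma~\ref{lem:leftright}), you reprove the ``only-left-edges'' fact for the specific cycle $C$ by a direct count on the ribbon structure together with a forward cascade toward $s$. The planarity step (cyclic order at $x\in V(C)$ is $e_x^{\mathrm{out}}$, left edges, $e_x^{\mathrm{in}}$, right edges) is correct and is the right local picture to extract.

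However, there is a real gap in the central deduction. You cite Proposition~\ref{prop:nooverspins} as ``no edge is crossed twice'' and conclude that the chip cannot re-enter $x$ along $e_x^{\mathrm{in}}$ or along any good non-$C$ edge. But Proposition~\ref{prop:nooverspins}, as it is used elsewhere in the paper and as its proof via Lemma~\ref{lem:fullspins} shows, says each edge is crossed at most once \emph{in each direction}. Exiting $x$ along $e_x^{\mathrm{in}}$ (direction $x\to x'$) does not preclude entering $x$ along $e_x^{\mathrm{in}}$ (direction $x'\to x$); indeed, if the first bad crossing really occurs at $x$, then the chip has entered $x$ exactly $g_x+2$ times (where $g_x$ is the number of good non-$C$ edges at $x$), there are exactly $g_x+2$ non-bad incoming directed edges, and so the chip entered via \emph{all} of them, including $e_x^{\mathrm{in}}$ and every good non-$C$ edge. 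So your stated reasoning is false; the correct route to ``the chip entered along $e_x^{\mathrm{out}}$'' is this count, not the exclusion of the other edges. The problem compounds when you say the argument ``applies verbatim'' at $x''$: there the count is only $g_{x''}+1$ arrivals against $g_{x''}+2$ available non-bad incoming directed edges, so one edge genuinely goes unused and a separate input is required --- namely that the chip never departs $x$ along $e_x^{\mathrm{out}}$ (true because the rotor at $x$ sits at the first bad edge and has not completed a full turn), so the unused one must be $e_{x''}^{\mathrm{in}}$, not $e_{x''}^{\mathrm{out}}$. An analogous observation (the rotor at $x^{(j)}$ reaches $e_{x^{(j)}}^{\mathrm{in}}$ but no further, on pain of a bad crossing or an impossible count) is needed at every subsequent step of the cascade. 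You flag this bookkeeping as work to be done, but the specific deduction you wrote down does not hold, and the fix is not cosmetic --- it changes what you actually use Proposition~\ref{prop:nooverspins} for (from an exclusion principle to a bound enabling an exact arrival count, plus a separate ``rotor never returns to $e^{\mathrm{out}}$'' argument carried along the cascade).
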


Let $(G,\chi)$ be a plane graph, $f \in E(G)$, $\i(f) = \{c,s\}$, and $T \in \T(G)$. Apply Algorithm~\ref{alg:rotor} with input $T$ and $c-s$. Suppose that it takes $n$ steps through the while loop for the algorithm to terminate. For $k \in [0,n]$, let $c_k$ and $\rho_k$ be the chip location and rotor configuration respectively after $k$ passes through the while loop of Algorithm~\ref{alg:rotor}. 

\begin{lemma}\label{lem:looprev2}
In the construction above, suppose that for some $i \in [0,n]$, $\rho_i$ includes a directed cycle $C$. Then there exists some $j \in [0,n]$ such that $\rho_j$ includes all of the rotors of $C$ in reverse order.
\end{lemma}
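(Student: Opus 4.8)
The plan is to trace the chip's motion relative to $C$, using three ingredients: (i) single-chip rotor-routing always keeps at most one directed cycle, with the chip on it; (ii) Proposition~\ref{prop:nooverspins}, which forbids any rotor from making a full turn; and (iii) the planar ``barrier'' phenomenon behind Lemma~\ref{lem:looprev}. For (i), a one-step induction suffices: each pass through the while loop alters only the rotor at the chip's current vertex, so it can create a directed cycle only through that vertex, onto which the chip then immediately steps; hence $C$ is the unique cycle of $\rho_i$, the chip $c_i$ lies on $C$, and (as $s$ carries no rotor) $s\notin V(C)$. For (ii), I would note that over the whole run each rotor advances through its cyclic order without ever returning to a previously held value, so for any fixed directed cycle $C'$ the set of steps $t$ with $C'\subseteq\rho_t$ is an \emph{interval} of $[0,n]$.

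Next comes the planarity. Orient $C$; then $s$ lies strictly on one of its two sides, and I let $\overline R$ denote the closed complementary side not containing $s$ (so $C=\partial\overline R$ and $s\notin\overline R$). Reading the cyclic order $\chi(v)$ at each $v\in V(C)$ starting from the outgoing $C$-edge lists, in order, the edges strictly inside $\overline R$, then the incoming $C$-edge, then the edges strictly outside $\overline R$; this is the planar input used for Lemma~\ref{lem:looprev}. With it I would show that so long as $\rho_t$ contains a directed cycle whose $s$-free side is contained in $\overline R$, the chip cannot cross an edge out of $\overline R$; so the chip is confined to $\overline R$ while any such cycle persists, in particular right after $C$ forms. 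Since the algorithm ends with the chip at $s\notin\overline R$, there is a first step $j^\ast$ at which the chip leaves $\overline R$, doing so by rotating the rotor at some $v\in V(C)$ onto an edge strictly outside $\overline R$.

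The heart of the proof is to identify $\rho_{j^\ast-1}$. Because the chip first reaches an edge outside $\overline R$ at step $j^\ast$, the barrier argument shows the rotor at $v$ cannot already point strictly outside $\overline R$ at step $j^\ast-1$, so it is exactly the incoming $C$-edge at $v$ there. Then the monotonicity of (ii) together with the minimality of $j^\ast$ should force the rotor at \emph{every} vertex of $C$ to equal its incoming $C$-edge at step $j^\ast-1$: a rotor lagging short of that edge would leave a directed cycle through that vertex inside $\overline R$ in $\rho_{j^\ast-1}$, keeping the chip trapped and contradicting that it escapes at step $j^\ast$, while a rotor past that edge would have forced an earlier escape. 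This common configuration on $V(C)$ is precisely $C$ with every rotor reversed, so $j=j^\ast-1$ works. (Equivalently, while $C$ is present the process is a run of the rotor-router operation on a \emph{unicycle} supported in $\overline R\cup\{s\}$; such operations are eventually periodic, and the confinement forces the orbit through the unicycle whose cycle is the reverse of $C$ before the chip can reach $s$ — this is the perspective of~\cite{Holroyd,CCG}.)

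I expect the main obstacle to be making the planar barrier argument airtight: handling the degenerate cases where a vertex of $C$ has no incident edge strictly inside or strictly outside $\overline R$ (including the bi-directed-edge case), verifying carefully that a lagging rotor really does close a directed cycle back inside $\overline R$ rather than escaping it, and pinning down the exact step at which the chip leaves $\overline R$. This is exactly the kind of bookkeeping carried out in~\cite{CCG}, and the appendix develops the terminology of~\cite{Holroyd,CCG} needed to carry it through.
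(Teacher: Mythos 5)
You take a genuinely different route from the paper. The paper lifts everything to the unicycle $(\rho^*_k,c_k)$ and then leans on the unicycle results: Proposition~\ref{prop:reverse} and Lemma~\ref{lem:fullspins} produce a step $j\in[0,2m-1]$ with $(\rho^*_j,c_j)=(\overline{\rho^*_i},c_i)$, and showing $j\le n$ is done by a case split on which side of $C_i$ the sink lies, with the hard case eliminated by the counting claim $c_{2m-1}=s$. You instead stay in the single-chip picture and try to rebuild confinement-and-reversal directly from a planar barrier around $\overline R$, in effect re-deriving Lemma~\ref{lem:leftright}. That is a reasonable plan, but two steps of your sketch do not go through.

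First, the cyclic-order claim at each $v\in V(C)$ — outgoing $C$-edge, edges inside $\overline R$, incoming $C$-edge, edges outside $\overline R$ — is correct if and only if $\overline R$ lies to the \emph{left} of the directed cycle $C$, equivalently $s$ lies to its right. You write ``Orient $C$'' as though you may choose, but $C$ comes already oriented by its rotors. If $\overline R$ is on the right, the barrier never forms: the chip escapes $\overline R$ while most rotors on $V(C)$ still show their outgoing $C$-edge, so $\rho_{j^*-1}$ is not the reversal of $C$ and the argument yields nothing. Nothing in your sketch rules this orientation out; the paper's ``$s$ to the left'' case, which uses Lemma~\ref{lem:leftright} together with $c_{2m-1}=s$, exists precisely to do so, and it is the nontrivial half of the proof. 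Second, your ``heart of the proof'' — that at step $j^*-1$ \emph{every} rotor of $V(C)$ has reached its incoming $C$-edge — is asserted rather than established: a ``lagging'' rotor at a $w\in V(C)$ that the chip has not visited since step $i$, still at its outgoing $C$-edge, closes no directed cycle in $\overline R$, so the contradiction you invoke does not follow from what you have written. What actually forces every $V(C)$-rotor through its full left arc is exactly Lemma~\ref{lem:leftright}, which you cite only as a fallback and whose content the barrier argument as given does not reproduce. (A smaller issue: claim (ii) overstates Proposition~\ref{prop:nooverspins}, which permits a rotor to make exactly one full turn and hence return to a previously held value.)
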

See Appendix~\ref{app:unicycles} for proofs of Lemmas~\ref{lem:looprev} and~\ref{lem:looprev2}. 

We conclude this section with a technical lemma which gives conditions where the cyclic order of edges around a vertex can be rearranged without affecting specific rotor-routing outputs. This lemma will be used repeatedly in the proof of Theorem~\ref{thm:rrconsistent}. 

Let $c,s \in V(G)$ and $f \in E(G)$ with $\i(f) = \{c,s\}$. Let $T' = r_{(G,\chi)}(\onechip,T)$. Consider any $x \in V(G)$. We can write $\chi(x)$ in the form $(T_s\la x\ra, e_1,e_2,\dots, e_p, T'_s\la x\ra, \what e_1,\dots, \what e_l)$. Let $\sigma$ be a permutation on the set $[1,p]$ and let $\pi$ be a permutation on the set $[1,l]$. Define $\wtilde{\chi}$ to be a ribbon structure such that $\wtilde{\chi}(x) = (T_s\la x\ra, e_{\sigma(1)},e_{\sigma(2)},\dots, e_{\sigma(p)}, T'_s\la x\ra, \what e_{\pi(1)},\dots, \what e_{\pi(l)})$ and $\wtilde{\chi}(v) = \chi(v)$ for all other $v \in V(G)$. 

\begin{lemma}\label{lem:rearrange}
In the construction above, we get the following equality: 
\[\what r_{(G,\wtilde\chi,s)}(c-s,T) = \what r_{(G,\chi,s)}(c-s,T) = r_{(G,\chi)}([c-s],T).\] 
\end{lemma}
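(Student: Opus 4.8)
The rightmost equality I would dispose of first: it is simply the definition of $r$ in terms of $\what r$ together with Theorem~\ref{thm:rristorsor} (here $c-s\in\Div^0_s(G)$, so $r_{(G,\chi)}([c-s],T)=\what r_{(G,\chi,s)}(c-s,T)$). The real content is the equality $\what r_{(G,\wtilde\chi,s)}(c-s,T)=\what r_{(G,\chi,s)}(c-s,T)$: Algorithm~\ref{alg:rotor} on input $T$ and $c-s$ returns the same rotor configuration whether run with $\chi$ or with $\wtilde\chi$. Since $\wtilde\chi$ and $\chi$ agree at every vertex other than $x$, and each pass through the while loop consults the ribbon structure only at the current chip location, the two runs agree step for step for as long as the chip stays off $x$. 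In particular, if the chip of the $\chi$-run never reaches $x$, then neither does the chip of the $\wtilde\chi$-run, the two runs are identical, and moreover $T'_s\la x\ra = T_s\la x\ra$, forcing $p=0$ and an empty $e_i$-block; there is nothing to prove. So assume the chip visits $x$.

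The plan is to decompose each run into \emph{excursions}: maximal segments of the chip's walk lying strictly between consecutive visits to $x$ (the first running from $c$ to the first visit to $x$, the last from a departure from $x$ to $s$). Each excursion is exactly a rotor-routing computation with \emph{sink set} $\{s,x\}$, started from the then-current rotor configuration restricted to $V(G)\setminus\{s,x\}$; it never rotates the rotor at $x$, so it never consults $\chi(x)$ and therefore behaves identically under $\chi$ and under $\wtilde\chi$. By Proposition~\ref{prop:nooverspins} the rotor at $x$ turns by strictly less than a full turn over the whole $\chi$-run, so it sweeps monotonically $T_s\la x\ra\to e_1\to\cdots\to e_p\to T'_s\la x\ra$; hence the chip visits $x$ exactly $p+1$ times and the $\chi$-run is: route a chip from $c$ (absorbed at $x$, since the chip visits $x$); advance the rotor at $x$ to $e_1$ and route a chip from the endpoint $y_1$ of $e_1$ other than $x$ (absorbed at $x$); $\ldots$; advance to $e_p$ and route a chip from $y_p$ (absorbed at $x$); advance to $T'_s\la x\ra$ and route a chip from its far endpoint (absorbed at $s$). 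Write $\rho^\star$ for the configuration on $V(G)\setminus\{s,x\}$ after the first $p+1$ of these sub-routings; after the last one the configuration on $V(G)\setminus\{s,x\}$ equals the restriction of $T'_s$.

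Now I would invoke the abelian property of rotor-routing. Identifying $s$ and $x$ into a single sink vertex (any loops thus created are irrelevant to rotor-routing and may be deleted) turns rotor-routing with sink set $\{s,x\}$ into ordinary rotor-routing with one sink, so by~\cite[Corollary~2.6]{Holroyd} both the final configuration on $V(G)\setminus\{s,x\}$ and the number of chips absorbed at $s$ versus at $x$ are independent of the order in which a fixed multiset of chips is routed. Under $\wtilde\chi$ the same excursion picture applies, but the rotor at $x$ now sweeps $T_s\la x\ra\to e_{\sigma(1)}\to\cdots\to e_{\sigma(p)}\to T'_s\la x\ra$, so the chips routed come from $c$, then the far endpoints of $e_{\sigma(1)},\ldots,e_{\sigma(p)}$, then the far endpoint of $T'_s\la x\ra$ --- the \emph{same} multiset as in the $\chi$-run, merely reordered. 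In the $\chi$-run the first $p+1$ of these chips are all absorbed at $x$ and leave the configuration at $\rho^\star$; by the abelian property the first $p+1$ chips of the $\wtilde\chi$-run are also all absorbed at $x$ and leave the configuration at $\rho^\star$ (in particular the $\wtilde\chi$-run does not terminate early at $s$). Consequently the rotor at $x$ is advanced exactly $p+1$ times in the $\wtilde\chi$-run, ending at $T'_s\la x\ra$, and its last chip --- from the far endpoint of $T'_s\la x\ra$, routed from $\rho^\star$ --- undergoes the identical sub-routing to the $\chi$-run, so it is absorbed at $s$ and leaves the configuration on $V(G)\setminus\{s,x\}$ equal to $T'_s$ there. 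Thus the $\wtilde\chi$-run halts with rotor configuration $T'_s$, proving the claim.

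The crux is reconciling the \emph{dynamically generated} chip stream --- each excursion's launch point is dictated by where the previous excursion lands at $x$, together with the rotor at $x$ --- with the abelian property, which concerns a \emph{fixed} multiset routed in various orders. The key facts that make this work are that $\wtilde\chi$ permutes the $e_i$ only among themselves while fixing $T_s\la x\ra$ and $T'_s\la x\ra$, so the monotone sweep of the rotor at $x$ yields the same set of launch edges (hence the same multiset of launch points) in the two runs, and that the distinguished last excursion --- the one that reaches $s$ --- is launched along $T'_s\la x\ra$ in both runs, which is precisely the abelian-property statement that none of the first $p+1$ chips is absorbed at $s$. Verifying this last point cleanly takes a small extra step: route the full multiset $\{c\}\cup\{y_{e_1},\ldots,y_{e_p}\}$ with the $\wtilde\chi$-run's chips listed first, and compare the $s$-count to that of the $\chi$-run's order. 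Finally, one should check the degenerate cases $x=c$ (the first excursion is then empty) and $T_s\la x\ra=T'_s\la x\ra$ (handled in the first paragraph), both of which are covered by the argument above.
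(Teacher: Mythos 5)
Your argument takes a genuinely different route from the paper's. The paper proves the nontrivial equality by one minimality contradiction: suppose some rotor in the $\wtilde\chi$-run advances past its $\chi$-run final position $T'_s\la v\ra$; at the first such moment the chip must have just entered $v$ along a directed edge not traversed in the $\chi$-run (using Proposition~\ref{prop:nooverspins} to count distinct entries), which forces the rotor on the adjacent endpoint to have already passed \emph{its} own $T'_s$-position at an earlier moment, a contradiction. No excursion decomposition and no abelian property is needed. Your decomposition into excursions with sink set $\{s,x\}$ is a more structural argument that makes explicit why only the set, not the order, of the rotor positions swept at $x$ matters; it is instructive, but it imports machinery the paper's approach avoids.

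There is, however, a real gap in the abelian step. After identifying $s$ and $x$ into one sink $w$, \cite[Corollary~2.6]{Holroyd} gives order-independence of the final rotor configuration on $V(G)\setminus\{s,x\}$, but the merged picture has by construction forgotten whether any given chip was absorbed at $s$ or at $x$, so that corollary does \emph{not} give order-independence of the $s$-versus-$x$ absorption split. Yet this split invariance is exactly the fact you invoke to conclude that the $\wtilde\chi$-run does not terminate early at $s$ and hence that the rotor at $x$ is advanced all $p+1$ times before the final excursion; without it your argument does not close. The statement you need is true---it is the abelian property of rotor walk with several absorbing states, which also pins down the odometer---but it is strictly stronger than what Corollary~2.6 says and needs its own citation or a short self-contained argument.
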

\begin{proof}
We will prove the lemma by contradiction. The second equality follows from Definition~\ref{def:STA}. For the first equality, suppose that for some vertex $v \in V(G)$, the rotor at $v$ moves past $T'_s\la v\ra$ on $(G,\wtilde\chi)$ (otherwise the proof works analogously after switching the roles of $(G,\chi)$ and $(G,\wtilde\chi)$). Consider the first instance where such an event occurs. At this moment, the chip must have reached $v$ more times than it did during rotor-routing on $(G,\chi)$. Because no edge is crossed more than once in each direction (by Proposition~\ref{prop:nooverspins}), the chip must enter $v$ along an edge that it did not cross during rotor-routing on $(G,\chi)$. It follows that the rotor on some vertex $y$ that is adjacent to $v$ must have already passed $T'_s \la y \ra$. This contradicts our minimality assumption. 
\end{proof}

\section{Rotor-Routing is Consistent}\label{sec:consistent}

In this section, we introduce the notion of \emph{consistency} of sandpile torsor algorithms and show that the rotor-routing algorithm is consistent. 

Throughout this section, $(G,\chi)$ is a plane graph, $f \in E(G)$, and $\i(f) = \{c,s\}$. For $e \in E(G)$, we write $G \setminus e$ for the graph we obtain from $G$ by \emph{deleting} edge $e$ and $G/e$ for the graph we obtain by \emph{contracting} the edge $e$. After contracting $e$, we also remove all loop edges because these cannot occur on any spanning tree and have no effect on the rotor-routing algorithm. 

\begin{definition}\label{def:condelchi} Let $(G,\chi)$ be a ribbon graph with $e \in E(G)$, where $\i(e) = \{x,y\}$. 
\begin{itemize}
    \item Define $\chi\setminus e$ to be equal to $\chi$, except with $e$ removed from $\chi(x)$ and $\chi(y)$.
    \item Suppose that after removing edges parallel to $e$, $\chi(x) = (e,e_1,e_2,\dots,e_p)$ and $\chi(y) = (e,\what e_1,\what e_2,\dots,\what e_l)$. Define $\chi/e$ to be equal to $\chi$ except that on the contracted edge $z$, we have 
    \[(\chi/e)(z) := (e_1,e_2,\dots,e_p,\what e_1,\what e_2,\dots,\what e_l).\] 
\end{itemize}
\end{definition}

The following lemma is a simple exercise. 

\begin{lemma}
If $(G,\chi)$ is a plane graph, then $(G\setminus e,\chi \setminus e)$ and $(G/ e,\chi /e)$ are also plane graphs.
\end{lemma}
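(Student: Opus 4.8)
The plan is to prove the statement that if $(G,\chi)$ is a plane graph, then both $(G \setminus e, \chi \setminus e)$ and $(G/e, \chi/e)$ are plane graphs. Since a plane graph is precisely a ribbon graph $(G,\chi)$ that admits a crossing-free drawing in the plane such that $\chi(v)$ lists the edges incident to $v$ in counterclockwise order at every vertex, the natural strategy is to start from such a drawing of $(G,\chi)$ and produce a corresponding drawing for each of the two modified ribbon graphs.

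First I would handle the deletion case. Given a planar embedding realizing $(G,\chi)$, simply erase the curve representing $e$ from the drawing. The result is a crossing-free drawing of $G \setminus e$, and at each vertex $x$ the cyclic counterclockwise order of the remaining incident edges is exactly $\chi(x)$ with $e$ removed, which is by definition $(\chi \setminus e)(x)$. For vertices not incident to $e$ nothing changes. Hence $(G \setminus e, \chi \setminus e)$ is a plane graph. This case is essentially immediate.

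Next I would handle the contraction case, which is the one requiring care. Starting from the planar embedding of $(G,\chi)$, with $\i(e) = \{x,y\}$, contract the edge $e$ by sliding $y$ along the curve of $e$ until it merges with $x$, creating the new vertex $z$; this can be done within a small neighborhood of the curve of $e$ so that no new crossings are introduced and only the portion of the picture near $e$ is disturbed. One must first deal with the parallel edges to $e$: any edge $f$ with $\i(f) = \{x,y\} = \i(e)$ becomes a loop at $z$ and is removed (as stipulated in the definition of $G/e$), and this removal does not affect planarity or the cyclic orders at other vertices. The key point is to verify that the counterclockwise cyclic order of edges around the new vertex $z$ in the resulting drawing is exactly $(e_1,\dots,e_p,\what e_1,\dots,\what e_l)$, i.e., the order $\chi/e$ prescribes. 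Here one uses that, in the embedding, reading counterclockwise around $x$ starting just after $e$ gives $e_1,\dots,e_p$ and then returns to $e$, while reading counterclockwise around $y$ starting just after $e$ gives $\what e_1,\dots,\what e_l$ and returns to $e$; when $x$ and $y$ are merged along $e$, the two fans $e_1,\dots,e_p$ (on one side of the contracted arc) and $\what e_1,\dots,\what e_l$ (on the other side) concatenate into a single counterclockwise fan around $z$. A small picture makes this transparent.

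The main obstacle — though really it is more of a bookkeeping subtlety than a genuine difficulty — is getting the orientation/side conventions right in the contraction step, namely confirming that the concatenation order is $e_1,\dots,e_p$ followed by $\what e_1,\dots,\what e_l$ rather than the reverse of one of the blocks, and checking the degenerate cases (when $x$ or $y$ has degree small, or when $e$ has parallel copies so that some of the $e_i$ or $\what e_j$ are deleted loops). Since the paper already calls this ``a simple exercise,'' I would keep the write-up brief: give the erase-the-curve argument for deletion in one sentence, and for contraction describe sliding $y$ into $x$ along $e$ within a neighborhood of $e$, note that parallel edges become loops and are discarded, and observe that the two counterclockwise edge-fans at $x$ and $y$ delimited by $e$ glue into the single fan $(e_1,\dots,e_p,\what e_1,\dots,\what e_l)$ around $z$, which is $(\chi/e)(z)$, with all other cyclic orders unchanged.
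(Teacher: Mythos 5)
Your proof is correct and is exactly the elementary geometric argument the paper has in mind when it calls the lemma "a simple exercise" (the paper supplies no proof of its own): erase the curve for deletion, shrink the arc of $e$ for contraction, and observe that the two counterclockwise edge-fans at $x$ and $y$, each delimited by $e$, splice into the single cyclic order $(e_1,\dots,e_p,\what e_1,\dots,\what e_l)$ around $z$. One small wording nit: the two fans are better described as emanating from the two \emph{ends} of the contracted arc (near $x$ and near $y$) rather than lying ``on one side'' and ``on the other side'' of it, since each fan actually wraps around its endpoint and meets both sides of $e$; the concatenation claim itself is correct as stated.
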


We will refer to any plane graph $(G',\chi')$ that can be obtained by contracting and deleting edges of $(G,\chi)$ as a \emph{minor} of the plane graph $(G,\chi)$. Note that whenever $(G',\chi')$ is a minor of $(G,\chi)$, there is a natural surjective map from $V(G)$ to $V(G')$ and from $E(G)$ to $E(G')$. When there is no ambiguity, we will use the same labeling for the vertices and edges of $G$ as we do for their image under this map. 

\begin{definition}\label{def:consistent}
A sandpile torsor algorithm $\alpha$ is \emph{consistent} if for every plane graph $(G,\chi)$, every choice of $f \in E(G)$, and every choice of $T \in \T(G)$, the following three properties hold (where we define $\{c,s\} = \i(f)$):
\begin{enumerate}
\item For any $e \in E(G)$ such that $\i(e) \neq \{c,s\}$, if $e \in T\cap \alpha_{(G,\chi)}(\onechip,T)$, then
    \[\alpha_{(G,\chi)}(\onechip,T)\setminus {e} = \alpha_{(G/ {e},\chi/e)}(\onechip,T\setminus e).\]
\item For any $e \in E(G)$, if $e \notin T\cup \alpha_{(G,\chi)}(\onechip,T)$, then
    \[ \alpha_{(G,\chi)}(\onechip,T) = \alpha_{(G\setminus {e},\chi\setminus {e})}(\onechip,T).\]
    \item For any $e \in E(G)\setminus f$, if there is a cut vertex $x$ such that all paths from $e$ to $f$ pass through $x$, then
    \[ e \in T \Longleftrightarrow e \in \alpha_{(G,\chi)}(\onechip, T).\]
    \end{enumerate}
\end{definition}

\begin{figure}
\begin{center}
\begin{tikzpicture}
    
    \tikzstyle{pt} = [circle,fill,inner sep=1pt,minimum size = 1.5mm]
    \tikzstyle{coin} = [draw,circle,inner sep=1pt,minimum size = 2mm]
    
    \begin{scope}[shift={(0,0)}]
    \node[pt] (ha) at (0,0) {};
    \node[pt](hb) at (1,0){} ;
    \node[pt,label={west:{$s$}}](hc) at (0,1){};
    \node[pt,label={east:{$c$}}](hd) at (1,1){};
    \end{scope} 
    
    \begin{scope}[shift={(2.5,0)}]
    \node[pt] (a) at (0,0) {};
    \node[pt](b) at (1,0){} ;
    \node[pt](c) at (0,1){};
    \node[coin](d) at (1,1){};
    \end{scope} 
    
    \begin{scope}[shift={(5,0)}]
    \node[coin](2a) at (0,0) {};
    \node[pt](2b) at (1,0){} ;
    \node[pt](2c) at (0,1){};
    \node[pt](2d) at (1,1){};
    \end{scope}
    
    \begin{scope}[shift={(7.5,0)}]
    \node[pt](3a) at (0,0) {};
    \node[pt](3b) at (1,0){} ;
    \node[coin](3c) at (0,1){};
    \node[pt](3d) at (1,1){};
    \end{scope}
    
    \begin{scope}[shift={(10,0)}]
    \node[pt](3ea) at (0,0) {};
    \node[pt](3eb) at (1,0){} ;
    \node[pt,label={west:{$s$}}](3ec) at (0,1){};
    \node[pt,label={east:{$c$}}](3ed) at (1,1){};
    \end{scope}
    
    \begin{scope}[shift={(0,-3)}]
    \node[pt] (4ha) at (0,0) {};
    \node[pt,label={west:{$s$}}](4hc) at (0,1){};
    \node[pt,label={east:{$c$}}](4hd) at (1,1){};
    \end{scope}
    
    \begin{scope}[shift={(2.5,-3)}]
    \node[pt](4a) at (0,0) {};
    \node[pt](4c) at (0,1){};
    \node[coin](4d) at (1,1){};
    \end{scope}
    
    \begin{scope}[shift={(5,-3)}]
    \node[coin](5a) at (0,0) {};
    \node[pt](5c) at (0,1){};
    \node[pt](5d) at (1,1){};
    \end{scope}
    
    \begin{scope}[shift={(7.5,-3)}]
    \node[pt](6a) at (0,0) {};
    \node[coin](6c) at (0,1){};
    \node[pt](6d) at (1,1){};
    \end{scope}
    
    \begin{scope}[shift={(10,-3)}]
    \node[pt](6ea) at (0,0) {};
    \node[pt,label={west:{$s$}}](6ec) at (0,1){};
    \node[pt,label={east:{$c$}}](6ed) at (1,1){};
    \end{scope}
    
    \begin{scope}[shift={(0,-6)}]
    \node[pt](7ha) at (0,0) {};
    \node[pt](7hb) at (1,0){} ;
    \node[pt,label={west:{$s$}}](7hc) at (0,1){};
    \node[pt,label={east:{$c$}}](7hd) at (1,1){};
    \end{scope}
    
    \begin{scope}[shift={(2.5,-6)}]
    \node[pt](7a) at (0,0) {};
    \node[pt](7b) at (1,0){} ;
    \node[pt](7c) at (0,1){};
    \node[coin](7d) at (1,1){};
    \end{scope}
    
    \begin{scope}[shift={(5,-6)}]
    \node[coin](8a) at (0,0) {};
    \node[pt](8b) at (1,0){} ;
    \node[pt](8c) at (0,1){};
    \node[pt](8d) at (1,1){};
    \end{scope}
    
    \begin{scope}[shift={(7.5,-6)}]
    \node[pt](9a) at (0,0) {};
    \node[pt](9b) at (1,0){} ;
    \node[coin](9c) at (0,1){};
    \node[pt](9d) at (1,1){};
    \end{scope}
    
    \begin{scope}[shift={(10,-6)}]
    \node[pt](9ea) at (0,0) {};
    \node[pt](9eb) at (1,0){} ;
    \node[pt,label={west:{$s$}}](9ec) at (0,1){};
    \node[pt,label={east:{$c$}}](9ed) at (1,1){};
    \end{scope}

    
    
    
    
    
    
    
    

    \tikzstyle{every node} = [draw = none,fill = none,scale = .8]
    
    \node(o) at (0.5,1.5){\Large$T$}; \node(o) at (10.5,1.5) {\Large$T'$};
    
    \node(o) at (0.5,-1.5){\Large$T\setminus {e_1}$};
    \node(o) at (10.5,-1.5) {\Large$T'\setminus {e_1}$};
    
    \node(o) at (0.5,-4.5){\Large$T$};
    \node(o) at (10.5,-4.5){\Large$T'$};
    
    \node(o) at (-1.5,0.5){\Large(a):};
    \node(o) at (-1.5,-2.5){\Large(b):};
    \node(o) at (-1.5,-5.3){\Large(c):};
    
    \draw[dotted] (hc) -- (ha) -- node [below]{$e_2$}(hb);
    
    \draw[dotted] (c) -- (a) -- (b);
    
    \draw[dotted] (2d) -- (2c) -- (2a) -- (2b);
    
    \draw[dotted] (3d) -- (3c);
    \draw[dotted] (3a) -- (3b);
    
    \draw[dotted] (3ed) -- (3ec);
    \draw[dotted] (3ea) -- (3eb);
    
    \draw[dotted] (4hc) -- (4ha);
    \draw[dotted] (4ha) to [bend right = 45] (4hd);
    
    \draw[dotted] (4c) -- (4a);
    \draw[dotted] (4a) to [bend right = 45] (4d);
    
    \draw[dotted] (5d) -- (5c) -- (5a);
    \draw[dotted] (5a) to [bend right = 45] (5d);
    
    \draw[dotted] (6d) -- (6c);
    \draw[dotted] (6a) to [bend right = 45] (6d);
    
    \draw[dotted] (6ed) -- (6ec);
    \draw[dotted] (6ea) to [bend right = 45] (6ed);
    
    \draw[dotted] (7hc) -- (7ha);
    
    \draw[dotted] (7c) -- (7a);
    
    \draw[dotted] (8d) -- (8c) -- (8a);
    
    \draw[dotted] (9d) -- (9c);
    
    \draw[dotted] (9ed) -- (9ec);
    
    
    
    
    \draw (ha) -- (hd);
    \draw (hb) -- node[right]{$e_1$}(hd);
    \draw (hd) -- (hc) ;
    
    \draw [-{Latex[length=2mm,width=1.5mm]}](a) --(d);
    \draw [-{Latex[length=2mm,width=1.5mm]}](b) -- (d);
    \draw [-{Latex[length=2mm,width=1.5mm]}](d) -- (c) ;
    
    \draw [-{Latex[length=2mm,width=1.5mm]}](2a) --(2d);
    \draw [-{Latex[length=2mm,width=1.5mm]}](2b) --(2d);
    \draw [-{Latex[length=2mm,width=1.5mm]}](2d) --(2a);
    
    \draw [-{Latex[length=2mm,width=1.5mm]}](3a) --(3c);
    \draw [-{Latex[length=2mm,width=1.5mm]}](3b) --(3d);
    \draw [-{Latex[length=2mm,width=1.5mm]}](3d) --(3a);
    
    \draw (3ea) --(3ec);
    \draw (3eb) --(3ed);
    \draw (3ed) --(3ea);
    
    \draw (4ha) --(4hd);
    \draw (4hd) --(4hc);
    
    \draw [-{Latex[length=2mm,width=1.5mm]}](4a) --(4d);
    \draw [-{Latex[length=2mm,width=1.5mm]}](4d) --(4c);
    
    \draw [-{Latex[length=2mm,width=1.5mm]}](5a) --(5d);
    \draw [-{Latex[length=2mm,width=1.5mm]}](5d) --(5a);
    
    \draw [-{Latex[length=2mm,width=1.5mm]}](6a) --(6c);
    \draw [-{Latex[length=2mm,width=1.5mm]}](6d) --(6a);
    
    \draw (6ea) --(6ec);
    \draw (6ed) --(6ea);
    
    \draw (7ha) --(7hd);
    \draw (7hb) --(7hd);
    \draw (7hd) --(7hc);
    
    \draw [-{Latex[length=2mm,width=1.5mm]}](7a) --(7d);
    \draw [-{Latex[length=2mm,width=1.5mm]}](7b) --(7d);
    \draw [-{Latex[length=2mm,width=1.5mm]}](7d) --(7c);
    
    \draw [-{Latex[length=2mm,width=1.5mm]}](8a) --(8d);
    \draw [-{Latex[length=2mm,width=1.5mm]}](8b) --(8d);
    \draw [-{Latex[length=2mm,width=1.5mm]}](8d) --(8a);
    
    \draw [-{Latex[length=2mm,width=1.5mm]}](9a) --(9c);
    \draw [-{Latex[length=2mm,width=1.5mm]}](9b) --(9d);
    \draw [-{Latex[length=2mm,width=1.5mm]}](9d) --(9a);
    
    \draw (9ea) --(9ec);
    \draw (9eb) --(9ed);
    \draw (9ed) --(9ea);
    
\end{tikzpicture}
\caption{An illustration of the implementation and consistency of the rotor-routing algorithm on the graph $(G,\chi)$, where $\chi$ denotes counterclockwise rotation. We denote the chip by a hollow vertex. Figure~\ref{fig:consistent}(a) demonstrates how Algorithm \ref{alg:rotor} can be used to compute $T' := r_{(G,\chi)}([c-s],T)$. In Figure~\ref{fig:consistent}(b), the rotor-routing algorithm commutes with contraction: $T'/e_1 = r_{(G/e_1,\chi/e_1)}([c-s],T \setminus e_1)$ (see Definition \ref{def:consistent} (1)). In Figure~\ref{fig:consistent}(c), the rotor-routing algorithm commutes with deletion: $T' = r_{(G\setminus e_2,\chi\setminus e_2)}([c-s],T)$ (see Definition \ref{def:consistent} (2)). See Clips 8-10 from \url{https://www.youtube.com/watch?v=tSdVSk5o4Kg} for animated examples of consistency on a larger plane graph (note that the video uses a clockwise convention for rotor-routing).}
\label{fig:consistent}
\end{center}
\end{figure}
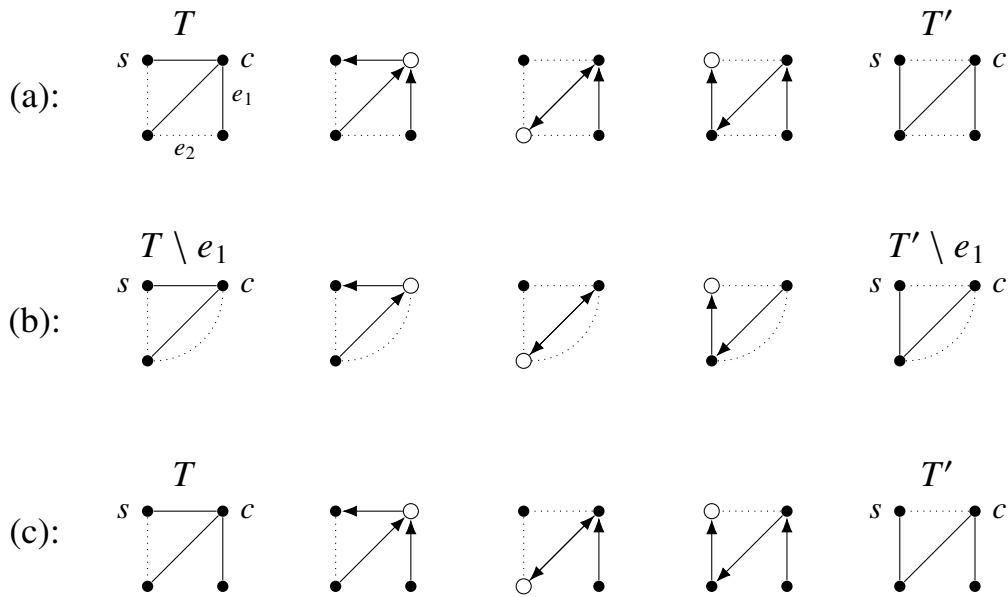

The first two conditions of Definition~\ref{def:consistent} are illustrated by Figure~\ref{fig:consistent}. The third condition says that we can treat two subgraphs joined at a vertex independently. 

\begin{prop}\label{prop:rrpart3}
Condition 3 of Definition~\ref{def:consistent} is satisfied by the rotor-routing algorithm. 
\end{prop}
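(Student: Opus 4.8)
The plan is to verify Condition~3 for rotor-routing ($\alpha = r$) by exploiting sink-independence (Theorem~\ref{thm:rristorsor}): I will take the cut vertex $x$ itself as the sink and pick a representative of $\onechip$ supported entirely on the side of $x$ containing $f$. Rotor-routing with that data never crosses to the $e$-side of $x$, so every rotor based there is left fixed; in particular the subtree on that side is unchanged, and hence so is membership of $e$. Throughout write $T' := r_{(G,\chi)}(\onechip,T)$.

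First I would decompose at $x$. Since every $e$--$f$ path passes through $x$, let $G_1$ be the subgraph of $G$ induced on $\{x\}$ together with the component of $G\setminus x$ containing an endpoint of $e$ other than $x$, and let $G_2$ be the subgraph induced on the remaining vertices together with $x$. Then $G_1\cap G_2 = \{x\}$, $E(G) = E(G_1)\sqcup E(G_2)$, and $e\in E(G_1)$; moreover $f\notin E(G_1)$ (otherwise an endpoint of $e$ could reach an endpoint of $f$ within that component, giving an $e$--$f$ path avoiding $x$), so $f\in E(G_2)$ and $c,s\in V(G_2)$. Set $B := V(G_1)\setminus\{x\}$; since $e\in E(G_1)$ we have $\i(e)\setminus\{x\}\subseteq B$. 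Next, applying Lemma~\ref{lem:anysink} to $G_2$ with sink $x$ yields $D\in\Div^0_x(G_2)$ with $[D] = [c-s]$ in $\Pic^0(G_2)$, which I view as a divisor of $G$ supported on $V(G_2)$. The key observation is that $[D] = [c-s]$ holds in $\Pic^0(G)$ as well: by the proof of Lemma~\ref{lem:anysink}, $D - (c-s)$ is a $\Z$-combination of firing vectors of vertices of $V(G_2)\setminus\{x\}$, and for each such vertex every incident edge lies in $G_2$, so its firing vector in $G$ coincides with its firing vector in $G_2$; hence $D - (c-s)\in\im_\Z(\Delta_G)$.

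With this representative, sink-independence gives $T' = r_{(G,\chi)}([D],T) = \what r_{(G,\chi,x)}(D,T)$. Writing $D = \sum_{j=1}^N (c_j - x)$ with each $c_j\in V(G_2)\setminus\{x\}$ and evaluating $\what r_{(G,\chi,x)}(D,T)$ as $N$ successive runs of Algorithm~\ref{alg:rotor}, each starting a chip at some $c_j$: in every run the chip begins in $V(G_2)$ and the run halts the moment the chip reaches the sink $x$; since $x$ is a cut vertex and every edge incident to a vertex of $V(G_2)\setminus\{x\}$ lies in $E(G_2)$, the chip cannot enter $B$ without first stopping at $x$. Therefore no rotor based at a vertex of $B$ is ever turned, so $T'_x\langle v\rangle = T_x\langle v\rangle$ for all $v\in B$. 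To conclude: for any spanning tree $S$ of $G$, Lemma~\ref{lem:acyclic} gives that $e\in S$ if and only if $S_x\langle v\rangle = e$ for some endpoint $v$ of $e$, and since $x$ carries no rotor while $\i(e)\setminus\{x\}\subseteq B$, such a $v$ must lie in $B$. Applying this to $S = T$ and $S = T'$ and using $T'_x|_B = T_x|_B$ gives $e\in T \iff e\in T'$, which is Condition~3.

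The step I expect to be the crux is the construction of $D$: confirming that $\onechip$ admits a representative supported off the $e$-side of $x$ \emph{as an element of $\Pic^0(G)$} (not merely of $\Pic^0(G_2)$) — equivalently, that $c-s$ can be pushed onto the $f$-side of $x$ without ever firing $x$. Once this is in place, the ``sink at $x$'' device makes the remaining steps routine, and notably none of the planar lemmas of Section~\ref{sec:funfacts} is needed.
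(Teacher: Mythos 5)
Your proof is correct and hits the same central idea as the paper's: pick the cut vertex $x$ as the sink, find a representative of $[c-s]$ in $\Div^0_x(G)$ supported away from the $e$-side of $x$, and observe that Algorithm~\ref{alg:rotor} with sink $x$ then never crosses into the $e$-side, so those rotors — and hence membership of $e$ — are fixed. The one place where you diverge from the paper is the construction of the representative $D$. The paper works directly on $G$, defining a divisor $\delta_x$ supported only on the component of $G\setminus x$ containing $s$ and running a Dhar-style stabilization, then arguing that vertices in the other components never gain chips. You instead apply Lemma~\ref{lem:anysink} verbatim to the induced subgraph $G_2$ (the union of $x$ with all components of $G\setminus x$ not containing $e$) and then lift the result to $G$ by noting that for every non-$x$ vertex of $G_2$ its firing vector in $G$ equals its firing vector in $G_2$. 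Both constructions are valid; yours is a bit more modular (it reuses Lemma~\ref{lem:anysink} unchanged rather than reproving a variant), while the paper's avoids introducing a subgraph and the associated lifting argument. Either way, you correctly identify and close the one subtle point, namely that equivalence of $D$ and $c-s$ must hold in $\Pic^0(G)$, not merely in $\Pic^0(G_2)$.
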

\begin{proof}
Let $x$ be a cut vertex. We say $x$ \emph{separates} two edges or vertices $g$ and $h$ if all paths between $g$ and $h$ must contain $x$. We claim that there is some $D \in \Div^0_x(G)$ such that $[D] = [c-s]$ and $D = \sum_{v \in V} (v - x)$, where $V$ is a multiset of vertices which $x$ does not separate from $f$. Assume that the claim holds. Then, by Definition \ref{def:STA}, it follows that
\[r_{(G,\chi)}([c-s],T) = \what r_{(G,\chi,x)}(D,T).\]

When rotor-routing with sink $x$, all chips halt upon reaching $x$. As a result, any rotor that $x$ separates from $f$ will not rotate during the implementation of Algorithm \ref{alg:rotor}. It follows that for any edge $e$ that is separated from $f$, we have 
\[e \in T \Longleftrightarrow e \in \what r_{(G,\chi,x)}(D,T) \Longleftrightarrow e \in r_{(G,\chi)}([c-s],T).\]

To prove the claim, we use a modified version of the proof of Lemma~\ref{lem:anysink}. If $s = x$, we can take $D = c-s$. Otherwise, let $V_1 \subseteq V(G \setminus x)$ be the set of vertices in the same component as $s$ in $G \setminus x$, and let $V_2 = V(G \setminus x) \setminus V_1$. Let $\delta_x \in \Div_x^0(G)$ be the divisor which places $\deg(v)$ chips on every $v \in V_1$, 0 chips on every $w \in V_2$, and $-\sum_{v \in V_1} \deg(v)$ chips on $x$. Define $\delta_x^\circ$ to be the divisor we obtain after repeatedly firing any vertex $v$ with at least $\deg(v)$ many chips. Notice that $\delta_x - \delta_x^\circ$ has a positive value on each $v \in V_1$ and is equal to 0 on each $w \in V_2$. Furthermore, $[\delta_x - \delta_x^\circ]= 0$. We prove the claim by letting $D = c - s + \delta_x - \delta_x^\circ$. 
\end{proof}

By requiring consistent sandpile torsor algorithms to satisfy Condition 3, we simplify many of the arguments in Section~\ref{sec:unique}. However, we are not convinced that this condition is necessary. 
\begin{question}\label{quest:con3unneeded}
Is Condition 3 of Definition~\ref{def:consistent} implied by Conditions 1 and 2? 
\end{question}

\begin{theorem}\label{thm:rrconsistent} The rotor-routing sandpile torsor algorithm is consistent. 
\end{theorem}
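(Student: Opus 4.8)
The task is to verify the three conditions of Definition~\ref{def:consistent} for the rotor-routing algorithm $r$. Condition~3 is exactly Proposition~\ref{prop:rrpart3}, so everything rests on Conditions~1 and~2. For both, fix $f\in E(G)$ with $\i(f)=\{c,s\}$ and $T\in\T(G)$, and put $T':=r_{(G,\chi)}(\onechip,T)$, which we evaluate by running Algorithm~\ref{alg:rotor} on $(G,\chi)$ with input $(T,c-s)$, bookkeeping the chip's trajectory together with the intermediate rotor configurations. The workhorse observation, a consequence of Proposition~\ref{prop:nooverspins}, is that at each vertex $v$ the rotor advances monotonically and by strictly less than a full turn, from $T_s\la v\ra$ to $T'_s\la v\ra$; hence the chip leaves $v$ exactly across the edges lying strictly after $T_s\la v\ra$ and up to and including $T'_s\la v\ra$ in the cyclic order $\chi(v)$ (an empty set if the chip never visits $v$). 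We will additionally invoke Lemmas~\ref{lem:looprev} and~\ref{lem:looprev2} to control directed cycles that appear transiently, and Lemma~\ref{lem:rearrange} to normalize $\chi$ at the vertices touched by a contraction or deletion without altering the output of Algorithm~\ref{alg:rotor}.

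\textbf{Deletion (Condition~2).} Assume $e\notin T\cup T'$. A bridge lies in every spanning tree, so $e$ is not a bridge; thus $G\setminus e$ is connected and $T,T'\in\T(G\setminus e)$. The crux is to show that the chip never crosses $e$ during the run of Algorithm~\ref{alg:rotor} on $(G,\chi)$. By the monotone-sweep observation and the hypotheses $e\neq T_s\la v\ra$, $e\neq T'_s\la v\ra$ at both endpoints $v$ of $e$, the chip could cross $e$ out of $v$ only if the rotor at $v$ passed over $e$ as a strictly intermediate position; but then the rotor at $v$ must later advance off $e$, forcing the chip to re-enter $v$, and tracking the ensuing configurations with Lemmas~\ref{lem:looprev} and~\ref{lem:looprev2} contradicts $e\notin T'$ (the case $\i(e)=\{c,s\}$, i.e.\ $e$ parallel to $f$, is handled separately: such an $e$ can only be crossed on the terminal step and only when $e\in T'$). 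Granting that the chip never crosses $e$, the cyclic orders $\chi$ and $\chi\setminus e$ differ only at the endpoints of $e$ and only at positions their rotors never occupy, so Algorithm~\ref{alg:rotor} on $(G\setminus e,\chi\setminus e)$ with input $(T,c-s)$ reproduces the same trajectory and terminates in the same configuration $T'_s$. This is the asserted equality $\alpha_{(G,\chi)}(\onechip,T)=\alpha_{(G\setminus e,\chi\setminus e)}(\onechip,T)$.

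\textbf{Contraction (Condition~1).} Assume $e\in T\cap T'$ with $\i(e)=\{x,y\}\neq\{c,s\}$, and let $z$ be the vertex of $G/e$ produced from $x$ and $y$ (after deleting loops and parallels as in Definition~\ref{def:condelchi}). Relabeling if necessary, $y$ lies on the $x$-to-$s$ path in $T$, so $T_s\la x\ra=e$, the tree $T\setminus e$ lies in $\T(G/e)$, and $(T\setminus e)_s\la z\ra=T_s\la y\ra$. The plan is to put the run of Algorithm~\ref{alg:rotor} on $(G,\chi)$ with input $(T,c-s)$ in bijection with its run on $(G/e,\chi/e)$ with input $(T\setminus e,c-s)$: the two runs agree step-for-step whenever the chip is outside $\{x,y\}$, and each maximal stretch of the $G$-run during which the chip bounces inside $\{x,y\}$ across $e$ collapses to a single sojourn of the chip at $z$. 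The delicate point is that the single rotor at $z$ must reproduce the interleaving of two independent sweeps --- the rotor at $x$ through $(e_1,\dots,e_p)$ and the rotor at $y$ through $(\what e_1,\dots,\what e_l)$ --- whereas the starting rotor $T_s\la y\ra$ sits somewhere inside the concatenation $(\chi/e)(z)=(e_1,\dots,e_p,\what e_1,\dots,\what e_l)$ rather than at a place where the interleaving is literal. We resolve this by first applying Lemma~\ref{lem:rearrange} to $(G,\chi)$ to permute the blocks of $\chi(x)$ and $\chi(y)$ between the $T_s$- and $T'_s$-rotors into a form compatible with that concatenation, after which the two runs correspond exactly. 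One then checks that every edge outside $\{x,y\}$ is crossed the same number of times on both sides, that any transient directed cycle occurs identically on both sides by Lemma~\ref{lem:looprev2}, and concludes that the $G/e$-run terminates in $T'_s$ with $e$ deleted, i.e.\ $\alpha_{(G,\chi)}(\onechip,T)\setminus e=\alpha_{(G/e,\chi/e)}(\onechip,T\setminus e)$. The verification subdivides according to whether $e$ has the same orientation in $T_s$ and $T'_s$, whether $x$ or $y$ coincides with $c$, and whether $G$ has edges parallel to $e$.

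\textbf{The main obstacle.} The real difficulty is Condition~1: making a single rotor at the contracted vertex simulate the two-rotor dynamics at $x$ and $y$ when their natural starting points do not align, uniformly over the orientation, location-of-$c$, and parallel-edge cases. Lemma~\ref{lem:rearrange} is precisely the device that makes the alignment possible, so the crux is checking that its hypotheses apply in each case, together with the transient-cycle bookkeeping via Lemma~\ref{lem:looprev2}. The other genuinely nontrivial step, as opposed to bookkeeping, is the claim in Condition~2 that the chip never meets a non-tree edge that is also absent from the output tree.
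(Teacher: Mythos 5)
Your decomposition into the two nontrivial conditions and a reference to Proposition~\ref{prop:rrpart3} for Condition 3 matches the paper's strategy, and your contraction sketch (normalize $\chi$ near $\{x,y\}$ with Lemma~\ref{lem:rearrange}, then pair the two runs step-for-step, collapsing stretches where the chip bounces across $e$) is essentially the paper's handling of its Cases 1--3. But the deletion argument has a genuine gap.

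You assert as a ``crux'' that, for $e\notin T\cup T'$, the chip never crosses $e$, and you try to derive a contradiction from the alternative using Lemmas~\ref{lem:looprev} and~\ref{lem:looprev2}. That claim is simply false. The paper's proof explicitly handles the case (its Case 5) in which the chip traverses $e$ once in each direction and yet $e$ belongs to neither $T$ nor $T'$; this situation really does occur and yields \emph{no} contradiction. Your chain of reasoning --- the rotor at $v$ passes $e$, the chip must later re-enter $v$, therefore contradiction --- is not a deduction: re-entering $v$ (in fact, typically re-entering along $e$ itself from the other endpoint) is exactly what happens, and Lemma~\ref{lem:looprev2} only forbids a transient directed cycle that is \emph{not} eventually reversed, which is a different thing. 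What the paper shows is a trichotomy: either $e$ is never crossed (its Case 4, your Case), or $e$ is crossed twice (Case 5, which you omit), or $e$ is crossed exactly once (Case 6, which \emph{is} impossible, and this is where Lemma~\ref{lem:looprev2} actually gets used). The twice-crossed case needs its own nontrivial argument: one rearranges $\chi$ at the endpoint via Lemma~\ref{lem:rearrange} so that after crossing $e$ the chip bounces straight back, at which point deleting $e$ just elides a null out-and-back and the trajectories coincide. Without handling this case, Condition~2 is not established.

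A smaller inaccuracy: your ``monotone-sweep observation'' claims each rotor advances by strictly less than a full turn, but Proposition~\ref{prop:nooverspins} only bounds it by \emph{at most} a full turn, and the exactly-one-full-turn case genuinely occurs (it is the paper's Case 2 for contraction, where $T_s\la x\ra=T'_s\la x\ra$ but the rotor moves through all intermediate positions). Your framework should allow for it.
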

\begin{proof}
We begin with a paragraph giving the outline of our argument. We reduce the result to 6 specific cases. If an edge $e$ is in both $T$ and $T' := r_{(G,\chi)}(\onechip,T)$, then the rotor at $e$ may be untouched during rotor-routing, it may complete a full rotation, or it could be oriented in opposite directions in $T_s$ and $T'_s$ (note that by Proposition \ref{prop:nooverspins}, the rotor will never move more than a full rotation). When the edge $e$ is in neither tree, the travelling vertex may never have crossed $e$, it may have crossed $e$ in both directions, or it may have only crossed $e$ in one direction. For some of these cases, the path of the traveling vertex will change after deleting or contracting $e$, but we use Lemma~\ref{lem:rearrange} to show that it still crosses the same edges, just in a different order. The planarity condition comes into play for the final case (where the edge is crossed in one direction). We apply Lemma~\ref{lem:looprev2} to show that this case is never realized.
 
Throughout this proof, we let $\i(e) = \{x,y\}$ and $T' = r_{(G,\chi)}([c-s],T)$. Let $\mathcal R$ be the ordered list of directed edges crossed when rotor-routing on $(G,\chi)$ with input $c-s$ and $T$. 

We will consider $3$ cases to prove that the first condition of Definition~\ref{def:consistent} holds, and then $3$ cases to prove that the second condition holds. The theorem then follows from Proposition~\ref{prop:rrpart3}. For the first $3$ cases, assume that $e\in T \cap T'$. We further assume without loss of generality that $T_s\rotdir{x} = e$. Let $T^e = r_{(G/ {e},\chi/e)}(\onechip,T \setminus e)$ and let $ {\mathcal R_e}$ be the ordered list of directed edges crossed when rotor-routing on $(G/e,\chi/e)$ with input $c-s$ and $T\setminus e$. Finally let $z$ be the vertex formed by contracting $e$. Notice that $(T\setminus e)_s\rotdir{z} = T_s\rotdir{y}$ if $y \neq s$ and $z$ does not have a rotor in $(T\setminus e)$ when $y = s$. \\

\textbf{Case 1: }The chip never enters the vertex $x$ while rotor-routing.\\

For this case, we claim that $\mathcal R$ and ${\mathcal R_e}$ are identical. 

Assume for the sake of induction that this statement holds for the first $k$ edges. After crossing these rotors, the chip must be on the same vertex of both graphs (or the chip is on $y$ in $G$ and $z$ in $G/e$). By assumption, this vertex has been entered the same number of times for each graph, so the rotor here must be in the same position. It follows that the chip will also exit along the same rotor for both graphs.

This proves our claim that $\mathcal R = {\mathcal R_e}$. For every $v \in V(G) \setminus \{s,x,y\}$, it follows immediately from the claim that $T'_s\la v\ra = T^e_s\la v\ra$. Furthermore, by construction, we also have $T'_s\la y\ra = T^e_s\la z\ra$ and $T'_s\la x\ra = e$. This implies that $T^e = T' \setminus e$ as desired.  \\

\textbf{Case 2: }The rotor at $x$ spins around completely. In other words, $T_s\rotdir{x} = T'_s\rotdir{x}$ but $T_s\rotdir{x} \not= \rho\rotdir{x}$ for some intermediate rotor configuration $\rho$. \\

We begin with a brief, high-level explanation of the proof of this case. We start by showing that $x \neq c$ and $y\neq s$ to avoid any edge cases that may arise. We then carefully define an alternative ribbon structure $\wtilde{\chi/e}$ for the contracted graph $G/e$ for which the steps of the rotor-routing algorithm used to compute $\what{r}_{(G/e,\wtilde{\chi/e},s)}(c-s,T\setminus e)$ and $\what{r}_{(G,\chi,s)}(c-s,T)$ are nearly identical. It is then easily argued that $\what{r}_{(G/e,\wtilde{\chi/e},s)}(c-s,T\setminus e) = T'\setminus e$. We then apply Lemma \ref{lem:rearrange} to show that 
\[T^e = \what{r}_{(G/e,\chi/e,s)}(c-s,T\setminus e) = \what{r}_{(G/e,\wtilde{\chi/e},s)}(c-s,T\setminus e) = T'\setminus e.\]

Now, we give a complete proof. First, we note that by Proposition~\ref{prop:nooverspins}, it is impossible for $x$ to complete more than one full rotation. Let $\chi(x) = (e,e_1,\dots,e_p)$ and $\chi(y) = (e,\what e_1,\dots,\what e_l)$ after removing edges parallel to $e$, so that $(\chi/e)(z) = (e_1,\dots,e_p,\what e_1,\dots,\what e_l)$. Recall that we require $e \not= f$ for the first condition of Definition~\ref{def:consistent}. Suppose that $x=c$. By assumption, the rotor must cross $f$ from $c$ to $s$ at some point during rotor-routing. As soon as this happens, the algorithm terminates, and thus $T'_s\la x \ra = f$. This contradicts the assumption that $T'_s\la x \ra = e$, so we must have $x \not= c$.

Likewise $y \neq s$. Otherwise, since $x \neq c$, for the rotor at $x$ to make a full rotation, the chip must enter $x$ precisely $\deg(x)$ many times. By Proposition \ref{prop:nooverspins}, it must enter $x$ along every edge, including the edge $e$ from $y$ to $x$, before the rotor makes a full rotation. However, this is clearly a contradiction as the rotor-routing algorithm halts as soon as the chip reaches $y=s$.

Let $T_s \la y\ra = \what e_i$ and $T'_s\la y\ra = \what e_j$. By Proposition~\ref{prop:nooverspins}, each edge may only be crossed once in each direction. By assumption, the chip exits $x$ along each incident edge. Because $x \not= c$, this means that $x$ must be entered its degree many times. Thus, the chip must traverse every edge incident to $x$ in both directions. In particular, the rotor at $y$ must cross $e$, which implies that $i\ge j$. When rotor-routing on $(G,\chi)$, keep an ordered list of every edge other than $e$ that the chip crosses when moving out of either $x$ or $y$ and call this list $\mathcal E$. This list must consist precisely of the edges:
\[\{e_1,\dots,e_p\} \cup \{\what e_1,\dots,\what e_j\} \cup \{\what e_{i+1},\dots,\what e_l\}.\]
Furthermore, the last edge in $\mathcal E$ must be $\what e_j$. This is because after exiting $x$ for the final time, the rotor enters $y$ across $e$, and $T'_s\la y\ra = \what e_j$. 

We now define an alternate ribbon structure on $G/e$, which we will call $\wtilde{\chi/e}$. For each vertex $v \not= z$, let $(\wtilde{\chi/e})(v) = (\chi/e)(v)$. Let $(\wtilde{\chi/e})(z)$ begin with $\mathcal E$ and then be ordered arbitrarily.  Let $\mathcal R - {\overline e}$ be the sublist of $\mathcal R$ that is obtained after removing any directed edges between $x$ and $y$. Finally, let $\wtilde {\mathcal R_e}$ be the ordered list of directed edges crossed when rotor-routing on $(G/e,\wtilde{\chi/e})$ with input $c-s$ and $T\setminus e$. We claim that $\mathcal R - {\overline e}$ and $\wtilde {\mathcal R_e}$ are identical. 

Suppose that the claim holds and let $\wtilde T = \what r_{(G/e,\wtilde{\chi/e},s)}(c-s,T\setminus e)$. For all $v \in V(G) \setminus\{x,y,s\}$, we know that $T'_s\la v \ra = \wtilde T_s\la v \ra$. Furthermore, by construction, we also have $T'_s\la y \ra = \wtilde T_s\la z \ra$ and $T'_s\la x \ra= e$. The fact that $T'_s\la y \ra = \wtilde T_s\la z \ra$, along with our construction of $\wtilde {\chi/e}$, implies that $\wtilde{\chi/e}$ satisfies the requirements of Lemma~\ref{lem:rearrange} (when comparing to $\chi/e$ on $G/e$). Thus, we know that $\wtilde T = T^e$. The result follows. 

Now we just need to prove the claim that $\mathcal R - {\overline e} = \wtilde {\mathcal R_e}$. Assume for the sake of induction that the first $k$ rotors match. After crossing these rotors, the chip must be on the same vertex of both graphs (or the chip is on $z$ for $G/e$ and on either $x$ or $y$ for $G$). If the chip is on some $v \not=z$, then the argument is analogous to the argument from Case 1. Thus, it suffices to consider the case where the chip is on $z$. 

Out of the first $k$ entries of $\mathcal R - {\overline e}$, let $a$ be the number of edges that exit $x$ or $y$. Then, by assumption, for the first $k$ entries of $\wtilde {\mathcal R_e}$, there must be $a$ edges that exit $z$. By construction, the next edge crossed in either case must be the $(a+1)$th edge of $\mathcal E$. It follows by induction that $\mathcal R - {\overline e} = \wtilde {\mathcal R_e}$.\\

\textbf{Case 3: } The rotor $e$ is directed differently for $T_s$ and $T'_s$. In other words, $T'_s\la y \ra = e$.\\

This case is similar to the previous case, and we will use the same notation for $\chi(x)$ and $\chi(y)$. Suppose that $T_s \la y\ra = \what e_i$ and $T'_s\la x\ra  = e_j$. As before, let $\mathcal E$ be the ordered list of edges other than $e$ that the chip crosses when moving out of $x$ or $y$. This list must consist precisely of the edges:
\[\{\what e_{i+1},\dots,\what e_l\} \cup \{ e_1,\dots, e_j\}.\]

Furthermore, after exiting $y$ for the final time across $e$, the chip enters $x$. This means that the last edge in $\mathcal E$ must be $e_j$. Similarly to the previous case, we define a ribbon structure $\wtilde{\chi/e}$ on $G/e$. For $v \not= z$, let $(\wtilde{\chi/e})(v) = (\chi/e)(v)$, and let $(\wtilde{\chi/e})(z)$ begin with $\mathcal E$ and then be ordered arbitrarily.

The rest of the argument is analogous to the argument used for Case 2. In particular, we define $\mathcal R - {\overline e}$ and $\wtilde {\mathcal R_e}$ and then show that they are identical. We also use this idea to show that $\wtilde{\chi/e}$ satisfied the requirements of Lemma~\ref{lem:rearrange}. The only slight difference from the previous case is that the final positions of the rotors at $x$ and $y$ swap.\\



For the last 3 cases, $e \not\in T\cup T'$. For these cases, we define ${}^eT = r_{(G\setminus e,\chi \setminus e)}([c-s],T)$. We need to show that ${}^eT = T'$. Also, let ${}_e\mathcal R$ be the ordered list of directed edges crossed when rotor-routing on $(G\setminus e,\chi \setminus e)$ with input $c-s$ and $T$. \\


\textbf{Case 4: } The chip never crosses $e$.\\ 

Since the chip never crosses $e$, removing this edge has no effect on the output of the rotor-routing algorithm. We can use an analogous argument to what we used for Case 1. In particular, we show that $\mathcal R = {}_e\mathcal R$, and the result follows. \\ 

\textbf{Case 5: } The edge $e$ is crossed in both directions, but $T'_s\la x \ra \not = e \not= T'_s\la y \ra$. \\

First, we observe that $\i(e) \not= \{c,s\}$: otherwise, it is impossible for the chip to cross $e$ in both directions. Thus, $c$ and $s$ must remain incident on $G\setminus e$. This allows us to apply Proposition~\ref{prop:nooverspins} to say that no edge is crossed more than once in each direction.

Define $\chi(x)$ and $\chi(y)$ as in Case 2. Without loss of generality, suppose that the first time the chip crosses $e$, it moves from $y$ to $x$. Let $\rho$ be the first rotor configuration reached while rotor routing on $(G,\chi)$ (with input $c-s$ and $T$) such that $\rho \la y \ra = e$.  Further, suppose that $\rho\la x \ra = e_q$. We introduce a new ribbon structure $\wtilde{\chi}$ on $G$. For $v \in V(G) \setminus x$, let $\wtilde{\chi}(v) = \chi(v)$, and let \[\wtilde{\chi}(x) = (e,e_{q+1},e_{q+2},\dots,e_p,e_1,e_2,\dots,e_q).\] 

Let $\rm E_1$ be the set of edges after $T_s \la x \ra$ and before $T'_s \la x \ra$ with respect to $\chi$  and $\rm E_2$ be the set of edges after $T_s \la x \ra$ and before $T'_s \la x \ra$ with respect to $\wtilde \chi$. Because we only rearranged the position of $e$, we must have $\rm E_1\setminus e=\rm E_2\setminus e$. Furthermore, $e \in \rm E_1$ by assumption, and $e \in \rm E_2$ by construction (because the rotor-routing algorithm is equivalent on $(G,\chi)$ and $(G,\wtilde \chi)$ until the chip crosses $e$ from $x$ to $y$). It follows that $\rm E_1 = \rm E_2$. 

By the reasoning in the previous paragraph, $\wtilde{\chi}$ satisfies the conditions for Lemma~\ref{lem:rearrange}, which means that $\what r_{(G,\wtilde{\chi},s)}(c-s,T) = T'$. 

Let $\wtilde {\mathcal R}$ be the ordered list of directed edges crossed when rotor-routing on $(G,\wtilde \chi)$ with input $c-s$ and $T$. Let $\wtilde {\mathcal R}-e$ be the remaining edges of $\wtilde {\mathcal R}$ after removing $e$ (but not edges parallel to $e$). We claim that $\wtilde {\mathcal R}-e = {}_e\mathcal R$. This follows from construction, because when the chip crosses $e$ from $y$ to $x$ in $\wtilde {\mathcal R}$, we know by construction that $e_q$ is the rotor at $x$. Therefore, the next step of the algorithm is to cross back along $e$ in the other direction. Thus, it is as though this edge did not exist. 

Given our claim, the result follows from the same reasoning we used for the previous cases. \\

\textbf{Case 6: } The edge $e$ is only crossed in one direction. \\

We will use Lemma~\ref{lem:looprev2} to show that this case is never realized. This is the one case where the planarity condition is vital. 

Without loss of generality, suppose that the chip crosses $e$ from $x$ to $y$. Then, by assumption, it must reach $x$ again without crossing $e$. Consider the next occurrence when the chip returns to $x$. Let the rotor configuration at this moment be $\rho$.

Consider the sequence $(x,y,v_1,v_2,\dots)$ of vertices where $\rho\la y\ra = v_1$ and each $v_i$ satisfies $\i(\rho \la v_i\ra ) = \{v_i,v_{i+1}\}$. Since there are a finite number of vertices, and the chip has just returned to $x$, there must be some $j$ such that $x \in \i(\rho \la v_j \ra)$. Thus, the rotors incident to these vertices form a directed cycle, which does not include the sink vertex. By Lemma~\ref{lem:looprev2}, this cycle must reverse before the rotor-routing algorithm terminates. In order for the cycle to reverse, the chip must cross $e$ in the other direction. This gives a contradiction.\end{proof}

We end this section with an example developed by T\'othm\'er\'esz, which demonstrates one of the subtleties of Definition~\ref{def:consistent}. In particular, Theorem~\ref{thm:rrconsistent} fails if we remove the requirement that $c$ and $s$ are adjacent.

\begin{example}{\cite[Remark 20]{Lilla2}}\label{ex:lilla} 
Figure~\ref{fig:lilla} gives a pair of vertices $c$ and $s$, and edge $e$, and a spanning tree $T$, such that $e \in T \cap r_{(G,\chi)}([c-s],T)$, but $r_{(G,\chi)}([c-s],T) \not= r_{(G/e,\chi/e)}([c-s],T\setminus e)$. The reason that this example does not contradict Theorem~\ref{thm:rrconsistent} is because $c$ and $s$ are not connected by an edge. In particular, Proposition~\ref{prop:nooverspins} is no longer satisfied. 
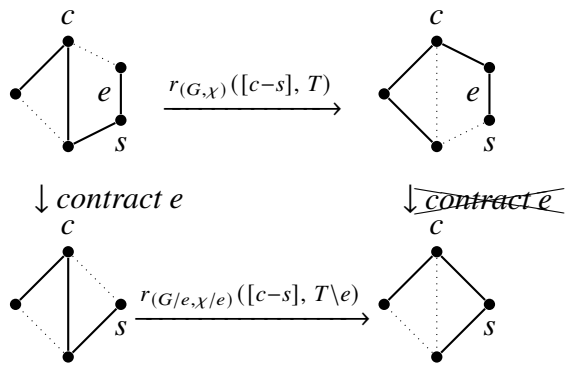
\begin{figure}
\begin{center}
\begin{tikzpicture}[scale = 0.7]
    
    \tikzstyle{pt} = [circle,fill,inner sep=1pt,minimum size = 1.5mm]
    \tikzstyle{coin} = [draw,circle,inner sep=1pt,minimum size = 2mm]
    
    \begin{scope}[shift={(0,0)}]
    \node[pt] (a1) at (0,0) {};
    \node[pt,label={north:{$c$}}](b1) at (1,1){} ;
    \node[pt](c1) at (2,.5){};
    \node[pt,label={south:{$s$}}](d1) at (2,-.5){};
    \node[pt](e1) at (1,-1){};
    
    \node[] at (4.5,0) {$\xrightarrow{r_{(G,\chi)}([c-s],~T)}$};
    \node[] at (0.5,-2) {$\downarrow$};
    \node[] at (2,-2) {contract $e$};
    \node(o) at (1.7,0){$e$};
    \end{scope} 
    
    \begin{scope}[shift={(7,0)}]
    \node[pt] (a2) at (0,0) {};
    \node[pt,label={north:{$c$}}](b2) at (1,1){} ;
    \node[pt](c2) at (2,.5){};
    \node[pt,label={south:{$s$}}](d2) at (2,-.5){};
    \node[pt](e2) at (1,-1){};
    \node[] at (0.5,-2) {$\downarrow$};
    \node[] at (2,-2) {\xcancel{~~~~~contract $e$~}};
    \node(o) at (1.7,0){$e$};
    \end{scope} 
    
    \begin{scope}[shift={(0,-4)}]
    \node[pt] (a3) at (0,0) {};
    \node[pt,label={north:{$c$}}](b3) at (1,1){} ;
    \node[pt,label={south:{$s$}}](cd3) at (2,0){};
    \node[pt](e3) at (1,-1){};
    
    \node[] at (4.5,0) {$\xrightarrow{r_{(G/ e,\chi/e)}([c-s],~T\setminus e)}$};
    \end{scope}
    
    \begin{scope}[shift={(7,-4)}]
    \node[pt] (a4) at (0,0) {};
    \node[pt,label={north:{$c$}}](b4) at (1,1){} ;
    \node[pt,label={south:{$s$}}](cd4) at (2,0){};
    \node[pt](e4) at (1,-1){};
    \end{scope}

    \tikzstyle{every node} = [draw = none,fill = none,scale = .8]

    \draw[thick] (a1) -- (b1);
    \draw[dotted] (b1) -- (c1);
    \draw[thick] (c1) -- (d1);
    \draw[thick](d1) -- (e1);
    \draw[dotted](e1) -- (a1);
    \draw[thick] (b1) -- (e1);
    
    \draw[thick] (a2) -- (b2);
    \draw[thick] (b2) -- (c2);
    \draw[thick] (c2) -- (d2);
    \draw[dotted] (d2) -- (e2);
    \draw[thick](e2) -- (a2);
    \draw[dotted] (b2) -- (e2);
    
    \draw [thick](a3) -- (b3);
    \draw[dotted] (b3) -- (cd3);
    \draw[thick] (cd3) -- (e3);
    \draw[dotted](e3) -- (a3);
    \draw[thick] (b3) -- (e3);
    
    \draw[thick] (a4) -- (b4);
    \draw[thick] (b4) -- (cd4);
    \draw[thick] (cd4) -- (e4);
    \draw[dotted](e4) -- (a4);
    \draw[dotted] (b4) -- (e4);

\end{tikzpicture}
	\end{center}
	\caption{This figure shows the importance of the source and sink vertices being adjacent in the definition of consistency. As usual, the ribbon graph is oriented counterclockwise.}
	\label{fig:lilla}
\end{figure}
\end{example} 

T\'othm\'er\'esz's note (which was written after the first version of our paper) also includes an alternative proof of Theorem~\ref{thm:rrconsistent} (see~\cite[Proposition~17]{Lilla2}).

\section{Uniqueness of Consistent Torsor Algorithms}\label{sec:unique}
In this section, we classify the consistent sandpile torsor algorithms. In particular, we show that every consistent sandpile torsor algorithm must either be equivalent to rotor-routing, or one of three related algorithms which we say have the same \emph{structure} as rotor-routing. 

For any ribbon graph $(G,\chi)$, let $\overline \chi$ be the reverse cyclic order around each vertex. Notice that if $(G,\chi)$ is a plane graph, then $(G,\overline \chi)$ is a plane graph as well (simply reflect the planar embedding of $(G,\chi)$ to get a planar embedding of $(G,\overline{\chi})$).

\begin{definition}
Suppose $\alpha$ is a sandpile torsor algorithm. Define $\overline{\alpha}$, $\alpha^{-1}$, and $\overline{\alpha}^{-1}$ such that for any plane graph $(G,\chi)$, $S \in \Pic^0(G)$, and $T \in \mathcal T(G)$, we have:
\[\alpha_{(G,\chi)}(S,T) = \overline{\alpha}_{(G,\overline \chi)}(S,T) = \alpha^{-1}_{(G,\chi)}(-S,T) = \overline{\alpha}^{-1}_{(G,\overline \chi)}(-S,T).\]
\end{definition}

If $\alpha$ is the rotor-routing algorithm, then $\overline{\alpha}$ reverses the direction in which the rotors turn, $\alpha^{-1}$ switches the role of the chip and sink, and $\overline{\alpha}^{-1}$ makes both of these changes. 

\begin{prop}\label{prop:otherts}
If $\alpha$ is a (consistent) sandpile torsor algorithm, then $\overline{\alpha}$, $\alpha^{-1}$ and $\overline{\alpha}^{-1}$ are distinct (consistent) sandpile torsor algorithms.
\end{prop}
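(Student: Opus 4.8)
The plan is to verify two things: first, that each of $\overline{\alpha}$, $\alpha^{-1}$, and $\overline{\alpha}^{-1}$ is actually a sandpile torsor algorithm (in the sense of Definition~\ref{def:STA}), and if $\alpha$ is consistent then so are these three; second, that the four algorithms $\alpha$, $\overline{\alpha}$, $\alpha^{-1}$, $\overline{\alpha}^{-1}$ are pairwise distinct. For the first part I would unwind the definitions. To see that $\overline{\alpha}$ is a sandpile torsor algorithm, note that $(G,\overline{\chi})$ is a plane graph whenever $(G,\chi)$ is, and $\overline{\alpha}_{(G,\chi)}(S,T) := \alpha_{(G,\overline{\chi})}(S,T)$; since $\alpha$ assigns a free transitive action on $\mathcal{T}(G)=\mathcal{T}(G)$ to every plane graph, so does $\overline{\alpha}$. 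The underlying sink-parameterized algorithm for $\overline{\alpha}$ is $\what{\overline{\alpha}}_{(G,\chi,s)}(D,T) := \what{\alpha}_{(G,\overline{\chi},s)}(D,T)$; it inherits Properties 1--4 of Definition~\ref{def:torsor} and equivariance under ribbon graph isomorphisms from $\what{\alpha}$ (a ribbon graph isomorphism $(G,\chi)\to(G',\chi')$ is the same data as one $(G,\overline{\chi})\to(G',\overline{\chi'})$, since reversing all cyclic orders commutes with the isomorphism conditions in Definition~\ref{def::auto}). For $\alpha^{-1}$, the key observation is that the inverse of a free transitive group action is again one: $\alpha^{-1}_{(G,\chi)}(S,T) := \alpha_{(G,\chi)}(-S,T)$, and since $-S$ ranges over all of $\Pic^0(G)$ as $S$ does, this is still a free transitive action; the sink-parameterized version is built from $\what{\alpha}$ composed with the divisor-negating involution, and one checks this still satisfies \eqref{eq:sinkinvariant} and the isomorphism-equivariance. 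Then $\overline{\alpha}^{-1}$ is just the composite of the two constructions, so nothing new is needed.

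Next, for consistency: I would check that if $\alpha$ satisfies the three conditions of Definition~\ref{def:consistent}, then so do $\overline{\alpha}$, $\alpha^{-1}$, $\overline{\alpha}^{-1}$. For $\overline{\alpha}$ this is essentially a relabeling: applying Definition~\ref{def:consistent} to $\alpha$ on the plane graph $(G,\overline{\chi})$ and noting that $(G/e,\overline{\chi}/e) = (G/e,\overline{\chi/e})$ and $(G\setminus e,\overline{\chi}\setminus e)=(G\setminus e,\overline{\chi\setminus e})$ — which follows directly from Definition~\ref{def:condelchi}, since reversing each cyclic order commutes with deleting an edge from it and with the concatenation in the contraction rule (up to a cyclic shift, which doesn't matter) — gives exactly the conditions for $\overline{\alpha}$ on $(G,\chi)$. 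For $\alpha^{-1}$, the point is that the three consistency conditions only involve membership of edges in $T$ and in $\alpha_{(G,\chi)}([c-s],T)$ (and the latter's contraction/deletion), and replacing $[c-s]$ by $[s-c]=-[c-s]$ just swaps the roles of the chip and sink, which the conditions are symmetric in: Condition 1 and Condition 2 quantify over all $e$ with $\i(e)\neq\{c,s\}$ (resp.\ over all $e$), a condition unchanged under $c\leftrightarrow s$, and Condition 3 speaks of paths from $e$ to $f$, also symmetric. So the consistency of $\alpha^{-1}$ follows from that of $\alpha$ by feeding in the divisor $s-c$ in place of $c-s$, using the same edge $f$. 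Again $\overline{\alpha}^{-1}$ is the composite.

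Finally, distinctness. This is the part that needs a genuine (small) example rather than formal manipulation, so I expect it to be the main obstacle — not because it is deep, but because one must exhibit a concrete plane graph witnessing all the inequalities simultaneously. The strategy is to find a single plane graph $(G,\chi)$ (for instance a cycle $C_n$ with $n\geq 3$, or $K_4\setminus e$ as in Figure~\ref{fig:mttexample}, or the square with a diagonal used throughout the paper) and a spanning tree $T$ together with $S\in\Pic^0(G)$ such that the four trees $\alpha_{(G,\chi)}(S,T)$, $\overline{\alpha}_{(G,\overline{\chi})}(S,T)=\alpha_{(G,\overline{\chi})}(S,T)$, $\alpha^{-1}_{(G,\chi)}(-S,T)=\alpha_{(G,\chi)}(-S,T)$, and $\alpha_{(G,\overline{\chi})}(-S,T)$ are pairwise distinct as elements of $\mathcal{T}(G)$. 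Because these equalities are statements about the algorithms as functions (i.e., on \emph{all} plane graphs), a single bad input suffices to separate any two of them. For rotor-routing one can compute these four trees by hand on a small graph and observe they differ; since $\alpha$ and rotor-routing $r$ need not coincide a priori, I would instead argue abstractly: by Theorem~\ref{thm:onestructure} (or, to avoid circularity, by a direct computation) the separation holds, or — cleaner — I would simply note that distinctness of $\alpha,\overline{\alpha},\alpha^{-1},\overline{\alpha}^{-1}$ as \emph{abstract} algorithms need only be checked for rotor-routing itself and any algorithm with "the same structure", and exhibit the witnessing tree computations on the graph of Figure~\ref{fig:consistent}. The cleanest route, which I would take, is: pick an $n$-cycle $C_n$ with $n$ odd and $n \geq 3$; then reversing the ribbon structure reflects the planar embedding, and one checks via Algorithm~\ref{alg:rotor} that for a suitable choice of $c,s,T$ the orbit structure distinguishes $\chi$ from $\overline{\chi}$ and the action from its inverse, yielding four distinct outputs. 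I would present the computation for the cycle (or the explicit small graph), remark that because the four algorithms disagree on this one plane graph they are distinct as algorithms, and note parenthetically that the "(consistent)" qualifiers in the statement are handled by the preceding two paragraphs.
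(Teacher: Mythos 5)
Your verification that $\overline{\alpha}$, $\alpha^{-1}$, $\overline{\alpha}^{-1}$ are (consistent) sandpile torsor algorithms is more detailed than, but substantively the same as, the paper's one-sentence dismissal, and your argument there is fine. The gap is in your distinctness argument, and specifically in the route you ultimately favor.

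Your ``cleanest route'' is to take an $n$-cycle $C_n$ with $n$ odd, $n\ge 3$, and argue that ``reversing the ribbon structure reflects the planar embedding'' so that for a suitable $(c,s,T)$ the four outputs are all distinct. This fails: every vertex of $C_n$ has degree $2$, and the reverse of a cyclic order on a two-element set is itself. Hence $\overline{\chi}=\chi$ on $C_n$ and $\overline{\alpha}_{(C_n,\chi)}=\alpha_{(C_n,\overline{\chi})}=\alpha_{(C_n,\chi)}$ for \emph{every} sandpile torsor algorithm $\alpha$. So no cycle, odd or even, can separate $\alpha$ from $\overline{\alpha}$ (nor $\alpha^{-1}$ from $\overline{\alpha}^{-1}$). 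Indeed, the Corollary at the end of Section~\ref{sec:unique} records exactly this: $C_k$ and $E_k$ each admit only two distinct consistent actions, not four, so neither alone witnesses pairwise distinctness of the four algorithms.

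The paper's proof sidesteps this by using \emph{two} graphs, each separating a different pair: $\Pic^0(C_3)\cong\Z/3\Z$ has an element with $[D]\neq[-D]$, which separates $\alpha$ from $\alpha^{-1}$ and $\overline{\alpha}$ from $\overline{\alpha}^{-1}$; and on $E_3$ (the triple edge), ribbon reversal is a ribbon graph automorphism that fixes $V$, $\Pic^0$, and the group element, but permutes the spanning trees nontrivially, which separates $\alpha$ from $\overline{\alpha}$ and $\alpha^{-1}$ from $\overline{\alpha}^{-1}$. The remaining two pairs ($\alpha$ vs.\ $\overline{\alpha}^{-1}$ and $\overline{\alpha}$ vs.\ $\alpha^{-1}$) then follow by combining the two witnesses. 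Your hedged earlier suggestion of using a graph with a degree-$\ge 3$ vertex, such as $K_4\setminus e$, could in principle give a single-graph witness (by the paper's Corollary such graphs have four distinct consistent actions), but you did not carry out that computation, and the route you did commit to does not work. To fix your proof, either switch to a graph with a vertex of degree at least $3$ and exhibit the four distinct outputs explicitly, or adopt the paper's two-graph strategy.
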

\begin{proof}
It is straightforward to show that the defining properties of sandpile torsor actions, as well as consistency, are preserved if we replace $\alpha$ with any of the other 3 possibilities. 

Let $E_3$ be the triple edge with a planar ribbon structure. Reversing the ribbon structure on $E_3$ is a ribbon graph automorphism that is the identity on $V(E_3)$ and $\Pic^0(E_3)$, but is not the identity on $E(E_3)$ or $\T(E_3)$. It follows that $\alpha_{(E_3)} \not= \overline \alpha_{(E_3)}$ and $\alpha^{-1}_{(E_3)} \not= \overline \alpha^{-1}_{(E_3)}$. 

Let $C_3$ be the circuit with 3 edges and its unique ribbon structure. Then $\Pic^0(C_3) \cong \Z/3\Z$, which means that there are elements of $\Pic^0(G)$ that are not equal to their own inverse. It follows that $\alpha_{(C_3)} \not= \alpha^{-1}_{(C_3)}$ and $\overline \alpha_{(C_3)} \not= \overline \alpha^{-1}_{(C_3)}$. 

By combining the arguments from the previous two paragraphs, we know that for any two of these algorithms, there exist plane graphs for which the sandpile torsor actions are distinct. 
\end{proof}

\begin{definition}
Sandpile torsor algorithms $\alpha$ and $\beta$ have the same \emph{sandpile torsor structure} if $\beta \in \{\alpha,\overline{\alpha},\alpha^{-1},\overline{\alpha}^{-1}\}$. 
\end{definition}

 Our goal will be to prove the following version of Conjecture~\ref{conj:klivans}. 

\begin{theorem}\label{thm:onestructure}
Every consistent sandpile torsor algorithm on plane graphs has the same sandpile torsor structure as the rotor-routing algorithm. 
\end{theorem}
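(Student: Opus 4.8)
The plan is to exploit the rigidity forced by consistency: a consistent sandpile torsor algorithm is pinned down on each plane graph by its values on smaller minors, and its overall ``shape'' is already determined on a handful of small graphs. By Corollary~\ref{cor:adjverts} it suffices to determine $\alpha_{(G,\chi)}([c-s],T)$ for every plane graph $(G,\chi)$, every $T\in\T(G)$, and every edge $f$ with $\i(f)=\{c,s\}$; and by Proposition~\ref{prop:otherts} the four algorithms $r,\overline r,r^{-1},\overline r^{-1}$ are consistent and pairwise distinct, so the task is to rule out any others. First I would carry out a finite verification on a few small plane graphs: the double edge $E_2$ (where the action is the unique free transitive $\Z/2\Z$-action), the triangle $C_3$ (where $[c-s]$ generates $\Pic^0(C_3)\cong\Z/3\Z$ and acts by one of exactly two $3$-cycles, interchanged by inversion), and a small plane graph with a degree-$3$ vertex on which reversing $\chi$ has a nontrivial effect --- for instance a planar theta graph, which moreover has both $C_3$ and the triple edge $E_3$ as minors, so that consistency links their actions. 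Using only the free-transitivity axioms together with the consistency conditions relating these minors, one checks that any consistent $\alpha$ agrees on these base graphs with exactly one member of $\{r,\overline r,r^{-1},\overline r^{-1}\}$. Replacing $\alpha$ by $\overline\alpha$, $\alpha^{-1}$ or $\overline\alpha^{-1}$ if necessary (each again consistent by Proposition~\ref{prop:otherts}), we may assume from now on that $\alpha$ agrees with $r$ on all of these base graphs.

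The heart of the proof is then an induction on $|E(G)|$ (using $|V(G)|$ to break ties if convenient), with conclusion $\alpha_{(G,\chi)}([c-s],T)=r_{(G,\chi)}([c-s],T)=:T'$ for all $(G,\chi),f,T$. The engine is the following reduction: suppose we can find an edge $e\neq f$ to which one of the conditions of Definition~\ref{def:consistent} applies for both $r$ and $\alpha$ --- say $e\in T\cap T'$ with $\i(e)\neq\{c,s\}$ and $e\in\alpha_{(G,\chi)}([c-s],T)$ (so Condition~(1) contracts $e$), or $e\notin T\cup T'$ and $e\notin\alpha_{(G,\chi)}([c-s],T)$ (so Condition~(2) deletes $e$). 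In either case $T'$ and $\alpha_{(G,\chi)}([c-s],T)$ have equal image in a strictly smaller plane minor $(G',\chi')$, where by the inductive hypothesis $\alpha_{(G',\chi')}=r_{(G',\chi')}$; since $e$ has the same status in $T'$ and in $\alpha_{(G,\chi)}([c-s],T)$, lifting back gives $\alpha_{(G,\chi)}([c-s],T)=T'$. A good deal of the work is in verifying that a suitable $e$ \emph{also} has the required status for $\alpha$ --- e.g.\ that $e\in\alpha_{(G,\chi)}([c-s],T)$, which is not automatic from $e\in T$ --- and here Condition~(3) is the principal tool.

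Two structural reductions feed this engine. If $G$ has a cut vertex $x$ separating some edge $e\neq f$ from $f$, then Condition~(3) fixes whether $e$ lies in the tree, for both $r$ and $\alpha$, and deleting or contracting the part of $G$ cut off by $x$ lands us in a strictly smaller plane graph, so induction applies. If $G$ is $2$-connected, I would invoke the leaf-swap connectivity of $\T(G)$ from Theorem~\ref{thm:sourceturn} and Corollary~\ref{cor:leafswap}: it then suffices to know $\alpha_{(G,\chi)}([c-s],-)$ on one conveniently chosen spanning tree and to track its behaviour under a single leaf swap $T\mapsto T-g+g'$. Contracting the leaf edge $g$, when $g\neq f$ and its leaf vertex is neither $c$ nor $s$, relates both actions to the same smaller plane graph as in the engine, and the boundary configurations (the leaf is $c$ or $s$, or $g=f$) are brought back to the generic case by first deleting a non-tree edge lying outside $T\cup T'$, or by Condition~(3) after an auxiliary contraction.

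The step I expect to be the genuine obstacle is the leftover case in which $G$ is $2$-connected, no cut vertex is available, and every edge other than $f$ lies in exactly one of $T$ and $T'$ --- so no edge can be contracted or deleted to apply the engine directly. This is precisely the configuration the paper isolates in Appendix~\ref{app:case4}, and I anticipate that handling it requires building a tailored auxiliary plane graph (or a carefully sequenced chain of contractions and deletions adapted to the configuration) together with the planarity-specific structural lemmas on reversal of directed cycles, Lemmas~\ref{lem:looprev} and~\ref{lem:looprev2}, to force $\alpha_{(G,\chi)}([c-s],T)=T'$. Once this case is settled the induction closes, yielding Theorem~\ref{thm:onestructure} and hence Conjecture~\ref{conj:klivans}.
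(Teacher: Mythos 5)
Your outline matches the paper's strategy in its main architecture: fix the algorithm on a pair of small base graphs, invoke Proposition~\ref{prop:otherts} to normalize, use Lemma~\ref{lem:2congood} (Condition~(3) plus cut vertices) to reduce to $2$-connected graphs, use Theorem~\ref{thm:sourceturn} to reduce the verification to a small class of sandpile-element/tree pairs, and then run an induction on minors, contracting an edge in $T\cap\widehat T$ or deleting one in $E(G)\setminus(T\cup\widehat T)$ to invoke the inductive hypothesis (here $\widehat T:=\alpha_{(G,\chi)}([c-s],T)$). Two points deserve attention, one of framing and one a genuine gap.

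On framing: the paper's reduction via Theorem~\ref{thm:sourceturn} is packaged as Corollary~\ref{cor:sourceenough}, which says that it suffices to check $\alpha_{(G,\chi)}([c-s],T)=r_{(G,\chi)}([c-s],T)$ for every \emph{source-turn pair} $(c-s,T)$ on a $2$-connected plane graph. Your phrasing --- ``know $\alpha$ on one conveniently chosen tree and track its behaviour under a single leaf swap'' --- suggests a propagation argument, which as stated would be circular, since deducing $\alpha$ on the swapped tree requires knowing $\alpha$ on the very source-turn pair performing the swap. The non-circular statement is: if $\alpha$ agrees with $r$ on all source-turn pairs simultaneously, then by composing a chain of source-turn moves from $T^{start}$ to $T^{goal}$ and using free transitivity, $\alpha$ and $r$ must agree on \emph{every} $(S,T)$. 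That is a statement about pairs, not about one tree. The same intuition, but the logical structure matters here.

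The genuine gap is in the endgame. Once the engine forces $E(G)\setminus(T\,\Delta\,\widehat T)\subseteq\{f,g\}$, the paper splits into \emph{four} subcases according to which of $f,g$ lie in $\widehat T$, and three of the four are resolved in the main text (not the appendix) by nontrivial arguments you never address: Case~1 ($f,g\in\widehat T$) forces $c$ to have degree~$2$, then uses Lemma~\ref{lem:looprev} twice (once on $G/g$, once on $G/f$) to conclude $G$ is a cycle and hence $C_3$; Case~2 ($f\notin\widehat T$, $g\in\widehat T$) produces a cut edge in $G\setminus f$ and is ruled out; Case~3 ($f,g\notin\widehat T$) uses \cite[Lemma 17]{alextorsors} and a pigeonhole argument on the edges around $c$ to force $G=E_3$. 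Your sketch collapses all of this into one ``leftover case'' and handles only Case~4, which you (correctly) flag as hard and defer to ``a tailored auxiliary construction.'' Even with Appendix~\ref{app:case4} set aside as the acknowledged hard core (there the paper builds single-step trees, proves Lemmas~\ref{lem:singstepnocase2}--\ref{lem:anyvalid}, classifies the critical configurations as telescope graphs, and finishes by an explicit computation), you are missing the three main-text cases entirely, each of which requires its own structural or planarity argument rather than following mechanically from the engine.
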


We provide the proof of Theorem~\ref{thm:onestructure} at the end of the section. The first step of the proof is to note that we can restrict our attention to \emph{2-connected} plane graphs (i.e. plane graphs with no cut vertices). In particular, we show the following.

\begin{lemma}\label{lem:2congood}
If $\alpha$ and $\beta$ are consistent sandpile torsor algorithms such that $\alpha_{(G,\chi)} = \beta_{(G,\chi)}$ for all 2-connected plane graphs $(G,\chi)$, then $\alpha = \beta$. 
\end{lemma}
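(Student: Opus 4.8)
The plan is to reduce, via Corollary~\ref{cor:adjverts}, to showing $\alpha_{(G,\chi)}(\onechip,T)=\beta_{(G,\chi)}(\onechip,T)$ for every plane graph $(G,\chi)$, every $f\in E(G)$ with $\i(f)=\{c,s\}$, and every $T\in\T(G)$, and then to induct on $|V(G)|$. If $(G,\chi)$ is $2$-connected we are done by hypothesis, so assume $G$ has a cut vertex $x$. Split $G$ at $x$: write $V(G)=V_1\cup V_2$ with $V_1\cap V_2=\{x\}$, no edges between $V_1\setminus x$ and $V_2\setminus x$, and $|V_1|,|V_2|\ge 2$; since the endpoints of $f$ are connected by $f$ in $G\setminus x$ unless one of them is $x$, we may arrange the split so that both endpoints of $f$ lie in $V_1$. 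Let $(G_i,\chi_i)$ be the plane graph induced on $V_i$ with the restricted ribbon structure; both are smaller than $(G,\chi)$. Standard arguments give $E(G)=E(G_1)\sqcup E(G_2)$ and, for $T\in\T(G)$, that $T_i:=T\cap E(G_i)$ is a spanning tree of $G_i$ with $T=T_1\sqcup T_2$. Since a subset of $E(G)$ is determined by its intersections with $E(G_1)$ and $E(G_2)$, it suffices to compute $\alpha_{(G,\chi)}(\onechip,T)\cap E(G_1)$ and $\alpha_{(G,\chi)}(\onechip,T)\cap E(G_2)$ and match each with the corresponding restriction of $\beta_{(G,\chi)}(\onechip,T)$.

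For the $E(G_2)$ part: every edge $e\in E(G_2)$ satisfies $e\neq f$ and is separated from $f$ by $x$ (any path between them must pass through $x$), so Condition 3 of Definition~\ref{def:consistent} gives $e\in T\Longleftrightarrow e\in\alpha_{(G,\chi)}(\onechip,T)$, and likewise for $\beta$. Hence both outputs meet $E(G_2)$ in exactly $T_2$, so these restrictions agree.

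For the $E(G_1)$ part I would contract the edges of $T_2$ one at a time and track Condition 1. At each stage the edge $e$ being contracted lies in $E(G_2)$, hence its two endpoints are never $\{c,s\}$ (at most one of $c,s$ equals $x$, and the other lies in $V_1\setminus x$), and $e$ is a genuine non-loop edge throughout since contracting a proper subforest of the tree $T_2$ never turns another edge of $T_2$ into a loop; moreover $e\in T$, and $e$ lies in the output of the current graph because $T_2\subseteq\alpha_{(G,\chi)}(\onechip,T)$ by the previous paragraph and contraction only removes the already-contracted edges. Thus Condition 1 applies repeatedly and yields
\[\alpha_{(G,\chi)}(\onechip,T)\setminus T_2=\alpha_{(G/T_2,\,\chi/T_2)}(\onechip,\,T\setminus T_2)=\alpha_{(G/T_2,\,\chi/T_2)}(\onechip,\,T_1),\]
together with the identical statement for $\beta$. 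I would then identify $(G/T_2,\chi/T_2)$ with $(G_1,\chi_1)$: contracting a spanning tree of $G_2$ collapses $V_2$ to $x$, turns the non-tree edges of $G_2$ into loops (which are deleted), and leaves the edges of $G_1$ untouched; and since $(G,\chi)$ is planar, the edges of $G_2$ at $x$ form a contiguous arc of $\chi(x)$, so deleting them leaves precisely $\chi_1(x)$. As $G_1$ is smaller, the inductive hypothesis gives $\alpha_{(G_1,\chi_1)}=\beta_{(G_1,\chi_1)}$, so the right-hand sides above agree for $\alpha$ and $\beta$; combined with the $E(G_2)$ computation, $\alpha_{(G,\chi)}(\onechip,T)=\beta_{(G,\chi)}(\onechip,T)$, completing the induction.

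The main obstacle I expect is the clean identification $(G/T_2,\chi/T_2)=(G_1,\chi_1)$ as \emph{plane} graphs rather than merely graphs; this is the one place planarity is genuinely used, via the fact that at a cut vertex of a plane graph the edges belonging to a single block occupy a contiguous arc of the cyclic order. If making this precise is awkward, a fallback is to note that $\chi/T_2$ and $\chi_1$ differ only by a permutation of the edges around $x$ and to invoke Lemma~\ref{lem:rearrange}, after checking that the relevant rotor-routing output has the form required there. A secondary, purely bookkeeping point is verifying the hypotheses of Condition 1 at every step of the iterated contraction, but as indicated above this is routine.
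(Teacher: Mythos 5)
Your proposal is correct and follows essentially the same strategy as the paper's proof: use Condition~3 to show that the edges of $G$ separated from $f$ by a cut vertex are unchanged by the action, then use Conditions~1 and~2 (you use Condition~1 together with the automatic removal of loops built into the definition of contraction, which has the same effect) to contract/delete those edges and reduce to a smaller plane graph. One remark: the identification $(G/T_2,\chi/T_2)=(G_1,\chi_1)$ that you flag as the main obstacle is actually unnecessary for your argument --- both $\alpha$ and $\beta$ reduce to the \emph{same} smaller plane graph $(G/T_2,\chi/T_2)$ with strictly fewer vertices, so the inductive hypothesis gives $\alpha_{(G/T_2,\chi/T_2)}=\beta_{(G/T_2,\chi/T_2)}$ directly without ever needing to compare $(G/T_2,\chi/T_2)$ to $(G_1,\chi_1)$ as plane graphs; the paper's own (terser, contradiction-style) proof likewise sidesteps this by removing all separated edges at once and invoking the hypothesis on the resulting 2-connected graph.
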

\begin{proof}
Suppose for the sake of contradiction that there is some plane graph $(G,\chi)$ such that $\alpha_{(G,\chi)} \not= \beta_{(G,\chi)}$. By the contrapositive of Corollary~\ref{cor:adjverts}, this means that for some adjacent vertices $c$ and $s$, and some $T\in\T(G)$, we have
\[\alpha_{(G,\chi)}([c-s],T) \not= \beta_{(G,\chi)}([c-s],T).\]

We can assume $(G,\chi)$ is not 2-connected, or else the contradiction is immediate. Let $f$ be an edge such that $\i(f) = \{c,s\}$. By Condition 3 of consistency (Definition~\ref{def:consistent}), any edges separated from $f$ after removing cut vertices must be unchanged when acting on $T$ by $[c-s]$. For all such edges, we contract the edges that are in $T$ and delete the edges that are not. We are left with a 2-connected plane graph $(G',\chi')$ and a spanning tree $T' \in \T(G')$. By Conditions 1 and 2 of consistency, the inequality is preserved. In particular, 
\[\alpha_{(G',\chi')}([c-s],T') \not= \beta_{(G',\chi')}([c-s],T').\]
This is a contradiction. 
\end{proof}

In general, the rotor-routing algorithm can have a complicated global effect on a spanning tree. Our next goal towards a proof of Theorem~\ref{thm:onestructure} will involve defining a distinguished set of (sandpile group element, spanning tree) pairs called \emph{source-turn pairs} (as well as a slightly less restrictive set of \emph{single-step pairs}). These pairs are useful because they have the following two properties:
\begin{itemize}
    \item The rotor-routing action on source-turn pairs and single-step pairs has a simple geometric characterization.
    \item To show that sandpile torsor algorithm is equivalent to the rotor-routing algorithm, it suffices to show that the two sandpile torsor algorithms agree on source-turn pairs (see Corollary~\ref{cor:sourceenough}). 
\end{itemize}

We first define source-turn pairs and single-step pairs. Given a graph $G$ and a spanning tree $T\in \T(G)$, a \emph{leaf vertex} is a vertex incident to only one edge of $T$, while a \emph{leaf edge} is an edge of $T$ incident to a leaf vertex. 

\begin{definition}\label{def:singstep}
Let $(G,\chi)$ be a 2-connected ribbon graph, $T \in \T(G)$, and $c,s\in V(G)$. 
\begin{itemize}
    \item The pair $(c-s,T)$ is called a \emph{single-step pair} at $c$ if the rotor-routing algorithm terminates after a single rotor turns one position. We call the map from $T$ to $r_{(G,\chi)}([c-s],T)$ a \emph{single-step move} at $c$. 
    \item The pair $(c-s,T)$ is called a \emph{source-turn pair} at $c$ if $c$ is a leaf vertex of $T$ and the equality $\{c,s\} = \i(\chi(c,T_s\la c \ra))$ is satisfied. We call the map from $T$ to $r_{(G,\chi)}([c-s],T)$ a \emph{source-turn move} at $c$. 
\end{itemize}
\end{definition}

\begin{lemma}\label{lem:singswap}
If $(c-s,T)$ is a source-turn pair, then $(c-s,T)$ is also a single-step pair. 
\end{lemma}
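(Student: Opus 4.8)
The plan is to argue directly from the definitions of the two notions in Definition~4.6, unwinding what Algorithm~\ref{alg:rotor} does on a source-turn pair. Suppose $(c-s,T)$ is a source-turn pair at $c$; thus $c$ is a leaf vertex of $T$, so $T_s$ assigns to $c$ its unique incident tree edge, which I will call $g := T_s\la c\ra$. The source-turn condition says $\{c,s\} = \i(\chi(c,g))$, i.e. the edge \emph{immediately after} $g$ in the cyclic order $\chi(c)$ is an edge $f$ joining $c$ and $s$. Now run Algorithm~\ref{alg:rotor} with input $T$ and $c-s$. The chip starts at $c$; the first step replaces $\rho$ by $r_c(\rho)$, which rotates the rotor at $c$ from $g$ to $\chi(c,g) = f$; the chip then moves across $f$ to the other endpoint of $f$, which is exactly $s$. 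Since $x = s$, the while loop terminates immediately. Hence the algorithm terminated after exactly one rotor turned exactly one position, which is precisely the definition of a single-step pair at $c$.

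The one technical point I would want to nail down is that $f$ really is an edge between $c$ and $s$ in the sense needed — i.e. that $\i(\chi(c,g)) = \{c,s\}$ forces $s \ne c$ (automatic, since $\chi(c,g)$ is incident to $c$, so its vertex set is $\{c, \text{something}\}$, and the condition says that something is $s$, and a loop edge would have $\i = \{c\}$, but the paper disallows loops) and that this $f$ is genuinely traversed to $s$ rather than back to $c$. Both are immediate: $\i(f) = \{c,s\}$ with $c \ne s$ means the endpoint of $f$ other than $c$ is $s$, so the line ``replace $x$ with the other vertex incident to $\rho\la x\ra$'' puts the chip at $s$. I should also remark that, a priori, $f$ need not lie on $T$ — indeed if $f \in T$ then since $c$ is a leaf we'd have $f = g$, contradicting that $f = \chi(c,g) \ne g$ (the cyclic order has length $\deg(c) \ge 2$ because $G$ is 2-connected, so $\chi(c,g) \ne g$). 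So in fact $f \notin T$, but this is not needed for the statement; I mention it only because it clarifies that a genuine rotor move occurs.

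I do not anticipate a serious obstacle here: the content is entirely a matter of reading off the first iteration of Algorithm~\ref{alg:rotor} from the definition of a source-turn pair. The only thing that makes it more than a one-line observation is keeping the bookkeeping of $T_s$ and $\chi$ straight — in particular remembering that ``$c$ is a leaf of $T$'' is exactly what guarantees $T_s\la c\ra$ is well-defined and unique, so that $\chi(c, T_s\la c\ra)$ is an unambiguous edge, and that 2-connectivity guarantees $\deg(c)\ge 2$ so that rotating by one position actually changes the rotor. With those in hand the proof is a direct verification, and I would write it as such.
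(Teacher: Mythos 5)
Your proof is correct and follows essentially the same route as the paper's (much terser) proof: both rest on the observation that because $c$ is a leaf of $T$, the rotor $T_s\langle c\rangle$ is forced to be the unique tree edge at $c$, so the source-turn condition $\{c,s\}=\i(\chi(c,T_s\langle c\rangle))$ makes the first pass through Algorithm~\ref{alg:rotor} immediately deposit the chip at $s$. Your supplementary remarks ($s\neq c$ because loops are disallowed, $\chi(c,g)\neq g$ because $\deg(c)\ge 2$ by 2-connectedness, and $f\notin T$) are accurate but, as you say, not needed for the claim itself.
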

\begin{proof}
If $c$ is a leaf vertex, then the unique edge of $T$ incident to $c$ must be equal to $T_s\la c \ra$. The result follows from the definition of rotor-routing. 
\end{proof}

Recall from Definition \ref{def:onestep} that for $x \in V(G) \setminus s$, the function $r_x$ rotates the rotor at $x$ one position. It is immediate from definition that when $(c-s,T)$ is a single-step pair, we have $r_{(G,\chi)}([c-s],T_s) = r_c(T_s)$. Furthermore, if $T_s\la c \ra = g$ and $r_c(T_s)\la c \ra = f$, then $r_{(G,\chi)}([c-s],T) = T \setminus g \cup f$. We say that $(c-s,T)$ is a \emph{single-step pair from $g$ to $f$} (where a \emph{source-turn pair from $g$ to $f$} is defined analogously). See Figure~\ref{fig:swaps} for some examples or single-step pairs and source-turn pairs. 

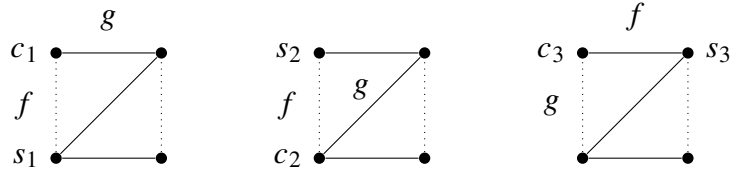
\begin{figure}[h]
\begin{center}
\begin{tikzpicture}[scale = 0.7]
    
    \tikzstyle{pt} = [circle,fill,inner sep=1pt,minimum size = 1.5mm]
    \tikzstyle{coin} = [draw,circle,inner sep=1pt,minimum size = 2mm]
    
    \begin{scope}[shift={(0,0)}]
    \node[pt,label={west:{$s_1$}}] (a1) at (0,0) {};
    \node[pt](b1) at (2,0){} ;
    \node[pt,label={west:{$c_1$}}](c1) at (0,2){};
    \node[pt](d1) at (2,2){};
    \node[label = {west:{$f$}}](f1) at (0,1){};
    \node[label = {north:{$g$}}](g1) at (1,2){};
    \end{scope} 
    
    \begin{scope}[shift={(5,0)}]
    \node[pt,label={west:{$c_2$}}] (a2) at (0,0) {};
    \node[pt](b2) at (2,0){} ;
    \node[pt,label={west:{$s_2$}}](c2) at (0,2){};
    \node[pt](d2) at (2,2){};
    \node[label = {west:{$f$}}](f2) at (0,1){};
    \node[label = {north:{$g$}}](g2) at (.8,.7){};
    \end{scope} 
    
    \begin{scope}[shift={(10,0)}]
    \node[pt] (a3) at (0,0) {};
    \node[pt](b3) at (2,0){} ;
    \node[pt,label={west:{$c_3$}}](c3) at (0,2){};
    \node[pt,label={east:{$s_3$}}](d3) at (2,2){};
    \node[label = {west:{$g$}}](g3) at (0,1){};
    \node[label = {north:{$f$}}](f3) at (1,2){};
    \end{scope}

    \tikzstyle{every node} = [draw = none,fill = none,scale = .8]

    \draw[dotted] (a1) -- (c1);
    \draw[dotted] (b1) -- (d1);
    \draw         (c1) -- (d1) -- (a1) -- (b1);
    
    \draw[dotted] (a2) -- (c2);
    \draw[dotted] (b2) -- (d2);
    \draw         (c2) -- (d2) -- (a2) -- (b2);
    
    \draw[dotted] (a3) -- (c3);
    \draw[dotted] (b3) -- (d3);
    \draw         (c3) -- (d3) -- (a3) -- (b3);

\end{tikzpicture}
\caption{Let $T$ be the spanning tree depicted above (where the graph is given a counterclockwise ribbon structure). The pair $(c_1-s_1,T)$ is a source-turn pair and a single-step pair from $g$ to $f$. The pair $(c_2-s_2,T)$ is a single-step pair from $g$ to $f$, but not a source-turn pair. The pair $(c_3-s_3,T)$ is neither a source-turn pair nor a single-step pair. However, $(s_3-c_3,T)$ is a reverse single-step pair from $f$ to $g$ (see Definition~\ref{def:revstep}).}
\label{fig:swaps}
\end{center}
\end{figure}

We now introduce the key result which we use to prove Corollary \ref{cor:sourceenough}:

\begin{theorem}\label{thm:sourceturn}
Let $(G,\chi)$ be a 2-connected ribbon graph. For any $T^{start},T^{goal} \in \T(G)$, there exists a sequence of source-turn moves whose composition maps $T^{start}$ to $T^{goal}$. 
\end{theorem}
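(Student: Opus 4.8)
The plan is to first replace source-turn moves by a simpler and more flexible class of moves, and then to prove the resulting connectivity statement by induction on the ``size'' of $G$.

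\emph{Step 1: reduction to leaf-swaps.} Call a \emph{leaf-swap} the operation that takes a spanning tree $T$, a leaf vertex $c$ of $T$ with incident tree-edge $g$, and an arbitrary other edge $h$ of $G$ incident to $c$, and returns $T\setminus g\cup h$ (again a spanning tree, since $c$ was a leaf); leaf-swaps are their own inverses. I claim that the relation ``reachable by source-turn moves'' coincides with ``reachable by leaf-swaps''. One direction is Lemma~\ref{lem:singswap}: a source-turn move is a special leaf-swap (it sends $g$ to $\chi(c,g)$). For the other direction, observe that after a source-turn move at $c$ the vertex $c$ is still a leaf, and the cyclic-order condition in Definition~\ref{def:singstep} is automatically met at a leaf (it merely pins down which sink to use). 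Hence one may apply source-turn moves at $c$ repeatedly, each time advancing the tree-edge at $c$ by one step of the cyclic order $\chi(c)$; after $\deg(c)$ steps it returns to $g$, having passed through every edge incident to $c$. So any leaf-swap at $c$ is a composition of source-turn moves at $c$. Since leaf-swaps do not refer to $\chi$ at all, it now suffices to prove that for every $2$-connected graph $G$, the graph $\Gamma(G)$ on vertex set $\T(G)$ with one edge per leaf-swap is connected.

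\emph{Step 2: induction.} I would prove $\Gamma(G)$ connected by induction on $(\mu(G),|V(G)|)$ in lexicographic order, where $\mu(G)=|E(G)|-|V(G)|+1$. The base cases $|V(G)|\le 2$ and $\mu(G)\le 1$ are immediate: a $2$-connected graph with $\mu=1$ is a cycle $C_n$, whose $n$ spanning trees $C_n\setminus e$ are joined cyclically by leaf-swaps at the endpoints of the missing edge. For the inductive step, pick a non-bridge edge $e=\{u,v\}$ with $G\setminus e$ and $G/e$ both still $2$-connected. Using $\T(G)=\T(G\setminus e)\ \sqcup\ \{T\in\T(G):e\in T\}$: the first block is $\Gamma$-connected by the inductive hypothesis applied to $G\setminus e$ (smaller $\mu$), and each of its leaf-swaps is literally a leaf-swap of $G$; the second block is in bijection with $\T(G/e)$ via $T\mapsto T/e$, which is $\Gamma$-connected by the inductive hypothesis applied to $G/e$ (same $\mu$, fewer vertices), and one lifts each of these leaf-swaps to a short sequence of leaf-swaps of $G$ — moves not touching $u,v$ lift verbatim, and a move at the contracted vertex is realized by a leaf-swap at $u$ or $v$, possibly after a brief detour through a tree not containing $e$ (an elementary but slightly fiddly check, in the spirit of the case analysis in Theorem~\ref{thm:rrconsistent}). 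Finally, since $G$ is $2$-connected $u$ is not a cut vertex, so $G-u$ has a spanning tree $S$; then $S\cup\{e\}$ has $u$ as a leaf with tree-edge $e$, and a leaf-swap at $u$ replacing $e$ by another edge of $G$ at $u$ (one exists, as $e$ lies on a cycle) yields a tree avoiding $e$. This single leaf-swap joins the two blocks, so $\Gamma(G)$ is connected; together with Step 1 this proves Theorem~\ref{thm:sourceturn}, since a $\Gamma$-path from $T^{start}$ to $T^{goal}$ becomes a composition of source-turn moves.

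\emph{Main obstacle.} The delicate point is the very first choice in the inductive step: producing a non-bridge edge $e$ with $G\setminus e$ \emph{and} $G/e$ both $2$-connected. Minimally $2$-connected graphs (no edge deletion preserves $2$-connectedness) force extra care, so I would split off the case where $G$ has a vertex $v$ of degree $2$: there one instead ``suppresses'' $v$ (deletes $v$ and adds an edge joining its two neighbors), obtaining a $2$-connected $G'$ with the same $\mu$ and one fewer vertex — this preserves $2$-connectedness because a $2$-cut of a $2$-connected graph cannot contain a degree-$2$ vertex together with one of its neighbors — and the natural map $\T(G)\to\T(G')$ then has fibers of size $1$ or $2$, with size-$2$ fibers joined by the single leaf-swap at $v$, so the induction still goes through. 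When $G$ has minimum degree $\ge 3$ it is not minimally $2$-connected, and in the $3$-connected case \emph{every} edge has both $G\setminus e$ and $G/e$ $2$-connected; the general minimum-degree-$\ge 3$ case is reduced to these by decomposing $G$ along its $2$-cuts. Carrying out this structural case analysis, together with the ``lift a leaf-swap through a contraction'' bookkeeping, is where the real work lies.
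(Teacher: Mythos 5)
Your Step 1 is sound and is in fact the observation the paper records as Corollary~\ref{cor:leafswap}: source-turn moves at a leaf vertex $c$ cycle the unique tree-edge at $c$ through the whole cyclic order $\chi(c)$, so the source-turn reachability relation coincides with leaf-swap reachability and $\chi$ drops out. The difference is that you use this as the \emph{starting point} of the proof (reducing the theorem to connectivity of a leaf-exchange graph $\Gamma(G)$), whereas the paper derives it afterwards as a corollary. Your Step 2 is then a genuinely different route from the paper's. The paper fixes a root $\rt$, introduces a partial order $\prec_\xi$ on $\T(G)$ measuring disagreement of $T_\rt$ with $T^{goal}_\rt$ (prioritized toward the root), and shows that from any $T\ne T^{goal}$ there is a sequence of source-rotations decreasing $\prec_\xi$; the key inductive gadget is contracting an entire connected subtree $F\subset T\cap T'$ rooted at $\rt$ (Lemma~\ref{lem:contracttreetotree}), not a single edge. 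By contrast you propose a one-edge deletion/contraction (or degree-$2$ suppression) induction on $(\mu(G),|V(G)|)$, in the spirit of fundamental-graph-connectivity arguments.

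The gap is exactly where you say it is, and it is not routine. First, an edge $e$ with \emph{both} $G\setminus e$ and $G/e$ $2$-connected may fail to exist: for $C_5$ plus a chord $\{1,3\}$, the only edge whose deletion preserves $2$-connectedness is the chord, but contracting the chord creates a cut vertex. Your degree-$2$-suppression fallback does cover this particular example, but after suppressing all degree-$2$ vertices you land in the $2$-connected, minimum-degree-$\ge 3$, not-necessarily-$3$-connected case, for which ``decompose along $2$-cuts'' is not an argument — spanning trees of $G$ do not factor over a $2$-cut decomposition, and leaf-swaps at a cut pair straddle pieces, so it is not clear how to recover $\Gamma(G)$-connectivity from the pieces. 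Second, the ``lift a leaf-swap through a contraction/suppression'' step requires a nontrivial case analysis: a leaf-swap at the contracted vertex $z$ of $G/e$ need not correspond to a single leaf-swap of $G$, because $z$'s preimage $\{u,v\}$ is not a leaf of $T$; one has to temporarily leave the block $\{T : e\in T\}$ and re-enter it, and to choose the correct lift in the size-$2$ fibers of the suppression map. These detours can be made to work (I checked the small cases), but none of this is in the write-up, and it is precisely these structural difficulties that the paper's approach of contracting a whole subtree $F$ at once is designed to avoid. So: a promising and genuinely different plan, but the inductive step — which is the entire content of the theorem — is not yet a proof.
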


Before proving the theorem, we give a high level outline of our argument. 
\begin{description}
\item[Step 1] Fix an arbitrary \emph{root vertex} $\rt \in V(G)$ and transform each $T \in \T(G)$ into rotor configurations $T_{\rt}$.
\item[Step 2] Observe that source-turn moves on trees correspond to \emph{source-rotations} in the corresponding rotor configurations (see Definition~\ref{def:rotatable}). 
\item[Step 3] Introduce a partial ordering on $\T(G)$ such that $T^{goal}$ is the unique minimal spanning tree.
\item[Step 4] Use an inductive argument to show that for any $T \neq T^{goal}$, it is possible to use source-rotations to transform $T_{\rt}$ into some $T_{\rt}'$ such that $T'$ is less than $T$ in the partial order on $\T(G)$.
\item[Step 5] Because $T^{goal}$ is the unique minimal spanning tree, the theorem follows. 
\end{description}    


In order to make this argument precise, it is convenient to introduce a few definitions and straightforward results. In particular, the inductive argument in Step 4 requires an understanding of the conditions under which source-turn moves applied to minors of $(G,\chi)$ correspond to source-turn moves in $(G,\chi)$. We establish these conditions in Lemmas \ref{lem:delconswap} and \ref{lem:contracttreetotree}, so it is convenient to delay the proof of Theorem~\ref{thm:sourceturn} until immediately after the proof of Lemma \ref{lem:contracttreetotree}.

\begin{definition}\label{def:vertorder}
Given a graph $G$, a spanning tree $T \in \T(G)$, and a vertex $\rt \in V(G)$, we write $x \prec_{T_\rt} y$ if the path from $x$ to $\rt$ along $T$ passes through $y$. 
\end{definition}

It is straightforward to check that $\prec_{T_\rt}$ always induces a partial ordering on $V(G)\setminus \rt$ and that a vertex is minimal if and only if it is a leaf vertex of $T$ (other than $\rt$). 
\begin{lemma}\label{lem:whichrotor}
Let $(G,\chi)$ be a 2-connected ribbon graph and $T \in \T(G)$. For $e \in T$ with $\i(e)=\{x,y\}$, $x \prec_{T_\rt} y$ if and only if $T_\rt\la x\ra = e$. 
\end{lemma}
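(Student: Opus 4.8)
\textbf{Proof proposal for Lemma~\ref{lem:whichrotor}.}

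The plan is to prove both directions by unwinding Definition~\ref{def:Ts} and Definition~\ref{def:vertorder}. Recall that $T_\rt$ is the rotor configuration where every edge of $T$ is oriented toward $\rt$: for each $v \neq \rt$, the rotor $T_\rt\la v\ra$ is the unique edge of $T$ incident to $v$ that lies on the path in $T$ from $v$ to $\rt$. Meanwhile, $x \prec_{T_\rt} y$ means the $T$-path from $x$ to $\rt$ passes through $y$. So the content of the lemma is just the observation that, for an edge $e = \{x,y\}$ of $T$, saying ``$e$ is the first edge on the $x$-to-$\rt$ path'' is the same as saying ``the $x$-to-$\rt$ path goes through $y$,'' given that $x$ and $y$ are $T$-adjacent.

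First I would handle the forward direction: suppose $x \prec_{T_\rt} y$, so the unique path $P$ in $T$ from $x$ to $\rt$ passes through $y$. Since $e=\{x,y\}\in T$ and $T$ is a tree, $e$ is the unique edge of $T$ incident to $x$ on the path from $x$ to $y$; because $P$ is a path in a tree and it contains $y$, the initial segment of $P$ from $x$ to $y$ must be exactly the single edge $e$ (any path between two $T$-adjacent vertices in the tree is that one edge). Hence $e$ is the first edge of $P$, i.e.\ the edge of $T$ incident to $x$ lying along the path from $x$ to $\rt$, which by Definition~\ref{def:Ts} is precisely $T_\rt\la x\ra$. For the converse, suppose $T_\rt\la x\ra = e$. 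Then by Definition~\ref{def:Ts}, $e$ is the edge of $T$ incident to $x$ that lies on the $T$-path from $x$ to $\rt$; removing $e$ from $T$ disconnects $T$ into two components, and $\rt$ lies in the component containing $y$ (otherwise the path from $x$ to $\rt$ would not use $e$). Therefore the path from $x$ to $\rt$ enters the $y$-component through $e$, hence passes through $y$, which is exactly $x \prec_{T_\rt} y$.

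This lemma is essentially definitional, so I do not anticipate a genuine obstacle; the only care needed is the degenerate-looking case and making sure the ``unique path in a tree between adjacent vertices is the connecting edge'' fact is invoked cleanly. One might worry about whether $x = \rt$ or $y = \rt$ causes trouble, but if $x = \rt$ then $\rt$ has no rotor so the claim is vacuous on that side, and if $y = \rt$ then both sides hold trivially (the path from $x$ to $\rt$ trivially passes through $\rt = y$, and $T_\rt\la x\ra$ is the edge from $x$ toward $\rt$, which is $e$ since $x$ and $\rt$ are adjacent). The 2-connectedness hypothesis is not actually used here — it is simply inherited from the ambient setup — so I would not invoke it in the argument.
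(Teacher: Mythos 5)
Your proof is correct and takes the same route as the paper, which simply notes that the lemma follows immediately from Definitions~\ref{def:Ts} and~\ref{def:vertorder}; you have filled in the definitional unwinding carefully, including the edge cases. Your observation that 2-connectedness is unused here is also accurate.
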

\begin{proof}
This lemma follows immediately from Definitions~\ref{def:Ts} and~\ref{def:vertorder}. 
\end{proof}

\begin{lemma}\label{lem:singcriteria}
Let $(G,\chi)$ be a 2-connected ribbon graph, $T \in \T(G)$, $g \in T$, $f \in E(G) \setminus T$, $\i(f) = \{c,s\}$, $\i(g) = \{c,x\}$, and $\chi(c,g) = f$. The pair $(c-s,T)$ is a single-step pair from $g$ to $f$ if and only if $c \prec_{T_s} x$. 
\end{lemma}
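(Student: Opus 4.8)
The plan is to reduce the statement to a direct application of Lemma~\ref{lem:whichrotor}, by first isolating the combinatorial content of "single-step pair from $g$ to $f$" under the standing hypothesis $\chi(c,g)=f$. Unwinding the discussion following Definition~\ref{def:singstep}, the pair $(c-s,T)$ being a single-step pair from $g$ to $f$ means three things: the rotor-routing algorithm with input $T$ and $c-s$ halts after a single rotor turns one position; $T_s\langle c\rangle = g$; and $r_c(T_s)\langle c\rangle = f$. Since $\chi(c,g)=f$ is given, the third condition is automatic as soon as the second holds, so the whole notion collapses to: the algorithm halts in one step, and $T_s\langle c\rangle = g$. So the first thing I would prove is the equivalence ``$(c-s,T)$ is a single-step pair from $g$ to $f$ $\iff$ $T_s\langle c\rangle = g$.''

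The forward implication of that equivalence is immediate from the definition. For the reverse implication I would run Algorithm~\ref{alg:rotor} by hand: since $\i(f)=\{c,s\}$ and $G$ has no loops, $c\neq s$, so the while loop executes at least once; on the first pass the rotor at $c$ rotates from $g=T_s\langle c\rangle$ to $\chi(c,g)=f$, and the chip then crosses $f$ to its other endpoint, which is $s$; at that point the loop terminates. Hence exactly one rotor has turned one position, so $(c-s,T)$ is a single-step pair, and combined with $T_s\langle c\rangle=g$ and $r_c(T_s)\langle c\rangle=f$ it is a single-step pair from $g$ to $f$.

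Finally I would invoke Lemma~\ref{lem:whichrotor} with root $\rt = s$, edge $e=g$, and endpoints $\i(g)=\{c,x\}$, which gives $T_s\langle c\rangle = g \iff c\prec_{T_s} x$. Chaining this with the equivalence from the previous paragraph yields the lemma. I do not expect a real obstacle: the argument is a single pass through Algorithm~\ref{alg:rotor} plus one citation. The only point meriting a sentence of care is the degenerate case $x=s$, where $g\in T$ forces the $T$-path from $c$ to $s$ to be the single edge $g$, so that $T_s\langle c\rangle=g$ and $c\prec_{T_s}s$ both hold automatically and the equivalence remains valid (which is precisely what Lemma~\ref{lem:whichrotor} asserts when one endpoint of $e$ is the root). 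Everything else is routine unwinding of Definitions~\ref{def:Ts}, \ref{def:onestep}, \ref{def:vertorder}, and~\ref{def:singstep}.
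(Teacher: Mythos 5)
Your proof is correct and follows essentially the same route as the paper: both reduce the statement, via Lemma~\ref{lem:whichrotor}, to the observation that $T_s\la c\ra = g$ together with $\chi(c,g)=f$ and $\i(f)=\{c,s\}$ forces Algorithm~\ref{alg:rotor} to halt after exactly one rotation. Your write-up is in fact a little cleaner than the paper's (which contains a small typo writing $T_s\la x\ra$ where $T_s\la c\ra$ is meant), and your explicit remark about the degenerate case $x=s$ is a welcome extra.
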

\begin{proof}
For the if direction, Lemma~\ref{lem:whichrotor} says that $T_s \la x \ra = g$. Thus, Algorithm~\ref{alg:rotor} terminates after a single step and the output tree is $T \setminus g \cup f$. For the only if direction, $c \not\prec_{T_s} x$ implies that $T_s \la x \ra \not= g$. If the rotor-routing algorithm terminates after a single step, the resulting tree must include the edge $g$. Thus, it is not a single-step pair from $g$ to $f$. 
\end{proof}

Lemma \ref{lem:singcriteria} suggests that we can characterize single-step moves and source-turn moves in terms of rotors constructed from the underlying spanning tree $T$. We now make this characterization explicit in Definition \ref{def:rotatable} and show the correspondence between the two representations of single-step (or source-turn) moves in Lemma \ref{lem:rotatable}.

\begin{definition}\label{def:rotatable}
Let $(G,\chi)$ be a 2-connected ribbon graph with $T \in \T(G)$ and $c,\rt \in V(G)$. 
\begin{itemize}
    \item If $r_c(T_\rt)$ is acyclic, then we say that $T_\rt$ is \emph{rotatable at $c$}. We call the map from $T_\rt$ to $r_c(T_\rt)$ a \emph{rotation at $c$}. 
    \item For $g,f \in E(G)$, we specify that $T_\rt$ is \emph{rotatable at $c$ from $g$ to $f$} if $T_\rt$ is rotatable at $c$, $g = T_\rt\la c \ra$, and $f = \chi(c,g)$.
    \item The terms \emph{source-rotatable at $c$ (from $g$ to $f$)} and \emph{source-rotation} are defined analogously with the added requirement that $c$ is a leaf vertex.
\end{itemize}
\end{definition}

\begin{lemma}\label{lem:rotatable}
Let $T \in \T(G)$ and $\rt \in V(G)$. The configuration $T_\rt$ is rotatable (resp. source-rotatable) at $c$ from $g$ to $f$ if and only if there exists a single-step pair (resp. source-turn pair) $(c-s,T)$ from $g$ to $f$ and $T_\rt\la c \ra = g$. 
\end{lemma}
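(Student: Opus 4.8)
The plan is to reduce each of the two equivalences (rotatable and source-rotatable) to a purely combinatorial condition on $T$, and then match the two reductions. The tools are Lemma~\ref{lem:acyclic} (to detect acyclicity of $r_c(T_\rt)$), Lemma~\ref{lem:whichrotor} (to pass between rotors and $\prec$-orderings), Lemma~\ref{lem:singcriteria} (the single-step criterion), and, for the source version, Lemma~\ref{lem:singswap}. Throughout write $\i(g)=\{c,x\}$ and $\i(f)=\{c,s\}$; since $(G,\chi)$ is $2$-connected, $\deg_G(c)\ge 2$, so the requirement $f=\chi(c,g)$ that is built into both ``rotatable at $c$ from $g$ to $f$'' and ``single-step pair from $g$ to $f$'' forces $f\ne g$. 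Observe that both sides of the claimed equivalence already incorporate $T_\rt\la c\ra=g$ (on the left this is part of Definition~\ref{def:rotatable}, on the right it is an explicit hypothesis) and $\chi(c,g)=f$, so the only thing left to compare is the condition ``$r_c(T_\rt)$ is acyclic'' against ``there is an $s$ for which $(c-s,T)$ is a single-step pair from $g$ to $f$''.

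The central step is to pin down exactly when $r_c(T_\rt)$ is acyclic. Since $T_\rt\la c\ra=g$, applying $r_c$ changes only the rotor at $c$, replacing $g$ by $f=\chi(c,g)$; so by Lemma~\ref{lem:acyclic} (together with the fact that any orientation of a spanning tree in which each non-sink vertex has exactly one outgoing edge is the orientation toward the sink), $r_c(T_\rt)$ is acyclic if and only if the edges carrying its rotors form a spanning tree of $G$. If $f\in T$, then $f$ is incident to $c$ with $f\ne g=T_\rt\la c\ra$, so Lemma~\ref{lem:whichrotor} applied to $f$ forces $T_\rt\la s\ra=f$; in $r_c(T_\rt)$ the rotors at $c$ and $s$ then lie on the single edge $f$ with opposite directions, forming a directed $2$-cycle, so $r_c(T_\rt)$ is not acyclic. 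Moreover, when $f\in T$ the unique $T$-path from $c$ to $s$ is the edge $f$, so $T_s\la c\ra=f\ne g$ and $(c-s,T)$ is not a single-step pair from $g$ to $f$; thus both sides of the equivalence fail. If instead $f\notin T$, then the rotor edges of $r_c(T_\rt)$ are the $|V(G)|-1$ distinct edges $(T\setminus g)\cup\{f\}$, and these form a spanning tree exactly when $f$ reconnects the two components $A\ni c$ and $B\ni x$ of $T\setminus g$, i.e. exactly when $s\in B$; by Lemma~\ref{lem:whichrotor} this is equivalent to $c\prec_{T_s}x$ (the $T$-path from $c$ to $s$ must traverse the unique $A$--$B$ edge $g$, hence pass through $x$). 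On the other hand, Lemma~\ref{lem:singcriteria} says that $(c-s,T)$ is a single-step pair from $g$ to $f$ if and only if $c\prec_{T_s}x$. Hence when $f\notin T$ both sides are equivalent to $c\prec_{T_s}x$, and combining the two cases establishes the ``rotatable'' equivalence.

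For the ``source-rotatable'' equivalence I would argue that passing from ``rotatable'' to ``source-rotatable'' on the left, and from ``single-step pair from $g$ to $f$'' to ``source-turn pair from $g$ to $f$'' on the right, amounts in both cases to adjoining the single extra hypothesis that $c$ is a leaf vertex of $T$. On the left this is immediate from Definition~\ref{def:rotatable}. On the right, if $c$ is a leaf of $T$ then its unique tree edge is $T_s\la c\ra$; when $(c-s,T)$ is a single-step pair from $g$ to $f$ this edge is $g$, so the defining condition $\i(\chi(c,T_s\la c\ra))=\{c,s\}$ of a source-turn pair (Definition~\ref{def:singstep}) becomes $\i(\chi(c,g))=\{c,s\}$, which already holds because $f=\chi(c,g)$ and $\i(f)=\{c,s\}$; conversely Lemma~\ref{lem:singswap} guarantees every source-turn pair is a single-step pair. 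Therefore conjoining ``$c$ is a leaf of $T$'' to both sides of the already-established ``rotatable'' equivalence yields the ``source-rotatable'' equivalence.

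I expect the acyclicity analysis of $r_c(T_\rt)$ to be the only substantive part of the argument — in particular, keeping the two-components-of-$T\setminus g$ picture straight and cleanly disposing of the degenerate case $f\in T$ (where $r_c(T_\rt)$ acquires a $2$-cycle and cannot come from a single-step pair). Once that is settled, everything else is bookkeeping with Lemmas~\ref{lem:acyclic},~\ref{lem:whichrotor},~\ref{lem:singcriteria}, and~\ref{lem:singswap}.
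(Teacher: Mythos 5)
Your proof is correct and reaches the same criterion --- $c \prec_{T_s} x$ via Lemmas~\ref{lem:acyclic}, \ref{lem:whichrotor}, and \ref{lem:singcriteria}, with Lemma~\ref{lem:singswap} handling the source-turn variant --- that the paper's proof uses, so the underlying route is essentially the same. Your explicit case split on whether $f\in T$ (where you note $r_c(T_\rt)$ acquires a directed $2$-cycle along $f$) is a small but genuine refinement, since the paper's step ``if $g\notin P$ then $T\setminus g\cup f$ contains a cycle'' is only literally accurate when $f\notin T$, a hypothesis the paper's argument does not first establish.
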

\begin{proof}
Suppose that $T_\rt$ is rotatable at $c$ from $g$ to $f$. By Definition~\ref{def:rotatable}, this implies that $g = T_\rt\la c \ra$, $f = \chi(c,g)$, and $r_c(T_\rt)$ is acyclic. Let $s = \i(f) \setminus c$ and  $x = \i(g) \setminus c$. Let $P$ be the unique path from $c$ to $s$ in $T$. If $g \not\in P$, then, $T \setminus g \cup f$ must contain a cycle, which contradicts the condition that $r_c(T_\rt)$ is acyclic. Thus, $g \in P$ and $c \prec_{T_{s}} x$. By Lemma~\ref{lem:singcriteria}, $(c-s,T)$ is a single-step pair from $g$ to $f$. 

Alternatively, suppose that there exists an $s \in V(G)$ such that $(c-s,T)$ is a single-step pair from $g$ to $f$ and $T_\rt \la c \ra = g$. A single step through the rotor-routing algorithm produces the rotor configuration $r_c(T_{s})$. By Lemma~\ref{lem:singcriteria}, we know that $c \prec_{T_s} x$, which implies $T_s \la c \ra = g$ by Lemma~\ref{lem:whichrotor}. Because $T_\rt \la c \ra = T_{s} \la c \ra$, it follows that $r_c(T_\rt)\la c \ra = r_c(T_s)\la c \ra = f$. This implies that $r_c(T_\rt) = (T \setminus g \cup f)_\rt$, which is acyclic by Lemma~\ref{lem:acyclic}.  

The proof is analogous when working with source-rotatability and source-turn pairs noting that the $c$ will remain a source vertex after rotating the rotor at $c$ from $g$ to $f$.
\end{proof}

For the induction step of Theorem~\ref{thm:sourceturn}, we need to describe how (source-)rotatability is impacted by taking deletions and contractions.  

\begin{lemma}\label{lem:delconswap}
Let $(G,\chi)$ be a 2-connected ribbon graph with $c,\rt \in V(G)$. Suppose that $T \in \T(G)$ and $T_{\rt}$ is (source-)rotatable at $c$ from $g$ to $f$ on $(G,\chi)$.
\begin{itemize}
\item For any $e \in E(G) \setminus (T\cup f)$, the configuration $T_{\rt}$ is (source-)rotatable at $c$ from $g$ to $f$ on $(G\setminus e,\chi\setminus e)$.  
\item For any $e \in T \setminus g$, the configuration $(T\setminus e)_{\rt}$ is (source-)rotatable at $c$ from $g$ to $f$ on $(G/e,\chi/e)$. 
\end{itemize}
\end{lemma}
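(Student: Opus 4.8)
The plan is to unwind Definition~\ref{def:rotatable}: saying that $T_\rt$ is rotatable at $c$ from $g$ to $f$ on a plane graph amounts to the conjunction of four statements --- (i) the relevant edge set is a spanning tree of the (minor) graph; (ii) $g$ is the rotor assigned to $c$; (iii) $f$ is the cyclic successor of $g$ at $c$ in the new ribbon structure; and (iv) rotating the rotor at $c$ from $g$ to $f$ yields an acyclic configuration --- together with the extra requirement that $c$ is a leaf in the source-rotatable case. So for each of the two bullet points I will simply verify (i)--(iv) (and leafhood) for the corresponding minor. Before doing this I will record two consequences of the hypotheses that get used repeatedly: since $(G,\chi)$ is $2$-connected we have $\deg(c)\ge 2$, hence $f=\chi(c,g)\neq g$; and since $r_c(T_\rt)$ is acyclic, Lemma~\ref{lem:acyclic} identifies it with a spanning tree of $G$ whose edge set is $T\setminus g\cup f$, which forces $f\notin T$ (if $f\in T$ then, as $T_\rt\la c\ra=g\neq f$, the edge $f$ is already the rotor of its other endpoint, and $r_c(T_\rt)$ would contain a directed $2$-cycle on $f$, a contradiction).

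\emph{Deletion bullet.} Here $e\notin T$ and $e\neq f$, so $T$ is itself a spanning tree of $G\setminus e$ and the $c$--$\rt$ path in $T$ is literally unchanged; this gives (i) and (ii). For (iii), $\chi\setminus e$ only removes $e$ from the cyclic orders at its endpoints, and since $e$ is neither $g$ nor the immediate successor $f$ of $g$, the successor of $g$ is unaffected. For (iv), the underlying edge set of $r_c(T_\rt)$ is $T\setminus g\cup f$, which avoids $e$, hence is a spanning tree of $G\setminus e$ and in particular acyclic there. Finally, $T$ is unchanged as a tree, so $c$ stays a leaf in the source case.

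\emph{Contraction bullet.} Now $e\in T\setminus g$; write $\i(e)=\{u,w\}$ and let $z$ be the vertex of $G/e$ obtained by merging $u$ and $w$. Since $e\in T$ and no $T$-edge is parallel to $e$, the set $T\setminus e$ is a spanning tree of $G/e$, giving (i). The key point for (ii) is that the path $P$ from $c$ to $\rt$ in $T$ begins with the edge $g\neq e$; because the unique $c$--$u$ and $c$--$w$ paths in $T$ are single edges whenever $e$ is incident to $c$, this forces $P$ not to begin with $e$, so the image of $P$ in $T\setminus e$ still begins with $g$, and $g$ cannot become a loop (that would make $g$ parallel to $e$, impossible in a tree). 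Hence $(T\setminus e)_\rt$ assigns $g$ to the vertex corresponding to $c$ (which is $z$ when $c\in\{u,w\}$); in particular $c$ and $\rt$ are not merged. For (iii): if $c\notin\{u,w\}$ the cyclic order at $c$ is unchanged; if $c\in\{u,w\}$ I use the explicit description of $(\chi/e)(z)$ from Definition~\ref{def:condelchi} together with $f\neq e$ (which holds since $f\notin T\ni e$) and the fact that $f$ is not parallel to $e$ (else $T\setminus g\cup f$ would contain the $2$-cycle $\{e,f\}$) to conclude that $g$ and $f$ remain cyclically consecutive. For (iv), the underlying edge set of $r_c((T\setminus e)_\rt)$ is $(T\setminus g\cup f)\setminus e$; since $T\setminus g\cup f$ is a spanning tree of $G$ containing $e$, contracting $e$ yields a spanning tree of $G/e$, so this configuration is acyclic. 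In the source case, $c$ being a leaf means its unique $T$-edge is $g\neq e$, hence $e$ is not incident to $c$, so $c\notin\{u,w\}$ and $c$ still has exactly the one edge $g$ in $T\setminus e$; thus $c$ remains a leaf.

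Almost all of the above is bookkeeping with the definitions; the one place that needs genuine care is the contraction bullet when $c$ is an endpoint of $e$, where one must check that after merging $u$ and $w$ the new vertex $z$ still carries $g$ as its rotor and $f$ as the cyclic successor of $g$ in $\chi/e$. Both checks rest on the two facts $g\neq e$ (so $P$ does not traverse $e$ first) and $f\neq e$ (from $f\notin T$, which is exactly where $2$-connectivity enters), combined with the explicit formula for $\chi/e$, so I expect the write-up to spend most of its length making these verifications precise.
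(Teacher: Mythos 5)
Your proof is correct and takes the same approach as the paper, which simply asserts that both claims follow directly from Definition~\ref{def:rotatable} since the defining properties of (source-)rotatability are preserved under contracting a tree edge other than $g$ or deleting a non-tree edge other than $f$. You have carefully spelled out the bookkeeping the paper leaves implicit, including the subtle points about the cyclic order at the contracted vertex and the needed facts $f\notin T$ and $f$ not parallel to $e$.
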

\begin{proof}
Both of these results follow directly from Definition~\ref{def:rotatable}. All of the defining properties of (source) rotatability still hold after contracting an edge in $T$ or deleting an edge not in $T$ (as long as this edge is not $f$ or $g$). 
\end{proof}

\begin{lemma}
\label{lem:contracttreetotree}
Let $(G,\chi)$ be a 2-connected plane graph with two spanning trees $T,T' \in \T(G)$ and some $\rt \in V(G)$. Fix a connected $F \subset T\cap T'$ with at least one edge incident to $\rt$. Let $(\widetilde{G},\widetilde{\chi})$ be the ribbon graph obtained by contracting the edges in $F$, and $\widetilde{\rt}$ be the vertex these edges contract into. 

If there exists a sequence of source-rotations that take $(T\setminus F)_{\widetilde{\rt}}$ to $(T'\setminus F)_{\widetilde{\rt}}$ on $(\widetilde{G},\widetilde{\chi})$, then there exists a sequence of source-rotations that take $T_\rt$ to $T'_\rt$ on $(G,\chi)$ without turning any rotors corresponding to edges in $F$.
\end{lemma}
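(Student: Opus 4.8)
The plan is to identify $\T(\widetilde G)$ with the set of spanning trees of $G$ containing $F$ via the standard bijection $S\mapsto S\cup F$, to check that this identification is compatible with rooted rotor configurations and with source-rotations, and then to transport the hypothesized sequence of source-rotations on $(\widetilde G,\widetilde\chi)$ back to $(G,\chi)$ step by step.

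First I would record the bookkeeping. Since $F$ is connected and sits inside a spanning tree, $F$ is a tree; it has an edge incident to $\rt$, so $\rt\in V(F)$, the vertex $\widetilde\rt$ is the image of $\rt$, and $V(\widetilde G)\setminus\widetilde\rt = V(G)\setminus V(F)$. Contracting $F$ changes neither the set of edges incident to a vertex $v\notin V(F)$ nor their cyclic order (no such edge ever becomes a loop), so $\widetilde\chi(v)=\chi(v)$ for every $v\in V(\widetilde G)\setminus\widetilde\rt$. Moreover, for $S\in\T(\widetilde G)$ and $v\notin V(F)$, the $v$-to-$\rt$ path in $S\cup F$ uses $S$-edges until it first meets $V(F)$ and then stays inside $F$, so its first edge equals the first edge of the $v$-to-$\widetilde\rt$ path in $S$; hence $(S\cup F)_\rt$ and $S_{\widetilde\rt}$ assign the same rotor to every $v\notin V(F)$, while $(S\cup F)_\rt$ assigns to each $v\in V(F)\setminus\rt$ the first edge of the path from $v$ to $\rt$ inside $F$, independently of $S$.

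Next, writing the given sequence as $(T\setminus F)_{\widetilde\rt}=S^0_{\widetilde\rt}\to\cdots\to S^k_{\widetilde\rt}=(T'\setminus F)_{\widetilde\rt}$ with the $i$-th arrow a source-rotation at $c_i$ from $g_i$ to $f_i$ and $S^i=S^{i-1}\setminus g_i\cup f_i$, I would set $U^i:=S^i\cup F\in\T(G)$, so that $U^0=T$ and $U^k=T'$, and prove by induction on $i$ that $U^{i-1}_\rt$ is source-rotatable at $c_i$ from $g_i$ to $f_i$ on $(G,\chi)$ with resulting configuration $U^i_\rt$. A source-rotation never acts at the root, so $c_i\neq\widetilde\rt$, hence $c_i\notin V(F)$ and the rotor it turns is not an edge of $F$; this is exactly the claim that no rotor corresponding to an edge of $F$ is turned. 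For the induction step I would verify Definition~\ref{def:rotatable} directly: $c_i$ is a leaf of $S^{i-1}$ and, since $c_i\notin V(F)$, is incident in $U^{i-1}$ to exactly the edge $g_i$, so $c_i$ is a leaf of $U^{i-1}$ and $U^{i-1}_\rt\la c_i\ra=g_i$; then $\chi(c_i,g_i)=\widetilde\chi(c_i,g_i)=f_i$; and for acyclicity, $r_{c_i}$ changes only the rotor at $c_i$, while $U^{i-1}_\rt$ and $(S^i\cup F)_\rt$ also differ only at $c_i$ (paths to $\rt$ from other vertices avoid the leaf $c_i$ and use the same edges in both trees), with common value $f_i$ there, so $r_{c_i}(U^{i-1}_\rt)=(S^i\cup F)_\rt=U^i_\rt$, which is acyclic by Lemma~\ref{lem:acyclic} because $S^i\in\T(\widetilde G)$ forces $S^i\cup F\in\T(G)$.

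The one step with real content is the acyclicity check: it is a converse to the contraction case of Lemma~\ref{lem:delconswap} and rests on the spanning-tree correspondence $S\leftrightarrow S\cup F$ rather than on any new idea. The main things that need care are organizational: identifying the vertices outside $V(F)$ as the locus where $\widetilde\chi$ and the local tree structure genuinely coincide with those of $(G,\chi)$, and noting that the contraction creates no loops at those vertices, so their cyclic orders are unchanged.
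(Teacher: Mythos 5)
Your proof is correct and follows essentially the same route as the paper's: both arguments identify $\T(\widetilde G)$ with spanning trees of $G$ containing $F$, observe that rotors outside $V(F)$ and the local cyclic orders coincide, and then lift each source-rotation on $(\widetilde G,\widetilde\chi)$ to one on $(G,\chi)$. The only small difference is in the acyclicity check, where you appeal directly to Lemma~\ref{lem:acyclic} via the identity $r_{c_i}(U^{i-1}_\rt)=(S^i\cup F)_\rt$, whereas the paper argues that a cycle in $r_c(T''_\rt)$ would force a cycle in $F$ or in $\widetilde G$; your version is slightly cleaner but both are sound.
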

\begin{proof}
Because $F$ has an edge incident to $\rt$, each vertex of $(\widetilde{G},\widetilde{\chi})$ other than $\widetilde{\rt}$ has a single preimage in $(G,\chi)$. Furthermore, for any $\widetilde T \in \T(\wtilde G)$, and any $c \not= \wtilde{\rt} \in V(\widetilde G)$, it is immediate that $\widetilde T \cup F\in \T(G)$ and $\widetilde T_{\rt'}\la c \ra = (\widetilde T \cup F)_{\rt}\la c \ra$. 

Fix some tree $T'' \in \T(G)$ and suppose there exists a vertex $c \in V(\wtilde{G})\setminus \wtilde{\rt}$ and edges 
$f,g \in E(\wtilde{G})$ such that $(T''\setminus F)_{\wtilde{\rt}}$ is source-rotatable at $c$ from $g$ to $f$ in $(\wtilde{G},\wtilde{\chi})$. Then by Definition \ref{def:rotatable}, $g = (T''\setminus F)_{\wtilde{\rt}}\la c\ra = T''_{\rt}\la c\ra$, and $f = \wtilde{\chi}(c,g) = \chi(c,g)$. Lastly, 
\[r_c((T''\setminus F)_{\wtilde{\rt}}) = ((T''\setminus F)\cup f\setminus g)_{\wtilde{\rt}} = (T''\cup f\setminus g)_{\rt}\setminus F = r_c(T''_{\rt})\setminus F.\]
Thus, $r_c(T''_{\rt}) = r_c((T''\setminus F)_{\wtilde{\rt}})\cup F$. Given that $r_c((T''\setminus F)_{\wtilde{\rt}})$ is acyclic (by Definition \ref{def:rotatable}) and that $F$ contracts to a single vertex, $r_c(T''_{\rt})$ can only contain a cycle if $F$ contains a cycle. However, $F \subseteq T$, where $T$ is a spanning tree, so this is impossible. Therefore $r_c(T''_{\rt})$ is acyclic. Then by Definition \ref{def:rotatable}, $T''_{\rt}$ is rotatable at $c$ from $g$ to $f$ in $(G,\chi)$. Lastly, $T''_{\rt}$ is source-rotatable at $c$ because if $c\neq \wtilde{\rt}$ is a leaf of $T''\setminus F$, then it is a leaf in $T''$.

Given that for any $\wtilde{T} \in \T(\wtilde{G})$, $\wtilde{\rt}$ has no rotors in $\wtilde{T}_{\wtilde{\rt}}$, this implies that any sequence of source-rotations that take $(T\setminus F)_{\widetilde{\rt}}$ to $(T'\setminus F)_{\widetilde{\rt}}$ on $(\widetilde{G},\widetilde{\chi})$ must also take $T_\rt$ to $T'_\rt$ on $(G,\chi)$. Furthermore, the rotors corresponding to edges in $F$ are not present in $ (\widetilde{G},\widetilde{\chi})$, so they are not rotated by any of the source rotations taking  $T_\rt$ to $T'_\rt$ on $(G,\chi)$.
\end{proof}

Now, we are ready to prove Theorem~\ref{thm:sourceturn}. 
\begin{proof}[Proof of Theorem \ref{thm:sourceturn}]
First, choose an arbitrary \emph{root vertex} $\rt \in V(G)$ which we will fix throughout this proof. Our goal will be to show that there is a series of source-rotations whose composition maps from $T^{start}_{\rt}$ to $T^{goal}_{\rt}$. The theorem then follows from repeated applications of Lemma~\ref{lem:rotatable}.

Next, we set up an induction argument. When $G$ consists of a single edge, the result is trivial. Consider the following inductive assumption. \\


\noindent \textbf{Inductive Assumption:} Suppose $(G',\chi')$ is a 2-connected proper minor of $(G,\chi)$. For any pair $\what{T}^{start}, \what{T}^{goal} \in \T(G')$, and any vertex $\rt' \in V(G')$, there is a series of source-rotations whose composition maps from $\what{T}^{start}_{\rt'}$ to $\what{T}^{goal}_{\rt'}$.\\

Let $\xi$ be the map from $\mathcal T(G) \times (V(G)\setminus \rt) \to \{0,1\}$ defined as follows:

\[ \xi(T,x) = \begin{cases} 0 & \text{if }T_\rt \la x \ra = T^{goal}_\rt \la x \ra, \\
1 & \text{otherwise.}\end{cases}\]

Notice that $\xi(T,x) = 0$ for all $x$ if and only if $T = T^{goal}$. We can use $\xi$ and the vertex partial order from Definition~\ref{def:vertorder} to define a partial order on spanning trees, which we will write as $\prec_{\xi}$ (or $\preceq_{\xi}$ for the corresponding weak partial order).

\begin{align*}T \preceq_{\xi} T' \Longleftrightarrow \text{ For all } &x \in (V(G)\setminus \rt) \text{ such that }\xi(T,x) > \xi(T',x) \text{, there exists }\\ & y \in (V(G)\setminus \rt)\text{ such that }\xi(T,y) < \xi(T',y) \text{ and } x \prec_{T^{goal}_\rt} y.\end{align*}

We now show that $\prec_{\xi}$ is a partial order. Irreflexivity and asymmetry are immediate, we just need to show transitivity. Let $T$, $T'$, and $T''$ be spanning trees such that $T \prec_\xi T'$ and $T' \prec_\xi T''$. Let $x$ be an arbitrary vertex such that $\xi(T,x) > \xi(T'',x)$ (if no such vertex exists, then $T \prec_\xi T''$ is immediate). 

First, suppose that $\xi(T',y) = \xi(T,y)$ for all $y$ where $x \prec_{T^{goal}_\rt} y$. Then, $\xi(T',x) \geq \xi(T,x) > \xi(T'',x)$. Because $T' \prec_\xi T''$, there is some $z$ such that $x \prec_{T^{goal}_\rt} z$ and $\xi(T',z) < \xi(T'',z)$. Furthermore, because $\xi(T',z) = \xi(T,z)$, we also have $\xi(T,z) < \xi(T'',z)$

Otherwise, there must be some $y$ such that $x \prec_{T^{goal}_\rt} y$ and $\xi(T,y) < \xi(T',y)$. We can take $y$ to be a maximal such vertex (with respect to $\prec_{T^{goal}_\rt}$). If $\xi(T',y) \le \xi(T'',y)$, then $\xi(T,y) < \xi(T'',y)$. Otherwise, because $T' \prec_\xi T''$, there is some $z$ such that $y \prec_{T^{goal}_\rt} z$ and $\xi(T',z) < \xi(T'',z)$. By maximality of $y$ and the definition of $\prec_\xi$, we know that $\xi(T,z) \le \xi(T',z)$. It follows that $\xi(T,z) < \xi(T'',z)$. Since $x$ was arbitrary, $x \prec_{T_\rt^{goal}} y$, and $x \prec_{T_\rt^{goal}} z$, we know that $T \prec_\xi T''$. \\

The partial ordering $\prec_\xi$ is used to describe deviations from $T^{goal}_\rt$, prioritizing rotors closest to the root. Because $|\T(G)|$ is finite and $T^{goal}$ is the unique minimal spanning tree, it suffices to show that given any $T \in \T(G)\setminus T^{goal}$, there is a sequence of source-rotations from $T_\rt$ to some $T_\rt''$ such that $T'' \prec_{\xi} T$.

Suppose that there exists some $x \in V(G)$ such that $\xi(T,x) = 1$ and $x$ is a leaf vertex of $T$. Then, we can freely rotate the rotor at $x$ using source-rotations until we get a configuration $T_\rt''$ such that $\xi(T'',x) = 0$. Because we only rotated the rotor at $x$, it follows that $T'' \prec_\xi T$ and we are done.

Otherwise, there must exist some $x \in V(G)\setminus \rt$ such that $\xi(T,x)=1$ and for all $v \prec_{T_\rt} x$, we have $\xi(T,v) = 0$. Let $g = T_\rt\la x \ra$, $f = \chi(x,g)$, and $\{x,y\} = \i(f)$. Consider the sets
\[V^x = \{v \mid v \prec_{T_\rt} x\} \text { and } E^x = \{T_\rt\la v \ra \mid v \in V^x\}.\]
By assumption, all of the rotors associated with vertices in $V^x$ are already in their final position and are directed toward $x$. This implies that
\[V^x \subseteq \{v \mid v \prec_{T^{goal}_\rt} x\}.\]

We now introduce the following claim:\\

\noindent\textbf{Claim: } There exists some $T'\in \T(G)$ such that $x$ is a leaf of $T'$ and $T_\rt'$ can be reached from $T_\rt$ after a sequence of source-rotations at vertices in $V^x$. \\

If the claim is true, then we can rotate the rotor at $x$ until it reaches $T^{goal}\la x \ra$ and call the new tree $T''$. By construction, $\xi(T'',x) < \xi(T',x) = \xi(T,x)$. Furthermore, since all source-rotations were at vertices in $V_x\subseteq \{v \mid v \prec_{T^{goal}_\rt} x\}$, it follows that $\xi(T'',v) = \xi(T,v)$ for all $v \not\prec_{T^{goal}_\rt} x$. Thus, $T'' \prec_\xi T$. 

All that is left is to prove the claim. By Lemma~\ref{lem:whichrotor}, the set $T \setminus E^x$ has only one edge incident to $x$. Because $(G,\chi)$ is 2-connected, $x$ is not a cut vertex. Thus, there exists a spanning tree $T' \in \T(G)$ such that $(T \setminus E^x) \subseteq T'$ and $x$ is a leaf edge of $T'$. Let $F$ be the set of edges in $T\setminus E^x$. Notice that $F$ consists of all of the edges of $T$ that are on the same component of $T$ as $\rt$ when the tree is severed at the point $x$. In particular, $F$ is connected and contains at least one edge incident to $\rt$.

If we contract all of the edges in $F$, we obtain a new ribbon graph $(\widetilde G,\widetilde \chi)$ such that $V(\widetilde G) = V^x\cup x$. Furthermore, $T \cap E(\widetilde{G})$ and $T' \cap E(\widetilde{G})$ are both in $\T(\widetilde G)$. Let $\wtilde \rt$ be the vertex formed by contraction. The vertices other than $\wtilde \rt$ cannot be cut vertices of $(\widetilde G,\widetilde \chi)$ because they aren't cut vertices of $(G,\chi)$. If $\wtilde \rt$ is a cut vertex, we can consider each 2-connected component separately. In either case, we can apply the induction hypothesis. 

By the induction hypothesis, there is a sequence of source-rotations whose composition maps from $T \cap E(\widetilde{G})$ to $T' \cap E(\widetilde{G})$ on $(\widetilde G,\widetilde \chi)$. Thus, by Lemma~\ref{lem:contracttreetotree}, there is a sequence of source-rotations that take $T_\rt$ to $T'_\rt$ on $(G,\chi)$ without turning any rotors corresponding to edges in $F$. The claim follows from the fact that the edges in $T \setminus F$ correspond to the rotors at vertices in $V^x$. 


\end{proof}

\begin{figure}
\begin{center}
\begin{tikzpicture}
    
    \tikzstyle{every node} = [circle,fill,inner sep=1pt,minimum size = 1.5mm]
    \node[label={west:{\large$\rt$}}] (a) at (0,0) {};
    \node(b) at (1,0){} ;
    \node(c) at (2,0){};
    \node(d) at (3,0){};
    
    \node(a2) at (0,1) {};
    \node[label={south east:{$x$}}](b2) at (1,1){} ;
    \node(c2) at (2,1){};
    \node(d2) at (3,1){};
    
    \node(a3) at (0,2) {};
    \node(b3) at (1,2){} ;
    \node(c3) at (2,2){};
    \node(d3) at (3,2){};

    \begin{scope}[shift={(5,0)}]
    \node[label={west:{\large$\rt$}}] (2a) at (0,0) {};
    \node(2b) at (1,0){} ;
    \node(2c) at (2,0){};
    \node(2d) at (3,0){};
    
    \node(2a2) at (0,1) {};
    \node(2b2) at (1,1){} ;
    \node(2c2) at (2,1){};
    \node(2d2) at (3,1){};
    
    \node(2a3) at (0,2) {};
    \node(2b3) at (1,2){} ;
    \node(2c3) at (2,2){};
    \node(2d3) at (3,2){};
    \end{scope}
    
    \begin{scope}[shift={(10,0)}]
    \node[label={west:{\large$\rt$}}] (3a) at (0,0) {};
    \node(3b) at (1,0){} ;
    \node(3c) at (2,0){};
    \node(3d) at (3,0){};
    
    \node(3a2) at (0,1) {};
    \node(3b2) at (1,1){} ;
    \node(3c2) at (2,1){};
    \node(3d2) at (3,1){};
    
    \node(3a3) at (0,2) {};
    \node(3b3) at (1,2){} ;
    \node(3c3) at (2,2){};
    \node(3d3) at (3,2){};
    \end{scope}
    
    
    
    
    
    
    
    
    

    \tikzstyle{every node} = [draw = none,fill = none,scale = .8]
    
    \node(o) at (1.5,2.5){\Large$T_\rt$};
    \node(o) at (6.5,2.5){\Large$T_\rt'$};
    \node(o) at (11.5,2.5){\Large$T_\rt^{goal}$};

    \draw[dotted](a) --(d)--(d3) -- (a3) -- (a);
    \draw[dotted](b) --(b3);
    \draw[dotted](c) --(c3);
    \draw[dotted](a2) --(d2);
    
    \draw[dotted](2a) --(2d)--(2d3) -- (2a3) -- (2a);
    \draw[dotted](2b) --(2b3);
    \draw[dotted](2c) --(2c3);
    \draw[dotted](2a2) --(2d2);
    
    \draw[dotted](3a) --(3d)--(3d3) -- (3a3) -- (3a);
    \draw[dotted](3b) --(3b3);
    \draw[dotted](3c) --(3c3);
    \draw[dotted](3a2) --(3d2);

    
    
    
    \draw [-{Latex[length=2mm,width=1.5mm]}](a2) --(a);
    \draw [-{Latex[length=2mm,width=1.5mm]}](b2) --(a2);
    \draw [-{Latex[length=2mm,width=1.5mm]}](b3) --(b2);
    \draw [-{Latex[length=2mm,width=1.5mm]}](a3) --(b3);
    \draw [-{Latex[length=2mm,width=1.5mm]}](c3) --(b3);
    \draw [-{Latex[length=2mm,width=1.5mm]}](b) --(a);
    \draw [-{Latex[length=2mm,width=1.5mm]}](c) --(b);
    \draw [-{Latex[length=2mm,width=1.5mm]}](d) --(c);
    \draw [-{Latex[length=2mm,width=1.5mm]}](c2) --(c);
    \draw [-{Latex[length=2mm,width=1.5mm]}](d2) --(c2);
    \draw [-{Latex[length=2mm,width=1.5mm]}](d3) --(d2);
    
    \draw [-{Latex[length=2mm,width=1.5mm]}](2a2) --(2a);
    \draw [-{Latex[length=2mm,width=1.5mm]}](2b2) --(2b);
    \draw [-{Latex[length=2mm,width=1.5mm]}](2b3) --(2a3);
    \draw [-{Latex[length=2mm,width=1.5mm]}](2a3) --(2a2);
    \draw [-{Latex[length=2mm,width=1.5mm]}](2c3) --(2d3);
    \draw [-{Latex[length=2mm,width=1.5mm]}](2b) --(2a);
    \draw [-{Latex[length=2mm,width=1.5mm]}](2c) --(2b);
    \draw [-{Latex[length=2mm,width=1.5mm]}](2d) --(2c);
    \draw [-{Latex[length=2mm,width=1.5mm]}](2c2) --(2c);
    \draw [-{Latex[length=2mm,width=1.5mm]}](2d2) --(2c2);
    \draw [-{Latex[length=2mm,width=1.5mm]}](2d3) --(2d2);
    
    \draw [-{Latex[length=2mm,width=1.5mm]}](3a2) --(3a);
    \draw [-{Latex[length=2mm,width=1.5mm]}](3b2) --(3b);
    \draw [-{Latex[length=2mm,width=1.5mm]}](3b3) --(3b2);
    \draw [-{Latex[length=2mm,width=1.5mm]}](3a3) --(3b3);
    \draw [-{Latex[length=2mm,width=1.5mm]}](3c3) --(3b3);
    \draw [-{Latex[length=2mm,width=1.5mm]}](3b) --(3a);
    \draw [-{Latex[length=2mm,width=1.5mm]}](3c) --(3b);
    \draw [-{Latex[length=2mm,width=1.5mm]}](3d) --(3c);
    \draw [-{Latex[length=2mm,width=1.5mm]}](3c2) --(3c);
    \draw [-{Latex[length=2mm,width=1.5mm]}](3d2) --(3d);
    \draw [-{Latex[length=2mm,width=1.5mm]}](3d3) --(3d2);
    
\end{tikzpicture}
\caption{In this example, with $\rt$ as our root vertex, there are no source-turn moves that immediately bring $T$ closer to $T^{goal}$. However, if we choose a vertex $x$ such that every $v \prec_{T_\rt} x$ is in the correct position, then we can turn these rotors until $x$ is the source of a source-turn pair, and then turn the rotor at $x$. This gives us the spanning tree $T'$. Notice that $T' \prec_\xi T$ as desired.  }
\label{fig:sourcelemma}
\end{center}
\end{figure}
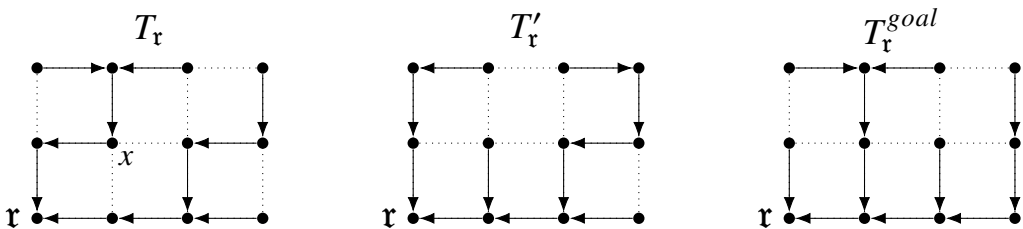

Notice that it is possible to rotate a rotor at a leaf vertex to any position through a series of source-turn moves. This means that the ribbon structure is not actually relevant for Theorem~\ref{thm:sourceturn}. In particular, we get the following corollary, which may be of independent interest. 

\begin{corollary}\label{cor:leafswap}
Let $G$ be a 2-connected graph and $T^{start}, T^{goal}\in \T(G)$. For some $k\in \Z_{\ge 0}$, there exists a sequence  $(T^0 = T^{start},T^1,\dots,T^k = T^{goal}) \in \T(G)^{k+1}$ such that for every $i \in [1,k]$, we have $|T^{i-1}\setminus T^{i}| = |T^i \setminus T^{i-1}| = 1$ and $T^{i-1}\setminus T^{i}$ is a leaf edge of $T^{i-1}$. 
\end{corollary}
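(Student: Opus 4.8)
The plan is to deduce the corollary directly from Theorem~\ref{thm:sourceturn} by equipping $G$ with an arbitrary ribbon structure and observing that a single source-turn move always swaps exactly one leaf edge of the current tree for one non-tree edge. First I would fix any ribbon structure $\chi$ on $G$ (for instance, choose an arbitrary cyclic order of the incident edges at each vertex), so that $(G,\chi)$ becomes a 2-connected ribbon graph; note that planarity is not needed here since Theorem~\ref{thm:sourceturn} is stated for ribbon graphs, not plane graphs. Theorem~\ref{thm:sourceturn} then provides a sequence of source-turn moves whose composition carries $T^{start}$ to $T^{goal}$. Writing $(T^0 = T^{start}, T^1, \dots, T^k = T^{goal})$ for the sequence of spanning trees obtained by applying these moves in order, it remains only to verify that each step $T^{i-1} \to T^{i}$ has the form required by the statement.

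Next I would unwind the definition of a source-turn move. By Lemma~\ref{lem:singswap} every source-turn pair is a single-step pair, so for each $i \in [1,k]$ there is a source-turn pair $(c_i - s_i, T^{i-1})$ from some edge $g_i \in T^{i-1}$ to some edge $f_i \in E(G)\setminus T^{i-1}$, and by the explicit description of single-step moves recorded just after Definition~\ref{def:singstep} we have $T^{i} = r_{(G,\chi)}([c_i-s_i],T^{i-1}) = T^{i-1}\setminus g_i \cup f_i$. Since $g_i \in T^{i-1}$ while $f_i \notin T^{i-1}$, we have $g_i \neq f_i$, and therefore $T^{i-1}\setminus T^{i} = \{g_i\}$ and $T^{i}\setminus T^{i-1} = \{f_i\}$, giving $|T^{i-1}\setminus T^{i}| = |T^{i}\setminus T^{i-1}| = 1$.

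Finally I would check that $g_i$ is a leaf edge of $T^{i-1}$. By the definition of a source-turn pair, $c_i$ is a leaf vertex of $T^{i-1}$, and $g_i = T^{i-1}_{s_i}\la c_i\ra$ is by Definition~\ref{def:Ts} an edge of $T^{i-1}$ incident to $c_i$; since $c_i$ is a leaf vertex, it is the unique edge of $T^{i-1}$ incident to $c_i$, so $g_i$ is a leaf edge of $T^{i-1}$. This establishes every required property of the sequence, completing the argument. I do not expect any genuine obstacle here: all of the difficulty is already contained in Theorem~\ref{thm:sourceturn}, and the only subtleties are that the ribbon structure may be chosen freely (which is precisely the content of the remark preceding the corollary) and that a source-turn move literally removes the single tree edge at its leaf vertex and adds a single non-tree edge, both of which are immediate from the relevant definitions.
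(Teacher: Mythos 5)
Your argument is correct and is essentially the paper's own argument, which the paper leaves implicit in the sentence preceding the corollary (``the ribbon structure is not actually relevant for Theorem~\ref{thm:sourceturn}''). Both approaches are: endow $G$ with an arbitrary ribbon structure, apply Theorem~\ref{thm:sourceturn}, and observe that each source-turn move is exactly a leaf-edge swap.
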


\begin{remark}
Corollary~\ref{cor:leafswap} is a generalization of a well-known matroidal result. If we removed the requirement that $T^{i-1}\setminus T^{i}$ is a leaf edge, then the result would follow from the fact that the \emph{fundamental graph} of any \emph{connected} matroid is connected~\cite[Proposition~4.3.2]{Oxley}, and the \emph{cycle matroid} of a 2-connected graph is connected ~\cite[Proposition~4.1.7]{Oxley}.
\end{remark}
\begin{corollary}\label{cor:sourceenough}
Let $\alpha$ be a consistent sandpile torsor algorithm and $\rho$ be the rotor-routing algorithm. The following are equivalent:
\begin{enumerate}
\item $\alpha = r$.
\item For all 2-connected plane graphs $(G,\chi)$ and all \emph{single-step pairs} $(c-s,T)$, we have \[\alpha_{(G,\chi)}([c-s],T) = r_{(G,\chi)}([c-s],T).\]
\item For all 2-connected plane graphs $(G,\chi)$ and all \emph{source-turn pairs} $(c-s,T)$, we have \[\alpha_{(G,\chi)}([c-s],T) = r_{(G,\chi)}([c-s],T).\]
\end{enumerate}
\end{corollary}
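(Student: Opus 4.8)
The plan is to establish the cycle of implications $(1)\Rightarrow(2)\Rightarrow(3)\Rightarrow(1)$, with essentially all of the content in the last one. The implication $(1)\Rightarrow(2)$ is immediate, since equal sandpile torsor algorithms agree on every input. For $(2)\Rightarrow(3)$ I would simply quote Lemma~\ref{lem:singswap}: every source-turn pair is a single-step pair, so agreement on single-step pairs implies agreement on source-turn pairs.

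For $(3)\Rightarrow(1)$, the first move is to reduce to 2-connected plane graphs: since $\alpha$ is consistent by hypothesis and $r$ is consistent by Theorem~\ref{thm:rrconsistent}, Lemma~\ref{lem:2congood} tells us it is enough to prove $\alpha_{(G,\chi)} = r_{(G,\chi)}$ for every 2-connected plane graph $(G,\chi)$. Fix such a graph, an element $S \in \Pic^0(G)$, and a tree $T \in \T(G)$, and put $T' := \alpha_{(G,\chi)}(S,T)$. By Theorem~\ref{thm:sourceturn} there is a finite sequence of source-turn moves whose composition carries $T$ to $T'$; concretely, there are spanning trees $T = T^0, T^1, \dots, T^k = T'$ and source-turn pairs $(c_i - s_i, T^{i-1})$ with $T^i = r_{(G,\chi)}([c_i - s_i], T^{i-1})$ for each $i$.

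The key observation is that hypothesis (3) lets us replace $r$ by $\alpha$ at every step: since $(c_i - s_i, T^{i-1})$ is a source-turn pair, $\alpha_{(G,\chi)}([c_i - s_i], T^{i-1}) = r_{(G,\chi)}([c_i - s_i], T^{i-1}) = T^i$. Setting $S' := \sum_{i=1}^{k}[c_i - s_i] \in \Pic^0(G)$ and applying the group-action axiom (Property 3 of Definition~\ref{def:torsor}, which $\alpha_{(G,\chi)}$ inherits via Definition~\ref{def:STA}) to both $\alpha_{(G,\chi)}$ and $r_{(G,\chi)}$, we obtain $r_{(G,\chi)}(S',T) = T' = \alpha_{(G,\chi)}(S',T)$. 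On the other hand $\alpha_{(G,\chi)}(S,T) = T' = \alpha_{(G,\chi)}(S',T)$, so freeness of the $\alpha$-action forces $S = S'$, and therefore $r_{(G,\chi)}(S,T) = r_{(G,\chi)}(S',T) = T' = \alpha_{(G,\chi)}(S,T)$. As $S$ and $T$ were arbitrary, $\alpha_{(G,\chi)} = r_{(G,\chi)}$; invoking Lemma~\ref{lem:2congood} once more gives $\alpha = r$.

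I do not expect a genuine obstacle here: with Theorem~\ref{thm:sourceturn} available, the corollary is a formal consequence of the facts that source-turn moves generate the $\Pic^0(G)$-action on $\T(G)$ and that a free transitive action is determined by its effect on a generating family of moves. The only things to be careful about are translating a composition of moves into a single group element via the action axiom, and performing the 2-connected reduction before (not after) applying Theorem~\ref{thm:sourceturn}, since that theorem requires 2-connectedness.
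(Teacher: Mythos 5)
Your proof is correct and follows the paper's own approach: the paper merely cites Theorem~\ref{thm:sourceturn} and Lemma~\ref{lem:2congood} for $(3)\Rightarrow(1)$, and your elaboration — decomposing $T\to T'$ into source-turn moves, collapsing them to a single $S'$ via the action axiom, and using freeness to force $S=S'$ — is exactly the intended argument. The only cosmetic quibble is that you do not really invoke Lemma~\ref{lem:2congood} twice; the second mention is just restating the conclusion of the first reduction.
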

\begin{proof}
$(1) \to (2)$ is immediate. $(2) \to (3)$ follows from Lemma~\ref{lem:singswap}. $(3) \to (1)$ follows from Theorem~\ref{thm:sourceturn} and Lemma~\ref{lem:2congood}. 
\end{proof}

Corollary~\ref{cor:sourceenough} greatly simplifies the task of proving that a sandpile torsor algorithm is equivalent to rotor-routing and will be essential in our proof of Theorem~\ref{thm:onestructure}. Before proving this theorem, we give one more definition that will aid our proof. 

\begin{definition}\label{def:revstep}
We say that $(s-c,T)$ is a \emph{reverse single-step pair from $f$ to $g$} if $f \in T$, $g \not\in T$, and $(c-s,T\setminus f \cup g)$ is a single-step pair from $g$ to $f$. 
\end{definition}

\begin{lemma}\label{lem:revstep}
If $(s-c,T)$ is a reverse single-step pair from $f$ to $g$, then $r_{(G,\chi)}([s-c],T) = T\setminus f \cup g$. 
\end{lemma}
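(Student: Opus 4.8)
The plan is to read the statement directly off the definition of a single-step pair together with the fact that $r_{(G,\chi)}$ is an action of the \emph{group} $\Pic^0(G)$ on $\T(G)$ (Theorem~\ref{thm:rristorsor}), so that group elements may be inverted.

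First I would set $T' := T \setminus f \cup g$. By the definition of a reverse single-step pair (Definition~\ref{def:revstep}), the pair $(c-s, T')$ is a single-step pair from $g$ to $f$; by the discussion immediately following Definition~\ref{def:singstep}, this means
\[ r_{(G,\chi)}([c-s], T') = T' \setminus g \cup f. \]
Since $f \in T$ and $g \notin T$ we have $f \neq g$, so $T' \setminus g \cup f = (T \setminus f \cup g) \setminus g \cup f = T$, and therefore $r_{(G,\chi)}([c-s], T') = T$.

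Next I would apply the group element $[s-c]$. Since $[s-c] = -[c-s]$ in $\Pic^0(G)$ and $r_{(G,\chi)}$ is a $\Pic^0(G)$-action,
\[ r_{(G,\chi)}([s-c], T) = r_{(G,\chi)}\bigl([s-c],\, r_{(G,\chi)}([c-s], T')\bigr) = r_{(G,\chi)}\bigl([s-c] + [c-s], T'\bigr) = r_{(G,\chi)}([0], T') = T'. \]
As $T' = T \setminus f \cup g$, this is the claimed equality.

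There is essentially no obstacle here. The only thing to keep track of is that $[s-c]$ is literally the inverse of $[c-s]$ in $\Pic^0(G)$ and that $r_{(G,\chi)}$ is a genuine group action (rather than merely a monoid action), both of which are guaranteed by the torsor structure from Theorem~\ref{thm:rristorsor}; everything else is bookkeeping of the set operations $\setminus$ and $\cup$ on edge sets.
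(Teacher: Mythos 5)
Your proof is correct and matches the paper's argument exactly: both set $T' = T \setminus f \cup g$, observe via Definition~\ref{def:revstep} that $r_{(G,\chi)}([c-s],T') = T$, and then cancel using $[s-c] + [c-s] = [0]$ in the group action. The paper simply compresses this into one displayed chain of equalities.
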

\begin{proof}
\begin{gather*}r_{(G,\chi)}([s-c],T) = r_{(G,\chi)}\left([s-c], r_{(G,\chi)}([c-s],T\setminus f \cup g)\right) \\
= r_{(G,\chi)}([0],T\setminus f \cup g)) = T\setminus f \cup g.\qedhere\end{gather*}
\end{proof}

We are now ready to prove the majority of Theorem~\ref{thm:onestructure}. However, the final case of the theorem is particularly involved, so we relegate its proof to Appendix~\ref{app:case4}. 

\begin{proof}[Proof of Theorem~\ref{thm:onestructure}]

Let $\alpha$ be an arbitrary consistent sandpile torsor algorithm. We will prove the theorem by induction, but we first consider a few special plane graphs. 

Let $C_3$ be the 3 cycle along with its unique ribbon structure, and let $E_3$ be the triple edge with a planar ribbon structure. Each of these plane graphs has two sandpile torsor actions. On $C_3$, one of these actions is equivalent to both $r_{C_3}$ and $\overline r_{C_3}$, while the other is equivalent to both $r^{-1}_{C_3}$ and $\overline r^{-1}_{C_3}$. On $E_3$, one of the actions is equivalent to both $r_{E_3}$ and $\overline r^{-1}_{E_3}$, while the other is equivalent to both $\overline r_{E_3}$ and $r^{-1}_{E_3}$. This means that $\alpha_{C_3}$ and $\alpha_{E_3}$ will simultaneously match precisely one of the four sandpile torsor algorithms with the same structure as rotor-routing. For this proof, we assume that $\alpha_{C_3} = r_{C_3}$ and $\alpha_{E_3} = r_{E_3}$. By Proposition~\ref{prop:otherts}, we can make this assumption without loss of generality. 

Let $(G,\chi)$ be an arbitrary 2-connected plane graph. For the induction hypothesis, assume that $\alpha_{(G',\chi')} = r_{(G',\chi')}$ for all proper minors $(G',\chi')$ of $(G,\chi)$. By Corollary~\ref{cor:sourceenough}, it suffices to show that $\alpha_{(G,\chi)}([c-s],T) = r_{(G,\chi)}([c-s],T)$ for every source-turn pair $(c-s,T)$. Throughout this proof, $(c-s,T)$ is an arbitrary source-turn pair from $g$ to $f$ and 
\[\widehat T := \alpha_{(G,\chi)}([c-s],T).\]
To prove the theorem, we need to show that $\widehat T = r_{(G,\chi)}([c-s],T) = T \setminus g\cup f $. 

First, suppose that there exists some $e\not= g$ such that $e \in T\cap \widehat T$. This implies that $\i(e) \not= \{c,s\}$ because $T_s\la c \ra = g$ (if $\i(e) = \{c,s\}$ and $e \in T$, then $T_s\la c \ra = e \neq g$ which presents a contradiction). By the condition that $\alpha$ is consistent, we must have $\alpha_{(G/e,\chi/e)}([c-s],T \setminus e) = \widehat T \setminus e$. By the induction hypothesis, $\alpha_{(G/e,\chi/e)}([c-s],T \setminus e) = r_{(G/e,\chi/e)}([c-s],T \setminus e)$. Furthermore, it follows from Lemmas~\ref{lem:delconswap} and \ref{lem:rotatable} that $(c-s,T)$ is a source-turn pair for $(G/e,\chi/e)$. This means that $r_{(G/e,\chi/e)}([c-s],T) = (T \setminus g \cup f)\setminus e$. It follows that $\widehat T = T \setminus g \cup f = r_{(G,\chi)}([c-s],T)$. Similarly, suppose that there exists some $e \not= f$ such that $e \not\in T \cup \widehat T$. Then, by an analogous argument, we have $\alpha_{(G\setminus e,\chi\setminus e)}([c-s],T) = \widehat T$ and $\widehat T =T \setminus g \cup f= r_{(G,\chi)}([c-s],T)$. In both cases, the theorem holds by induction. Therefore we may assume that the only shared edges or non-edges of $T$ and $\what T$ are in $\{f,g\}$. In particular, we have the relation
\begin{equation}
\label{eq:nonsharededges}
E(G)\setminus (T\Delta \what T) \subseteq \{f,g\},
\end{equation}
where $\Delta$ denotes the symmetric difference operator. Note that this argument still holds when $(c-s,T)$ is a single-step pair or a reverse single-step pair. This leaves us with 4 cases to consider corresponding to the 4 subsets of $\{f,g\}$.\\

\textbf{Case 1:} We have $f \in \widehat T$ and $g \in \widehat T$.\\

In this case, the number of edges of $G$ must be one less than double the size of each spanning tree. 

Let $\i(g) = \{x,c\}$. Because $g \in T \cap \alpha_{(G,\chi)}([c-s],T)$, we can apply Definition \ref{def:consistent} and the induction hypothesis to show that
\[\alpha_{(G,\chi)}([c-s],T)\setminus g = \alpha_{(G/g,\chi/g)}([c-s],T\setminus g) = r_{(G/g,\chi/g)}([c-s],T\setminus g).\] 

Because $(c-s,T)$ is a source-turn pair (and thus also a single-step pair) from $g$ to $f$, we must have $T_s\la c \ra = g$. By Definition~\ref{def:vertorder} and Lemma~\ref{lem:whichrotor}, this implies that the path of edges $(e_1,e_2,\dots,e_k)$ on $T$ from $x$ to $s$ does not contain $g$ or any edges parallel to $g$. Furthermore, these edges must also form a path from $z$ to $s$ on $G/g$ (where $z$ is the vertex formed by contracting $g$). By Lemma~\ref{lem:looprev}, any edges to the right of the directed cycle formed by $(e_1,e_2,\dots,e_k,f)$ are in $r_{(G/g,\chi/g)}([z-s],T\setminus g) = \what T\setminus g$ if and only if they are in $T \setminus g$. This implies that for any such edge $e$, we have $e \notin \{f,g\}$ and $e \notin T\Delta \what{T}$, which contradicts \eqref{eq:nonsharededges}. Thus, there can be no edges to the right of the cycle. In particular, this means that $(\chi/g)(z,f) = e_1$ and $\chi(c,f) = g$. 

By Definition \ref{def:singstep}, we know that $\chi(c,g) = f$.  Together, the facts that $\chi(c,g) = f$ and $\chi(c,f) = g$ imply that $c$ is a vertex of degree 2. This means that a single firing of vertex $c$ sends $2c -s - x$ to $0$. Thus, $[2c - s - x] = [0]$  and $[c-s] = [x-c]$.

Let $T' = T\setminus g \cup f$. It follows from Definition \ref{def:singstep} that $(c-x,T')$ is a source-turn pair from $f$ to $g$. Suppose that $\alpha_{(G,\chi)}([c-x],T') = T$. Then, this would mean that $\alpha_{(G,\chi)}([x-c],T) = T'$. However, we showed that \[\alpha_{(G,\chi)}([x-c],T) = \alpha_{(G,\chi)}([c-s],T) = \widehat T\neq T'.\]

Because $\alpha_{(G,\chi)}([c-x],T') \not= T$, this action must swap every edge of $T'$ other than $f$ and $g$. Furthermore, since the number of edges in $G$ is one less than double the size of each spanning tree, both $f$ and $g$ must be in $\alpha_{(G,\chi)}([c-x],T')$. It follows that $\alpha_{(G,\chi)}([c-x],T') = \widehat T$. 

Notice that the edges $(g,e_k,e_{k-1},\dots,e_1)$ form a directed cycle on the graph $G/f$. Using our earlier argument, we can conclude that there are no edges to the right of this cycle. Furthermore, the rotors of this cycle are  oriented in the opposite direction as they were on $G/g$. Together with the fact that $c$ is a degree 2 vertex, this implies that $\{e_1,e_2,\dots,e_k,f,g\}$ forms a cycle in $G$ with no edges on either side. In particular, this means that $G$ is a cycle. 

It follows that $\what T = \{f,g\}$. The only cycle with a 2 edge spanning tree is the 3-cycle $C_3$. Thus, $\alpha_{(G,\chi)} = r_{(G,\chi)}$ by our assumption that $\alpha_{C_3} = r_{C_3}$.\\

\textbf{Case 2:} We have $f \notin \widehat T$ and $g \in \widehat T$.\\

Let $\i(g) = \{x,c\}$. By the same argument that we used in the previous proof, $c$ has degree $2$ and $[c-s] = [x-c]$. This means that 

\[\alpha_{(G,\chi)}([x-c],T) = \alpha_{(G,\chi)}([c-s],T) = \widehat T.\]

Because $f\not\in T \cup \widehat T$, we can use consistency to conclude that $\alpha_{(G\setminus f,\chi\setminus f)}([x-c],T) = \widehat T$. However, $g$ is a cut edge of $G \setminus f$. Thus, $[x-c] = 0$ on $G\setminus f$. This is a contradiction by \eqref{eq:nonsharededges}, since $T \not= \widehat T$ unless $E(G) = \{f,g\}$. If $E(G) = \{f,g\}$, we still have a contradiction because $[c-s]$ is a nontrivial element of $\Pic^0(G)$, so it cannot fix any spanning trees. Thus, Case 2 is impossible.\\

\textbf{Case 3:} We have $f \notin \widehat T$ and $g \notin \widehat T$.\\

This case is a bit more complicated than the previous two, so we begin with a brief overview of our argument. 
\begin{description}
    \item[Step 1] First, we show that for any source turn pair $(c''-s'',T'')$, we either have $\alpha_{(G,\chi)}([c'' - s''],T'') = r_{(G,\chi)}([c'' - s''],T'')$ or $\alpha_{(G,\chi)}([2(c'' - s'')],T'') = r_{(G,\chi)}([c'' - s''],T'')$ (see~\eqref{eq:case3}).
    \item[Step 2] This observation allows us to define a sequence of sandpile elements such that the action by $\alpha_{(G,\chi)}$ on $T''$ rotates the rotor at $c''$ all the way around until returning to $T''$. In particular, these sandpile elements must add to the identity (see~\eqref{eq:case3identity}). 
    \item[Step 3] Using properties of the sandpile group, we prove that all the sandpile elements defined in the previous step have a particular form.
    \item[Step 4] Next, we use the fact that $\alpha_{(G,\chi)}$ must always output a spanning tree in order to deduce that there must be exactly $3$ edges incident to $c$.
    \item[Step 5] After this, we show that when Case 3 occurs and $c$ is incident to exactly $3$ edges, these must be the only edges of $G$. In particular, it follows that $(G,\chi) = E_3$ (a planar embedding of the triple edge). 
    \item[Step 6] Finally, we know that $r_{E_3} = \alpha_{E_3}$ by a base case of our induction. 
\end{description} 

Let $T' = T \setminus g$. By assumption, $\alpha_{(G,\chi)}([c-s],T) \not= T'\cup f$. Thus, $\alpha_{(G,\chi)}([s-c],T'\cup f) \not= T$. However, note that $(s-c,T'\cup f)$ is a reverse single-step pair. Because \eqref{eq:nonsharededges} holds for reverse single-step pairs, every edge other than $f$ and $g$ must swap. In addition, because the total number of edges in each spanning tree is consistent, neither $f$ nor $g$ can be in $\alpha_{(G,\chi)}([s-c],T'\cup f)$. This means that $\alpha_{(G,\chi)}([s-c],T'\cup f) = \what{T}$ and $\alpha_{(G,\chi)}([c-s],\what{T}) = T'\cup f$. We obtain the following chain of equalities:
\[\alpha_{(G,\chi)}([2c-2s],T) = \alpha_{(G,\chi)}([c-s],\what T) = T' \cup f = r_{(G,\chi)}([c-s], T).\] 

Note that the reasoning in the previous paragraph holds for any source-turn pair. In particular, if $(c''-s'',T'')$ is a source-turn pair, then \begin{equation}\label{eq:case3} \alpha_{(G,\chi)}([a(c'' - s'')],T'') = r_{(G,\chi)}([c'' - s''],T'') \text{ for some $a \in \{1,2\}$}.\end{equation}

Let $\chi(c) = (e_1,e_2,\dots,e_k)$ where $e_1 = g$, and set $\i(e_i) = \{c,s_i\}$. Note that $e_2 = f$, $s_2 = s$, and the $s_i$ are allowed to coincide. Because $(c-s,T)$ is a source-turn pair, it follows that $(c-s_{i+1},T'\cup e_i)$ is also a source-turn pair for any $i\in[1,k]$. Thus, by \eqref{eq:case3}, there exists a function $\phi: [1,k] \to \{1,2\}$ such that for all $i \in [1,k]$, 
\[\alpha_{(G,\chi)}([\phi(i)(c-s_{i+1})],T' \cup e_i) = r_{(G,\chi)}([c-s_{i+1}],T' \cup e_i),\]
where $i+1$ is replaced with $1$ for $i = k$. It follows that

\begin{equation}\label{eq:case3identity}\alpha_{(G,\chi)}\left(\left[\sum_{i=1}^k\phi(i)(c-s_i)\right],T\right) = r_{(G,\chi)}\left(\left[\sum_{i=1}^k(c-s_i)\right],T\right) = r_{(G,\chi)}([0],T) = T,\end{equation}
where we use the fact that

\[\left[\sum_{i=1}^k(c-s_i)\right] = [0], \]
because after firing $c$, we return to the identity. Since $\alpha$ is free, this implies that

\[\left[\sum_{i=1}^k\phi(i)(c-s_i)\right] = \left[\sum_{i=1}^k(\phi(i)-1)(c-s_i)\right]= [0]. \]

The divisor $\sum_{i=1}^k(\phi(i)-1)(c-s_i)$ has $-1$ chips on each $s_i$ for which $\phi(i) = 2$, chips on $c$ equal to the number of such $i$, and no chips elsewhere. By~\cite[Lemma 17]{alextorsors}, and the assumption that $c$ is not a cut vertex, this divisor is only equivalent to the identity if $\phi(i) = 2$ for all $i$ or $\phi(i) = 1$ for all $i$. By the assumption that we are in Case 3, $\phi(1) \not= 1$, which means that $\phi(i) = 2$ for all $i \in [1,k]$.

Next, we consider the possible values of $k$ (the number of edges incident to $c$). Recall that $\widehat T = \alpha_{(G,\chi)}([c-s],T)$. Because $(c-s,T)$ is a source-turn pair, $e_j\not\in T$ for $j > 1$. Furthermore, since we are in Case 3, this implies that $e_j \in \widehat T$ if and only if $j>2$. If $k \le 2$, then $\widehat T$ has no edges connecting to $c$, which is impossible. Since $e_1 \not\in \what T$, there must be a path from $s_1$ to some other $s_i\neq s_1$ along $\widehat T$ that does not pass through the vertex $c$. If $k>4$, then by pigeonhole principle, there must be a pair $s_j$ and $s_{j+1}$ such that neither vertex is $s_1$ or $s_i$. 

We know that $\phi(j) = 2$, which means that $\alpha_{(G,\chi)}([c-s_{j+1}],T' \cup e_j) \not= T' \cup e_{j+1}$. The only other possibility is that
\[ \alpha_{(G,\chi)}([c-s_{j+1}],T' \cup e_j) = \widehat T \cup \{e_1,e_2\} \setminus \{e_j, e_{j+1}\}. \]

However, $\widehat T \cup \{e_1,e_2\} \setminus \{e_j, e_{j+1}\}$ contains $e_1$, $e_i$, and a second path from $s_1$ to $s_i$. This means that it must contain a cycle, which is a contradiction.

When $k=4$, the situation is similar. The argument that we used for $k>4$ still works if we can show that there is a path along $\widehat T$ from $s_1$ to $s_2$ or $s_4$ that does not pass through $c$. In particular, we let $j = 3$ in the first case and $j = 2$ in the second. We will show that such a path always exists (where throughout this paragraph, all of our paths are along $\widehat T$). First, we note that if every path from $s_1$ to $s_4$ goes through $c$, then there must be a path $P_1$ from $s_1$ to $s_3$ that does not pass through $c$. If there is a path from $s_3$ to $s_2$ that does not pass through $c$, then by combining that path with $P_1$, we get a path from $s_1$ to $s_2$ that does not pass through $c$ as desired. Otherwise, there must be a path $P_2$ from $s_2$ to $s_4$ that does not pass through $c$. Furthermore, note that $s_4$ and $s_2$ are on opposite sides of the cycle $P_1 \cup \{e_1,e_3\}$. By planarity, this implies that $P_1$ and $P_2$ must intersect at some point $v$. If we combine the path from $s_1$ to $v$ along $P_1$ and the path from $v$ to $s_2$ along $P_2$, we get a path from $s_1$ to $s_2$ that does not pass through $c$.

We have now shown that we must have $k=3$ for this case to be possible. Recall that $\widehat T := \alpha_{(G,\chi)}([c-s_2],T) = (E(G) \setminus T)\setminus e_2$. Because $k=3$, $\widehat T$ contains exactly one edge incident to $c$, namely $e_3$. In particular, $([c-s_1],\widehat T)$ is a source-turn pair. Let $\widetilde T = \alpha_{(G,\chi)}([c-s_1],\widehat T)$. By \eqref{eq:nonsharededges}, there are only 2 possibilities for $\widetilde T$: either $\widetilde T = \widehat T \setminus e_3 \cup e_1$ or $\widetilde T = T' \cup e_2$. 

In either case, we have the following:
\[\alpha_{(G,\chi)}([c-s_3],\widetilde T) = \alpha_{(G,\chi)}([(c-s_3) + (c -s_1) + (c - s_2)],T) = \alpha_{(G,\chi)}([0],T) = T.\]

If $\widetilde T = \widehat T \setminus e_3 \cup e_1$, then $e_2 \not\in \widetilde T \cup T$. Thus, by consistency, we have
\[\alpha_{(G\setminus e_2,\chi\setminus e_2)}([c-s_3],\widetilde T) = T.\] By our induction hypothesis, 
\[\alpha_{(G\setminus e_2,\chi\setminus e_2)}([c-s_3],\widetilde T) = r_{(G\setminus e_2,\chi\setminus e_2)}([c-s_3],\widetilde T) = \widetilde T \setminus e_1 \cup e_3.\]

This is a contradiction because $e_3 \not\in T$. It follows that we must have $\widetilde T = T' \cup e_2$. Notice that $(c-s_3,T' \cup e_2)$ is a source-turn pair, and $r_{(G,\chi)}([c-s_3],T' \cup e_2) = T' \cup e_3 \not= T$. By \eqref{eq:nonsharededges}, this means that $\alpha_{(G,\chi)}([c-s_3],T' \cup e_2)$ cannot share any edges with $T' \cup e_2$, and the only shared non-edge is $e_3$. However, we showed above that 
\[\alpha_{(G,\chi)}([c-s_3],T' \cup e_2) = \alpha_{(G,\chi)}([c-s_3],\widetilde T) = T = T' \cup e_1.\]

It follows that $E(G) = \{e_1,e_2,e_3\}$ and, by the 2-connectedness condition, $(G,\chi)$ must be $E_3$, a planar embedding of the triple edge. Thus, $\alpha_{(G,\chi)} = r_{(G,\chi)}$ by our assumption that $\alpha_{E_3} = r_{E_3}$.\\

\textbf{Case 4:} We have $f \in \widehat T$ and $g \notin \widehat T$.\\

Most of the work for this case is given in Appendix~\ref{app:case4}, and there is an overview of our argument just before Lemma~\ref{lem:singstepnocase2}. If $(G,\chi)$ is a \emph{telescope graph} (see Definition~\ref{def:telescope}), then $\alpha_{(G,\chi)}([c-s],T) = r_{(G,\chi)}([c-s],T)$ by Lemma~\ref{lem:scopegood}. If $(G,\chi)$ is not a telescope graph, then $\alpha_{(G,\chi)}([c-s],T) = r_{(G,\chi)}([c-s],T)$ by Lemma~\ref{lem:noscopegood}.
\end{proof}

\begin{definition}
A \emph{consistent sandpile torsor action} on a plane graph $(G,\chi)$ is a sandpile torsor action obtained from a consistent sandpile torsor algorithm.
\end{definition}

\begin{corollary}
Let $(G,\chi)$ be a 2-connected plane graph. For $k\ge 1$, let $E_k$ be the plane graph with $2$ vertices and $k$ parallel edges. For $k\ge 1$, let $C_k$ be the cycle graph with $k$ edges (where we define $C_1 = E_1$ and $C_2 = E_2$). 
\begin{itemize}
    \item If $(G,\chi) = E_1=C_1$ or $(G,\chi) = E_2=C_2$, then $(G,\chi)$ has exactly 1 consistent sandpile torsor action.
    \item If $(G,\chi) = E_k$ or $(G,\chi) = C_k$ for $k\ge 3$, then $(G,\chi)$ has exactly 2 consistent sandpile torsor actions. 
    \item If $(G,\chi)$ is any other 2-connected plane graph, then $(G,\chi)$ has exactly 4 consistent sandpile torsor actions. 
\end{itemize}
\end{corollary}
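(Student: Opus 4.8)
By Theorems~\ref{thm:rrconsistent} and~\ref{thm:onestructure} together with Proposition~\ref{prop:otherts}, the consistent sandpile torsor actions on a plane graph $(G,\chi)$ are exactly the distinct entries of the list $r_{(G,\chi)},\ \overline{r}_{(G,\chi)},\ r^{-1}_{(G,\chi)},\ \overline{r}^{-1}_{(G,\chi)}$. The four operations $\alpha\mapsto\alpha,\overline{\alpha},\alpha^{-1},\overline{\alpha}^{-1}$ commute and are involutions, so they form a group isomorphic to $(\mathbb Z/2)^2$. Writing $N$ for the set of these operations whose effect on $(G,\chi)$ is trivial (e.g.\ the reversal operation lies in $N$ iff $\overline{r}_{(G,\chi)}=r_{(G,\chi)}$), a short check using $\overline{\overline{\chi}}=\chi$ shows $N$ is a subgroup, so the number of consistent actions on $(G,\chi)$ equals $4/|N|$. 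Hence it suffices to decide, for each nontrivial operation, whether it fixes $r_{(G,\chi)}$; equivalently, which of the equalities \textbf{(a)} $r_{(G,\chi)}=r^{-1}_{(G,\chi)}$, \textbf{(b)} $r_{(G,\chi)}=\overline{r}_{(G,\chi)}$, \textbf{(c)} $r_{(G,\chi)}=\overline{r}^{-1}_{(G,\chi)}$ hold.

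\textbf{The easy equalities and the $C_k$, $E_k$ cases.} By freeness of the rotor-routing action, (a) holds iff $S=-S$ for all $S\in\Pic^0(G)$, i.e.\ iff $\Pic^0(G)$ is an elementary abelian $2$-group. If $G$ has no vertex of degree $\ge 3$ then $G=C_k$ for some $k$, and $\overline{\chi}=\chi$ because a cyclic order on at most $2$ elements is reversal-invariant; thus (b) holds, and consequently (c) reduces to (a) on $C_k$. Since $\Pic^0(C_k)\cong\mathbb Z/k$, for $k\ge 3$ only (b) holds, so $|N|=2$ and there are $2$ actions; for $C_1=E_1$ and $C_2=E_2$ all four equalities hold (the unique, trivial or $\mathbb Z/2$, free transitive action), so $|N|=4$ and there is $1$ action. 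If $G=E_k$, computing single rotor-routing steps on the $k$ parallel edges shows $r_{(E_k,\chi)}=\overline{r}^{-1}_{(E_k,\chi)}$ (so (c) holds) while $r_{(E_k,\chi)}\ne\overline{r}_{(E_k,\chi)}$ for $k\ge 3$, and $\Pic^0(E_k)\cong\mathbb Z/k$; hence for $k\ge 3$ only (c) holds, $|N|=2$, and there are $2$ actions. It remains to treat a $2$-connected plane graph $(G,\chi)$ that is neither some $C_k$ nor some $E_k$ — equivalently, one with at least $3$ vertices \emph{and} a vertex of degree $\ge 3$ — and show that (a), (b), (c) all fail, so $|N|=1$ and there are $4$ actions.

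\textbf{Two descent arguments and the structural fact.} For (b): let $v$ have degree $m\ge 3$. Contracting a spanning tree of the connected graph $G\setminus v$ turns $(G,\chi)$ into a plane graph $(E_m,\chi')$. Choosing $T$ to be that spanning tree together with one edge at $v$ yields an explicit source-turn pair $(v-u,T)$; tracking this pair through the contractions and using that \emph{both} $r$ and $\overline{r}$ are consistent (so Condition~1 of Definition~\ref{def:consistent} applies at each step) shows that $\overline{r}_{(G,\chi)}=r_{(G,\chi)}$ would force $\overline{r}_{(E_m,\chi')}=r_{(E_m,\chi')}$, contradicting the single-step computation above; so (b) fails. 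For (c): since $G$ has $\ge 3$ vertices it contains a cycle of length $\ge 3$, and $2$-connectivity guarantees a vertex $c$ with two cyclically consecutive edges $g_0,f_0$ of $\chi(c)$ going to distinct vertices $q,s$; taking $T$ to be a spanning tree of $G\setminus c$ plus $g_0$ makes $(c-s,T)$ a source-turn pair whose fundamental cycle through $f_0$ has length $\ge 3$. Tracking this pair through the contractions of the off-cycle tree edges (Condition~1) and the deletions of the remaining non-tree edges (Condition~2), and then contracting that cycle down to a triangle, reduces the equality $\overline{r}_{(G,\chi)}=r^{-1}_{(G,\chi)}$ (which is equivalent to (c)) to $\overline{r}_{C_3}=r^{-1}_{C_3}$; but $\overline{r}_{C_3}=r_{C_3}$ while $r_{C_3}$ and $r^{-1}_{C_3}$ differ on every source-turn pair since $\Pic^0(C_3)\cong\mathbb Z/3$ has no nonzero element fixed by negation — a contradiction, so (c) fails. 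Finally (a) fails because a $2$-connected graph whose sandpile group is an elementary abelian $2$-group must be $E_1$ or $E_2$; this can be verified directly (e.g.\ by bounding the order of $[c-s]$ for adjacent $c,s$) or cited from the theory of critical groups. Assembling the three failures gives $|N|=1$ and hence $4$ actions, completing the case analysis.

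\textbf{Main obstacle.} The delicate point is making the descent for (c) rigorous. Unlike the pure-contraction reduction used for (b), it interleaves contractions and deletions, so one must choose the source-turn pair and order the operations so that at every stage the edge to be deleted lies in neither the current spanning tree nor its image, while the edge to be contracted lies in both and is not incident to the current source–sink pair; carrying out this bookkeeping (the reductions automatically stay within the class of plane graphs) is where the real work lies. A secondary point is supplying the structural input behind (a).
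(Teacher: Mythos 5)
Your proof reaches the same conclusion as the paper and shares the same overall reduction (the four consistent actions on $(G,\chi)$ are exactly the distinct members of $\{r,\overline r,r^{-1},\overline r^{-1}\}$, and one determines pairwise coincidence), but the route differs in two ways worth noting. First, the orbit--stabilizer packaging is a nice conceptual add-on not made explicit in the paper; it cleanly explains why the answer is always a power of $2$. Second, and more substantially, to rule out $r_{(G,\chi)}=\overline r_{(G,\chi)}$ and $r_{(G,\chi)}=\overline r^{-1}_{(G,\chi)}$ for graphs that are neither $C_k$ nor $E_k$, you descend to $E_m$ and $C_3$ via the consistency machinery, whereas the paper never invokes consistency here: it simply exhibits a single-step pair at which the two actions provably differ. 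For $r\neq\overline r$ the paper picks a leaf tree at a degree-$\geq 3$ vertex and uses freeness; for $r\neq\overline r^{-1}$ the paper takes a \emph{face} bounded by $\geq 3$ edges, a tree containing all but one face edge $f$, and observes that both $r_{(G,\chi)}([c-s],T)$ and $\overline r^{-1}_{(G,\chi)}([c-s],T)=\overline r_{(G,\chi)}([s-c],T)$ are literal single-step moves with different outputs $T\setminus g\cup f$ and $T\setminus h\cup f$. Choosing a face rather than an arbitrary fundamental cycle is what makes the $\overline r^{-1}$ side a one-step computation — this is exactly what your descent is laboring to recover by shrinking the cycle to $C_3$. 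Your descent can be completed (the correct way to set it up is to track $([s-c],T)$ for $\overline r$, noting that under the contradiction hypothesis $\overline r_{(G,\chi)}([s-c],T)=r_{(G,\chi)}([c-s],T)=T\setminus g_0\cup f_0$ is an explicitly known tree, so the contraction/deletion conditions of Definition~\ref{def:consistent} can be verified at every stage), but it is substantially longer and, as you acknowledge, requires bookkeeping that the face-based direct computation avoids entirely. Finally, for (a) your offer to ``cite from the theory of critical groups'' the fact that a 2-connected graph with elementary abelian $2$-group sandpile group must be $E_1$ or $E_2$ is a hand-wave — this is not a standard quotable fact — but your alternate suggestion (bounding the order of $[c-s]$) is precisely the chip-firing argument the paper gives, which shows $[2(c-s)]\neq 0$ whenever $G$ has a vertex of degree $\geq 3$ and reduces the remaining case to cycles, whose sandpile group is $\Z/k\Z$.
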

\begin{proof}
We showed in Theorem~\ref{thm:onestructure} that every consistent sandpile torsor action must be equivalent to $r_{(G,\chi)}$, $\overline{r}_{(G,\chi)}$, $r^{-1}_{(G,\chi)}$, or $\overline{r}^{-1}_{(G,\chi)}$. 

If $(G,\chi)$ is 2-connected, but not equal to $C_k$ for some $k$, then it must have a vertex $c$ with degree at least $3$. By 2-connectedness, there exists some $T \in \T(G)$ such that $c$ is a leaf vertex of $T$. Let $g$ be the unique edge of $T$ incident to $x$. Furthermore, let $f = \chi(c,g)$, $h = \chi(c,f)$, and $\{c,s\} = \i(f)$. It follows from the definition of rotor-routing that
\[r_{(G,\chi)}([c-s],T) = T \setminus g \cup f = \overline{r}_{(G,\chi)}([c-s],T\setminus g \cup h).\]
Because $T \not= T\setminus g \cup h$, we can conclude that $r_{(G,\chi)} \not= \overline{r}_{(G,\chi)}$ and $r^{-1}_{(G,\chi)} \not= \overline{r}_{(G,\chi)}$ for any 2-connected $(G,\chi)$ not equal to $C_k$ for some $k$.  

If $(G,\chi)$ is 2-connected, but not equal to $E_k$ for some $k$, then it must have at least $3$ vertices. If we remove all but one edge out of every set of parallel edges, the resulting graph must contain a cycle or there would be a cut edge. After adding in the parallel edges (taking the innermost edge of each), the cycle remains and forms a face of $G$ that is bounded by at least $3$ edges. Let $C$ be the set of edges which bound this face and choose an arbitrary $f \in C$. Since $C \setminus f$ is acyclic, there must exist some $T \in \T(G)$ such that $C \setminus f \in T$. Suppose that when considering the edges of $C$ in counterclockwise order, the edge $h$ comes just before $f$, and the edge $g$ comes right after $f$. Furthermore, let $c = \i(f) \cap \i(g)$ and $s = \i(f) \cap \i(h)$. It follows from the definition of rotor-routing that
\[r_{(G,\chi)}([c-s],T) = T \setminus g \cup f \text{ and } \overline{r}^{-1}_{(G,\chi)}([c-s],T) =\overline{r}_{(G,\chi)}([s-c],T) = T \setminus h \cup f.\]
Because $T\setminus g \cup f \not= T\setminus h \cup f$, we can conclude that $r_{(G,\chi)} \not= \overline{r}^{-1}_{(G,\chi)}$ and $r^{-1}_{(G,\chi)} \not= \overline{r}_{(G,\chi)}$ for any 2-connected $(G,\chi)$ not equal to $E_k$ for any $k$. 

When $(G,\chi) = E_k$ for some $k$, the actions $r_{(G,\chi)}$ and $\overline{r}^{-1}_{(G,\chi)}$ as well as $\overline{r}_{(G,\chi)}$ and $r^{-1}_{(G,\chi)}$ are equivalent since both vertices are incident to the same edges, but in the opposite order. When $(G,\chi) = C_k$ for some $k$, the actions $r_{(G,\chi)}$ and $\overline{r}_{(G,\chi)}$ as well as $r^{-1}_{(G,\chi)}$ and $\overline{r}^{-1}_{(G,\chi)}$ are equivalent since reversing the ribbon structure has no effect on $C_k$. 

The result follows if we show that $r_{(G,\chi)} \not= r^{-1}_{(G,\chi)}$ when $(G,\chi)$ is a 2-connected ribbon graph other than $E_1=C_1$ or $E_2=C_2$. In particular, we need to show that for some $D \in \Div^0(G)$, we have $[2D] \not= [0]$. There are a few ways to show this, but we will use the chip-firing perspective. 

First, suppose that every vertex of $G$ has degree at most $2$. By our previous reasoning, $(G,\chi)$ must be equal to $C_k$ for some $k$. It is well known and straightforward to check that $\Pic^0(C_k) = \Z/k\Z$ (\cite[Problem 2.7]{divisorsbook}). Thus, for $k>2$, there are elements not equal to their own inverse. This means that we can assume that there is some $s \in V(G)$ such that $\deg(s) > 2$. Let $c\in V(G)$ be any other vertex. We will show that $[2c-2s] \not= [0]$ and the result follows. First, recall that $[2c-2s] = [0]$ if and only if there is a sequence of firing moves from $2c-2s$ to $0$. Suppose that such a sequence of firings exist. Since firing every vertex is equivalent to firing no vertices, we can assume that not every vertex is fired. The only way to reduce the number of chips on a vertex is by firing it, which means that $c$ must fire. After $c$ fires, any adjacent vertices other than $s$ have a positive number of chips, so they must also fire. By recursion, it follows that a vertex $v$ must fire if there is a path from $c$ to $v$ that does not pass through $s$. By 2-connectedness, this means that every vertex other than $s$ must fire at least once. After these firings, $s$ has $\deg(s)-2$ chips on it, which is positive by assumption. Thus $s$ must fire as well. However, this is a contradiction because we assumed that not every vertex is fired. Thus, $[2c-2s] \not= [0]$ and $r_{(G,\chi)} \not= r^{-1}_{(G,\chi)}$. 
\end{proof}

\section{Extension to Regular Matroids}\label{sec:matroids}

Many properties of graphs can be generalized to \emph{regular matroids}. In this section, we will explore how the ideas from the previous sections can be applied to regular matroids and we give conjectures for extensions of Theorems~\ref{thm:rrconsistent} and~\ref{thm:onestructure}.

For this section, we assume that the reader is familiar with oriented matroid theory (for necessary definitions, see e.g.~\cite{Oxley,Oriented}). However, we will not need any of the results from previous sections. 

\subsection{The BBY Algorithm}

Suppose that $M = (E,\Bcal, \chi)$ is an oriented matroid that is also regular. In particular, $E$ is the \emph{ground set}, $\Bcal$ is the set of \emph{bases}, and $\chi$ is a \emph{chirotope} which gives $M$ its orientation. 

\begin{remark}
Throughout this section, all of our regular matroids will be oriented even if we don't explicitly say so. This orientation is arbitrary and has only a minor cosmetic effect on the results. 
\end{remark}

Bacher, de la Harpe, and Nagnibeda showed that the sandpile group of a graph can also be described in terms of a graph's \emph{flow lattice} and \emph{cut lattice}~\cite{Bacher}. The analogue on regular matroids are the \emph{circuit lattice} and \emph{cocircuit lattice}. Consider the set of \emph{signed circuits} of $M$, which we write as vectors in $\{+1,-1,0\}^{E}$. These vectors generate a sublattice of $\Z^{E}$ called the \emph{circuit lattice}. Similarly, the \emph{signed cocircuits} of $M$ generate a sublattice of $\Z^{E}$ called the \emph{cocircuit lattice}.

\begin{definition}\label{def:sandmat}
The \emph{sandpile group} of a regular matroid $M$, written $\Scal(M)$, is the quotient of $\Z^{E}$ by the direct sum of the circuit and cocircuit lattices. 
\end{definition}

This sandpile group is also called the \emph{Jacobian} or \emph{critical group} of $M$. When $M$ is the \emph{cycle matroid} associated with a connected graph $G$, the groups $\Scal(M)$ and $\Pic^0(G)$ are isomorphic (see~\cite{Bacher,Biggs,alexthesis} for more details on this connection). 

\begin{theorem}[sandpile matrix-tree theorem for regular matroids~{\cite[Theorem 4.6.1]{Merino}}]\label{thm:regmtt}
For any regular matroid $M = (E,\Bcal, \chi)$, we have $|\Scal(M)| = |\Bcal|$. 
\end{theorem}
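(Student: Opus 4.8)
This is \cite[Theorem 4.6.1]{Merino}; the standard proof generalizes Kirchhoff's theorem, and the plan is to pass to a totally unimodular representation of $M$ and then combine the Cauchy--Binet formula with the defining property of total unimodularity. Since $M$ is regular, I would first fix an $r \times n$ totally unimodular matrix $A$ of full row rank $r = \rank(M)$, where $n = |E|$, whose columns are indexed by $E$ and which represents $M$ (its sign pattern realizing $\chi$, although the orientation is irrelevant to the cardinality count). The key input is a lattice identification: under this representation the circuit lattice of $M$ equals $\ker_{\Z}(A) = \{x \in \Z^{E} : Ax = 0\}$, and the cocircuit lattice equals $\im_{\Z}(A^{\top}) = \{A^{\top}y : y \in \Z^{r}\}$, the $\Z$-span of the rows of $A$. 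This is where regularity is used essentially: for a unimodular matrix, the signed circuits (resp.\ cocircuits), viewed as $\{0,\pm 1\}$-vectors of minimal support in $\ker A$ (resp.\ in the row space), generate the full integer kernel lattice (resp.\ the full integer row lattice), not merely a finite-index sublattice.

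Granting this identification, I would compute $|\Scal(M)|$ directly. Because $M$ has a basis $S$, the columns in $S$ are linearly independent and square, so $\det A_{S} = \pm 1$ by total unimodularity; hence $A : \Z^{E} \to \Z^{r}$ is surjective and induces an isomorphism $\Z^{E}/\ker_{\Z}(A) \xrightarrow{\sim} \Z^{r}$ carrying the cocircuit lattice $\im_{\Z}(A^{\top})$ onto $\im_{\Z}(A A^{\top})$. Since $\ker A$ and the row space of $A$ are orthogonal complements in $\Q^{E}$, the circuit and cocircuit lattices meet trivially and together have rank $n$, so
\[
\Scal(M) \;=\; \Z^{E}\big/\bigl(\ker_{\Z}(A) \oplus \im_{\Z}(A^{\top})\bigr) \;\cong\; \Z^{r}\big/\im_{\Z}(A A^{\top}),
\]
which is finite of order $|\det(A A^{\top})|$ since $A A^{\top}$ is a nonsingular integer matrix. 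By the Cauchy--Binet formula,
\[
\det(A A^{\top}) \;=\; \sum_{S \in \binom{E}{r}} \bigl(\det A_{S}\bigr)^{2},
\]
where $A_{S}$ is the $r \times r$ submatrix of $A$ with columns $S$. Total unimodularity forces $\det A_{S} \in \{-1,0,1\}$, and $\det A_{S} \neq 0$ precisely when the columns indexed by $S$ are independent, i.e.\ precisely when $S \in \Bcal$. Therefore $\det(A A^{\top}) = |\Bcal|$, and combining the two displays yields $|\Scal(M)| = |\Bcal|$.

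The main obstacle is the lattice identification in the first step: one must verify that the sandpile group $\Scal(M)$, defined abstractly via the signed circuits and cocircuits of the oriented matroid, really does coincide with the cokernel $\Z^{r}/\im_{\Z}(A A^{\top})$ attached to a totally unimodular representation. Equivalently, one needs that for totally unimodular $A$ the $\{0,\pm 1\}$-vectors of minimal support in $\ker A$ generate $\ker_{\Z}(A)$ over $\Z$ (and dually for the row lattice); this is the regular-matroid incarnation of the fact that unimodular matrices admit unimodular kernel representations, and it is exactly the place where regularity of $M$—rather than mere orientability—enters. Once this is granted, the remainder is the routine Cauchy--Binet computation above, with the special case of the cycle matroid of a graph recovering Theorem~\ref{thm:mtt}.
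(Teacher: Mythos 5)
The paper does not prove Theorem~\ref{thm:regmtt}; it is cited directly from Merino's thesis, so there is no in-text proof to compare against. Your argument is correct and is the standard proof: pass to a totally unimodular representing matrix $A$, identify the circuit lattice with $\ker_{\Z}(A)$ and the cocircuit lattice with $\im_{\Z}(A^{\top})$, reduce $\Scal(M)$ to the cokernel of $AA^{\top}$, and evaluate $\det(AA^{\top})$ by Cauchy--Binet using that every nonzero $r\times r$ minor of a totally unimodular matrix is $\pm 1$. You correctly flag the one genuinely nontrivial ingredient, namely that regularity (not just orientability) is what guarantees that the $\{0,\pm 1\}$ circuit and cocircuit vectors $\Z$-generate the full integer kernel and row lattices rather than finite-index sublattices; everything after that is routine. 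This matches the approach taken in the cited source, so there is nothing substantive to add.
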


For a graph $G$, we defined a sandpile torsor action to be a free transitive action of $\Pic^0(G)$ on $\T(G)$. Additionally, for our action to be ``natural'' on a graph, we needed to introduce a ribbon structure and restrict to plane graphs. The analogue for regular matroids is to find a free transitive action of $\Scal(M)$ on $\Bcal$ after introducing some additional structure on $M$. Backman, Baker, and Yuen constructed a family of such actions which depend on a choice of \emph{acyclic circuit and cocircuit signatures}~\cite{BBY,Yuenthesis}. We will describe their construction below, but first we need a few definitions. 

For convenience, we will sometimes express elements of $\Z^E$ as integer linear combinations of ground set elements and sometimes as integer vectors. For example, if $E = \{e_1,e_2,e_3,e_4,e_5\}$, then $e_1 + 3e_3$ and $(1,0,3,0,0)$ represent equivalent elements of $\Z^E$. Given $D \in \Z^E$, we write $[D]$ for the equivalence class of $\Scal (M)$ containing $D$. Notice that $\Scal(M)$ is generated by elements of the form $[e]$ for $e \in E$.  

For each circuit (resp. cocircuit), there are two signed circuits (resp. cocircuits) which differ by multiplication by $-1$. A \emph{circuit signature} (resp. \emph{cocircuit signature}) is a choice of one signed circuit (resp. cocircuit) for each unsigned circuit (resp. cocircuit). A circuit signature (resp. cocircuit signature) is called \emph{acyclic} if no positive linear combination of signed circuits (resp. cocircuits) adds to zero. We will call $(\sigma,\sigma^*)$ a \emph{pair of acyclic signatures} if $\sigma$ is an acyclic circuit signature and $\sigma^*$ is an acyclic cocircuit signature. 




\begin{definition}\label{def:msta}
A \emph{matroidal sandpile torsor algorithm} $\alpha$ is a function whose input is a regular matroid $M = (E,\Bcal,\chi)$ and a pair of acyclic signatures $(\sigma,\sigma^*)$, and whose output is a free transitive action of $\Scal(M)$ on $\Bcal$. For a specific $M$ and signatures $(\sigma,\sigma^*)$, we write $\alpha_{(M,\sigma,\sigma^*)}$ for the free transitive action, which we call a \emph{matroidal sandpile torsor action} on $M$.  
\end{definition}

\begin{remark}
The acyclic signatures in Definition~\ref{def:msta} play a similar role to the ribbon structure in Definition~\ref{def:torsor}: they are necessary to resolve ambiguity and allow us to define ``natural'' torsor actions. Definition~\ref{def:msta} should also require a form of automorphism invariance, but this detail is more distracting than enlightening for our purposes.
\end{remark}

Backman, Baker, and Yuen introduced a matroidal sandpile torsor algorithm which will denote \emph{$\BBY$}~\cite{BBY}. Their construction is a generalization of the \emph{Bernardi algorithm} which we discussed in Section~\ref{sec:background}~\cite{Yuengeometric}. 

The key insight for defining $\BBY$ is that, given a pair of acyclic signatures, every $B \in \Bcal$ can be associated with a $\{0,1\}^E$ vector such that these vectors are all distinct as elements of $\Scal(M)$. By Theorem~\ref{thm:regmtt}, this means that every equivalence class of $\Scal(M)$ is associated with a unique element of $\Bcal$. Thus, we obtain a free transitive action through pointwise addition. All that is left is to show how to associate a basis with a $\{0,1\}^E$ vector. 

For any $e \not\in B$, there is a unique circuit contained in $e \cup B$, which is called the \emph{fundamental circuit of $e$} (see~\cite[Corollary 1.2.6]{Oxley}). Similarly, for any $e \in B$, there is a unique cocircuit contained in $e \cup (E \setminus B)$, which is called the \emph{fundamental cocircuit of $e$} . To determine whether the $e$ entry of our vector should be $1$ or $0$, first check whether or not $e$ is in $B$. If $e \not\in B$, then consider the fundamental circuit $C$ of $e$. The signature $\sigma$ contains one of the the two signed circuits associated with $C$. If $e$ is positive in this signed circuit, then our $e$ entry is $1$, otherwise our $e$ entry is $0$. Similarly, if $e \in B$, then consider the fundamental cocircuit of $e$, and assign the value based on the associated signed cocircuit in $\sigma^*$. See Example~\ref{ex:matroid1} for a demonstration of this algorithm. 

\begin{remark}
The construction given by Backman, Baker, and Yuen is stated differently than our construction. In particular, the original formulation involves the set $\Gcal$ of \emph{circuit-cocircuit minimal orientations}. Nevertheless, these two perspectives are equivalent (see \cite[Section 6.5]{alexthesis}).
\end{remark}
\begin{theorem}[{\cite[Theorem 1.2.2]{BBY}}]\label{thm:BBYbij}
The above construction produces a matroidal sandpile torsor algorithm. 
\end{theorem}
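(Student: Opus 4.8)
The plan is to reduce the statement to a single bijectivity claim and then isolate the step where acyclicity of the signatures does the real work. Write $v(B) \in \{0,1\}^E$ for the vector the construction assigns to a basis $B$; this is well defined since the fundamental circuit of each $e \notin B$ and the fundamental cocircuit of each $e \in B$ exist and are unique (\cite[Corollary 1.2.6]{Oxley}). Let $\beta \colon \Bcal \to \Scal(M)$ be the map $B \mapsto [v(B)]$. I claim it suffices to prove that $\beta$ is a bijection. Indeed, if $\beta$ is a bijection then
\[
\alpha_{(M,\sigma,\sigma^*)}(S,B) := \beta^{-1}\!\bigl(\beta(B)+S\bigr)
\]
is a free transitive action of $\Scal(M)$ on $\Bcal$: the identity axiom ($S=0$), compatibility with the group law (from $\beta(\alpha(S',B)) = \beta(B)+S'$ one gets $\alpha(S+S',B) = \alpha(S,\alpha(S',B))$), transitivity (take $S = \beta(B') - \beta(B)$), and freeness ($\alpha(S,B)=B \Leftrightarrow S=0$) are all formal consequences of $\beta$ being a bijection. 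Since the construction is uniform in the input $(M,\sigma,\sigma^*)$, this yields a matroidal sandpile torsor algorithm in the sense of Definition~\ref{def:msta} (the automorphism-invariance condition mentioned in the Remark after that definition is inherited from the canonical, coordinate-free description of $v(B)$).

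Next I would reduce bijectivity of $\beta$ to injectivity. By the matroidal matrix-tree theorem (Theorem~\ref{thm:regmtt}), $|\Bcal| = |\Scal(M)|$, so $\beta$ is a bijection as soon as it is injective, equivalently as soon as the vectors $\{v(B) : B \in \Bcal\}$ lie in pairwise distinct cosets of $\Lcal \oplus \Lcal^*$ in $\Z^E$, where $\Lcal$ and $\Lcal^*$ denote the circuit and cocircuit lattices of $M$.

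The heart of the argument is this injectivity, and it is here that acyclicity of $(\sigma,\sigma^*)$ is genuinely needed: the transversal property fails for non-acyclic signatures. Suppose $B_1, B_2 \in \Bcal$ satisfy $v(B_1) - v(B_2) = \omega + \omega^*$ with $\omega \in \Lcal$ and $\omega^* \in \Lcal^*$; since $M$ is regular, the real circuit space and the real cocircuit space are orthogonal complements in $\R^E$, so this decomposition is unique and $\omega, \omega^*$ are integral. I would then invoke a Farkas/linear-programming certificate of acyclicity: $\sigma$ acyclic means $0$ is not in the positive hull of the signed circuits in $\sigma$, so there is a linear functional strictly positive on every element of $\sigma$, and dually one strictly positive on every element of $\sigma^*$. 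Because the defining rule records, on the fundamental circuit of each $e \notin B_i$, exactly the sign of $e$ prescribed by $\sigma$ (and dually on fundamental cocircuits using $\sigma^*$), pairing $\omega$ and $\omega^*$ against these certificates and inducting on the symmetric difference of $B_1$ and $B_2$ via basis exchanges forces $\omega = \omega^* = 0$, hence $v(B_1) = v(B_2)$ and then $B_1 = B_2$. Equivalently, one may phrase the same content through circuit--cocircuit minimal orientations: each coset of $\Lcal \oplus \Lcal^*$ contains exactly one $(\sigma,\sigma^*)$-minimal orientation and these biject with $\Bcal$ --- this is \cite[Theorem 1.2.2]{BBY}, whose translation into the present fundamental-circuit formulation appears in \cite[Section 6.5]{alexthesis}.

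The step I expect to be the main obstacle is precisely this injectivity claim: it is the only part that uses more than formal bookkeeping, combining regularity of $M$ (for orthogonality of the circuit and cocircuit spaces and for integrality of the decomposition $\omega+\omega^*$) with acyclicity of \emph{both} signatures, and it requires the linear-programming duality input rather than a direct manipulation.
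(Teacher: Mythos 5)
The paper does not prove Theorem~\ref{thm:BBYbij}; it cites it from \cite{BBY} and remarks only that Backman, Baker, and Yuen prove it geometrically via zonotopal tilings (with a fundamental-circuit translation in \cite[Section 6.5]{alexthesis}). So there is no in-paper argument to compare against; what you have written is a sketch of an alternative, non-geometric route: reduce to bijectivity of $\beta \colon B \mapsto [v(B)]$, use Theorem~\ref{thm:regmtt} to reduce bijectivity to injectivity, and obtain injectivity from acyclicity via an LP-duality certificate. The first two reductions are correct, standard, and correctly stated, and the formal verification that a bijection $\beta$ yields a free transitive action is fine; so up to injectivity of $\beta$ you have a genuinely different and more elementary framing than \cite{BBY}.

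The gap is where you flag it, but it is a real gap rather than a detail left to the reader. Acyclicity of $\sigma$ gives, by Gordan/Stiemke, a functional $w$ with $\langle w, C\rangle > 0$ for every signed circuit $C \in \sigma$, and dually a functional $w^*$ for $\sigma^*$. But the circuit component $\omega$ of $v(B_1) - v(B_2)$ is an \emph{arbitrary} $\Z$-linear combination of circuits, with coefficients of both signs, so $\langle w, \omega\rangle$ has no controlled sign and the naive pairing proves nothing. To make this work you must exploit the specific way the coordinates of $v(B)$ are read off fundamental circuits and cocircuits: at each $e$, the $e$-entries of $v(B_1)$ and $v(B_2)$ are signs prescribed by $\sigma$ or $\sigma^*$ on fundamental circuits/cocircuits relative to $B_1$ and $B_2$ respectively, and the needed statement is that when these disagree they disagree in a direction that is jointly incompatible with $\omega \in \Lcal$, $\omega^* \in \Lcal^*$, and both acyclicity hypotheses. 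That combination --- not the existence of the certificates by themselves --- is the entire content of the theorem, and ``pairing against the certificates and inducting on the symmetric difference via basis exchanges'' names a strategy without supplying its mechanism (which sign is being bounded, what the inductive invariant is, and how acyclicity closes the induction). As it stands this is a plan, not a proof; either fill in that step concretely, or do as the paper does and cite \cite{BBY} (or \cite[Section 6.5]{alexthesis}) for it.
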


Backman, Baker, and Yuen prove this theorem geometrically, using \emph{zonotopal tilings}.

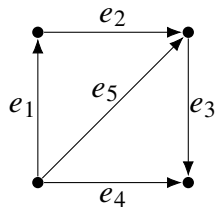
\begin{figure}
\begin{center}
\begin{tikzpicture}
    
    \tikzstyle{every node} = [circle,fill,inner sep=1pt,minimum size = 1.5mm]
    \node(a) at (0,0) {};
    \node(b) at (0,2){};
    \node(c) at (2,2) {};
    \node(d) at (2,0){};

    \tikzstyle{every node} = [draw = none,fill = none]
    \draw [-{Latex[length=2mm,width=1.5mm]}](a) -- (b);
    \draw [-{Latex[length=2mm,width=1.5mm]}](b) -- (c);
    \draw [-{Latex[length=2mm,width=1.5mm]}](c) -- (d);
    \draw [-{Latex[length=2mm,width=1.5mm]}](a) -- (c);
    \draw [-{Latex[length=2mm,width=1.5mm]}](a) -- (d);
    
    \node (o) at (-.2,1){$e_1$};
    \node (o) at (1,2.2){$e_2$};
    \node (o) at (2.2,1){$e_3$};
    \node (o) at (1,-.2){$e_4$};
    \node (o) at (.9,1.2){$e_5$};

\end{tikzpicture}
\caption{In Example~\ref{ex:matroid1}, we consider the oriented matroid represented by the oriented graph above.}\label{fig:k4minus1}
\end{center}
\end{figure}

\begin{example}\label{ex:matroid1}
The oriented graph in Figure~\ref{fig:k4minus1} \emph{represents} a regular matroid, which we call $M$. The pair $(\sigma,\sigma^*)$ below is a pair of acyclic signatures for $M$.\footnote{We obtain $\sigma$ and $\sigma^*$ by ordering the elements of $E$ and then setting the minimal nonzero element of each circuit and cocircuit to be positive. This method will always produce a pair of acyclic signatures, but not all acyclic signatures are produced this way (see~\cite[Example 5.1.4]{Yuenthesis}).}

\[\sigma= \begin{array}{ccccccc} & e_1 & e_2 & e_3 & e_4 & e_5 &\\ (&+1,&+1,&0,&0,&-1&)\\ (&+1,&+1,&+1,&-1,&0&)\\ (&0,&0,&+1,&-1,&+1&)\end{array} \hspace{.3 cm} \sigma^* = \begin{array}{ccccccc} & e_1 & e_2 & e_3 & e_4 & e_5 &\\(&+1,&-1,&0,&0,&0&)\\ (&+1, &0, &-1, &0, &+1&)\\ (&+1, &0, &0, &+1, &+1&)\\ (&0,&+1,&-1,&0,&+1&)\\ (&0, &+1, &0, &+1, &+1&)\\ (&0,&0,&+1,&+1,&0&).\end{array}\]

Suppose that we wish to compute $\BBY_{(M,\sigma,\sigma^*)}([e_3],\{e_2,e_3,e_5\})$. First, we find the $\{0,1\}^E$ vector associated with $\{e_2,e_3,e_5\}$ with respect to the pair $(\sigma,\sigma^*)$. To do this, we look at the fundamental circuit or cocircuit of each element. This gives the vector $(1,0,1,0,1)$. Next, we add $(0,0,1,0,0)$ and get $(1,0,2,0,1)$. Finally, we need to find which basis is associated with the equivalence class of $\Scal(M)$ containing $(1,0,2,0,1)$. Through trial and error (or more clever means), one can find that $[(1,0,2,0,1)] = [(1,1,0,1,1)]$, and this vector is associated with the basis $\{e_1,e_3,e_4\}$. It follows that
\[\BBY_{(M,\sigma,\sigma^*)}([e_3],\{e_2,e_3,e_5\}) = \{e_1,e_3,e_4\}.\]
\end{example}
\subsection{Consistency Conjectures}

Let $M = (E,\Bcal,\chi)$ be an oriented regular matroid and $e \in E$. Define $M\setminus e$ and $M/e$ as given in~\cite[Propositions 3.3.1 and 3.3.2]{Oriented}. It is well known that $M\setminus e$ and $M/e$ are also regular matroids (\cite[Proposition 3.2.5]{Oxley}). Furthermore, as discussed in~\cite[Section 3.3]{Oriented}, for any pair $(\sigma,\sigma^*)$ of acyclic signatures on $M$, there is a pair $(\sigma\setminus e,\sigma^*\setminus e)$ of induced acyclic signatures on $M \setminus e$ and a pair $(\sigma/e,\sigma^*/ e)$ of induced acyclic signatures on $M /e$.



Notice that the representatives of equivalence classes of $\Scal(M)$ are generated by the elements of $E$. The following definition is a matroidal analogue to consistency (Definition~\ref{def:consistent}). 

\begin{definition}\label{def:matroidconsistent}
A matroidal sandpile torsor algorithm $\alpha$ is \emph{consistent} if for every regular matroid $M = (E,\Bcal,\chi)$, pair of acyclic signatures $(\sigma,\sigma^*)$, $B \in \Bcal$, and $f \in E$, the following equalities hold (where $B' = \alpha_{(M,\sigma,\sigma^*)}([f],B)$):
\begin{enumerate}
    \item For $e \in (B \cap B') \setminus f$, we have 
    \[ \alpha_{(M/e,\sigma/ e,\sigma^*/ e)}([f],B\setminus e) = B'\setminus e.\]
    \item For any $e \not\in B \cup B'\cup f$, we have
    \[ \alpha_{(M\setminus e,\sigma\setminus e,\sigma^*\setminus e)}([f],B) = B'.\]
\end{enumerate}
\end{definition}

\begin{remark}
It is entirely possible that Definition~\ref{def:matroidconsistent} needs to be tweaked a bit in order for Conjectures~\ref{conj:BBYconsistent} and~\ref{conj:allconsistent} to be plausible. For example, it may be necessary (or convenient) to include an analogue of Condition 3 from Definition~\ref{def:consistent}. 
\end{remark}
\begin{conj}\label{conj:BBYconsistent}
The $\BBY$ matroidal sandpile torsor algorithm is consistent. 
\end{conj}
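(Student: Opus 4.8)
The plan is to work directly with the explicit combinatorial description of $\BBY$ given above, rather than attempting to build a (nonexistent in general) natural morphism between the sandpile group of a matroid and those of its minors. Write $\beta_M\colon\Bcal\to\Scal(M)$ for the injection sending a basis $B$ to the class of its associated $\{0,1\}^E$-vector; then $\alpha_{(M,\sigma,\sigma^*)}([f],B)=B'$ means exactly that $\beta_M(B')-\beta_M(B)-\mathbf 1_f$ lies in $\Ccal_M\oplus\Ccal^*_M$, the direct sum of the circuit and cocircuit lattices. The $\BBY$ construction is dual in the appropriate sense: passing from $(M,\sigma,\sigma^*)$ to $(M^*,\sigma^*,\sigma)$ interchanges fundamental circuits with fundamental cocircuits, identifies $\Scal(M^*)$ with $\Scal(M)$, and interchanges deletion with contraction, so Condition 1 of Definition~\ref{def:matroidconsistent} for $(M,\sigma,\sigma^*)$ is equivalent to Condition 2 for $(M^*,\sigma^*,\sigma)$. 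It therefore suffices to establish Condition 2.

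The first ingredient is a \emph{restriction lemma}: if $e$ is not a coloop and $e\notin B$, then for every $g\neq e$ the $g$-coordinate of the $\BBY$-vector of $B$ computed in $(M\setminus e,\sigma\setminus e,\sigma^*\setminus e)$ equals the $g$-coordinate computed in $(M,\sigma,\sigma^*)$; dually, if $e$ is not a loop and $e\in B$, the $\BBY$-vector of $B\setminus e$ in $(M/e,\sigma/e,\sigma^*/e)$ agrees coordinatewise with that of $B$. This is bookkeeping: the fundamental circuit of $g\notin B$ is contained in $B\cup g$, which avoids $e$ when $e\in B$ (so contraction leaves it unchanged) and which, when $e\notin B$, deletion of $e$ can only shorten; the coordinate is read off from the sign of $g$ in the relevant signed circuit, and the induced signatures of~\cite[Section 3.3]{Oriented} are defined precisely so as to preserve these signs, while the symmetric statement for fundamental cocircuits handles $g\in B$. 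Hence $\beta_{M\setminus e}(B)$ is literally the projection $\pi_e(\beta_M(B))$ obtained by deleting the $e$-coordinate, and likewise for $B'$.

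The crux is then to transfer the relation $\beta_M(B')-\beta_M(B)-\mathbf 1_f\in\Ccal_M\oplus\Ccal^*_M$ down to $\beta_{M\setminus e}(B')-\beta_{M\setminus e}(B)-\mathbf 1_f\in\Ccal_{M\setminus e}\oplus\Ccal^*_{M\setminus e}$. Applying $\pi_e$ is harmless on the cocircuit summand, since the cocircuits of $M\setminus e$ are exactly the sets $D^*\setminus e$, giving $\pi_e(\Ccal^*_M)=\Ccal^*_{M\setminus e}$; but it is \emph{not} harmless on the circuit summand, because a circuit through $e$ need not project into $\Ccal_{M\setminus e}$. This is exactly where the hypothesis $e\notin B\cup B'\cup f$ and the acyclicity of the signatures must enter. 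Working with a totally unimodular representation $A$ of $M$ (for which $\Ccal_M=\ker_{\Z}(A)$ and $\Ccal_M\cap\Z^{E\setminus e}=\Ccal_{M\setminus e}$), I would try to show that the decomposition $\beta_M(B')-\beta_M(B)-\mathbf 1_f=c+c^*$ can be chosen with the $e$-coordinate of $c$ equal to $0$, whence $\pi_e(c)\in\Ccal_{M\setminus e}$; this should reduce to proving that $\beta_M(B')_e=\beta_M(B)_e$ when $e\notin B\cup B'\cup f$, i.e. that the $e$-coordinate of the $\BBY$-vector is unchanged by the action of $[f]$ in this situation. A second, likely cleaner, route is the geometric one of Backman, Baker, and Yuen: realize the action as translation within a fine zonotopal tiling of the zonotope $Z_M$ spanned by the columns of $A$, observe that deletion of $e$ corresponds to projecting $Z_M$ along the $e$-direction and contraction to slicing, and verify that these operations carry the tiling (and its acyclicity) to the tilings realizing $\BBY$ on $M\setminus e$ and $M/e$, so that translation by $[f]$ descends. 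I expect this transfer step to be the main obstacle: making precise how the $\BBY$ data behaves under projection and slicing of zonotopal tilings without routing through an ill-defined map of sandpile groups, and checking that the acyclicity conditions — the matroidal stand-in for the planarity that was essential to the deletion cases of Theorem~\ref{thm:rrconsistent} — survive these operations. Finally, as in the graph setting one may need to treat separated pieces independently (a matroidal analogue of Condition 3, reducing to connected and cosimple matroids), with the base cases being the rank-one and corank-one uniform matroids, checked by hand.
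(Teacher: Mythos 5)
This statement is Conjecture~\ref{conj:BBYconsistent} in the paper: the authors state it as an \emph{open conjecture} and offer no proof, so there is no argument of theirs to compare against. What you have written is therefore a candidate proof of an open problem, and it has to be judged on its own terms.

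Your sketch does get the framing right in several respects. Reducing Condition 1 to Condition 2 via matroid duality is the correct move, and your restriction lemma is the right first ingredient: for $e\notin B$ not a coloop and $g\neq e$, the fundamental circuit $C_B(g)\subseteq B\cup g$ avoids $e$, and the fundamental cocircuit $C^*_B(g)$ in $M\setminus e$ is exactly $C^*_{B,M}(g)\setminus e$, so coordinatewise agreement of the $\BBY$-vectors does come down to compatibility of the induced signatures. You have also put your finger on the genuine difficulty — the circuit summand does not push forward under $\pi_e$, and this is the matroidal shadow of the planarity input in Case~6 of Theorem~\ref{thm:rrconsistent}.

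But the proof is not complete, and you say so yourself (``I would try to show\dots'', ``I expect this transfer step to be the main obstacle''). Moreover, the reduction you propose for the crux is, as stated, insufficient. You want to conclude that in $\beta_M(B')-\beta_M(B)-\mathbf 1_f=c+c^*$ one can take $c_e=0$, and you suggest this ``should reduce to'' $\beta_M(B')_e=\beta_M(B)_e$. That equality only gives $c_e+c^*_e=0$. For a connected regular matroid the circuit lattice and cocircuit lattice intersect trivially (they are orthogonal complements over $\Q$), so the decomposition $c+c^*$ is \emph{unique}; there is no freedom to ``choose'' a different one, and $c_e=-c^*_e$ certainly does not force $c_e=0$. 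So even granting the coordinate identity you would still need a separate argument that $c_e$ itself vanishes — or a different decomposition strategy altogether. The alternative zonotopal-tiling route you mention is plausible and is closer in spirit to how \cite{BBY} actually proves Theorem~\ref{thm:BBYbij}, but you have not carried out the verification that projection and slicing of the tiling are compatible with the $\BBY$ action and with the acyclicity of the signatures, which is exactly the content that would make this a proof rather than a plan. In short: good diagnosis of where the problem lives, but the conjecture remains unproved by this proposal.
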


For a circuit signature $\sigma$, let $\overline{\sigma}$ be the circuit signature made up of the signed circuits not in $\sigma$. Define $\overline{\sigma}^*$ similarly for cocircuit signatures.  

\begin{definition}
Suppose $\alpha$ is a matroidal sandpile torsor algorithm. Define $\alpha'$, $\alpha''$, and $\alpha'''$ such that for any regular matroid $M = (E,\Bcal,\chi)$ and pair of acyclic signatures $(\sigma,\sigma^*)$, we have
\[\alpha_{(M,\sigma,\sigma^*)} = \alpha'_{(M,\overline{\sigma},\sigma^*)} = \alpha''_{(M,\overline{\sigma},\overline{\sigma}^*)} = \alpha'''_{(M,\sigma,\overline{\sigma}^*)}.\]
\end{definition}

\begin{definition}
Matroidal sandpile torsor algorithms $\alpha$ and $\beta$ have the same \emph{structure} if $\beta \in \{\alpha,\alpha',\alpha'',\alpha'''\}$.
\end{definition}

\begin{conj}\label{conj:allconsistent}
Every consistent matroidal sandpile torsor algorithm has the same structure as $\BBY$. 
\end{conj}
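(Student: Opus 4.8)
The plan is to mirror, as closely as possible, the strategy used to prove Theorem~\ref{thm:onestructure} for plane graphs, since regular matroids were chosen precisely because circuits and cocircuits play symmetric, well-behaved roles. Throughout I would assume (or prove in tandem with) Conjecture~\ref{conj:BBYconsistent}, just as the proof of Theorem~\ref{thm:onestructure} relied on Theorem~\ref{thm:rrconsistent}. The first reduction is to connected regular matroids: if $M = M_1 \oplus M_2$, then $\Scal(M) \cong \Scal(M_1) \oplus \Scal(M_2)$ and $\Bcal = \Bcal(M_1) \times \Bcal(M_2)$, so a consistent $\alpha$ should split as a product of actions. Making this rigorous would require either adding an analogue of Condition~3 of Definition~\ref{def:consistent} to Definition~\ref{def:matroidconsistent} (as anticipated in the Remark following that definition) or showing that Conditions~1 and~2 already force the product decomposition across $2$-separations, a matroidal version of Lemma~\ref{lem:2congood}.

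The second ingredient is a matroidal \emph{source-turn} calculus. Define a \emph{single-step pair} $([e],B)$ to be one where $\alpha_{(M,\sigma,\sigma^*)}([e],B)$ differs from $B$ by a single basis exchange, and a \emph{source-turn pair} to be the special case where the exchange is forced in a ``leaf-like'' way (the analogue of $c$ being a leaf vertex of $T$ with $\chi(c,T_s\langle c\rangle)$ the chip edge). One checks directly from the fundamental-circuit/cocircuit description that $\BBY$ acts on such pairs by the expected single basis exchange. Then I would prove the analogue of Theorem~\ref{thm:sourceturn}: any basis of a connected regular matroid can be taken to any other by a sequence of source-turn moves. Its non-source version is the standard fact that the basis-exchange graph of a connected matroid is connected (used in the Remark after Corollary~\ref{cor:leafswap}, citing~\cite{Oxley}); upgrading it should follow the inductive, root-fixing argument of the proof of Theorem~\ref{thm:sourceturn} almost verbatim, using deletion/contraction of a forced element in place of the graph operations, together with matroidal analogues of Lemmas~\ref{lem:delconswap} and~\ref{lem:contracttreetotree}. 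This yields the analogue of Corollary~\ref{cor:sourceenough}: a consistent matroidal sandpile torsor algorithm equals $\BBY$ iff it agrees with $\BBY$ on every source-turn pair.

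With these in hand, the heart of the argument is an induction on $|E|$ via deletion and contraction. The base cases are the uniform matroids $U_{1,k}$, $U_{k-1,k}$ (the cycle matroids of $k$ parallel edges and of the $k$-cycle, i.e.\ the matroidal incarnations of $E_k$ and $C_k$) together with $U_{2,3}$; for these $\Scal(M)$ is cyclic of small order and $|\Bcal|$ is small, so one enumerates the finitely many free transitive actions and verifies by hand that they are exactly $\{\BBY,\BBY',\BBY'',\BBY'''\}$, and that $\BBY_{U_{1,k}}$, $\BBY_{U_{k-1,k}}$, $\BBY_{U_{2,3}}$ are pinned down simultaneously by which of the four structures one is in, exactly as $\alpha_{C_3}$ and $\alpha_{E_3}$ were in the proof of Theorem~\ref{thm:onestructure}. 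For the inductive step, fix a source-turn pair $([f],B)$ and set $B' = \alpha_{(M,\sigma,\sigma^*)}([f],B)$. If some element $e$ of $B\cap B'$ is not the forced exchange element, contract it and combine Condition~1 of Definition~\ref{def:matroidconsistent} for $\alpha$, the induction hypothesis, and Conjecture~\ref{conj:BBYconsistent} for the minor to get $B'\setminus e = \BBY_{(M/e,\sigma/e,\sigma^*/e)}([f],B\setminus e) = \BBY_{(M,\sigma,\sigma^*)}([f],B)\setminus e$; symmetrically, if some non-forced $e \notin B\cup B'$ exists, delete it and use Condition~2. In the remaining cases $B\,\Delta\,B'$ is everything except possibly the exchanged pair, and — exactly as in Cases 1--4 of the proof of Theorem~\ref{thm:onestructure} — one would argue this forces $M$ to be one of the base-case matroids, where the result is already established.

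The main obstacle is twofold. First, one must actually settle Conjecture~\ref{conj:BBYconsistent}, and since $\BBY$ is defined through zonotopal tilings (Theorem~\ref{thm:BBYbij}), this requires understanding how those tilings restrict under deletion and contraction — a genuinely geometric question with no off-the-shelf answer, playing the role that the rotor-routing-specific Lemmas~\ref{lem:looprev} and~\ref{lem:looprev2} play in Theorem~\ref{thm:rrconsistent}. Second, the degenerate cases of the induction are where planarity did real work in the graph proof: the claim that ``there are no edges on either side of the relevant cycle'' used planarity to force $G$ to be $C_3$ or $E_3$. Its matroidal replacement should exploit the orthogonality of the circuit and cocircuit lattices of a regular matroid (equivalently, total unimodularity) to show the analogous signed vectors cannot have unexpected support, forcing $M$ down to a uniform matroid, together with a matroidal analogue of~\cite[Lemma~17]{alextorsors} controlling which elements of $\Scal(M)$ of the relevant special form are trivial. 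I expect Case~4 — the analogue of the telescope-graph case relegated to Appendix~\ref{app:case4} — to again be the most delicate, very possibly requiring one to isolate a family of regular matroids playing the role of telescope graphs.
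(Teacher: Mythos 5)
You should first note that the statement you were asked to prove is labeled a \emph{conjecture} in the paper (Conjecture~\ref{conj:allconsistent}); the authors do not prove it, and the paper explicitly treats both it and Conjecture~\ref{conj:BBYconsistent} as open. So there is no ``paper's own proof'' to compare against, and your text — correctly framed as a plan rather than an argument — cannot be checked against one.

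As a research outline, what you have written is a faithful transcription of the plane-graph strategy, and you identify the right load-bearing pieces: a matroidal analogue of Theorem~\ref{thm:sourceturn} and Corollary~\ref{cor:sourceenough}, base cases $U_{1,k}$, $U_{k-1,k}$, $U_{2,3}$, a deletion/contraction induction using Definition~\ref{def:matroidconsistent}, and the need for an analogue of Condition~3 (which the authors themselves flag as a possible missing ingredient in the Remark after Definition~\ref{def:matroidconsistent}). But you should be candid that this is not a proof: every step where planarity did genuine work in Sections~\ref{sec:unique} and Appendix~\ref{app:case4} is left as ``should follow'' or ``I expect,'' and those are exactly the places where the argument is not routine. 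Concretely, (i) you assume Conjecture~\ref{conj:BBYconsistent} without any argument, and the proof of Theorem~\ref{thm:rrconsistent} leaned on rotor-routing-specific facts (Proposition~\ref{prop:nooverspins}, Lemmas~\ref{lem:looprev} and~\ref{lem:looprev2}) that have no known $\BBY$ counterparts; (ii) the ``no edges to one side of the cycle'' arguments in Cases~1--3, the application of~\cite[Lemma~17]{alextorsors}, and the entire telescope-graph analysis in Appendix~\ref{app:case4} are all genuinely two-dimensional, and it is not at all clear that total unimodularity or circuit/cocircuit orthogonality substitutes for planarity in the way you hope; and (iii) even the first reduction (to connected matroids) is not free without resolving the status of Condition~3, which the paper leaves as Question~\ref{quest:con3unneeded} already in the graph case. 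Until those gaps are filled, what you have is a plausible roadmap for an open problem, which is precisely what the authors intended Conjecture~\ref{conj:allconsistent} to be.
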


\section*{Acknowledgements}
The first author was partially supported by the United States Army Research Office under grant number W911NF2010133. We would like to thank Matt Baker, Richard Kenyon, Caroline Klivans, Cyrus Peterpaul, Lilla T\'othm\'er\'esz, Chi Ho Yuen, and the anonymous referees for helpful conversation, comments and suggestions. In particular, we are grateful to Caroline Klivans for calling our attention to the main conjecture, Richard Kenyon for suggesting we look for a consistency condition related to contraction and deletion, and Lilla T\'othm\'er\'esz for Example~\ref{ex:lilla}.

\appendix
\section{Sink-free Rotor Configurations and Unicycles}\label{app:unicycles}

Here, we introduce \emph{sink-free rotor configurations} and \emph{unicycles}, which were explored in both \cite{Holroyd} and \cite{CCG}. These objects allow us to prove the remaining results from Section~\ref{sec:funfacts}. 

\begin{definition} A \emph{sink-free rotor configuration} is an assignment of an incident edge to every vertex of $G$. 
\end{definition}

We obtain a sink-free rotor configuration from a rotor configuration by assigning a rotor to the sink vertex. 

\begin{definition}\label{def:unicycle}
A \emph{unicycle} is a pair $(\rho^*,x)$ where $\rho^*$ is a sink-free rotor configuration with exactly one directed cycle and $x$ is a vertex on this cycle. 
\end{definition}

If we are given a unicycle $(\rho^*,x)$, we can get a new unicycle by replacing $\rho^*\la x\ra$ with $\chi(x,\rho^*\la x \ra)$ and then replacing $x$ with the other vertex incident to the new value of $\rho^* \la x \ra$. In other words, we apply one step of Algorithm~\ref{alg:rotor}. By~\cite[Lemma 3.3]{Holroyd}, this will always output a new unicycle. This action of Algorithm~\ref{alg:rotor} on unicycles is what Holroyd et al. call the \emph{rotor-routing process}. For clarity, we will only use the word \emph{process} when working with unicycles. 

\begin{lemma}[{\cite[Lemma 4.9]{Holroyd}}]\label{lem:fullspins} Let $(\rho^*,x)$ be a unicycle on a ribbon graph with $m$ edges.
If we iterate the rotor-routing process $2|E(G)|$ times, the chip traverses each edge
of $G$ exactly once in each direction, each rotor makes exactly one full turn, and the final unicycle is $(\rho^*,x)$.
\end{lemma}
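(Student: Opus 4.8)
The plan is to prove that $(\rho^*,x)$ is periodic under the rotor-routing process with minimal period exactly $2|E(G)|$, and that within one period the chip and the rotors behave as asserted; the three claims of the lemma are then immediate.

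\emph{Reversibility and periodicity.} First I would check that one step of the process is invertible on the (finite) set of unicycles of $(G,\chi)$, so that it is a permutation of that set and every unicycle is periodic. Given a unicycle $(\sigma^*,y)$ with distinguished cycle $C$, let $w$ be the vertex of $C$ whose successor on $C$ is $y$, so that $\sigma^*\langle w\rangle$ is the cycle edge directed from $w$ into $y$, and let $\tau^*$ be $\sigma^*$ with $\sigma^*\langle w\rangle$ replaced by the edge immediately preceding it in the cyclic order $\chi(w)$. One checks that $(\tau^*,w)$ is again a unicycle (the ``reverse'' of~\cite[Lemma 3.3]{Holroyd}) and that one step of the process carries $(\tau^*,w)$ to $(\sigma^*,y)$. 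Hence the process is reversible; let $N\ge 1$ be the minimal period of $(\rho^*,x)$.

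\emph{Counting over a period via the Laplacian.} After $N$ steps the chip is back at $x$ and every rotor is back to its original edge. A rotor at $v$ advances by one position exactly when the chip departs from $v$, so if $t_v$ denotes the number of departures from $v$ over the $N$ steps then $\deg(v)\mid t_v$; write $t_v=k_v\deg(v)$, and note the chip then leaves $v$ along each incident edge exactly $k_v$ times. Each arrival at $v$ along an edge $\{v,w\}$ is a departure from $w$ along that edge, so the number of arrivals at $v$ is $\sum_{\{v,w\}\ni v}k_w$; since the chip walk is closed, arrivals and departures agree at each vertex, giving $\sum_{\{v,w\}\ni v}k_w=\deg(v)\,k_v$ for all $v$. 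This says exactly that the vector $(k_v)_v$ lies in the kernel of the Laplacian $\Delta$, so by connectedness of $G$ it is constant, say equal to $k$. Therefore $N=\sum_v t_v=k\sum_v\deg(v)=2k\,|E(G)|$, and over one period each edge is crossed $k$ times in each direction while each rotor makes $k$ full turns.

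\emph{The period is $2|E(G)|$ (the crux), and conclusion.} It remains to show $k=1$. Since the total number of rotor advances in $2|E(G)|$ steps equals $\sum_v\deg(v)=2|E(G)|$, it suffices to show that during the first $2|E(G)|$ steps no rotor advances more than $\deg(v)$ times: then every rotor makes exactly one full turn, every directed edge is traversed exactly once, and repeating the arrivals-equal-departures bookkeeping over just the first $2|E(G)|$ steps forces the chip back to $x$ with every rotor restored, so $2|E(G)|$ is already a period and $k=1$ — at which point the computation of the previous paragraph yields the three assertions of the lemma. To exclude an over-rotation I would track the unique directed cycle $C_t$ of the configuration $\rho^*_t$ reached after $t$ steps: $x\in C_0$, the chip always lies on $C_t$, each step rotates the cycle edge leaving the chip's vertex, so $C_{t+1}$ contains the just-crossed edge (which was not on $C_t$), and $C_t$ sweeps across $E(G)$; the point to pin down is that the first instant a rotor would begin a second turn is precisely an instant at which the chip is forced to re-traverse a directed edge already used, which this sweeping-cycle structure forbids before every directed edge has been used once. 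This last step — equivalently, that the first return happens no later than step $2|E(G)|$ — is where the combinatorics of the unicycle genuinely enters, and it is what I expect to be the main obstacle; the earlier steps are reversibility plus bookkeeping with the kernel of $\Delta$.
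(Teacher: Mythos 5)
This lemma is cited in the paper directly from \cite[Lemma 4.9]{Holroyd}; the paper gives no proof of its own, so there is nothing internal to compare against. Your proposal should therefore be judged on its own merits, and as you yourself flag, it is incomplete.

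Your first two steps are sound. Reversibility of one step of the process on the finite set of unicycles (hence periodicity, with some minimal period $N$) is correct, though the verification that your reverse step sends unicycles to unicycles deserves more than ``one checks.'' The Laplacian bookkeeping in the second step is clean and correct: $\deg(v)\mid t_v$ because each rotor must return, arrivals equal departures at every vertex because the period walk is closed (including at $x$), and $(k_v)_v\in\ker\Delta$ then forces $k_v\equiv k$ by connectivity, giving $N=2k|E(G)|$.

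The gap is in the third step, and it is genuine, not cosmetic. The lemma's entire content is $k=1$, and your argument for it is a heuristic, not a proof. The pigeonhole argument you gesture at does show that the \emph{first} over-rotation cannot occur at a vertex $v\neq x$ (it would force an earlier over-rotation at a neighbour of $v$), hence the first over-rotation, if any occurs in the first $2m$ steps, must be at $x$. But at the moment just before $x$ over-rotates, you only know that the chip is at $x$, the $x$-rotor is back at $\rho^*\langle x\rangle$, and every other rotor has made \emph{at most} $\deg(w)$ advances. You need $j_w=\deg(w)$ for every $w$, i.e.\ that the walk up to that moment has already traversed all $2m$ directed edges, and nothing you've written establishes this. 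Your appeal to ``the sweeping-cycle structure forbids this before every directed edge has been used once'' is exactly the claim that needs proving; saying that $C_{t+1}$ contains the just-crossed edge is true and a reasonable observation, but it is a long way from showing that the cycles $C_t$ collectively force the chip onto every directed edge before the $x$-rotor completes its first turn. Until that implication is supplied — by whatever combinatorial argument Holroyd et al.\ actually use, which you have not reproduced — the proof does not go through.

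You are honest that this is ``the main obstacle'' and ``where the combinatorics of the unicycle genuinely enters.'' That assessment is accurate: it is also where your proof stops being a proof.
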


Let $(G,\chi)$ be a ribbon graph, $T \in \T(G)$, and $c,s \in V(G)$. Suppose that there is some $f \in E(G)$ such that $\i(f) = \{c,s\}$. Let $T^{*f}_s$ be the sink-free rotor configuration defined by:
\begin{equation}\label{eq:tstardef}T^{*f}_s\la x \ra := \begin{cases} T_s\la x \ra&\text{if $x \not=s$,}\\f &\text{if $x=s$.}\end{cases}\end{equation}

\begin{lemma}The pair $(T^{*f}_s,c)$ is a unicycle.
\end{lemma}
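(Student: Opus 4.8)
The plan is to verify directly the two requirements of Definition~\ref{def:unicycle}: that $T^{*f}_s$ is a sink-free rotor configuration containing \emph{exactly one} directed cycle, and that $c$ is a vertex on that cycle. That $T^{*f}_s$ is a sink-free rotor configuration is immediate from \eqref{eq:tstardef}, since it assigns an incident edge to every vertex (including $s$, to which it assigns $f$). The essential observation is that $T_s$ itself is acyclic by Lemma~\ref{lem:acyclic}, so any directed cycle in $T^{*f}_s$ is forced to use the single rotor in which $T^{*f}_s$ differs from $T_s$, namely $T^{*f}_s\la s\ra = f$, visualized as the directed edge from $s$ to $c$.

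First I would exhibit a directed cycle. Starting at $s$ and following the rotor $f$ lands the chip at $c$. For every vertex $x \neq s$ we have $T^{*f}_s\la x\ra = T_s\la x\ra$, which by Definition~\ref{def:Ts} is the first edge along the path in $T$ from $x$ to $s$; since $T$ is a tree, repeatedly following these rotors starting from $c$ traces the unique $T$-path from $c$ to $s$ and terminates at $s$ after finitely many steps. The edges traversed — namely $f$ together with the rotors along the $T$-path from $c$ to $s$ — form a directed cycle $C$, and by construction $c,s \in V(C)$.

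Next I would argue uniqueness. Let $C'$ be any directed cycle of $T^{*f}_s$. If $s \notin V(C')$, then every rotor of $C'$ is a rotor of $T_s$, contradicting the acyclicity of $T_s$; hence $s \in V(C')$, and the rotor of $C'$ at $s$ must be $T^{*f}_s\la s\ra = f$, so $C'$ contains the directed edge from $s$ to $c$. Because $T^{*f}_s$ assigns exactly one outgoing rotor to each vertex, a directed cycle is completely determined by any one of its directed edges: starting from $s \to c$ and following rotors, the continuation is forced at every step and must eventually return to $s$. Therefore $C' = C$, so $C$ is the unique directed cycle of $T^{*f}_s$ and $c \in V(C)$, which is exactly the statement that $(T^{*f}_s,c)$ is a unicycle.

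I do not expect a genuine obstacle here — the proof is a short verification. The only points requiring a little care are (i) confirming that following the $T_s$-rotors from $c$ actually closes up at $s$ rather than wandering, which is where the tree structure of $T$ is used, and (ii) the determinism argument that pins down $C'$ uniquely once we know it must pass through $s$.
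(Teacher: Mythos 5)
Your proof is correct and follows essentially the same route as the paper's one-sentence argument: $T_s$ is acyclic and contains the directed $T$-path from $c$ to $s$, so appending the rotor $f$ at $s$ creates exactly one directed cycle through $c$. You have simply spelled out the existence and uniqueness steps (deterministic rotor-following from $s$, and the impossibility of a cycle avoiding $s$) that the paper leaves implicit.
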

\begin{proof}
The rotor configuration $T_s$ contains a directed path from $c$ to $s$ and no cycles. After adding a rotor from $s$ to $c$, we have a unique cycle which must contain $c$. 
\end{proof}

This leads directly to a unicycle interpretation of the rotor-routing algorithm. Suppose that Algorithm~\ref{alg:rotor} requires $n$ iterations of the while loop to terminate given the inputs $T$ and $c-s$. For each $k\in [0,n]$, let $\rho_k$ be the rotor configuration after $k$ iterations of the while loop in Algorithm \ref{alg:rotor}. Similarly, let $(\rho^*_k,c_k)$ be the unicycle obtained from $(T^{*f},c)$ after $k$ steps of the rotor-routing process. 

\begin{lemma}\label{lem:rruni}
For any $k\in [0,n]$, we have
\[\rho^*_k\rotdir{v} = \begin{cases}\rho_k\rotdir{v} & \text{ for $v \neq s$}\\f & \text{ for $v = s$}.\end{cases} \]
\end{lemma}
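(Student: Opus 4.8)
The plan is to prove the lemma by induction on $k$, carrying along the chip positions of the two processes as part of the inductive statement. Write $x_k$ for the chip location after $k$ iterations of the while loop in Algorithm~\ref{alg:rotor} (so $x_0 = c$ and $x_n = s$). I would prove the strengthened claim that for every $k \in [0,n]$ we have $c_k = x_k$ together with the displayed rotor equalities. The conceptual content is simply that, during its first $n$ steps, the rotor-routing \emph{process} on the unicycle $(T^{*f}_s,c)$ is carrying out Algorithm~\ref{alg:rotor} verbatim; the only difference between a sink-free rotor configuration and an ordinary one is the extra rotor at $s$, so the whole task is to check that this rotor is never touched before step $n$. (One should also cite \cite[Lemma 3.3]{Holroyd} so that each $(\rho^*_k,c_k)$ is genuinely a unicycle and the statement is well-posed.)

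For the base case $k=0$, we have $\rho_0 = T_s$ and $\rho^*_0 = T^{*f}_s$, so the two equalities are exactly the definition~\eqref{eq:tstardef}, and $c_0 = c = x_0$ by the initialization of Algorithm~\ref{alg:rotor}. For the inductive step, assume the claim at some $k < n$. Since $k < n$, the while loop has not terminated, so $x_k \neq s$, and hence $c_k = x_k \neq s$. The transition from step $k$ to step $k+1$ is, in both processes, the same pair of operations applied at the vertex $c_k = x_k \neq s$: rotate that vertex's rotor one position according to $\chi$, then move the chip across the new rotor. Because $\rho^*_k$ and $\rho_k$ agree at every vertex other than $s$ — in particular at $c_k$ — the rotations yield the same edge, so $\rho^*_{k+1}$ and $\rho_{k+1}$ agree at $c_k$ and hence at all vertices $v \neq s$, and the chip is moved to the same new vertex, giving $c_{k+1} = x_{k+1}$. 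The rotor at $s$ is not rotated at step $k+1$ (the rotation happens at $c_k \neq s$), so $\rho^*_{k+1}\la s\ra = \rho^*_k\la s\ra = f$. This closes the induction, and at $k=n$ the equalities still hold, with $c_n = x_n = s$, since the rotation performed at step $n$ was at $x_{n-1}\neq s$.

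I do not expect a real obstacle: the lemma is a bookkeeping identity unwinding the two definitions, and the only point requiring care is that the chip never reaches $s$ before step $n$, which is immediate from the definition of $n$ as the termination time of Algorithm~\ref{alg:rotor} once chip positions are part of the induction. The mild notational wrinkle — that $c_k$ already denotes the unicycle chip, while Algorithm~\ref{alg:rotor}'s chip needs a fresh name — is handled by introducing $x_k$ as above.
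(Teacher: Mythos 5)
Your proof is correct and is essentially the paper's argument: the paper observes in one sentence that, until the chip first leaves $s$ (at step $n+1$), the extra rotor at $s$ never rotates and never affects the walk, so the two processes coincide; your induction on $k$, carrying the chip position $c_k = x_k$ along, is a careful formalization of exactly that observation.
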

\begin{proof}
This follows immediately from the fact that, until the chip leaves $s$ on the $(n+1)$ step of the rotor-routing process, the rotor at $s$ remains fixed and does not affect the path of the chip or the rotation of the other rotors.
\end{proof}

\begin{proof}[Proof of Proposition~\ref{prop:nooverspins}]
For each $k$, let $(\rho^*_k,c_k)$ denote the $k$th step of the rotor-routing process initialized at $(T^{*f},c)$ and suppose that $n$ is the smallest integer such that $c_n = s$. As a consequence of Lemma \ref{lem:fullspins}, the chip must visit every vertex before the unicycle returns to its original value. Thus, this must happen after step $n$. Then by Lemma \ref{lem:fullspins}, no rotor can have completed more than a full rotation by step $n$. By the equivalence established in Lemma \ref{lem:rruni}, this implies that no rotor can complete more than a full rotation before Algorithm \ref{alg:rotor} terminates. 
\end{proof}
 
For a sink-free rotor configuration $\rho^*$ with a unique directed cycle $C$, let $\overline{\rho^*}$ be the sink-free rotor configuration obtained by reversing the direction of the rotors that make up $C$ and keeping the others fixed. 

\begin{prop}\cite[Proposition 9]{CCG}\label{prop:reverse}
The following are equivalent:
\begin{itemize}
\item $(G,\chi)$ is a plane graph.
\item For every unicycle $(\rho^*,x)$ on $(G,\chi)$, repeated applications of the rotor-routing process eventually produce the unicycle $(\overline{\rho^*},x)$.  
\end{itemize}
\end{prop}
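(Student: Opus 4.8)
The plan is to prove the two implications of the equivalence separately, taking Lemma~\ref{lem:fullspins} as the main input. Throughout, fix the ribbon graph $(G,\chi)$ and, for a unicycle $(\rho^*,x)$, regard it as the directed cycle $C$ of $\rho^*$ together with the in-trees that drain into $C$. Write $(\rho^*_k,x_k)$ for the $k$-th unicycle produced by the process and $C_k$ for its directed cycle; observe that $\rho^*_k\langle x_k\rangle$ is always the forward edge of $C_k$ at $x_k$, and that one step of the process rotates precisely that one rotor (by $\chi$) before moving the mark along the new rotor.

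For the implication ``$(G,\chi)$ is plane $\Rightarrow$ the process reverses every unicycle'', I would embed $G$ in the sphere, so that each $C_k$ bounds two closed disks, and let $R_k$ be the one to the left of $C_k$. The heart of the argument is a local analysis of a single step at $x_k$: rotating the forward $C_k$-edge counterclockwise, the new rotor $e'$ is either (i) an edge lying in the interior of $R_k$, or (ii) the reverse of the backward $C_k$-edge at $x_k$, since the edges incident to $x_k$ lying in $R_k$ are exactly those met before the backward edge when scanning counterclockwise from the forward edge. In case (i), using that an in-tree of a planar unicycle lies entirely on one side of its cycle (it meets the cycle only at its root and cannot cross it), one checks that $C_{k+1}$ is assembled from $e'$, an in-tree path, and an arc of $C_k$, all contained in $R_k$, so that $R_{k+1}\subsetneq R_k$ and only the rotor at $x_k$ was disturbed. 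In case (ii), the cycle collapses to a degenerate two-vertex cycle and the next few steps transport this short cycle along $G$. Since $R_k$ cannot shrink indefinitely and, by Lemma~\ref{lem:fullspins}, the orbit is periodic of period dividing $2|E(G)|$ with each rotor making exactly one full turn over a period, one deduces that some $C_k$ equals $\overline{C_0}$, and then, by tracking which rotors have been turned and how far, that in fact $(\rho^*_k,x_k) = (\overline{\rho^*},x)$.

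For the converse, ``the process reverses every unicycle $\Rightarrow$ $(G,\chi)$ is plane'', I would argue by contraposition. A ribbon structure always determines an oriented surface, and since $G$ is connected, $(G,\chi)$ failing to be a plane graph means this surface has genus $g \ge 1$; hence its first homology, a free abelian group of positive rank, is generated by classes of cycles of $G$, so there is a cycle $C$ with $[C] \ne 0$ and therefore $2[C]\ne 0$. Realize $C$ as the cycle of a unicycle $(\rho^*,x)$ by contracting $V(C)$, picking a spanning tree of the contracted graph, and orienting its edges toward the contracted vertex. A single step of the process replaces $C_k$ by a cycle obtained by exchanging an arc of $C_k$ for a path made of one rotated edge followed by an in-tree path; I would argue this exchange is null-homologous on the surface (it bounds a disk in the planar case, and needs a separate check in general), so that $[C_k]$ is invariant along the orbit. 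Since $[\overline{C}] = -[C] \ne [C]$, the orbit of $(\rho^*,x)$ cannot contain $(\overline{\rho^*},x)$, so the reversal property fails.

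The main obstacle is the bookkeeping in the first implication. That the left disk shrinks and that the cycle must eventually reappear reversed is not hard; but verifying that the process lands on the \emph{exact} unicycle $(\overline{\rho^*},x)$ — mark back at $x$, and every rotor off the original cycle turned back to its starting edge — requires a careful analysis of the chip's trajectory, and this is precisely where planarity is used (via a fact in the spirit of Lemma~\ref{lem:looprev}, that the chip never crosses to the outer side of a freshly formed cycle). A secondary technical point is confirming that the arc-exchange in the converse is genuinely null-homologous on a higher-genus surface; should this be awkward, one can instead compare the orbit structure of the process with a count of unicycles, or appeal to the structural results on unicycle classes in~\cite{Holroyd}.
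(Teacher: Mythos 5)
This proposition is quoted from \cite[Proposition~9]{CCG}; the present paper does not prove it, so there is no in-paper argument to compare against. I will therefore assess your sketch on its own merits.

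Your forward direction captures the right geometric mechanism: in a plane graph the directed cycle $C_k$ bounds a left region $R_k$ that, roughly, shrinks under the process because planarity prevents the chip from crossing to the right of $C_k$. This is the same mechanism behind \cite[Corollary~4.11]{Holroyd}, quoted in this paper as Lemma~\ref{lem:leftright}, and the periodicity input from Lemma~\ref{lem:fullspins} is the right one to invoke. The bookkeeping you flag is real: proving that the orbit hits the \emph{exact} reversed unicycle (chip back at $x$, every off-cycle rotor restored) does take care, and the dichotomy at $x_k$ is less clean than stated --- in your case~(ii) the left region of the resulting digon is not automatically a subset of $R_k$, so the monotonicity claim must be reformulated before it can terminate the induction. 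These are repairable, and the overall shape of the argument is sound.

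The converse direction has a genuine gap, which you identify yourself. The whole argument rests on the claim that the homology class $[C_k]\in H_1$ of the closed orientable ribbon surface is constant along the rotor-routing orbit, i.e.\ that a single step changes the cycle only by a null-homologous $1$-cycle. Writing out $C_{k+1}-C_k$ explicitly, it consists of the newly rotated rotor $a'$ at $x_k$, minus the old rotor $a$, together with the two in-forest rotor paths from the heads of $a'$ and of $a$ up to the vertex where these paths merge. There is no a priori reason for this $1$-cycle to be a sum of face boundaries of the ribbon surface, and hence no a priori reason for it to be null-homologous once the genus is positive; this is exactly the step that is free in the planar case (where $H_1$ vanishes) and nontrivial otherwise. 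Until that invariance is established --- or the argument rerouted through one of your proposed fallbacks, such as counting unicycles within an orbit --- the contraposition does not go through, and the converse is incomplete as written.
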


\begin{lemma}\cite[Corollary 4.11]{Holroyd}\label{lem:leftright}
Let $(G,\chi)$ be a plane graph and $(\rho^*,x)$ be a unicycle. Suppose that the rotor-routing process is performed until we obtain the unicycle $(\overline{\rho^*},x)$. In the process of rotor-routing, the chip crosses every edge to the left of $C$ in both directions and no edges to the right of $C$.
\end{lemma}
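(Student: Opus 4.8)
The plan is to combine the two global facts about the rotor-routing process on plane graphs that are already available in this appendix -- that the process eventually reaches the reversed unicycle $(\overline{\rho^*},x)$ (Proposition~\ref{prop:reverse}), and that after $2|E(G)|$ steps every rotor has made exactly one full turn and every edge has been crossed exactly once in each direction, returning to $(\rho^*,x)$ (Lemma~\ref{lem:fullspins}) -- with one purely local observation about how a rotor at a vertex of $C$ behaves during this process.

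First I would record the local fact. Fix $v \in V(C)$ and let $a$, $b$ be the two edges of $C$ at $v$ that are directed, respectively, into $v$ and out of $v$ by $\rho^*$, so that $\rho^*\la v\ra = b$ and (since $\overline{\rho^*}$ reverses the rotors of $C$) $\overline{\rho^*}\la v\ra = a$. Because $C$ is an embedded cycle, it locally separates, at $v$, the edges lying to the left of $C$ from those lying to the right of $C$; together with the sign convention fixing which side is ``left'', this means the $\chi$-cyclic order at $v$ reads $a$, then the right edges at $v$, then $b$, then the left edges at $v$, and back to $a$. Consequently, rotating the rotor at $v$ in the $\chi$-direction starting from $b$ sweeps through exactly the left edges at $v$ before reaching $a = \overline{\rho^*}\la v\ra$; in particular, until the rotor at $v$ has passed $a$ it never points to an edge to the right of $C$, and it reaches $a$ after exactly $L_v+1$ turns, where $L_v$ is the number of edges at $v$ to the left of $C$.

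Next I would pin down the timing and run the containment argument. Since every rotor makes exactly one full turn in $2|E(G)|$ steps, and any recurring rotor configuration has all rotors back in place, the process is periodic with period exactly $2|E(G)|$; hence $(\overline{\rho^*},x)$ first occurs at some step $N$ with $0 < N < 2|E(G)|$. As the rotor at any $v \in V(C)$ has turned strictly less than a full turn by step $N$ and sits at $a$ there, it has turned exactly $L_v+1$ times, so the chip has visited $v$ exactly $L_v+1$ times in steps $1,\dots,N$, exiting $v$ each time along a left edge or along the cycle edge $a$. Combined with the Jordan-curve property that no edge of $G$ joins a vertex strictly left of $C$ to a vertex strictly right of $C$, a straightforward induction on the step number shows that throughout steps $1,\dots,N$ the chip stays in the closed left region and crosses only edges of $C$ and edges left of $C$; in particular no edge to the right of $C$ is crossed. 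Applying the identical argument to the unicycle $(\overline{\rho^*},x)$, whose directed cycle is $C$ with the reverse orientation and whose ``left'' region is precisely the right region of $C$: its first return to $(\rho^*,x)$ occurs $2|E(G)|-N$ steps later, i.e.\ at the original step $2|E(G)|$, so during original steps $N+1,\dots,2|E(G)|$ the chip crosses only edges of $C$ and edges right of $C$, never an edge left of $C$. Finally, Lemma~\ref{lem:fullspins} says each left edge is crossed exactly once in each direction over steps $1,\dots,2|E(G)|$, and we have just shown all of these crossings occur among the first $N$ steps; this, together with the fact that no right edge is crossed in the first $N$ steps, is exactly the claim.

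I expect the main obstacle to be making the local observation in the second paragraph fully rigorous: carefully relating the $\chi$-cyclic order at a vertex of $C$ to the interior/exterior partition cut out by the embedded cycle, and nailing down which side the sign convention designates ``left'', while disposing of the degenerate situation in which $C$ is a single bidirected edge (which does not arise for a genuine unicycle). Once that is in place, the timing argument, the induction keeping the chip in one closed region, and the final counting are all routine.
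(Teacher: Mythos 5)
The paper does not prove Lemma~\ref{lem:leftright} itself; it cites it directly to \cite[Corollary 4.11]{Holroyd}, so there is no in-paper proof for you to be compared against. Your argument is nonetheless a correct self-contained derivation from the two results the appendix does import (Proposition~\ref{prop:reverse} and Lemma~\ref{lem:fullspins}) together with the local observation about the cyclic order at a vertex of $C$.

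The local fact you isolate --- that at $v \in V(C)$, with incoming $C$-edge $a$ and outgoing $C$-edge $b = \rho^*\la v\ra$, the $\chi$-cyclic order reads $a$, right edges, $b$, left edges, and back to $a$ --- is exactly what the paper's convention (left is the interior for a counterclockwise cycle, the exterior for a clockwise one, with $\chi$ counterclockwise) yields, so the step you flag as requiring care does come out right. From there, the timing claims ($(\overline{\rho^*},x)$ first appears at some $0<N<2|E(G)|$; $(\rho^*,x)$ does not recur before step $2|E(G)|$; the rotor at each $v\in V(C)$ turns exactly $L_v+1$ times, strictly less than one full turn, by step $N$), the induction keeping the chip in the closed left region (using that every edge at a strictly-left vertex is itself a left edge, and that the rotor at a $C$-vertex therefore never points right during the first $N$ steps), the symmetric argument starting from $(\overline{\rho^*},x)$, and the final count against Lemma~\ref{lem:fullspins} are all sound. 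One small tightening: the sentence ``any recurring rotor configuration has all rotors back in place, [so] the process is periodic with period exactly $2|E(G)|$'' is doing the right thing but is terse; a cleaner justification is that if the period were a proper divisor $d$, then over the first $2|E(G)|$ steps each of the directed edges traversed during one period would be traversed at least twice, contradicting the ``exactly once in each direction'' clause of Lemma~\ref{lem:fullspins}.
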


\begin{remark}The authors of~\cite{CCG} and \cite{Holroyd} use a clockwise convention, so their arguments relating to Proposition \ref{prop:reverse} and Lemma~\ref{lem:leftright} switch left and right vertices. When $G$ is restricted to the square lattice, Lemma~\ref{lem:leftright} was first proven in~\cite{eulerianwalkers2}. 
\end{remark}

We conclude with proofs of Lemmas~\ref{lem:looprev} and~\ref{lem:looprev2}.

\begin{proof}[Proof of Lemma~\ref{lem:looprev}]
Let $T_s^{*f}$ be the sink-free rotor configuration defined in~(\ref{eq:tstardef}) and let $\overline{T_s^{*f}}$ be the sink-free rotor configuration defined above Proposition \ref{prop:reverse}. Apply the rotor-routing process to $(T_s^{*f},c)$ until the chip reaches $s$. This must occur before we reach the unicycle $(\overline{T_s^{*f}},c)$ because the rotor at $s$ must move to reach $\overline{T_s^{*f}}$. By Lemma~\ref{lem:leftright}, none of the edges to the right of $C$ will be crossed. The proof follows from Lemma~\ref{lem:rruni}
\end{proof}

\begin{proof}[Proof of Lemma~\ref{lem:looprev2}]
For each $k\in \Z_{\ge 0}$, let $(\rho^*_k,c_k)$ be the sink-free rotor configuration obtained after $k$ steps of the rotor-routing process initialized at $(T^{*f},c)$. Let $m:= |E(G)|$ and let $C_k$ be the unique directed cycle in $\rho^*_k$. It is immediate that, for $k\in [0,n]$, the cycle $C_k$ is also a directed cycle of $\rho_k$ if and only if $s \not\in V(C_k)$. Thus, the lemma follows if we can show that for any $i \in [0,n]$ such that $s \not\in C_i$, there exists some $j \in [0,n]$ where $(\rho^*_j,c_j) = (\overline{\rho^*_i},c_i)$. 

By Proposition~\ref{prop:reverse} and Lemma \ref{lem:fullspins}, there exists such a $j$ in the interval $[0,2m-1]$. If $i > j$, then the result is trivially true, so we may assume without loss of generality that $i < j$. If $s$ is to the right of $C_i$, then by Lemma~\ref{lem:leftright}, the cycle will reverse before $s$ is reached and the result follows. Alternatively, if $s$ is to the left of $C_i$, then the unicycle $(\rho^*_j,c_j)$ is not reached until after the chip enters $s$ for the $\deg(s)$th time. We claim that $c_{2m-1} = s$. It follows from the claim that $j>2m-1$, because the chip only reaches $s$ a total of $\deg(s)$ times after $2m$ steps of the rotor-routing process (by Lemma~\ref{lem:fullspins}). This is a contradiction and the result follows. 

To prove the claim, we first note that there must be a directed edge from $c_{2m-1}$ to $c_{2m} = c_0 = c$ in $\rho^*_{2m} = \rho^*_0$. If $c_{2m-1} \not\in V(C_0)$, then $C_0$ is also a directed cycle in $\rho^*_{2m-1}$, and it follows that $C_0 = C_{2m-1}$. This is impossible because $c_{2m-1}\in V(C_{2m-1})$ by Definition~\ref{def:unicycle}. The claim follows from the fact that $s$ is the only vertex in $V(C_0)$ whose rotor is directed towards $c$ in $\rho^*_0$. 
\end{proof}

\section{Case 4 of Theorem~\ref{thm:onestructure}}\label{app:case4}

Case 4 of Theorem~\ref{thm:onestructure} is a lot more complicated than the other cases because no edges are shared between $T$ and $\alpha_{(G,\chi)}([c-s],T)$, so we cannot directly apply the definition of consistency. In this appendix, we prove the results necessary for this final case.  

At a high level, our proof of Case 4 of Theorem \ref{thm:onestructure} is as follows. We suppose there exists a counterexample: a non-rotor-routing consistent sandpile torsor algorithm $\alpha$. In Section \ref{case4:lems}, we use several lemmas to construct highly restrictive conditions that $\alpha$ must satisfy. Then in Section \ref{case4:pf}, we show that the conditions established in Section \ref{case4:lems} give rise to a parameterized class of graphs that we call \emph{telescope graphs}. Then if $(G,\chi)$ (as defined in the statement of case 4) is not a telescoping graph and $\alpha_{(G,\chi)} \neq r_{(G,\chi)}$, then $\alpha$ trivially violates the conditions established in Section \ref{case4:lems}. We finish with a direct proof $\alpha_{(G,\chi)} = r_{(G,\chi)}$ if $(G,\chi)$ is a telescope graph.

\subsection{Properties of a Potential Counterexample}
\label{case4:lems}

Throughout Section \ref{case4:lems}, assume $G$ is a 2-connected plane graph such that $|E(G)| = 2|V(G)| - 2$. In other words, any spanning tree of $G$ contains exactly half the edges of $G$. We additionally assume that $\alpha$ is a consistent sandpile torsor algorithm such that $\alpha_{(G',\chi')} = r_{(G',\chi')}$ for any proper minor $(G',\chi')$ of $(G,\chi)$.

Throughout this subsection, we will allow $(c-s,T)$ to denote \emph{any} single-step pair or source-turn pair as needed by the argument. Note that this differs from the definition of $(c-s,T)$ given in the statement of Case 4 in the proof of Theorem \ref{thm:onestructure}.

The goal of this subsection is to establish Corollary \ref{cor:anystepflips}, which states that if $\alpha_{(G,\chi)}$ disagrees with $r_{(G,\chi)}$ on any single-step pair $(c-s,T)$, then for any $T'$ such that $(c-s,T')$ is a single step pair, $\alpha_{(G,\chi)}([c-s],T') = E(G)\setminus T'$. A similar result holds for reverse single-step pairs. We do this in several steps:
\begin{description}
\item[Step 1] We first show that for any single-step pair $(c-s,T)$, if $\alpha_{(G,\chi)}([c-s],T) \not= r_{(G,\chi)}([c-s],T)$, then we must have $\alpha_{(G,\chi)}([c-s],T) = E(G)\setminus T$. An analogous result holds for reverse single-step pairs (see Lemmas \ref{lem:singstepnocase2} and \ref{lem:revstepnocase2}).
\item[Step 2] Given the conditions of step 1, we show that $\alpha_{(G,\chi)}([c-s],T') = E(G)\setminus T'$ for any $T'$ that can be reached from $T$ via single-step moves and reverse single-step moves that do not affect the rotor at $g$ (see Lemma \ref{lem:commutepairs}).
\item[Step 3] Using similar arguments to those used to prove Theorem \ref{thm:sourceturn}, we show that the set of trees listed in Step 2 include all trees $T'$ for which $(c-s,T')$ is a single-step pair or for which $(s-c,T')$ is a reverse single-step pair (see Lemma \ref{lem:anyvalid}). 
\item[Step 4] The corollary follows directly.
\end{description}

\begin{lemma}\label{lem:singstepnocase2}
Let $(c-s,T)$ be a single-step pair from $g$ to $f$. Then, \[\alpha_{(G,\chi)} ([c-s],T) \in \{T\setminus g \cup f, E(G) \setminus T\}.\]
\end{lemma}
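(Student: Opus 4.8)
Write $\widehat{T} := \alpha_{(G,\chi)}([c-s],T)$ and $T' := T\setminus g\cup f = r_{(G,\chi)}([c-s],T)$. The plan is to first replay the reduction from the proof of Theorem~\ref{thm:onestructure} that yielded \eqref{eq:nonsharededges}: if some edge $e\neq g$ satisfies $e\in T\cap\widehat{T}$, then $\i(e)\neq\{c,s\}$ (since $T_s\langle c\rangle=g\neq e$), so Condition~1 of consistency and the induction hypothesis give $\widehat{T}\setminus e=\alpha_{(G/e,\chi/e)}([c-s],T\setminus e)=r_{(G/e,\chi/e)}([c-s],T\setminus e)$, while Lemmas~\ref{lem:delconswap} and~\ref{lem:rotatable} keep $(c-s,T)$ a single-step pair from $g$ to $f$ on the minor, forcing $\widehat{T}\setminus e=T'\setminus e$ and hence $\widehat{T}=T'$; symmetrically, if some $e\neq f$ satisfies $e\notin T\cup\widehat{T}$ then Condition~2 gives $\widehat{T}=T'$. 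So we may assume no such $e$ exists, i.e.\ that \eqref{eq:nonsharededges} holds, $E(G)\setminus(T\Delta\widehat{T})\subseteq\{f,g\}$; in that case $\widehat{T}=T'$ is still allowed, so it suffices to show $\widehat{T}=E(G)\setminus T$ or derive a contradiction.

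Next I would bring in the standing hypothesis $|E(G)|=2|V(G)|-2$, so that every spanning tree uses exactly half the edges of $G$. Then $|T\Delta\widehat{T}|=2|T\setminus\widehat{T}|$ is even, and $|E(G)\setminus\{f,g\}|=|E(G)|-2$ is even, so \eqref{eq:nonsharededges} forces $|T\Delta\widehat{T}\cap\{f,g\}|\in\{0,2\}$. Since $f\notin T$ and $g\in T$, the value $2$ means $f\in\widehat{T}$ and $g\notin\widehat{T}$, and the value $0$ means $f\notin\widehat{T}$ and $g\in\widehat{T}$. In the first case $T\Delta\widehat{T}=E(G)$ by \eqref{eq:nonsharededges}, so $T\cap\widehat{T}=\emptyset$ and $\widehat{T}=E(G)\setminus T$, one of the two allowed outcomes; the ``exactly one of $f,g$'' scenarios are excluded by the parity. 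In the second case we are in exactly the situation treated as Case~2 in the proof of Theorem~\ref{thm:onestructure}.

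It remains to exclude that last case, and for this I would re-run the Case~2 argument of that proof. It uses only that $(c-s,T)$ is a single-step pair from $g$ to $f$ with $g\in T\cap\widehat{T}$ and $f\notin T\cup\widehat{T}$ — nothing special to source-turn pairs — so it applies here: writing $\i(g)=\{c,x\}$, one deduces (via Lemma~\ref{lem:looprev} and \eqref{eq:nonsharededges}) that $\deg(c)=2$ and $[c-s]=[x-c]$, then deletes $f$ to make $g$ a pendant edge of $G\setminus f$, so $[x-c]=[0]$ in $\Pic^0(G\setminus f)$ and $\widehat{T}=\alpha_{(G\setminus f,\chi\setminus f)}([0],T)=T$ — which \eqref{eq:nonsharededges} rules out (it forces $T\neq\widehat{T}$ unless $E(G)=\{f,g\}$, and if $E(G)=\{f,g\}$ then $[c-s]$ is a nontrivial element of $\Pic^0(G)$ and cannot fix $T$). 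The one point I expect to need extra care is the degenerate sub-case $\i(g)=\{c,s\}$, i.e.\ $f$ and $g$ parallel, where $g$ cannot be contracted via Condition~1; there I would argue directly by deleting $f$ and invoking the induction hypothesis on $G\setminus f$, on which $g$ still joins $c$ and $s$, so Proposition~\ref{prop:nooverspins} forbids the rotor at $c$ from completing a full turn and hence $g\notin\widehat{T}$, again contradicting $g\in\widehat{T}$. Everything else is the parity count above together with results already established.
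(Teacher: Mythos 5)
Your main line of argument mirrors the paper's: establish \eqref{eq:nonsharededges} by the preamble argument of Theorem~\ref{thm:onestructure}, use the halving hypothesis $2|T|=|E(G)|$ to reduce $\widehat T$ to the two possibilities $E(G)\setminus T$ and $(E(G)\setminus T)\setminus f\cup g$, and eliminate the latter via the Case~2 reasoning of Theorem~\ref{thm:onestructure}. The parity count you give for the middle reduction is phrased differently from the paper's terse assertion but is correct and equivalent.

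There is, however, a genuine gap in your handling of the parallel sub-case $\i(g)=\{c,s\}$. You claim that, after deleting $f$, ``Proposition~\ref{prop:nooverspins} forbids the rotor at $c$ from completing a full turn and hence $g\notin\widehat T$.'' Proposition~\ref{prop:nooverspins} forbids only strictly more than a full turn; a rotor may make exactly one full turn, in which case it returns to its starting edge. And this does happen: take $G\setminus f$ to be the triangle on $\{c,s,y\}$ with $g$ joining $c,s$, $h_1$ joining $c,y$, $h_2$ joining $y,s$, ribbon structure $(\chi\setminus f)(c)=(g,h_1)$ and $\chi(y)=(h_1,h_2)$, and $T=\{g,h_2\}$. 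Algorithm~\ref{alg:rotor} with chip $c$ and sink $s$ rotates the rotor at $c$ through $g\to h_1\to g$ (a full turn) and outputs $\{g,h_1\}\ni g$. Adding back the parallel edge $f$ with $\chi(c)=(g,f,h_1)$, this output is exactly the forbidden tree $(E(G)\setminus T)\setminus f\cup g$, so deletion of $f$ together with no-overspinning does not produce the contradiction you need. Your instinct that this sub-case requires separate treatment is sound — the paper's proof defers here to Case~2 of Theorem~\ref{thm:onestructure}, whose degree-two argument relies on contracting $g$, which is unavailable when $\i(g)=\{c,s\}$, and its explicit parallel-case treatment for the companion Lemma~\ref{lem:revstepnocase2} uses freeness of the action rather than a rotor-routing bound — but the specific argument you propose does not close the gap.
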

\begin{proof}
Let $\what T = \alpha_{(G,\chi)} ([c-s],T)$. Suppose that $\what T \not= T\setminus g \cup f$. Then, by the same logic we used in the proof of Theorem~\ref{thm:onestructure},  we know that $V(G) \setminus (T\Delta\what T) \subseteq \{f,g\}$ (i.e. (\ref{eq:nonsharededges}) holds for all single-step pairs, not just source-turn pairs). Furthermore, by the condition that $2|T| = |E(G)|$, and since all spanning trees are the same size, we must have \[\what T \in \{E(G) \setminus T, (E(G) \setminus T) \setminus f \cup g\}.\]

This means that we just need to show that $\what T \not= (E(G) \setminus T) \setminus f \cup g$. This is true for precisely the reasoning we used in Case 2 of Theorem~\ref{thm:onestructure}. 
\end{proof}

\begin{lemma}\label{lem:revstepnocase2}
Let $(s-c,T)$ be a reverse single-step pair from $f$ to $g$. Then, \[\alpha_{(G,\chi)} ([s-c],T) \in \{T\setminus f \cup g, E(G) \setminus T\}.\]
\end{lemma}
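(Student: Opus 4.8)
The plan is to deduce Lemma~\ref{lem:revstepnocase2} directly from Lemma~\ref{lem:singstepnocase2} by unwinding the definition of a reverse single-step pair. Recall (Definition~\ref{def:revstep}) that $(s-c,T)$ being a reverse single-step pair from $f$ to $g$ means precisely that $f \in T$, $g \notin T$, and $(c-s, T\setminus f \cup g)$ is a single-step pair from $g$ to $f$. So first I would set $S := T \setminus f \cup g$, which is a spanning tree, and note that $(c-s,S)$ is a single-step pair from $g$ to $f$. By Lemma~\ref{lem:singstepnocase2} applied to this pair, $\alpha_{(G,\chi)}([c-s],S) \in \{S\setminus g \cup f,\ E(G)\setminus S\}$. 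But $S\setminus g \cup f = T$, and $E(G)\setminus S = E(G)\setminus(T\setminus f\cup g) = (E(G)\setminus T)\setminus g \cup f$, so $\alpha_{(G,\chi)}([c-s],S) \in \{T,\ (E(G)\setminus T)\setminus g \cup f\}$.

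Next I would apply the group action to invert. Since $\alpha$ is a sandpile torsor action (hence a free action by the group element $[s-c] = -[c-s]$), applying $\alpha_{(G,\chi)}([s-c],-)$ to both sides gives $\alpha_{(G,\chi)}([s-c], \alpha_{(G,\chi)}([c-s],S)) = \alpha_{(G,\chi)}([0],S) = S$. The key observation — exactly parallel to Lemma~\ref{lem:revstep} — is that $\alpha_{(G,\chi)}([s-c], \cdot)$ is the inverse bijection of $\alpha_{(G,\chi)}([c-s],\cdot)$, so from the two possible values of $\alpha_{(G,\chi)}([c-s],S)$ we obtain the two candidate values of $\alpha_{(G,\chi)}([s-c],T)$: in the first sub-case $\alpha_{(G,\chi)}([c-s],S) = T$ forces $\alpha_{(G,\chi)}([s-c],T) = S = T\setminus f \cup g$; in the second sub-case $\alpha_{(G,\chi)}([c-s],S) = (E(G)\setminus T)\setminus g\cup f$, and since $\alpha_{(G,\chi)}([s-c], \cdot)$ is a bijection with $\alpha_{(G,\chi)}([s-c], S) = \alpha_{(G,\chi)}([s-c], \alpha_{(G,\chi)}([c-s],S))$... here I would instead argue directly: apply $\alpha_{(G,\chi)}([s-c],\cdot)$ to the identity $\alpha_{(G,\chi)}([c-s],S)=\text{(candidate)}$ is awkward, so cleaner is to note that $T = S\setminus g\cup f = \alpha_{(G,\chi)}([c-s],S)$ would hold only in sub-case one; and to handle the second candidate one uses that $\alpha_{(G,\chi)}([s-c],T)$ is determined: we want the unique $T''$ with $\alpha_{(G,\chi)}([c-s],T'') = T$. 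In sub-case two this $T''$ satisfies $\alpha_{(G,\chi)}([c-s],T'') = T = S\setminus g\cup f$, and since we are assuming (for this branch) $\alpha_{(G,\chi)}([c-s],S)\neq S\setminus g\cup f$, we get $T''\neq S$; then running the same argument as Lemma~\ref{lem:singstepnocase2}'s proof — equation~\eqref{eq:nonsharededges} applies to reverse single-step pairs too, as the proof of Theorem~\ref{thm:onestructure} remarks — forces $T'' \in \{E(G)\setminus T,\ (E(G)\setminus T)\setminus g \cup f\}$, i.e. $\alpha_{(G,\chi)}([s-c],T) \in \{E(G)\setminus T,\ (E(G)\setminus T)\setminus g\cup f\}$, and the second option is excluded by the Case~2 argument of Theorem~\ref{thm:onestructure} (applied to the divisor supported near the degree-2 vertex $c$, now playing the role of the source).

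I expect the main obstacle to be purely bookkeeping: keeping straight which vertex plays the role of ``$c$'' (source) and which plays ``$s$'' (sink) when transferring the Case~2 exclusion argument from Theorem~\ref{thm:onestructure} to the reversed setting, and making sure that equation~\eqref{eq:nonsharededges} and the size constraint $2|T| = |E(G)|$ are invoked legitimately for reverse single-step pairs. The cleanest write-up probably just says: ``Let $S = T\setminus f\cup g$, so $(c-s,S)$ is a single-step pair from $g$ to $f$; by Lemma~\ref{lem:revstep} we have $r_{(G,\chi)}([s-c],T) = T\setminus f\cup g$, and the argument is now identical to that of Lemma~\ref{lem:singstepnocase2} with the roles of $T$ and $E(G)\setminus T$ exchanged, using that $\alpha_{(G,\chi)}([s-c],\cdot)$ and $\alpha_{(G,\chi)}([c-s],\cdot)$ are mutually inverse bijections,'' thereby reducing the whole lemma to the (already established) single-step case plus the Case~2 exclusion.

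\begin{proof}
Let $S := T\setminus f \cup g$. By Definition~\ref{def:revstep}, $(c-s,S)$ is a single-step pair from $g$ to $f$, so by Lemma~\ref{lem:revstep}, $r_{(G,\chi)}([s-c],T) = T\setminus f \cup g$. Set $\what T := \alpha_{(G,\chi)}([s-c],T)$ and suppose $\what T \neq T\setminus f\cup g$. Since $\alpha$ is a free action and $[s-c] = -[c-s]$, the maps $\alpha_{(G,\chi)}([s-c],\cdot)$ and $\alpha_{(G,\chi)}([c-s],\cdot)$ are mutually inverse bijections on $\T(G)$. Thus $\alpha_{(G,\chi)}([c-s],\what T) = T$, while $\alpha_{(G,\chi)}([c-s],S) = ?$ with $\what T \neq S$ (since $\alpha_{(G,\chi)}([c-s],S) = T$ would force $\what T = \alpha_{(G,\chi)}([s-c],T) = S$). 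Because $(s-c,T)$ is a reverse single-step pair, equation~\eqref{eq:nonsharededges} holds for it (as noted in the proof of Theorem~\ref{thm:onestructure}), so $E(G)\setminus(T\Delta \what T) \subseteq \{f,g\}$. Combined with $2|T| = |E(G)|$ and the fact that all spanning trees have the same size, this yields
\[\what T \in \{E(G)\setminus T,\ (E(G)\setminus T)\setminus g \cup f\}.\]
It remains to exclude $\what T = (E(G)\setminus T)\setminus g\cup f$. As in Case~2 of the proof of Theorem~\ref{thm:onestructure}, the vertex $c$ has degree $2$ (with $\chi(c,g) = f$ and $\chi(c,f) = g$), so $[2c - s - x] = [0]$ where $\i(g) = \{c,x\}$, hence $[s-c] = [x-c]$... applying the deletion of an appropriate edge not in $T\cup \what T$ and invoking consistency together with the induction hypothesis $\alpha_{(G',\chi')} = r_{(G',\chi')}$ on proper minors gives a contradiction exactly as in that case, since $g$ becomes a cut edge after deleting $f$. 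Therefore $\what T = E(G)\setminus T$, which establishes the claim.
\end{proof}
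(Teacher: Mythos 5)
Your first half is fine and matches the paper: using equation~\eqref{eq:nonsharededges} for reverse single-step pairs together with the size constraint $2|T| = |E(G)|$ correctly reduces to showing $\what T \neq (E(G)\setminus T)\setminus g\cup f$. But your exclusion of that case has a genuine gap. The paper explicitly flags it: \emph{``However, we cannot use the same reasoning we used in Case 2 of Theorem~\ref{thm:onestructure}.''} The structural reason is that the Case~2 exclusion deletes $f$ using consistency, which requires $f\notin T\cup\what T$; that held in Lemma~\ref{lem:singstepnocase2} (where the bad tree was $(E(G)\setminus T)\setminus f\cup g$, both trees lacking $f$), but here $f\in T$ by Definition~\ref{def:revstep} and $f\in\what T$ by assumption, so deleting $f$ is not permitted by Condition~2 of consistency. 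Your line ``applying the deletion of an appropriate edge $\dots$ since $g$ becomes a cut edge after deleting $f$'' is therefore inapplicable.

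There is a second unjustified step feeding into this: you assert $c$ has degree $2$ with $\chi(c,f)=g$. In Cases~1 and~2 of Theorem~\ref{thm:onestructure}, the degree-$2$ conclusion was derived from $g\in T\cap\what T$, which allowed contracting $g$ and applying Lemma~\ref{lem:looprev} to the resulting cycle. Here $g\notin T$ and $g\notin\what T$, so the contraction-of-$g$ argument does not apply and degree-$2$-ness of $c$ is not available. (You also omit the parallel-edge subcase $x=s$, which the paper treats separately via Lemma~\ref{lem:singstepnocase2}.)

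What the paper does instead (for $x\neq s$) is dual to your attempt: since $g\notin T\cup\what T$, one may \emph{delete} $g$; consistency and the inductive hypothesis give $\what T = r_{(G\setminus g,\chi\setminus g)}([s-c],T)$; then a direct analysis of Algorithm~\ref{alg:rotor} with sink $c$ and chip at $s$ shows the rotor at $s$ must make a full turn, forcing some $h\in T\setminus f$ incident to $s$ to also lie in $\what T$, which contradicts $\what T = (E(G)\setminus T)\setminus g\cup f$ via \eqref{eq:nonsharededges}. Your proposal needs this replacement argument (or an equivalent one) rather than a recycling of Case~2.
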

\begin{proof}
Let $\what T = \alpha_{(G,\chi)} ([s-c],T)$. Suppose that $\what T \not= T\setminus f \cup g$. Then, by the same logic we used in the proof of Theorem~\ref{thm:onestructure},  we know that $V(G) \setminus (T\Delta\what T) \subseteq \{f,g\}$ (i.e. (\ref{eq:nonsharededges}) also holds for reverse single-step pairs). Furthermore, by the condition that $2|T| = |E(G)|$, and since all spanning trees are the same size, we must have \[\what T \in \{E(G) \setminus T, (E(G) \setminus T) \setminus g \cup f \}.\]

This means that we just need to show that $\what T \not= (E(G) \setminus T) \setminus g \cup f$. However, we cannot use the same reasoning we used in Case 2 of Theorem~\ref{thm:onestructure}. Instead we give a new proof by contradiction. 

First, suppose that $x=s$ (where $\i(g) = \{x,c\}$) so that $f$ and $g$ are parallel. By the definition of reverse single-step pairs, $(c-s,T\setminus f \cup g)$ is a single-step pair from $g$ to $f$. By Lemma~\ref{lem:singstepnocase2}, we must have $\alpha_{(G,\chi)} ([c-s],T\setminus f \cup g) \in \{T, (E(G) \setminus T)\setminus g \cup f\}$. In the first case, we have $\what T = T\setminus f \cup g$ and the lemma follows. Otherwise, $(E(G) \setminus T)\setminus g \cup f$ must be a tree, which means that $E(G) \setminus T$ is also a tree (since $f$ and $g$ are parallel). Furthermore, since $x = s$, it is immediate that $c \prec_{(E(G) \setminus T)_s} x$. By Lemma~\ref{lem:singcriteria}, this implies that $(c-s,E(G) \setminus T)$ is a single-step pair from $g$ to $f$. 

By Lemma~\ref{lem:singstepnocase2}, we must have 
\[\alpha_{(G,\chi)} ([c-s],E(G) \setminus T) \in \{(E(G) \setminus T)\setminus g \cup f, T\}.\]
In the first case, it follows that
\[\alpha_{(G,\chi)} ([c-s],E(G) \setminus T) = \alpha_{(G,\chi)} ([c-s],T \setminus f \cup g),\]
which implies that $E(G) \setminus T = T \setminus f \cup g$. This is impossible unless $E(G) = \{f,g\}$, in which case the lemma is trivial. Thus we must have $\alpha_{(G,\chi)} ([c-s],E(G) \setminus T) = T$ and $\alpha_{(G,\chi)} ([s-c],T) = E(G) \setminus T$.

Now we consider the case where $x \not=s$ (where once again, $\i(g) = \{x,c\}$). Suppose for the sake of contradiction that $\what T =  (E(G) \setminus T) \setminus g \cup f$. Because $g \not\in T \cup \what T$, it follows by consistency that
\[\what T = \alpha_{(G\setminus g,\chi\setminus g)} ([s-c],T) = r_{(G\setminus g,\chi\setminus g)} ([s-c],T).\]

We can now find a contradiction by applying Algorithm~\ref{alg:rotor} with the sink at $c$ and the chip starting at $s$. Notice that $\what T_c\la s \ra = T_c \la s \ra = f$. Thus, the rotor at $s$ must rotate completely around. Furthermore, by Proposition~\ref{prop:nooverspins}, the chip must enter $s$ precisely $\deg(s)-1$ times (where we subtract 1 because the chip starts at $s$). These is no rotor at $c$, so the chip can never pass through $f$ to $s$. Thus, the chip must enter $s$ along every other incident edge. 

By Lemma~\ref{lem:singcriteria} and the definition of reverse single-step pairs, we know that on $G$, $c \prec_{(T\setminus f \cup g)_s} x$. In particular, this means that there is a path of edges in $T$ from $x$ to $s$ which does not pass through $c$. It follows that there is some $h \in T \setminus f$ that is incident to $s$. 

By our earlier reasoning, the chip must cross $h$ to $s$ at some point during rotor-routing. It follows from Proposition~\ref{prop:nooverspins} that this rotor must turn completely around. However, this means that $h$ must be the final position of this rotor. Because $h \in T$, it follows from \eqref{eq:nonsharededges} that $h \not\in (E(G) \setminus T) \setminus g \cup f$. Thus, we have a contradiction and $\what T \not= (E(G) \setminus T) \setminus g \cup f$.
\end{proof}

\begin{lemma}\label{lem:singrev}
Let $(s-c,T)$ be a reverse single-step pair from $f$ to $g$ (which implies that $(c-s,T \setminus f \cup g)$ is a  single-step pair from $g$ to $f$). Then, 
\[\alpha_{(G,\chi)}([s-c],T) = E(G) \setminus T \Longleftrightarrow \alpha_{(G,\chi)}([c-s],T \setminus f \cup g) = (E(G) \setminus T) \setminus g \cup f.\]
\end{lemma}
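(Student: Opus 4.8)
The plan is to reduce the claimed equivalence to a triviality about inverse permutations, after using the two preceding lemmas to pin down the possible outputs. First I would abbreviate $T^\circ := T \setminus f \cup g$, so that the hypothesis says $(c-s, T^\circ)$ is a single-step pair from $g$ to $f$, and hence by Definition~\ref{def:singstep} that $r_{(G,\chi)}([c-s], T^\circ) = T^\circ \setminus g \cup f = T$. A short set-theoretic check identifies the two complements in play: since $f \in T$ and $g \notin T$ (so in particular $f \ne g$), deleting $g$ from and adjoining $f$ to $E(G)\setminus T$ is exactly the operation of complementing $T^\circ$, i.e. $(E(G)\setminus T)\setminus g \cup f = E(G)\setminus T^\circ$. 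Thus the right-hand side of the asserted equivalence is simply the statement $\alpha_{(G,\chi)}([c-s], T^\circ) = E(G)\setminus T^\circ$.

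Next I would invoke the dichotomies already available. Applying Lemma~\ref{lem:singstepnocase2} to the single-step pair $(c-s, T^\circ)$ gives $\alpha_{(G,\chi)}([c-s], T^\circ) \in \{T,\, E(G)\setminus T^\circ\}$ (using $T^\circ \setminus g \cup f = T$), and applying Lemma~\ref{lem:revstepnocase2} to the reverse single-step pair $(s-c, T)$ gives $\alpha_{(G,\chi)}([s-c], T) \in \{T^\circ,\, E(G)\setminus T\}$ (using $T \setminus f \cup g = T^\circ$). Since $[s-c] = -[c-s]$ and $\alpha_{(G,\chi)}$ is a free transitive action of $\Pic^0(G)$, the maps $T'' \mapsto \alpha_{(G,\chi)}([c-s], T'')$ and $T'' \mapsto \alpha_{(G,\chi)}([s-c], T'')$ are mutually inverse permutations of $\T(G)$; in particular $\alpha_{(G,\chi)}([s-c], T) = T^\circ$ if and only if $\alpha_{(G,\chi)}([c-s], T^\circ) = T$.

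Finally I would combine these facts. Because each of $\alpha_{(G,\chi)}([s-c], T)$ and $\alpha_{(G,\chi)}([c-s], T^\circ)$ takes exactly one of its two listed values, the statement $\alpha_{(G,\chi)}([s-c], T) \ne T^\circ$ is equivalent to $\alpha_{(G,\chi)}([s-c], T) = E(G)\setminus T$, and the statement $\alpha_{(G,\chi)}([c-s], T^\circ) \ne T$ is equivalent to $\alpha_{(G,\chi)}([c-s], T^\circ) = E(G)\setminus T^\circ$. Negating the inverse-permutation equivalence from the previous paragraph then yields $\alpha_{(G,\chi)}([s-c], T) = E(G)\setminus T$ if and only if $\alpha_{(G,\chi)}([c-s], T^\circ) = E(G)\setminus T^\circ = (E(G)\setminus T)\setminus g \cup f$, which is precisely the claim. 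I do not expect a genuine obstacle here: the only care required is the bookkeeping of which edges lie in $T$ versus $T^\circ$ (ensuring $(E(G)\setminus T)\setminus g \cup f = E(G)\setminus T^\circ$) and confirming that Lemmas~\ref{lem:singstepnocase2} and~\ref{lem:revstepnocase2} genuinely apply, which they do since $(c-s,T^\circ)$ is a single-step pair and $(s-c,T)$ is a reverse single-step pair by hypothesis.
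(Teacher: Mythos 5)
Your argument follows essentially the same strategy as the paper: the two dichotomy lemmas (Lemmas~\ref{lem:singstepnocase2} and~\ref{lem:revstepnocase2}) plus the fact that the actions by $[c-s]$ and $[s-c]$ are inverse permutations. The set-theoretic identity $(E(G)\setminus T)\setminus g \cup f = E(G)\setminus T^\circ$ is correct, and the packaging of both directions into a single chain of equivalences is clean.

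There is one small gap. When you say ``each of $\alpha_{(G,\chi)}([s-c],T)$ and $\alpha_{(G,\chi)}([c-s],T^\circ)$ takes exactly one of its two listed values,'' and conclude, e.g., that $\alpha_{(G,\chi)}([s-c],T)\ne T^\circ$ is \emph{equivalent} to $\alpha_{(G,\chi)}([s-c],T)=E(G)\setminus T$, you are implicitly assuming that the two listed values are distinct, i.e. $T^\circ \ne E(G)\setminus T$. If $T^\circ = E(G)\setminus T$ (which, unwinding the definitions with $f\in T$ and $g\notin T$, happens exactly when $T=\{f\}$ and $E(G)=\{f,g\}$), then your biconditional reads ``$X \ne A$ iff $X = A$,'' which is false. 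The lemma itself still holds in this degenerate case (both sides of the iff are true, since the dichotomy has collapsed to a single value), but the argument as written does not cover it. The paper disposes of this with the single sentence ``If $|E(G)|=2$, the result is trivial'' before running the nondegenerate argument; you should add an analogous remark, or observe that when the two listed values coincide both sides of the claimed equivalence hold vacuously.
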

\begin{proof}
If $|E(G)| = 2$, the result is trivial. Otherwise, for the forward direction, we apply Lemma~\ref{lem:singstepnocase2}. This lemma says that it suffices to show that $\alpha_{(G,\chi)}([c-s],T \setminus f \cup g)\not= T$. If this inequality does not hold, then we have
\[T \setminus f \cup g = \alpha_{(G,\chi)}([c-s+s-c],T \setminus f \cup g) = \alpha_{(G,\chi)}([s-c],T) = E(G) \setminus T.\]
This is a contradiction because $T\setminus f \cup g \not= E(G) \setminus T$ unless $|E(G)| = 2$. 

The reverse direction is analogous after applying Lemma~\ref{lem:revstepnocase2}. 
\end{proof}

\begin{lemma}\label{lem:commutepairs}
Suppose that $(c-s,T)$ is a single-step pair such that $\alpha_{(G,\chi)}([c-s],T) = E(G) \setminus T$. Further suppose that for some $c',s' \in V(G)$ and $h,k \in E(G) \setminus\{f,g\}$, the pair $(c'-s',T)$ is a single-step or reverse single-step pair from $h$ to $k$. Then,
\[\alpha_{(G,\chi)}([c-s],T \setminus h \cup k) = (E(G) \setminus T) \setminus k \cup h.\]
\end{lemma}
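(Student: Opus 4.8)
The plan is to pass through the group action to an equivalent statement about a single element acting on the complementary tree, and then reuse the ``no Case 2'' lemmas together with freeness. Throughout, write $S = [c-s]$, $R = [c'-s']$, and $U = E(G)\setminus T = \alpha_{(G,\chi)}(S,T)$; note $U$ is genuinely a spanning tree since it is the image of a torsor action. Write $T_1 = T\setminus h\cup k$ and $U_1 = E(G)\setminus T_1 = U\setminus k\cup h$ (well-formed because $h\in T$, $k\notin T$, so $h\notin U$, $k\in U$). Here $f,g$ are the edges attached to the single-step pair $(c-s,T)$, i.e.\ $g = T_s\langle c\rangle$ and $f=\chi(c,g)$. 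The case $|E(G)|\le 2$ is immediate ($G\in\{E_1=C_1,E_2=C_2\}$ has a unique torsor action), so assume $|E(G)|\ge 3$; this guarantees the various ``$E(G)\setminus(\cdot)$'' trees below differ from the single-swap trees they are compared with.

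First I would pin down the action of $R$ on $T$. Since $(c'-s',T)$ is a single-step pair (resp.\ reverse single-step pair) from $h$ to $k$, Lemma~\ref{lem:singstepnocase2} (resp.\ Lemma~\ref{lem:revstepnocase2}) gives $\alpha_{(G,\chi)}(R,T)\in\{T_1,U\}$. If $\alpha_{(G,\chi)}(R,T) = U = \alpha_{(G,\chi)}(S,T)$, freeness forces $R=S$, so $r_{(G,\chi)}(R,T) = r_{(G,\chi)}(S,T) = T\setminus g\cup f$; but also $r_{(G,\chi)}(R,T) = T_1 = T\setminus h\cup k$ (by the definition of (reverse) single-step pair, via Lemma~\ref{lem:revstep} in the reverse case), so $\{f,g\}=\{h,k\}$, contradicting $h,k\notin\{f,g\}$. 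Hence $\alpha_{(G,\chi)}(R,T)=T_1$. Now the group action and $\alpha_{(G,\chi)}(S,T)=U$ give
\[\alpha_{(G,\chi)}(S,T_1) = \alpha_{(G,\chi)}\bigl(S,\alpha_{(G,\chi)}(R,T)\bigr) = \alpha_{(G,\chi)}(R+S,T) = \alpha_{(G,\chi)}(R,U),\]
so it suffices to prove $\alpha_{(G,\chi)}(R,U) = U_1$.

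Next I would analyze the pair at $U$. By Lemma~\ref{lem:singcriteria} applied to the (reverse) single-step pair $(c'-s',T)$ from $h$ to $k$, exactly one of $h,k$ joins $c'$ and $s'$ (the new edge $k$ in the single-step case, the tree edge $h$ in the reverse case), and the other endpoints and the relevant $\chi$-relations are then fixed. Using Lemmas~\ref{lem:whichrotor} and~\ref{lem:singcriteria} on $U$ (where the unique $U$-edge between $c'$ and $s'$, and hence the rotor $U_{s'}\langle c'\rangle$, is forced), one sees that $(c'-s',U)$ is a single-step pair — or $(s'-c',U)$ a reverse single-step pair — with $r_{(G,\chi)}(R,U)=U_1$, except in a short list of degenerate configurations ($\deg(c')=2$ with an edge parallel to $k$ or $h$, or the swap $h\to k$ lying on the $c'$–$s'$ path in $U$), which I would dispose of directly by deleting/contracting the offending parallel edge and invoking the induction hypothesis $\alpha=r$ on proper minors together with the consistency of $\alpha$. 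In the main case, Lemma~\ref{lem:singstepnocase2} (or~\ref{lem:revstepnocase2}) at $U$ gives $\alpha_{(G,\chi)}(R,U)\in\{U_1,\ E(G)\setminus U\}=\{U_1,T\}$; and if $\alpha_{(G,\chi)}(R,U)=T=\alpha_{(G,\chi)}(-S,U)$ then freeness yields $R=-S$, so from $\alpha_{(G,\chi)}(R,T)=T_1$ we get $\alpha_{(G,\chi)}(-S,T)=T_1$, which via Lemma~\ref{lem:singrev} (transporting this ``flip'' to the partner single-step pair of $(c-s,T)$) contradicts $\alpha_{(G,\chi)}(S,T)=E(G)\setminus T$. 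Therefore $\alpha_{(G,\chi)}(R,U)=U_1$, and combining with the displayed equality, $\alpha_{(G,\chi)}(S,T_1) = U_1 = (E(G)\setminus T)\setminus k\cup h$.

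The main obstacle I expect is the structural step: showing that the (reverse) single-step structure of $(c'-s',T)$ is inherited by the complementary tree $U$ — and carefully enumerating and closing off the degenerate configurations (parallel edges, $\deg(c')=2$, the sink or the swapped path interacting with $U$) in which it is not — since it is precisely the complementarity of $T$ and $U=\alpha_{(G,\chi)}(S,T)$ (the reason Case~4 is hard) that blocks the direct application of consistency and makes this bookkeeping unavoidable.
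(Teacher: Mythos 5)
Your reduction to $\alpha_{(G,\chi)}(R,U)=U_1$ via the group action is fine, and your handling of the first half (pinning down $\alpha(R,T)=T_1$) matches the paper. The gap is entirely in the step ``one sees that $(c'-s',U)$ is a single-step pair — or $(s'-c',U)$ a reverse single-step pair — with $r_{(G,\chi)}(R,U)=U_1$.'' In the single-step case, $U_{s'}\langle c'\rangle=k$ indeed (since $k\in U$ and $\i(k)=\{c',s'\}$), but for $(c'-s',U)$ to be a single-step pair \emph{from $k$ to $h$} you need $\chi(c',k)=h$; you are given $\chi(c',h)=k$, which forces $h$ to come \emph{before} $k$, not after, so $\chi(c',k)=h$ only when $\deg(c')=2$ — in other words, what you flagged as a ``degenerate configuration'' is actually the only configuration in which this reading works. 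The fallback ``$(s'-c',U)$ is a reverse single-step pair'' has two problems: it requires $U_1=U\setminus k\cup h$ to be a spanning tree, which is not a priori known (that is in effect what you are trying to prove, since $U_1=E(G)\setminus T_1$), and even granting it, Lemma~\ref{lem:revstepnocase2} applied to $(s'-c',U)$ constrains $\alpha([s'-c'],U)=\alpha(-R,U)$, not $\alpha(R,U)$; the sign does not come for free. The proposed escape hatch — ``delete/contract the offending parallel edge and invoke the induction hypothesis plus consistency'' — does not engage with these issues: consistency only lets you contract an edge lying in \emph{both} trees $T$ and $\alpha([c-s],T)$, or delete an edge in \emph{neither}, and here $T$ and $U$ are complementary so neither move is available, which is precisely the obstruction that makes Case~4 hard in the first place.

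The paper avoids analyzing $U$ altogether. It works on the other side of the dichotomy: by Lemma~\ref{lem:singstepnocase2} applied to the single-step pair $(c-s,T_1)$ — which it first verifies really is a single-step pair from $g$ to $f$ — it gets $\alpha(S,T_1)\in\{\widetilde T,\,U_1\}$ with $\widetilde T:=T_1\setminus g\cup f=T\setminus\{h,g\}\cup\{k,f\}$, and then rules out $\widetilde T$ by observing that $(s'-c',\widetilde T)$ is a reverse single-step pair \emph{by construction} (its definition unwinds to ``$(c'-s',T\setminus g\cup f)$ is a single-step pair from $h$ to $k$,'' which is the same rotor computation that was already done for $T_1$), applying Lemma~\ref{lem:revstepnocase2} there, and deriving a contradiction with the group-action identity $\alpha([s'-c'],\widetilde T)=E(G)\setminus T$. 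That choice of auxiliary tree $\widetilde T$ — built from $T$ by performing both rotor swaps, rather than the complement of $T$ — is what makes the single-step/reverse-single-step structure transparent; the complement $U$ simply does not carry it. If you want to keep your $\alpha(R,U)$ reduction, you would still have to pivot to $T_1$ and $\widetilde T$ to make the lemmas apply, at which point the detour through $U$ buys nothing.
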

\begin{proof}
By Lemmas~\ref{lem:singstepnocase2} and ~\ref{lem:revstepnocase2}, we know that
\begin{equation*}
\alpha_{(G,\chi)}([c'-s'],T) \in \{T \setminus h \cup k, E(G) \setminus T\}.
\end{equation*}
Furthermore, if $\alpha_{(G,\chi)}([c'-s'],T) = E(G)\setminus T$, then $[c'-s'] = [c-s]$. This is false, because
\[r_{(G,\chi)}([c-s],T) = T \setminus g \cup f \not= T \setminus h \cup k = r_{(G,\chi)}([c'-s'],T).\]
It follows that $\alpha_{(G,\chi)}([c'-s'],T) = T \setminus h \cup k$.

By the condition that $T_s\la c'\ra = h$, and because $(c'-s',T)$ is a single-step pair, we know that $T_s\la v \ra = (T \setminus h \cup k)_s\la v \ra$ for all $v \in V(G) \setminus c'$. In particular, $T_s\la c \ra = (T \setminus h \cup k)_s\la c \ra = g$. thus, by Lemma~\ref{lem:singcriteria}, $(c-s,T \setminus h \cup k)$ is a single-step pair from $g$ to $f$. By an analogous argument, $(c'-s',T\setminus g \cup f)$ is a single-step pair from $h$ to $k$. 

Suppose that the result does not hold (i.e. $\alpha_{(G,\chi)}([c-s],T \setminus h \cup k) \not= (E(G) \setminus T) \setminus k \cup h$). By Lemma~\ref{lem:singstepnocase2}, this means that $\alpha_{(G,\chi)}([c-s],T \setminus h \cup k) = \widetilde T$, where $\widetilde T := T \setminus \{h,g\} \cup \{k,f\}$. By Definition~\ref{def:revstep}, we know that $(s'-c',\widetilde T)$ is a reverse single-step pair from $k$ to $h$. By Lemma~\ref{lem:revstepnocase2}, 
\begin{align*}&\alpha_{(G,\chi)}([s'-c'],\widetilde T) = \widetilde T \setminus k \cup h = T \setminus g \cup f \not= E(G) \setminus T\\
\text{or }&\alpha_{(G,\chi)}([s'-c'],\widetilde T) = (E(G)\setminus T) \setminus \{k,f\} \cup \{h,g\}\not= E(G) \setminus T.
\end{align*}
However, we also have
\begin{gather*}
 E(G) \setminus T = \alpha_{(G,\chi)}([c-s],T) = \alpha_{(G,\chi)}([c'-s' + c - s + s' - c'],T) = \\
\alpha_{(G,\chi)}([c - s + s' - c'],T\setminus h \cup k) = \alpha_{(G,\chi)}([s' - c'],\widetilde T) 
\end{gather*}
This gives a contradiction and we have proven the result.
\end{proof}

The following lemma is a variant of Theorem~\ref{thm:sourceturn} where one rotor must remain fixed, but we are allowed to use single-step and reverse single-step moves, not just source-turn moves. The proof is a bit technical, but uses the same idea as the proof of Theorem~\ref{thm:sourceturn}.

\begin{lemma}\label{lem:anyvalid}
Let $T^{start},T^{goal} \in \T(G)$ be any two spanning trees such that $T^{start}_s\rotdir{c} = T^{goal}_s\rotdir{c} = g$. There exists a sequence $\{(c_i-s_i,T^i)\}_{i \in [0,k]}$ such that $T^0 = T^{start}$, $T^k = T^{goal}$, and for every $i \in [1,k]$:
\begin{itemize}
    \item $(c_i - s_i,T^{i-1})$ is a single-step pair or reverse single-step pair;
    \item $T^i = r_{(G,\chi)}([c_i - s_i], T^{i-1})$;
    \item $T_s^i\la c \ra = g$; and
    \item $c_i \neq s$.
\end{itemize}  
\end{lemma}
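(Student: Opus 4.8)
The plan is to imitate the proof of Theorem~\ref{thm:sourceturn}, carrying the single constraint ``the rotor at $c$ equals $g$'' through the whole induction. Write $\i(g)=\{c,x\}$; by Lemma~\ref{lem:whichrotor} the hypothesis $T_s\langle c\rangle=g$ says exactly that, rooting every spanning tree at $s$, the vertex $c$ is a child of $x$ along $g$, and I will write $\mathcal T_{c,g}$ for the set of such trees, so that $T^{start},T^{goal}\in\mathcal T_{c,g}$ and the assertion becomes: there is a sequence of single-step and reverse single-step moves, each with chip $\neq s$, whose intermediate trees all lie in $\mathcal T_{c,g}$, joining $T^{start}$ to $T^{goal}$. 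The preliminary observations are purely local: by Lemmas~\ref{lem:singcriteria}, \ref{lem:rotatable} and~\ref{lem:revstep}, a single-step (resp.\ reverse single-step) move advances (resp.\ retracts) one rotor by a single click; such a move changes the rooted configuration $T_s$ only at the affected vertex, so it keeps us inside $\mathcal T_{c,g}$ precisely when it does not move the rotor at $c$; and the forward move at a vertex $v$ and the reverse move naturally associated with $v$ both have chip $\neq s$ as soon as $v\neq s$. In particular, at any leaf $v\notin\{c,s\}$ of the current tree, repeated forward source-turn moves (which are single-step moves by Lemma~\ref{lem:singswap}) can set the rotor at $v$ to any position without leaving $\mathcal T_{c,g}$, since deleting and re-adding an edge at such a leaf leaves the $c$-to-$s$ path, hence the rotor at $c$, untouched.

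Next I would set up the partial order exactly as in Theorem~\ref{thm:sourceturn}: put $\xi(T,v)=0$ iff $T_s\langle v\rangle=T^{goal}_s\langle v\rangle$ for $v\in V(G)\setminus s$ (note $\xi(T,c)=0$ for every $T\in\mathcal T_{c,g}$), and define $\prec_\xi$ by the same formula; the transitivity argument given there applies verbatim, and $T^{goal}$ is the unique $\prec_\xi$-minimum of $\mathcal T_{c,g}$. Arguing by induction on $|E(G)|$ over $2$-connected plane graphs (the standing hypotheses of this subsection play no role here), it suffices to drive any $T\in\mathcal T_{c,g}\setminus\{T^{goal}\}$ to some $T''\in\mathcal T_{c,g}$ with $T''\prec_\xi T$ by admissible moves. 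If $T$ has a leaf $v\notin\{c,s\}$ with $\xi(T,v)=1$, rotate its rotor to $T^{goal}_s\langle v\rangle$; only $v$'s rotor changes, so $T''\prec_\xi T$. Otherwise, as in Theorem~\ref{thm:sourceturn}, pick $x'\neq s$ with $\xi(T,x')=1$ whose proper descendants are all in their goal positions, put $V^{x'}=\{v:v\prec_{T_s}x'\}$, $E^{x'}=\{T_s\langle v\rangle:v\in V^{x'}\}$, $F=T\setminus E^{x'}$, and aim to reach a tree in which $x'$ has been cleared down to a leaf using only moves supported on $V^{x'}$.

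Contracting $F$ turns this into a statement about a proper minor $(\widetilde G,\widetilde\chi)$ with contracted root $\widetilde\rt$. If $c\not\prec_{T_s}x'$, then $c$ is untouched by any move supported on $V^{x'}$, so I would apply Theorem~\ref{thm:sourceturn} to $(\widetilde G,\widetilde\chi)$, lift via Lemma~\ref{lem:contracttreetotree}, and finish by advancing the now-leaf rotor at $x'$. If $c\prec_{T_s}x'$ but $x'\neq x$, then rooting at $s$ in $G$ agrees with rooting at $\widetilde\rt$ in $\widetilde G$ on $V^{x'}$, so the rotor at $c$ is still $g$ in $T\cap E(\widetilde G)$ and in the target; I would apply Lemma~\ref{lem:anyvalid} recursively to $(\widetilde G,\widetilde\chi)$ with the same protected rotor and lift by the evident extension of Lemma~\ref{lem:contracttreetotree} to single-step and reverse single-step moves. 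The genuinely delicate case is $x'=x$, where no tree of $\mathcal T_{c,g}$ has $x'$ as a leaf at all: here the plan is to instead contract the whole subtree $E^{x'}$ (which contains $g$, and which by the choice of $x'$ already coincides with the corresponding subtree of $T^{goal}$) into $x'$, so that in the resulting proper minor $x'$ is a genuine leaf whose rotor can be advanced to $T^{goal}_s\langle x'\rangle$, the remaining discrepancy being cleared by Theorem~\ref{thm:sourceturn}. The hard part will be the lift: advancing $x'$'s rotor in that minor does not, click by click, correspond to single-step moves of $G$, so the lift must be realized by a mixture of single-step and reverse single-step moves operating inside the block $\{x'\}\cup V^{x'}$ in such a way that, throughout, the block stays attached to the rest of $G$ along an edge incident to $x'$ — equivalently, so that the rotor at $c$ stays equal to $g$. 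Checking that this can always be arranged, and that every move produced in the end has chip $\neq s$ and lies in $\mathcal T_{c,g}$, is the technical heart of the proof; everything else is a transcription of the corresponding steps in the proof of Theorem~\ref{thm:sourceturn}.
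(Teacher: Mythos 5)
Your overall strategy is on the right track, and your observation that no tree of $\mathcal T_{c,g}$ can have $x$ as a leaf (when $x\neq s$) is a genuine insight that explains why a naive transcription of Theorem~\ref{thm:sourceturn} breaks down. Your case $c\nprec_{T_s}x'$ matches the paper's Case 1 exactly. But the remaining two cases, which are where all the difficulty sits, are left with gaps. In your middle case ($c\prec_{T_s}x'$, $x'\neq x$), the recursive appeal to Lemma~\ref{lem:anyvalid} requires the goal tree $\wtilde T'$ in the minor (some spanning tree with $x'$ a leaf) to also have rotor $g$ at $c$; you assert this without argument, and it is not obvious since having $g\in \wtilde T'$ does not by itself force $\wtilde T'_{\widetilde\rt}\langle c\rangle = g$. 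And you explicitly declare your $x'=x$ case the "technical heart of the proof" and stop short of proving it — but that is precisely the content of the lemma, and the entire reason it cannot be dismissed as a transcription of Theorem~\ref{thm:sourceturn}.

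The paper handles your last two cases uniformly (its Case 2 is $x\preceq_{T_s}y$, which is equivalent to $c\prec_{T_s}x'$) with a contraction you did not consider: contract the path $P$ of $T$-edges from $c$ to $y$ (which contains $g$). This merges $c,x,\dots,y$ into a single vertex $z$, so the constraint "the rotor at $c$ stays $g$" simply disappears in the minor, and the lift (Lemma~\ref{lem:contracttreetotreeB}) never touches the rotors on $P$. One is then left only with advancing the rotor at $y$ to its target position, and $y$ is not a leaf (it still carries an edge of $P$), so these are genuine single-step and reverse single-step moves, not source-turn moves — this is the only place reverse moves are actually needed. The missing idea in your proposal is the justification that these moves suffice: the paper shows that $T^{goal}_s\langle y\rangle$ lies in the 2-connected piece $E(\what{G/P})$, and then uses planarity and 2-connectedness to prove that the edges of $\chi(y)\cap E(\what{G/P})$ form a single contiguous arc in $\chi(y)$ (for any such edge $e$, build a cycle through $h = T_s\langle y\rangle$ and $e$ avoiding $P$, and observe by planarity that everything between $h$ and $e$ in $\chi(y)$ lies inside this cycle, hence in $\what{G/P}$). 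That planarity argument is what you would need to supply, and it is also what your plan of contracting $E^{x'}$ into $x'$ would require, in a more awkward form, since there the rotations at the merged vertex do not correspond vertex-by-vertex to moves of $G$.
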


Before giving the proof, we give the following Lemma, which is a variation of Lemma~\ref{lem:contracttreetotree} under which the contracted edges are not incident to the root. 

\begin{lemma}
\label{lem:contracttreetotreeB}
Let $(G,\chi)$ be a 2-connected plane graph with two spanning trees $T,T' \in \T(G)$ and some $\rt \in V(G)$. Fix a connected $F \subset T\cap T'$ which does not contain any edges incident to $\rt$. Let $(\widetilde{G},\widetilde{\chi})$ be the ribbon graph obtained by contracting the edges in $F$, and $z$ be the vertex these edges contract into. 

If there exists a sequence of source-rotations that take $(T\setminus F)_{\rt}$ to $(T'\setminus F)_{\rt}$ in $(\widetilde{G},\widetilde{\chi})$ without turning the rotor at $z$, then there exists a sequence of source-rotations that take $T_\rt$ to $T'_\rt$ in $(G,\chi)$ without turning any rotors corresponding to edges in $F$.
\end{lemma}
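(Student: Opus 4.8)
The plan is to mirror the proof of Lemma~\ref{lem:contracttreetotree} as closely as possible, adjusting for the fact that the contracted vertex $z$ is now an ordinary vertex (not the root) and therefore carries a rotor. First I would record the basic structural facts: since $F\subseteq T$ is connected and contains no edge incident to $\rt$, the vertices of $V(F)$ form a subtree of $T$ avoiding $\rt$, so $\rt\notin V(F)$ and contracting $F$ identifies exactly the vertices of $V(F)$ into the single vertex $z\neq\rt$; every vertex of $\widetilde G$ other than $z$ has a unique preimage in $G$, lying in $V(G)\setminus V(F)$; and for any spanning tree $T''$ of $G$ with $F\subseteq T''$ we have $T''/F=T''\setminus F\in\T(\widetilde G)$, with $(T''\setminus F)_{\rt}\la c\ra=T''_{\rt}\la c\ra$ for every $c\neq z$ and $\widetilde\chi(c,\cdot)=\chi(c,\cdot)$ for such $c$.

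Next I would prove the lifting step, which is the analogue of the main computation in Lemma~\ref{lem:contracttreetotree}: if $F\subseteq T''\in\T(G)$ and $(T''\setminus F)_{\rt}$ is source-rotatable at some vertex $c\neq z$ from $g$ to $f$ in $(\widetilde G,\widetilde\chi)$, then $T''_{\rt}$ is source-rotatable at $c$ from $g$ to $f$ in $(G,\chi)$, and $r_c(T''_{\rt})=r_c((T''\setminus F)_{\rt})\cup F$, where we reattach the edges of $F$ oriented toward $\rt$. The verification is routine and identical in spirit to the earlier lemma: since $c\notin V(F)$, both $\chi(c,g)$ and the rotor at $c$ are unaffected by the contraction; acyclicity of $r_c(T''_{\rt})$ follows because reattaching the acyclic set $F$ to the acyclic configuration $r_c((T''\setminus F)_{\rt})$ cannot create a cycle (such a cycle would have to lie in $F\subseteq T$); and $c$ has the same degree in $T''$ as in $T''\setminus F$, so it remains a leaf. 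Since the hypothesized source-rotation sequence never turns the rotor at $z$, and the lifted rotations are all at vertices $c\neq z$ whose preimages lie in $V(G)\setminus V(F)$, the edge set $F$ stays inside every tree along the way, so this lifting step applies at each stage and produces a valid sequence of source-rotations in $(G,\chi)$ that never turns a rotor corresponding to an edge of $F$ (indeed never turns any rotor at a vertex of $V(F)$).

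Finally I would check that the lifted sequence actually runs from $T_{\rt}$ to $T'_{\rt}$. Composing the lifting step along the sequence shows that the configuration at each stage equals the corresponding configuration in $\widetilde G$ together with $F$; in particular the endpoints are $(T\setminus F)_{\rt}\cup F=T_{\rt}$ and $(T'\setminus F)_{\rt}\cup F$. It remains to see that $(T'\setminus F)_{\rt}\cup F=T'_{\rt}$, and this is where the hypothesis that the rotor at $z$ is never turned does its work: that rotor is therefore constant along the sequence, so $(T\setminus F)_{\rt}\la z\ra=(T'\setminus F)_{\rt}\la z\ra=:e^{*}$. This edge $e^{*}\in E(G)\setminus F$ is incident to exactly one vertex $w\in V(F)$ (otherwise it would contract to a loop), and $w$ is forced to be the vertex of $F$ closest to $\rt$ in both $T$ and $T'$; hence $T_{\rt}$ and $T'_{\rt}$ orient the edges of $F$ identically (all toward $w$) and assign the same rotor $e^{*}$ at $w$, so reattaching $F$ to $(T'\setminus F)_{\rt}$ yields exactly $T'_{\rt}$. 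I expect this matching-of-endpoints point to be the only genuinely new content relative to Lemma~\ref{lem:contracttreetotree}; the remainder is a direct transcription with $\widetilde\rt$ replaced by $z$ and the property ``$\widetilde\rt$ has no rotor'' replaced by the hypothesis ``the given sequence avoids the rotor at $z$''.
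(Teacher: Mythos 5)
Your proposal is correct and follows essentially the same route as the paper: the paper's proof is a short remark that the argument of Lemma~\ref{lem:contracttreetotree} carries over verbatim, with the hypothesis that the given sequence never turns the rotor at $z$ replacing the fact that $\widetilde\rt$ carried no rotor, and you have simply spelled this out. One small remark: your closing endpoint-matching argument via $e^*$ and the attachment vertex $w$ can be skipped, because once the lifting step is phrased as $r_c(T''_\rt)=(T''\setminus g\cup f)_\rt$ for every source-rotation at $c\neq z$ (which is what your calculation actually shows), the lifted sequence of trees is $\widetilde T^i\cup F$ and automatically terminates at $(T'\setminus F)\cup F=T'$, whose rotor configuration with respect to $\rt$ is $T'_\rt$ by definition.
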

\begin{proof}
The proof follows analogously to the proof of Lemma~\ref{lem:contracttreetotree}. The only difference is that a source-rotation at $z$ of a rotor configuration $\wtilde T_\rt$ on $(\widetilde{G},\widetilde{\chi})$ might not correspond to a source-rotation of $(\wtilde T\cup F)_\rt$ on $(G,\chi)$. This is not a problem by the condition that the rotor at $z$ remains stationary. 
\end{proof}

\begin{proof}[Proof of Lemma~\ref{lem:anyvalid}]

Recall the partial order $\prec_{\xi}$ from the proof of Theorem~\ref{thm:sourceturn} setting $\rt = s$. By Lemma \ref{lem:singswap}, all source-turn pairs are also single-step pairs. For simplicity, we will call a tree $T \in \T(G)$ \emph{valid} if $T_s\rotdir{c} = g$. For the sake of induction, suppose that that for all 2-connected proper minors $(G',\chi')$ containing $c$ and $s$ (allowing minors where disjoint sets of vertices merge into $c$ and $s$), the results of the lemma hold.

To prove this lemma, it suffices to prove that for every valid $T \in \T(G) \setminus T^{goal}$, there exists a valid $T'' \in \T(G)$ such that $T'' \prec_{\xi} T$ and it is possible to move from $T_s$ to $T''_s$ through a series of single-step and reverse single-step moves which never change the rotor at $c$.

Suppose there exists some $y \in V(G)\setminus s$ such that $\xi(T,y) = 1$ and $y$ is a leaf vertex of $T$. Then, we can freely rotate the rotor at $y$ until we get a tree $T''$ such that $T''\la y \ra = 0$. Then $T'' \prec_\xi T$ and we are done. 

Otherwise, let $y$ be a minimal element with respect to $\prec_{T_s}$ such that $\xi(T,y) = 1$. By minimality, $\xi(T,v) = 0$ for all $v \prec_{T_s} y$. As in the proof of Theorem \ref{thm:sourceturn}, we define 
\[V^y := \{v \mid v\prec_{T_s} y\} \text{ and } E^y := \{T_s\la v \ra \mid v \in V^y\}.\] 
By the same argument that we used in the proof of Theorem \ref{thm:sourceturn}, $V^y \subseteq \{v \mid v \prec_{T^{goal}_s} y\}$. We will consider 2 cases (where $\i(g) = \{x,c\}$).\\

\textbf{Case 1: } $x \npreceq_{T_s} y$.\\

Because $x \npreceq_{T_s} y$, it follows that $x \not\in V^y$. Furthermore, if $c \prec_{T_s} y$, then the path from $c$ to $y$ along $T$ must pass through $g$ (and therefore $x$), which would imply that $x \prec_{T_s} y$. Thus, we must have $c \nprec_{T_s} y$, which implies that $c \not\in V^y$ and $g \not\in E^y$. Then, by the claim stated in the proof of Theorem \ref{thm:sourceturn}, there exists some $T' \in \T(G)$ that can be reached from a series of source-turn moves at vertices in $V^y$ such that $y$ is a leaf of $T'$. Since $c \notin V^y$, this implies it is possible to go from $T$ to $T'$ without moving the rotor at $c$. Since $y$ is a leaf of $T'$, we can rotate the rotor at $y$ freely to get a spanning tree $T''$ such that $T''\la y \ra = T^{goal} \la y \ra$. By construction, $T'' \prec_\xi T$ and the lemma follows.\\

\textbf{Case 2: } $x\preceq_{T_s} y$.\\

Let $P$ be the path of edges in $T$ from $c$ to $y$ and let $V(P)$ be the vertices of this path. By definition of the partial order $\preceq_{T_s}$, we must have $g \in P$. Let $(G/P,\chi/P)$ be the plane graph formed by contracting the edges of $P$ and let $z$ be the contracted vertex. Since $G$ is 2-connected, $z$ is the only vertex of $G/P$ that can be a cut vertex. Furthermore, $s$ cannot be contracted into $z$ because the vertices that make up $P$ are all less than or equal to $y$ with respect to $\preceq_{T_s}$. Let $(\what {G/P},\what {\chi/P})$ be the graph we obtain after removing any edges and vertices that can only be reached from $s$ through paths that pass through $z$. We do not remove $z$. In other words, $\what {G/P}$ is the maximal 2-connected subgraph of $G/P$ which contains $s$. 

Let $\wtilde T$ be the restriction of $T$ to $E(\what {G/P})$. By 2-connectedness of $\what{G/P}$, there exists some $\wtilde T' \in \T(\what {G/P})$ such that $z$ is a leaf vertex. Furthermore, by the (stronger) inductive claim we proved in Theorem \ref{thm:sourceturn} (with $\what{T}^{start}= \wtilde{T}$, $\what{T}^{goal}=\wtilde T'$ and $\rt' = z$), we know that $\wtilde T'$ can be reached from $\wtilde T$ by a series of source-turn moves at vertices that do not involve $z$. Let 
\[T':= \wtilde T' \cup (T \cap (E(G) \setminus E(\what {G/P}))).\] 
In other words, $T'$ is equivalent to $\wtilde T'$ when restricted to $E(\what {G/P})$ and the remaining edges match the edges of $T$. By Lemma~\ref{lem:contracttreetotreeB}, there exist a sequence of source-turn moves which send $T$ to $T'$ on $(G,\chi)$ and do not involve any rotors in $P$ (including $g$). 

Finally, we need to rotate the rotor at $y$ to the correct position. Fix $h = T_s\rotdir{y} = T'_s\rotdir{y}$. Notice that $y$ is not a leaf of $T'$ because it is incident to $h$ and an edge of $P$. Nevertheless, we claim that there exists $T'' \in \T(G)$ such that $\xi(T'',y) = 0$, and we can reach $T''$ from $T'$ through a series of single-step or reverse single-step moves at $y$. Given this claim, it follows that $T'' \prec_\xi T$ because $\xi(T'',y) < \xi(T,y)$ and $\xi(T'',v) = \xi(T,v)$ for every $v \in E(G) \setminus (V^y \cup s)$. 

To prove the claim, we first consider the set $\wtilde V = V(G) \setminus V(\what {G/P})$. By definition, any $v \in \wtilde V$ must satisfy $v \prec_{T_s} w$ for some $w \in V(P)$. Furthermore, for any $w \in V(P)$, we have $w \prec_{T_s} y$. It follows that any $v \in \wtilde V$ must satisfy $v \prec_{T_s} y$ and, by assumption, this means that $T_s\la v \ra = T^{goal}_s \la v \ra$. 

Suppose that $T^{goal}_s \la y \ra \in E(G) \setminus E(\what {G/P})$. Then, by the results of the previous paragraph, we can follow rotors of $T^{goal}_s$ until we return to $y$. This implies that $T^{goal}$ contains a cycle, which is impossible. Thus, we must have $T^{goal}_s \la y \ra \in E(\what {G/P})$. Furthermore, by construction, $h$ is the only edge incident to $y$ in $\wtilde T'$. Thus, we can freely rotate the rotor at $y$ in either direction using single-step or reverse single-step moves as long as we avoid edges in $E(G) \setminus E(\what {G/P})$. Our claim follows if we show that the edges of $\chi(y) \cap E(\what {G/P})$ must all appear sequentially in $\chi(y)$ on $G$. 

Let $\{y,w\} = \i(h)$ and let $e$ be any edge of $E(\what {G/P})\setminus h$ that is incident to $y$. By 2-connectedness of $\what {G/P}$, there is a path $\what P$ in $G$ from $y$ to $w$ that contains $e$ but does not contain $h$ or any edges of $P$. Thus, $\what P \cup h$ forms a cycle $C$. Declare the ``inside'' of $C$ to be the region that does not contain $c$. By planarity, $P$ must be completely outside of $C$. Since $G$ is 2-connected, the edges inside of $C$ must be a subset of $E(\what {G/P})$. This includes all of the edges between $h$ and $e$ with respect to $\chi(y)$. Since $e$ was arbitrary, the claim follows. 
\end{proof}

\begin{corollary}\label{cor:anystepflips}
Suppose that $(c-s,T)$ is a single-step pair from $g$ to $f$ such that $\alpha_{(G,\chi)}([c-s],T) = E(G) \setminus T$. 
\begin{itemize}
    \item For any $T' \in \T(G)$ such that $(c-s,T')$ is a single-step pair from $g$ to $f$, \[\alpha_{(G,\chi)}([c-s],T') = E(G) \setminus T'.\]
    \item For any $T'' \in \T(G)$ such that $(s-c,T'')$ is a reverse single-step pair from $f$ to $g$, \[\alpha_{(G,\chi)}([s-c],T'') = E(G) \setminus T''.\]
\end{itemize}
\end{corollary}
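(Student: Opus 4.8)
The plan is to derive both bullet points from the machinery already assembled in this subsection, in essentially two moves. For the first bullet, let $(c-s,T)$ be the given single-step pair from $g$ to $f$ with $\alpha_{(G,\chi)}([c-s],T) = E(G)\setminus T$, and let $(c-s,T')$ be another single-step pair from $g$ to $f$. By Lemma~\ref{lem:singcriteria}, both $T$ and $T'$ satisfy $T_s\la c\ra = T'_s\la c\ra = g$, so by Lemma~\ref{lem:anyvalid} (applied with $T^{start} = T$ and $T^{goal} = T'$) there is a sequence $\{(c_i-s_i,T^i)\}_{i\in[0,k]}$ of single-step and reverse single-step moves, none of which turns the rotor at $c$, connecting $T$ to $T'$. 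The key point is that none of these intermediate moves acts on the edge $g$ (since the rotor at $c$ is fixed, $g$ is never swapped out; and since each $T^i$ is valid with $T^i_s\la c\ra = g$, the edge $g$ is never swapped in either), and none acts on $f$ (because $f \notin T^i$ for any valid $T^i$ that is a single-step pair base — more carefully, at the step where $f$ would be involved the rotor at $c$ would have to move, contradiction). Hence every move $(c_i-s_i,T^{i-1})$ is a single-step or reverse single-step pair from some $h_i$ to $k_i$ with $h_i,k_i\notin\{f,g\}$, and I can apply Lemma~\ref{lem:commutepairs} inductively: if $\alpha_{(G,\chi)}([c-s],T^{i-1}) = E(G)\setminus T^{i-1}$, then since $T^i = T^{i-1}\setminus h_i\cup k_i$, Lemma~\ref{lem:commutepairs} gives $\alpha_{(G,\chi)}([c-s],T^i) = (E(G)\setminus T^{i-1})\setminus k_i\cup h_i = E(G)\setminus T^i$. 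Running this from $i=1$ to $k$ yields $\alpha_{(G,\chi)}([c-s],T') = E(G)\setminus T'$.

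For the second bullet, suppose $(s-c,T'')$ is a reverse single-step pair from $f$ to $g$. By Definition~\ref{def:revstep}, $(c-s,T''\setminus f\cup g)$ is a single-step pair from $g$ to $f$. Applying the first bullet to the single-step pair $(c-s,T''\setminus f\cup g)$ (which is legitimate once the first bullet is established, since $(c-s,T''\setminus f\cup g)$ and the original $(c-s,T)$ are both single-step pairs from $g$ to $f$), we obtain
\[
\alpha_{(G,\chi)}([c-s],T''\setminus f\cup g) = E(G)\setminus(T''\setminus f\cup g) = (E(G)\setminus T'')\setminus g\cup f.
\]
Now invoke Lemma~\ref{lem:singrev}, which is precisely the statement that $\alpha_{(G,\chi)}([s-c],T'') = E(G)\setminus T''$ if and only if $\alpha_{(G,\chi)}([c-s],T''\setminus f\cup g) = (E(G)\setminus T'')\setminus g\cup f$. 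The right-hand condition has just been verified, so $\alpha_{(G,\chi)}([s-c],T'') = E(G)\setminus T''$, as required.

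I expect the main obstacle to be the bookkeeping in the first bullet: one must check carefully that the connecting sequence produced by Lemma~\ref{lem:anyvalid} never touches $f$ or $g$, so that Lemma~\ref{lem:commutepairs} genuinely applies at every step (recall Lemma~\ref{lem:commutepairs} explicitly requires the swapped edges $h,k\in E(G)\setminus\{f,g\}$). The "$g$ is fixed" part is immediate from the last two conditions in Lemma~\ref{lem:anyvalid} ($T^i_s\la c\ra = g$ and $c_i\neq s$, so the rotor at $c$ never changes, hence $g$ is neither added nor removed). The "$f$ is never involved" part needs a short argument: if some move swapped $f$ in, it would be added to a tree $T^{i}$; since $\i(f)=\{c,s\}$ this would force $T^i_s\la c\ra = f \neq g$ unless $s\prec_{T^i_s}c$, but then the move $(c_i-s_i,T^{i-1})$ would either be at $c$ (excluded) or would create the configuration with $c$'s rotor changed — in any case it contradicts the invariance of the rotor at $c$, so $f$ stays out of $T^i$ throughout. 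Once this verification is in place, the induction is routine and the corollary follows; no other step presents any real difficulty, as everything else is a direct citation of Lemmas~\ref{lem:singcriteria}, \ref{lem:anyvalid}, \ref{lem:commutepairs}, \ref{lem:singrev} and Definition~\ref{def:revstep}.
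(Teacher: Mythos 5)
Your proposal is correct and follows essentially the same route as the paper's proof: Lemma~\ref{lem:anyvalid} followed by iterated application of Lemma~\ref{lem:commutepairs} for the first bullet, and Definition~\ref{def:revstep}, the first bullet, and Lemma~\ref{lem:singrev} for the second. The paper leaves implicit the check that the intermediate swaps never touch $f$ or $g$; your explicit verification is a welcome addition rather than a deviation, though it can be streamlined — since $s$ has no rotor and $T^i_s\langle c\rangle = g$ for all $i$, we get $f\notin T^i$ immediately, and since each swap exchanges the rotors $h_i,k_i$ at $c_i\notin\{c,s\}$, none of them can equal $f$, while $g$ is pinned as the rotor at $c$ so it is never swapped.
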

\begin{proof}
For the first claim, apply Lemma~\ref{lem:anyvalid} with $T^{start} = T$ and $T^{goal} = T'$. This produces a sequence $\{(c_i-s_i,T^i)\}_{i \in [0,k]}$. By repeatedly applying Lemma~\ref{lem:commutepairs}, we find that $\alpha_{(G,\chi)}([c-s],T^{goal}) = E(G) \setminus T^{goal}$ as desired.

For the second claim, by definition $(c-s,T'' \setminus f \cup g)$ is a single-step pair from $f$ to $g$. Thus, $\alpha_{(G,\chi)}([c-s],T'' \setminus f \cup g) = (E(G) \setminus T'') \setminus g \cup f$. The result follows from Lemma~\ref{lem:singrev}. 
\end{proof}

\begin{figure}
\begin{center}
\begin{tikzpicture}[scale = .4]
    
    \tikzstyle{pt} = [circle,fill,inner sep=1pt,minimum size = 1.5mm]
    \tikzstyle{coin} = [draw,circle,inner sep=1pt,minimum size = 2mm]
    
    \node[pt,label={north:{$c$}}] (c) at (10,8) {};
    \node[pt,label={west:{$z_5=s$}}](z1) at (0,0){} ;
    \node[pt,label={west:{$z_4$}}](z2) at (2.68,-4.64){};
    \node[pt,label={south:{$z_3$}}](z3) at (7.32,-7.32){};
    \node[pt,label={south:{$z_2$}}](z4) at (12.68,-7.32){};
    \node[pt,label={east:{$z_1$}}](z5) at (17.32,-4.64){};
    \node[pt,label={east:{$z_0=x$}}](z6) at (20,0){};
    
    \node[pt,label={west:{$w_0^1$}}] (v1) at (14.3,2) {};
    \node[pt,label={west:{$w_3^1$}}] (v2) at (12,0) {};
    \node[pt,label={west:{$w_3^2$}}] (v3) at (8,0) {};
    \node[pt,label={west:{$w_4^1$}}] (v4) at (5.7,2) {};

\tikzstyle{every node} = [draw = none,fill = none,scale = .8]

    \draw (z6) to[out=210, in = 90]node[left]{$ e_1$}(z5);
    \draw [dotted](z6) to[out=270, in = 30]node[right]{$\what e_1$} (z5);
    \draw (z5) to[out=180, in = 60]node[above]{$ e_2$}(z4);
    \draw [dotted](z5) to[out=240, in = 0]node[below]{$\what e_2$} (z4);
    \draw (z4) to[out=150, in = 30] node[above]{$e_3$}(z3);
    \draw [dotted](z4) to[out=210, in = 330]node[below]{$\what e_3$} (z3);
    \draw (z3) to[out=120, in = 0] node[above]{$e_4$}(z2);
    \draw [dotted](z3) to[out=180, in = 300]node[below]{$\what e_4$} (z2);
    \draw (z2) to[out=90, in = 330] node[right]{$e_5$}(z1);
    \draw [dotted](z2) to[out=150, in = 270]node[left]{$\what e_5$} (z1);

    \draw (z3) to node[fill = white]{$h_3^1$}(v2);
    \draw [dotted](v2) to node[fill = white]{$\what h_3^1$} (c);
    \draw (z3) to node[fill = white]{$h_3^2$}(v3);
    \draw [dotted](v3) to node[fill = white]{$\what h_3^2$} (c);
    \draw (z2) to node[fill = white]{$h_4^1$}(v4);
    \draw [dotted](v4) to node[fill = white]{$\what h_4^1$} (c);
    \draw (z6) to node[fill = white]{$h_0^1$}(v1);
    \draw [dotted](v1) to node[fill = white]{$\what h_0^1$} (c);

    \draw(c) -- node[above]{\large$g$}(z6);
    \draw[dotted](z1) -- node[above]{\large$f$}(c);
    
\end{tikzpicture}
\caption{Above is the telescope graph $\tele{5}{1,0,0,2,1,0}$ that is defined in Definition~\ref{def:telescope}. The solid lines indicate the spanning tree we consider in the proof of Lemma~\ref{lem:scopegood}.}
\label{fig:telescope}
\end{center}
\end{figure}

\subsection{The Counterexample does not exist}
\label{case4:pf}

Throughout this section, $(G,\chi)$ is a 2-connected plane graph and $\alpha$ is a consistent sandpile torsor algorithm such that $\alpha_{(G',\chi')} = r_{(G',\chi')}$ for any proper minor $(G',\chi')$ of $(G,\chi)$. We also fix $c,s \in V(G)$ and $g,f \in E(G)$ such that for some $T \in \T(G)$, 
\begin{itemize}
    \item the pair $(c-s,T)$ is a source-turn pair from $g$ to $f$, and
    \item $\alpha_{(G,\chi)}([c-s],T) = E(G) \setminus T$. 
\end{itemize} 
Furthermore, $x \in V(G)$ is defined such that $\i(g) = \{c,x\}$ (where $x$ and $s$ are allowed to coincide). We also assume that each spanning tree of $G$ contains half of the edges of $G$. Note that we use $T$ as a variable for an arbitrary tree in this section. We do not fix $T$ to be the specific tree satisfying the conditions of Case 4. 

Our general strategy for proving Case 4 makes use of Corollary \ref{cor:anystepflips} and the statement of Case 4 in the following manner:

\begin{description}
    \item[Step 1] Corollary \ref{cor:anystepflips} implies that for any single-step pair of the form $(c-s,T')$, we have $\alpha_{(G,\chi)}([c-s],T') = E(G) \setminus T'$. If there is any such $T'$ where $E(G) \setminus T'\not\in \T(G)$, then this contradicts the fact that $\alpha_{(G,\chi)}$ is a sandpile torsor action (see Lemma~\ref{lem:noscopegood}).
    \item[Step 2] We define and characterize the class of \emph{telescope graphs} which consists of all plane graphs in which the contradiction above does not appear (see Lemma~\ref{lem:flippablescope}). 
    \item[Step 3] We prove the result directly when restricted to telescope graphs (See Lemma~\ref{lem:scopegood}). 
\end{description}

\begin{definition}
\label{def:telescope}
We introduce a family of plane graphs called \emph{telescope graphs}, which all satisfy the conditions at the beginning of the subsection. These graphs are indexed by $\ic \in \Z_{\ge 0}$ and a vector $(k_0,k_1,\dots,k_\ic) \in \Z_{\ge 0}^{\ic+1}$. We begin with $c$, $s$, $x$, $f$, and $g$ as defined above. We then relabel $s$ as $z_n$ and $x$ as $z_0$. Add the vertices $z_1,\dots,z_{\ic-1}$ to the graph and then, for each $i\in [\ic]$, add two parallel edges between $z_{i-1}$ and $z_i$. Finally, for each $i \in [0,\ic]$, we add $k_i$ degree 2 vertices to the graph, which are incident to both $c$ and $z_i$.

Up to ribbon graph isomorphism, there is a unique planar ribbon structure such that $f$ comes directly after $g$ in the cyclic order around $c$. We will write the graph along with this planar ribbon structure as $\tele{\ic}{k_0,\dots,k_\ic}$. 
\end{definition}

Figure~\ref{fig:telescope} shows an example of a telescope graph. For each $i \in [0,\ic]$ where $k_i \not= 0$, we label the degree 2 vertices incident to $z_i$ by $(w_i^1,w_i^2,\dots,w_i^{k_i})$, ordered based on the ribbon structure at $z_i$. We label the edge between $z_i$ and $w_i^j$ by $h_i^j$ and the edge between $c$ and $w_i^j$ by $\what h_i^j$. We also label the two edges between $z_{i-1}$ and $z_i$ by $e_i$ and $\what e_i$, where $e_i$ comes directly before $\what e_i$ in the cyclic order around $z_{i-1}$. 

\begin{definition}\label{def:sstree}
Let $(G,\chi)$ be a plane graph satisfying the conditions discussed at the beginning of the subsection. We say that a spanning tree $T \in \T(G)$ is a \emph{single-step tree} if $(c-s,T)$ is a single-step pair from $g$ to $f$ or $(s-c,T)$ is a reverse single-step pair from $f$ to $g$.
\end{definition}
\begin{lemma}\label{lem:whenflip}
A spanning tree $T \in \T(G)$ is a single-step tree if and only if it contains exactly one of $f$ and $g$ as well as a path from $x$ to $s$ that does not pass through $c$. 
\end{lemma}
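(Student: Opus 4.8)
The plan is to reduce the whole statement to the single-step criterion of Lemma~\ref{lem:singcriteria} and then feed it through the definition of a reverse single-step pair. First I would record a convenient reformulation: for any spanning tree $S \in \T(G)$ with $g \in S$ and $f \notin S$ (recall $\chi(c,g) = f$, $\i(f) = \{c,s\}$, $\i(g) = \{c,x\}$ are fixed throughout this subsection), the pair $(c-s,S)$ is a single-step pair from $g$ to $f$ if and only if $S$ contains a path from $x$ to $s$ that does not pass through $c$. Indeed, by Lemma~\ref{lem:singcriteria} the left side is equivalent to $c \prec_{S_s} x$, i.e. the unique $c$-to-$s$ path of $S$ runs through $x$; since $g = \{c,x\} \in S$, prepending $g$ to an $x$-to-$s$ path of $S$ avoiding $c$ yields a simple $c$-to-$s$ path in $S$ (the only shared vertex is the endpoint $x$), which by uniqueness of tree paths must be that very path, and conversely the $x$-onward portion of the $c$-to-$s$ path of $S$ is an $x$-to-$s$ path avoiding $c$.

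Next I would split the definition of \emph{single-step tree} into its two alternatives. If $(c-s,T)$ is a single-step pair from $g$ to $f$, then (noting that this forces $g \in T$, $f \notin T$ directly from the definition after Definition~\ref{def:singstep}) the reformulation says this happens exactly when $g \in T$, $f \notin T$, and $T$ has an $x$-$s$ path avoiding $c$. If instead $(s-c,T)$ is a reverse single-step pair from $f$ to $g$, then by Definition~\ref{def:revstep} we need $f \in T$, $g \notin T$, and $(c-s, T \setminus f \cup g)$ a single-step pair from $g$ to $f$; applying the reformulation to $S = T \setminus f \cup g$ gives the condition that $T \setminus f \cup g$ contains an $x$-$s$ path avoiding $c$. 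The last observation is that, because $f$ and $g$ are both incident to $c$, any path avoiding $c$ uses neither, so "$T \setminus f \cup g$ has an $x$-$s$ path avoiding $c$" is literally the same statement as "$T$ has an $x$-$s$ path avoiding $c$".

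Combining the two cases, $T$ is a single-step tree if and only if $T$ has an $x$-$s$ path avoiding $c$ and either ($g \in T$, $f \notin T$) or ($f \in T$, $g \notin T$) holds; the latter disjunction is precisely "$T$ contains exactly one of $f$ and $g$", which is the claim. I do not expect a genuinely hard step here — the lemma is an unwinding of definitions layered on Lemma~\ref{lem:singcriteria}. The only points needing a word of care are the degenerate case $x = s$ (where $f$ and $g$ are parallel and the "path" is the trivial one-vertex path, still avoiding $c$ since $c \neq s$) and the reverse-to-forward translation in the second case, which is exactly the "$f,g$ incident to $c$" observation above.
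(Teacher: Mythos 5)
Your proposal is correct and follows the same route the paper takes (the paper simply declares the lemma ``immediate from Definition~\ref{def:sstree} and Lemma~\ref{lem:singcriteria}''); you have just filled in the unwinding: translating $c \prec_{S_s} x$ into the existence of an $x$-to-$s$ tree path avoiding $c$ via uniqueness of tree paths, and noting that since $f$ and $g$ are both incident to $c$, such a path is insensitive to the swap $T \leftrightarrow T \setminus f \cup g$.
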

\begin{proof}
This follows immediately from Definition~\ref{def:sstree} as well as Lemma~\ref{lem:singcriteria}. 
\end{proof}

\begin{lemma}
\label{lem:flippablescope}
The following are equivalent (when $(G,\chi)$ is a 2-connected plane graph satisfying the conditions from the beginning of the section):
\begin{itemize}
    \item $(G,\chi) = \tele{\ic}{k_0,\dots,k_\ic}$ for some $n \in \Z_{\ge 0}$ and $(k_0,k_1,\dots,k_\ic) \in \Z_{\ge 0}^{\ic+1}$.
    \item For every single-step tree $T \in \T(G)$, we have $E(G) \setminus T \in \T(G)$. 
\end{itemize}
\end{lemma}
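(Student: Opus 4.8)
The plan is to prove the two implications separately, with the forward implication being a routine (if slightly tedious) verification and the converse being the real work.

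\textbf{Forward direction.} Assume $(G,\chi) = \tele{\ic}{k_0,\dots,k_\ic}$. First I would pin down exactly which trees are single-step trees, using Lemma~\ref{lem:whenflip}: each $w_i^j$ has degree $2$ with one of its two edges incident to $c$, so a $c$-avoiding $x$-to-$s$ path can never pass through a $w_i^j$ and must therefore use exactly one of $\{e_i,\what e_i\}$ for every $i\in[1,\ic]$. Hence a single-step tree $T$ is precisely a choice of one of $\{f,g\}$, one of $\{e_i,\what e_i\}$ for each $i\in[1,\ic]$ (this forces the $c$-avoiding $x$--$s$ path and connects $c,z_0,\dots,z_\ic$ into a tree), together with one of $\{h_i^j,\what h_i^j\}$ for each $(i,j)$ (attaching each $w_i^j$ as a leaf); conversely every such selection is a spanning tree and is a single-step tree. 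Then $E(G)\setminus T$ is obtained by swapping $f\leftrightarrow g$, each $e_i\leftrightarrow\what e_i$, and each $h_i^j\leftrightarrow\what h_i^j$, so it again has this form and is in particular a spanning tree. This finishes the forward direction.

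\textbf{Backward direction.} Here I would reconstruct the telescope structure from the rigidity the hypothesis imposes. The useful reformulation is that, since $|E(G)| = 2|V(G)|-2$, for any spanning tree $T$ the set $E(G)\setminus T$ has $|V(G)|-1$ edges, so $E(G)\setminus T\in\T(G)$ if and only if $(V(G),E(G)\setminus T)$ is connected, i.e.\ if and only if $T$ contains no bond (minimal edge cut) of $G$. Thus the hypothesis reads: \emph{no single-step tree of $G$ contains a bond}. Using this together with $2$-connectivity and the fact that at least one single-step tree exists (the given source-turn tree, which is single-step by Lemma~\ref{lem:singswap}), I would argue in three steps. (a) The only edge between $c$ and $x$ is $g$, the only edge between $c$ and $s$ is $f$, and every other neighbor of $c$ has degree $2$: if some neighbor $u$ of $c$ reached by an edge $a\notin\{f,g\}$ had degree $\ge 3$, or if there were an extra parallel edge at $x$ or at $s$, one can choose the $c$-avoiding $x$--$s$ path and the remaining edges of a single-step tree $T$ so that $a$ together with another cut edge at $u$ (respectively $g$ together with the extra parallel edge) forms a bond contained in $T$, contradicting the hypothesis. (b) Writing $W$ for the degree-$2$ neighbors of $c$, consider $G-c$, which is connected by $2$-connectivity; each $w\in W$ becomes pendant at its other endpoint, and after suppressing pendants and degree-$2$ vertices one obtains a $2$-connected multigraph $H$ on the ``spine'' vertices with $x,s$ distinguished. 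The no-bond condition forces every parallel class of $H$ to have size exactly $2$ (a class of size $\ge 3$ would let the complement of a suitable single-step tree contain a bigon, since the spine edges are exactly those not meeting $c$) and forces $H$ to be a path from $x$ to $s$ (any branch vertex again yields a single-step tree containing a bond). This recovers the chain $x=z_0,z_1,\dots,z_\ic=s$ with doubled edges $e_i,\what e_i$, and identifies each $w\in W$ as some $w_i^j$ attached to the correct $z_i$. (c) Conclude $E(G)=\{f,g\}\cup\bigcup_i\{e_i,\what e_i\}\cup\bigcup_{i,j}\{h_i^j,\what h_i^j\}$ with incidences exactly as in Definition~\ref{def:telescope}; since $(G,\chi)$ is a plane graph and $f$ is directly after $g$ in $\chi(c)$ (part of the standing hypotheses on source-turn pairs), the ribbon structure must be the unique planar one of Definition~\ref{def:telescope}, so $(G,\chi)=\tele{\ic}{k_0,\dots,k_\ic}$.

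The main obstacle is step (b) of the backward direction: carefully ruling out \emph{every} way in which the spine could fail to be a simple doubled path --- surplus parallel edges, vertices of degree $\ge 3$ off the $c$-structure, and non-path connectivity patterns --- by producing in each case an explicit single-step tree (via the description in Lemma~\ref{lem:whenflip}) whose complement is disconnected, equivalently which contains a bond. Planarity is used only at the end to fix $\chi$; the combinatorial heart of the argument is the interplay between the ``$2|V|-2$ edges'' hypothesis, $2$-connectivity, and the shape of single-step trees.
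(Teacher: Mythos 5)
Your forward direction is essentially the paper's: characterize single-step trees as one choice from each of the pairs $\{f,g\}$, $\{e_i,\what e_i\}$, $\{h_i^j,\what h_i^j\}$, and observe the complement makes the opposite choice. The backward direction, however, is a genuinely different route: you reconstruct the telescope directly by a local analysis of $c$ and then of the ``spine'' $G-c$, using the clean reformulation ``$E(G)\setminus T\in\T(G)$ iff $T$ contains no bond.'' The paper instead proceeds by induction on $|E(G)|$: it first rules out parallel edges at $g$, then shows the edges at $x$ must contain a double edge $\{e,\what e\}$ to some $v\neq c$, contracts $e$ and deletes $\what e$, invokes the inductive hypothesis to conclude the minor is a telescope, and then verifies the uncontraction lands back in the telescope family. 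Your non-inductive approach is appealing, but as written it has a concrete bug and an acknowledged gap.

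The bug is in your Step (a). You claim that, in the presence of an extra parallel edge, you can build a single-step tree $T$ so that ``$g$ together with the extra parallel edge forms a bond contained in $T$.'' No tree contains a pair of parallel edges, so $g$ and an edge parallel to $g$ can never both lie in $T$; and if the extra edge is a third edge in the class $\{e_1,\what e_1\}$, the set $\{g,e_1\}$ is not a bond (the other two parallel edges still connect $x$). The correct observation in both cases is on the complement side: a single-step tree $T$ contains at most one edge from any parallel class, so if some class has size $\ge 3$ (or if $g$ has a parallel mate and $T$ is chosen to contain $f$), then $E(G)\setminus T$ contains a digon and is not a tree. This is exactly how the paper rules out edges parallel to $g$. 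More broadly, your Step (b) --- ruling out branch vertices and larger parallel classes on the spine --- is where the lemma lives, and you flag it as the main obstacle without carrying it out; in particular, a spine with a branch vertex requires you to exhibit a single-step tree whose complement is disconnected, and that construction (which choices of $e_i$ vs.\ $\what e_i$, which $c$-avoiding $x$--$s$ path) is not given and is not obviously routine. Until that is done, the backward direction is incomplete; the paper's contraction-plus-induction sidesteps this by only ever needing to certify one local double edge at $x$ and then reusing the statement for a smaller graph.
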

\begin{proof}
The forward direction follows from direct argument. For the reverse direction, we apply induction on the number of edges in the graph.

For the forward direction, consider any single-step tree $T \in \T(G)$. For any $i \in [0,n]$ and $j \in [1,k_i]$, $T$ must contain at least one of $h_i^j$ and $\what h_i^j$ or else $w_i^j$ is isolated. By Lemma~\ref{lem:whenflip}, $T$ contains $f$ or $g$ and an path $P$ from $z_0$ to $z_n$ that does not pass through $c$. Notice that $P \cup  \{h_i^j,\what h_i^j\} \cup f$ and $P \cup  \{h_i^j,\what h_i^j\} \cup g$ both contain circuits. Thus, $T$ must contain exactly one of the edges $h_i^j$ and $\what h_i^j$. Similarly, for $i \in [1,n]$, the path $P$ must contain exactly one of $\{e_i,\what e_i\}$. To prevent a circuit, $T$ must also contain exactly one of every such pair. It follows that for $T$ to be a single-step tree, it must contain exactly one of $\{f,g\}$, exactly one of each $\{h_i^j,\what h_i^j\}$, and exactly one of each $\{e_i,\what e_i\}$. It is straightforward to check that any such choice of edges will give a single-step tree. Furthermore, the complement of a single-step tree makes the opposite choice for each pair of edges listed above. Thus, $E(G)\setminus T$ is also a single-step tree and a spanning tree. 

We will prove the reverse direction by induction. The only 2-edge plane graph satisfying the property is the double edge, which is equivalent to $\tele{0}{0}$. Suppose that every plane graph with $2N$ edges satisfying the property is a telescope graph. Let $(G,\chi)$ be a 2-connected plane graph with $2N+2$ edges such that for every single-step tree $T \in \T(G)$, we have $E(G) \setminus T \in \T(G)$. On any graph, the size of each spanning tree is $|V(G)| - 1$. For $E(G) \setminus T$ to ever be a spanning tree, we must have $2(|V(G)| - 1) = |E(G)|$ and $|V(G)| = |E(G)|/2 + 1 = N+2$. 

First, suppose that $x = s$ so that the edges $f$ and $g$ are parallel in $G$. For this case, we do not actually need to use induction. Let $v$ be an arbitrary vertex other than $x$ and $s$. Suppose that $v$ is incident to a pair of parallel edges other than $\{f,g\}$. By 2-connectedness, removing these parallel edges does not disconnect the graph. Thus, there must be some tree $T$ containing $g$ and neither of these parallel edges. Since $x = s$, it follows immediately from Lemma~\ref{lem:whenflip} that $T$ is a single-step tree. We reach a contradiction because $E(G) \setminus T$ contains a pair of parallel edges and cannot be a tree. Thus, $\{f,g\}$ is the only pair of parallel edges in $G$.

Furthermore, if there is some spanning tree $T$ that contains $g$ and all of the edges incident to $v$, then $v$ is isolated in $E(G) \setminus T$ and this set cannot form a spanning tree. Thus, any spanning tree of $G$ containing $g$ must only contain a subset of the edges incident to $v$. Because every subtree of $G$ is the subset of a spanning tree, this implies that the union of $g$ and the edges incident to $v$ must contain a circuit. If $v = c$, then this circuit is simply the double edge $\{f,g\}$. If $v \neq c$, then the fact that $i(g) = \{c,s\}$ implies that there must be an edge incident to $v$ and $c$ and an edge incident to $v$ and $s$. We showed earlier that if there are $N$ vertices other than $c$ and $s$, then there must be $2N+2$ edges. These edges are accounted for by $f,g$ and the two specified edges incident to each of the $N$ vertices $v\notin \{c,s\}$. We already accounted for all of these edges, so $(G,\chi)$ must be equal to $\tele{0}{N}$. 

Now, we consider the case where $s \not= x$. By 2-connectedness, there must be some path $P$ along $G$ from $x$ to $s$ that does not pass through $c$. Then, the union of $P$ with $f$ is also acyclic, so it can be expanded to a tree $T$, which must not contain $g$. It follows from Lemma~\ref{lem:whenflip} that $T$ is a single-step tree, which means that $E(G) \setminus T$ must be a spanning tree by assumption. However, $E(G) \setminus T$ contains all edges incident to both $c$ and $x$. Thus, there cannot be any edges parallel to $g$. 

Consider the set of all edges incident to $x$. If these edges contain no circuits, then we can take their union with $P$ and then add edges to obtain a spanning tree $T$. It follows from Lemma~\ref{lem:whenflip} that $T$ is a single-step tree. However, $E(G) \setminus T$ cannot be a spanning tree because $x$ is isolated. This is a contradiction, so the edges incident to $x$ must contain a circuit. In particular, this must be a double edge $\{e,\what e\}$. By our reasoning in the previous paragraph, these edges must connect $x$ to a vertex $v \not= c$. 

By assumption, any single-step tree $T$ must contain exactly one of $e$ and $\what e$. Otherwise, $T$ or $E(G) \setminus T$ contains a cycle. Furthermore, it follows from Lemma~\ref{lem:whenflip} that $T$ is a single-step tree on $(G,\chi)$ if and only if $T \setminus \{e,\what e\}$ is a single-step tree on $(G/e\setminus \what e, \chi/e \setminus \what e)$. This implies that for every single-step tree $T \in (G/e\setminus \what e, \chi/e \setminus \what e)$, we must have $E(G) \setminus T \in \T(G/e\setminus \what e)$. By the induction hypothesis, this means that $(G/e\setminus \what e, \chi/e \setminus \what e)$ must be a telescope graph.

We have shown that after contracting a double edge incident to $x$, we obtain a telescope graph. Let $z$ be the vertex formed by contraction. To recover $G$, we need to determine which edges incident to $z$ in $G/e\setminus \what e$ are incident to $x$ in $G$ and which are incident to $v$. If $v= s$, then it is immediate that $(G,\chi)$ is a telescope graph with $n=1$. Otherwise, $G/e\setminus \what e$ must contain a pair of parallel edges $e'$ and $\what e'$ which are incident to $z$. By definition, $(G,\chi)$ is a telescope graph if and only if neither $e'$ nor $\what e'$ are incident to $x$ on $(G,\chi)$.

Suppose for the sake of contradiction that $e'$ is incident to $x$ on $G$. Then, $e'$ is part of an path $P$ from $x$ to $s$ that does not pass through $c$. In particular, $f \not\in P$ and $g \cup P$ contains no cycles. Thus, $P \cup g$ is contained in a spanning tree that does not contain $f$, $e$, or $\what e$. This spanning tree is a single-step tree but the complement contains the cycle $\{e,\what e\}$. 

The argument from the previous paragraph still holds if we replace $e'$ with $\what e'$. Thus, $(G,\chi)$ must be a telescope graph. 
\end{proof}
\begin{lemma}
\label{lem:noscopegood}
If $(G,\chi)$ is not a telescope graph, then for any $T\in \T(G)$ such that $(c-s,T)$ is a single-step pair  from $g$ to $f$, we have $\alpha_{(G,\chi)}([c-s],T) = T \setminus g \cup f$. 
\end{lemma}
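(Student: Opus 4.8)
The plan is to argue by contradiction, leaning on the standing hypotheses of this subsection — in particular that every spanning tree of $G$ uses exactly half of its edges — together with the tools already assembled. So suppose $(G,\chi)$ is not a telescope graph, and yet $\alpha_{(G,\chi)}([c-s],T)\neq T\setminus g\cup f$ for some single-step pair $(c-s,T)$ from $g$ to $f$. My first step would be to apply Lemma~\ref{lem:singstepnocase2}: it pins $\alpha_{(G,\chi)}([c-s],T)$ down to the two-element set $\{T\setminus g\cup f,\ E(G)\setminus T\}$, so under our supposition we must have $\alpha_{(G,\chi)}([c-s],T) = E(G)\setminus T$.

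Next I would feed this particular single-step pair into Corollary~\ref{cor:anystepflips}, whose hypothesis is exactly ``$(c-s,T)$ is a single-step pair from $g$ to $f$ with $\alpha_{(G,\chi)}([c-s],T) = E(G)\setminus T$''. Its two bullets then spread the complementing behaviour over \emph{all} single-step trees of $(G,\chi)$ in the sense of Definition~\ref{def:sstree}: if $(c-s,T')$ is a single-step pair from $g$ to $f$ then $\alpha_{(G,\chi)}([c-s],T') = E(G)\setminus T'$, and if instead $(s-c,T')$ is a reverse single-step pair from $f$ to $g$ then $\alpha_{(G,\chi)}([s-c],T') = E(G)\setminus T'$. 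In either case $E(G)\setminus T'$ arises as the value of $\alpha_{(G,\chi)}$ at some element of $\Pic^0(G)$ acting on $T'$; since $\alpha_{(G,\chi)}$ is an action on $\T(G)$, it follows that $E(G)\setminus T'\in\T(G)$ for every single-step tree $T'$.

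The last step is to invoke the reverse implication of Lemma~\ref{lem:flippablescope}: the property just verified is precisely its second bullet, so $(G,\chi) = \tele{n}{k_0,\dots,k_n}$ for some $n\in\Z_{\ge 0}$ and $(k_0,\dots,k_n)\in\Z_{\ge 0}^{n+1}$. That is, $(G,\chi)$ is a telescope graph, contradicting our assumption. Hence no such single-step pair can exist, and $\alpha_{(G,\chi)}([c-s],T) = T\setminus g\cup f$ for every single-step pair $(c-s,T)$ from $g$ to $f$, as claimed.

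The main obstacle is not in this deduction, which is short, but rather in the results it consumes: Corollary~\ref{cor:anystepflips} (which goes through the technical Lemma~\ref{lem:anyvalid}, a single-step-and-reverse-single-step analogue of Theorem~\ref{thm:sourceturn}) and the structural classification of Lemma~\ref{lem:flippablescope}, which identifies the telescope family as exactly the graphs on which complementing a single-step tree always returns a spanning tree. In writing this lemma up, the only genuine care points are to make sure both alternatives of Definition~\ref{def:sstree} are covered — one by each bullet of Corollary~\ref{cor:anystepflips} — and to note that the degenerate case $E(G) = \{f,g\}$ needs no separate handling, as that graph is the telescope graph $\tele{0}{0}$ and is already excluded by the hypothesis.
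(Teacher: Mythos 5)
Your argument is correct and is essentially the paper's proof: both reduce to $\alpha_{(G,\chi)}([c-s],T)=E(G)\setminus T$ via Lemma~\ref{lem:singstepnocase2}, both invoke Corollary~\ref{cor:anystepflips} to spread the complementation to all single-step trees, and both play this against the characterization in Lemma~\ref{lem:flippablescope}. You apply that characterization in the forward direction (deducing $(G,\chi)$ is a telescope graph), while the paper applies it in the contrapositive (producing a single-step tree whose complement is not a spanning tree and contradicting Corollary~\ref{cor:anystepflips}); the two are logically identical.
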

\begin{proof}
Suppose that this condition does not hold. By Lemma~\ref{lem:singstepnocase2}, this means that $\alpha_{(G,\chi)}([c-s],T) = E(G) \setminus T$. By Lemma~\ref{lem:flippablescope}, there must be some single-step tree $T'$ such that $E(G) \setminus T'$ is not a spanning tree. We arrive at a contradiction from Corollary~\ref{cor:anystepflips} and the definition of single-step trees, because the output of a sandpile torsor action must always be a spanning tree.  
\end{proof}

\begin{lemma}\label{lem:scopegood}
If $(G,\chi)$ is a telescope graph, then for any $T\in \T(G)$ such that $(c-s,T)$ is a single-step pair from $g$ to $f$, we have $\alpha_{(G,\chi)}([c-s],T) = T \setminus g \cup f$. 
\end{lemma}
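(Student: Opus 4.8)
\textbf{Proof proposal for Lemma~\ref{lem:scopegood}.}

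The plan is to proceed by induction on $\ic$ and on the number of edges of the telescope graph $\tele{\ic}{k_0,\dots,k_\ic}$, using consistency to strip off ``inessential'' edges and reduce to a few genuinely small base cases. First I would dispose of the base case $\ic = 0$, $k_0 = 0$: here $G$ is the double edge $\tele{0}{0} = E_2$, which has only one sandpile torsor action, so $\alpha_{(G,\chi)} = r_{(G,\chi)}$ trivially. Next, I would handle any telescope graph with a degree-$2$ vertex $w_i^j$ other than $c$, $s$. Given a single-step pair $(c-s,T)$ from $g$ to $f$, exactly one of $h_i^j$, $\what h_i^j$ lies in $T$ (this is part of the characterization of single-step trees from the proof of Lemma~\ref{lem:flippablescope}), and by Lemma~\ref{lem:noscopegood}'s kind of reasoning the edge not in $T$ is also not in $\what T := \alpha_{(G,\chi)}([c-s],T)$ unless we are in the flip case. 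So I would argue: if $\what T = T\setminus g\cup f$ we are done; otherwise $\what T = E(G)\setminus T$ by Lemma~\ref{lem:singstepnocase2}, and then Corollary~\ref{cor:anystepflips} forces $\alpha_{(G,\chi)}([c-s],T') = E(G)\setminus T'$ for \emph{every} single-step pair $(c-s,T')$. In particular I can pick $T'$ containing the path along one of the parallel families and all of $\{h_i^j,\what h_i^j\}$ forced appropriately; whichever choice is convenient, contracting or deleting an inessential edge $e\notin\{f,g\}$ that is shared (or shared as a non-edge) between $T'$ and $\what T'$ lets me invoke consistency (Definition~\ref{def:consistent}(1) or (2)) plus the induction hypothesis on the smaller telescope graph $(G/e,\chi/e)$ or $(G\setminus e,\chi\setminus e)$ — these are again telescope graphs (or products thereof) with fewer edges — to conclude $\alpha$ agrees with $r$ there, contradicting $\what T' = E(G)\setminus T'$. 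This would rule out the flip case whenever $G$ has more than the minimal number of edges.

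The heart of the argument is the irreducible case: $\tele{\ic}{0,\dots,0}$, the ``pure telescope'' consisting of $c$, $s=z_\ic$, $x=z_0$, the edge $f$ from $s$ to $c$, the edge $g$ from $x$ to $c$, and $\ic$ doubled edges forming a path $z_0$–$z_1$–$\cdots$–$z_\ic$. Here I cannot delete or contract anything without leaving the family or producing a loop, so I need a direct computation. I would fix a single-step pair $(c-s,T)$ from $g$ to $f$: by Lemma~\ref{lem:whenflip}, $T$ contains $g$ (not $f$) and a path from $x$ to $s$ through the doubled edges, i.e.\ exactly one of $\{e_i,\what e_i\}$ for each $i\in[1,\ic]$. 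Suppose for contradiction that $\what T = E(G)\setminus T$. By Corollary~\ref{cor:anystepflips}, this flip behavior holds for \emph{all} single-step trees simultaneously, and moreover (its second bullet) for reverse single-step pairs. I would then exploit the cyclic structure at $c$: since $c$ has degree $\ic+2$ (edges $g$, $f$, and $\what h$-type edges — but in the pure case $c$ is incident only to $g$, $f$... wait, in $\tele{\ic}{0,\dots,0}$ vertex $c$ is incident only to $f$ and $g$, so $\deg(c) = 2$). This forces $[c-s] = [x-c]$ after a single firing of $c$, exactly as in Cases~1 and~2 of the main proof. Then $\alpha_{(G,\chi)}([x-c],T) = \what T = E(G)\setminus T$, and I would try to contract $g$: since $x$ has degree... hmm, $x=z_0$ has degree $3$ (edges $g$, $e_1$, $\what e_1$). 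The cleaner move: $(x-c,T)$ with $T_c\la x\ra = g$ is not the right shape, but consistency applied to $g\in T\cap \what T$? No — in the flip case $g\notin\what T$. Instead, I expect the right approach is to contract one of the doubled edges far from $c$: say $e_\ic$ (the edge $z_{\ic-1}$–$z_\ic = s$). If $e_\ic\in T$, then since in the flip case $e_\ic\notin\what T$, this is not a shared edge. But $\what e_\ic$: if $e_\ic \in T$ then $\what e_\ic\notin T$, and $\what e_\ic\in\what T$, so $\what e_\ic$ is again not shared. The genuinely shared edges between $T$ and $\what T = E(G)\setminus T$ are none (other than possibly $f,g$). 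So consistency Conditions 1 and 2 give nothing directly here, which is why this case is hard.

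The resolution I anticipate mirrors the structure of Case~3 and Case~1 in the body: combine the ``flip forces the action on all single-step trees'' consequence of Corollary~\ref{cor:anystepflips} with the composition/freeness of the torsor action to derive a contradiction about the order of $[c-s]$ in $\Pic^0(G)$, or to pin down $G$ exactly as $E_2$. Concretely, I would use that $r_{(G,\chi)}([c-s],T) = T\setminus g\cup f$ is a single-step tree again, apply the flip to it to get $\alpha_{(G,\chi)}([c-s],T\setminus g\cup f)$, and iterate; tracking how the rotor at $c$ (degree $2$) cycles, I get after two applications $\alpha_{(G,\chi)}([2c-2s],T) = T\setminus g\cup f \ne T$ whereas (since $c$ has degree $2$) $[2c-2s]$ equals $[c - x] + [c - s]$ — and chasing these identities against what the flip predicts should force $[2(c-s)] = [0]$, i.e.\ $\Pic^0(G)$ has exponent $2$; but $\Pic^0(\tele{\ic}{0,\dots,0})$ is easily computed (it is a product of cyclic groups coming from the $\ic$ doubled edges and the ``outer'' pair $f,g$, analogous to $\Pic^0(C_k)$ or $\Pic^0(E_k)$) and one checks it has an element of order $>2$ once $\ic\ge 1$ or once there are enough edges — forcing $\ic = 0$, $G = E_2$, where there is nothing to prove. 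The main obstacle, as flagged, is organizing this final contradiction cleanly: making precise the claim ``the flip behavior on all single-step trees, combined with the degree-$2$ structure at $c$ and freeness, forces an impossible order on $[c-s]$,'' and correctly handling the bookkeeping of which doubled edges lie in $T$ versus $\what T$ as one cycles the construction. Once $\what T \ne E(G)\setminus T$ is established, Lemma~\ref{lem:singstepnocase2} immediately yields $\what T = T\setminus g\cup f = r_{(G,\chi)}([c-s],T)$, completing the proof.
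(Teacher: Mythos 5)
There is a genuine gap, and it occurs in two related places.

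First, your proposed reduction step — ``contract or delete an inessential edge $e\notin\{f,g\}$ that is shared (or shared as a non-edge) between $T'$ and $\what{T}'$'' — cannot be executed in the flip case, not just for the pure telescope but for \emph{any} telescope graph. If $\what{T}' = E(G)\setminus T'$, then every edge of $G$ is in exactly one of $T'$ and $\what{T}'$: there are no shared edges and no shared non-edges except possibly $f$ and $g$, which you have excluded. Conditions 1 and 2 of consistency (Definition~\ref{def:consistent}) therefore give you nothing, and the induction you sketch on $|E(G)|$ never gets off the ground in precisely the scenario you need to rule out. You noticed this obstruction in the pure-telescope paragraph, but it actually disables your ``strip inessential edges'' plan in general; the general case is no easier than the pure one.

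Second, your direct argument for the irreducible case is incomplete and, as written, not viable. You cannot ``apply the flip to $T\setminus g\cup f$'' by reusing Corollary~\ref{cor:anystepflips}, because $(c-s, T\setminus g\cup f)$ is \emph{not} a single-step pair from $g$ to $f$ (it does not contain $g$), and when $\ic\ge 1$ it is not a single-step pair at all, since $\chi(c,f)=g$ carries the chip to $x=z_0\ne s$. Your hoped-for conclusion $[2(c-s)]=[0]$ also does not follow from composing the flip with its reverse form; that composition only recovers the tautology $\alpha([0],T)=T$. The argument you gesture at about the exponent of $\Pic^0(\tele{\ic}{0,\ldots,0})$ is a plausible target, but you have not supplied a route to it, and it is exactly the nontrivial core that Appendix~\ref{app:case4} exists to furnish.

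The idea you are missing is the one the paper actually uses: rather than composing the move $[c-s]$ with itself, compose a chain of single-step and reverse single-step moves whose sources are vertices $c'\ne c$. For such a move $(c'-s',T^{l-1})$, the graph is \emph{not} a telescope graph with respect to the pair $(c',s')$ (when $\ic>0$ the only ribbon graph automorphism of $\tele{\ic}{k_0,\ldots,k_\ic}$ is trivial; when $\ic=0$ the only nontrivial one swaps $c$ and $s$, and that one extra step is handled separately), so Lemma~\ref{lem:noscopegood} forces $\alpha$ to agree with rotor-routing on every step of the chain. This lets the paper compute $\alpha_{(G,\chi)}([D_\Sigma],T')$ explicitly, exhibit $D_\Sigma$ with $\alpha([D_\Sigma],T')=E(G)\setminus T'$, and contradict freeness by noting $[D_\Sigma]\ne[c-s]$ because rotor-routing sends $T'$ to different trees under the two classes. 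This sidesteps both obstructions above: it never needs a shared edge, and it never needs to flip twice with source $c$.
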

\begin{proof}
Suppose for the sake of contradiction that the lemma does not hold. Then, by Lemma~\ref{lem:singstepnocase2}, there exists some $T \in \T(G)$ such that $(c-s,T)$ is a single-step pair from $g$ to $f$ and $\alpha_{(G,\chi)}([c-s],T) = E(G) \setminus T$. By Corollary~\ref{cor:anystepflips}, this equality holds for any single-step tree. 

To obtain a contradiction, we explicitly construct a divisor $D_{\Sigma}$ such that $\alpha_{(G,\chi)}([D_{\Sigma}],T) = \alpha_{(G,\chi)}([c-s],T) = E(G)\setminus T$, but $[D_{\Sigma}]\neq [c-s]$.\\

\textbf{Case 1: } $n>0$.\\

Let $\tele{n}{k_0,\dots,k_n} = (G,\chi)$ and let $T'$ be the spanning tree 
\[T' := g \cup \{e_i \mid i \in [1,n]\} \cup \{h_i^j \mid i \in [0,n], j \in [1,k_i]\}.\]
See Figure~\ref{fig:telescope} for this tree on $\tele{5}{1,0,0,2,1,0}$. 

We consider several sequences of divisors (where we ignore any divisors involving $w_i^j$ for $k_i = 0$ or $z_i$ for $i <0$):
\begin{alignat*}{2}
&\mathcal E_0 &&= ( w_0^1 - c, \dots, w_0^{k_0} -c, w_1^1 - c, \dots,w_1^{k_1}-c,\dots,w_n^{k_n}-c)\\
&\mathcal E_1 &&= (z_n - w_n^1, \dots, z_n - w_n^{k_n}, z_n - c)\\
&\mathcal E_2 &&= (z_{n-1} - w_{n-1}^1, \dots, z_{n-1} - w_{n-1}^{k_{n-1}},z_{n-1} - z_n,z_{n-1} - z_n)\\
&\mathcal E_3 &&= (z_{n-2} - w_{n-2}^1, \dots, z_{n-2} - w_{n-2}^{k_{n-2}},z_{n-2} - z_{n-1},z_{n-2} - z_{n-1})\\
&\dots && = \dots\\
&\mathcal E_{n+1} &&= (z_0 - w_0^1, \dots, z_0 - w_0^{k_{0}},z_{0} - z_{1},z_{0} - z_{1})\\
&\mathcal E &&= (\mathcal E_0,\mathcal E_1,\dots,\mathcal E_{n+1}).
\end{alignat*}

Let $D_l$ be the $l^{th}$ entry in $\mathcal E$. Let $T^0 = T'$ and recursively define the sequence \[(T^1,T^2,T^3,\dots, T^{|\mathcal E|}) \in \T(G)^{|\mathcal E|}\] such that $r_{(G,\chi)}([D_l],T^{l-1}) = T^l$. It is straightforward to see that $T^{|\mathcal E|} = E(G) \setminus T'$, and for any $l \in [1,|\mathcal E|]$, the pair $(D_l, T^{l-1})$ is a single-step pair. We claim that for all $l \in [1,|\mathcal E|]$, we also have $\alpha_{(G,\chi)}([D_l],T^{l-1}) = T^l$.

For any $l\in [1,|\mathcal{E}|]$, let $c',s' \in V(G)$ such that $D_l = c' - s'$. Notice by the definition of $\mathcal E$ that $c' \not= c$. By Lemma~\ref{lem:noscopegood}, $\alpha_{(G,\chi)}([D_l],T^{l-1}) = r_{(G,\chi)}([D_l],T^{l-1}) = T^l$ unless $(G,\chi)$ remains a telescope graph when $c'$ plays the role of $c$ and $s'$ plays the role of $s$ (i.e. when there is a ribbon graph automorphism which maps $c$ to $c'$ and $s$ to $s'$). It is easy to verify that when $n>0$, the only ribbon graph automorphism is trivial. 

Let $D_\Sigma$ be the sum of all of the divisors that make up $\mathcal E$. We have shown that \[\alpha_{(G,\chi)}([D_\Sigma],T') = r_{(G,\chi)}([D_{\Sigma}],T') = T^{|\mathcal E|} = E(G) \setminus T'.\] If $\alpha_{(G\chi)}([c-s],T') = E(G) \setminus T'$, then this implies that $[D_{\Sigma}] = [c-s]$. This is false because 
\[r_{(G,\chi)}([D_{\Sigma}],T') = E(G) \setminus T' \not= T' \setminus g \cup f = r_{(G,\chi)}([c-s],T').\]\\

\textbf{Case 2: } $n=0$.\\

If $k_0 = 0$, then $E(G) = \{f,g\}$ and the proof is trivial. Otherwise, let $T' = g \cup \{ h_0^j\mid j \in [1,k_i]\}$ and $\mathcal E = (w_0^1-c ,w_0^2-c,\dots, w_0^{k_0}-c, s-w_0^1, s-w_0^2, \dots, s-w_0^{k_0}, s - c)$. Define $D_l$, $T^l$, and $D_\Sigma$ as in Case 1, but with this new choice of $\mathcal E$. 

The argument that we used in Case 1 is mostly still valid, except that there does exist a non-trivial ribbon graph automorphism on $(G,\chi)$. Nevertheless, the only such automorphism must flip $c$ and $s$. Thus, the only additional equality that we need to check is  $\alpha_{(G,\chi)}([s-c], T^{2k_0}) = T^{2k_0+1}$. However, $T^{2k_0}$ contains neither $g$ nor $f$, so $E(G)\setminus T^{2k_0}$ is not a spanning tree. By Lemma~\ref{lem:singstepnocase2}, we know that $\alpha_{(G,\chi)}([s-c], T^{2k_0}) = r_{(G,\chi)}([s-c], T^{2k_0}) = T^{2k_0+1}$. The remainder of the proof follows using the same reasoning from Case 1. 
\end{proof}

\bibliographystyle{alpha}
\bibliography{biblio.bib}

\end{document}